\newtheorem{thm}{Theorem}[section]
\newtheorem{cor}[thm]{Corollary}
\newtheorem{lem}[thm]{Lemma}
\newtheorem{definition}[thm]{Definition}
\newtheorem{remark}[thm]{Remark}
\newtheorem{proposition}[thm]{Proposition}
\newtheorem{defn}[thm]{Definition}
\newtheorem{prop}[thm]{Proposition}
\newtheorem{quest}[thm]{Question}
\newtheorem{prob}[thm]{Problem}
\theoremstyle{definition}
\newcommand{\bM}{\mathsf{M}}
\newcommand{\bA}{\mathsf{A}}
\newcommand{\bN}{\mathsf{N}}
\newcommand{\bP}{\mathsf{P}}
\newcommand{\bH}{\mathsf{H}}
\newcommand{\Ind}{\mathcal{I}}
\newcommand{\Fix}{\textup{Fix}}
\newcommand{\bn}{\mathbb{N}}
\newcommand{\bz}{\mathbb{Z}}
\newcommand{\bc}{\mathbb{C}}
\newcommand{\bq}{\mathbb{Q}}
\DeclareMathOperator{\C}{C}
\newenvironment{rlist}
{
	
	\begin{enumerate}}
	{\end{enumerate}}
\begin{document}

\title{Maximal subgroups and von Neumann subalgebras with the Haagerup property}
\date{}
\author{Yongle Jiang}
\address{Institute of Mathematics of the Polish Academy of Sciences,
	ul.~\'Sniadeckich 8, 00--656 Warszawa, Poland}
\curraddr{School of Mathematical Sciences, Dalian University of Technology, Dalian, 116024, China.} 
\email{yonglejiang@dlut.edu.cn}

\author{Adam Skalski}
\address{Institute of Mathematics of the Polish Academy of Sciences,
	ul.~\'Sniadeckich 8, 00--656 Warszawa, Poland}
\email{a.skalski@impan.pl}

\begin{abstract}
	We initiate a study of maximal subgroups and maximal von Neumann subalgebras which have the Haagerup property. We determine maximal Haagerup subgroups inside $\mathbb{Z}^2 \rtimes SL_2(\mathbb{Z})$ and obtain several explicit instances where maximal Haagerup subgroups yield maximal Haagerup subalgebras. Our techniques are on one hand based on group-theoretic considerations, and on the other on certain results on intermediate von Neumann algebras, in particular these allowing us to deduce that all the intermediate algebras for certain inclusions arise from groups or from group actions. Some remarks and examples concerning maximal non-(T) subgroups and subalgebras are also presented, and we answer two questions of Ge regarding maximal von Neumann subalgebras.
\end{abstract}

\subjclass[2010]{Primary:46L10; Secondary 20E28, 22D25 }

\keywords{von Neumann algebra; Haagerup property; maximal subgroups/subalgebras}

\maketitle

The study of maximal von Neumann subalgebras with particular properties has a long and rich history, dating back to the origins of the subject. In particular the role of maximal abelian subalgebras was realised early on by Dixmier and others (see for example \cite{dix}), came to prominence with the groundbreaking results of Feldman and Moore (\cite{femo}) and plays a central role in the modern study of structure and rigidity of von Neumann algebras  (\cite{ioICM}). With time it became clear that similarly one can ask about concrete maximal amenable (in other words injective) von Neumann subalgebras. Here the breakthrough work is due to Popa, who showed in \cite{popa_advances} that the so-called generator masa  in a free group von Neumann algebra is also maximal amenable.

The context for Popa's result is the fact that from the early days of the theory of operator algebras countable discrete groups formed a very rich source of examples; in particular the key theorem of \cite{popa_advances} implies a much easier fact that $\bz < \mathbb{F}_n$ is a maximal amenable subgroup. In recent years there has been a renewed interest in asking when, given a maximal amenable subgroup $H<G$, the von Neumann algebra $L(H)$ is a maximal amenable von Neumann subalgebra of $L(G)$. Satisfactory sufficient conditions, leading to several concrete examples of this phenomenon, were obtained by Boutonnet and Carderi (\cite{remi_carderi1}, \cite{remi_carderi2}). It is also worth noting that some maximal amenable subgroups do not lead to maximal amenable subalgebras.

Another approximation property for von Neumann algebras, originating in the work of Haagerup on free groups (\cite{Haa}), is the Haagerup property (\cite{CoJ}, \cite{Cho}, \cite{Jol}). This again has proved to be very fruitful for the study of operator algebras, partly due to its geometric interpretations, partly as it weakens amenability and yet offers some tools to study the algebra in question, and finally because it forms a strong negation of Kazhdan's Property (T) (see \cite{bdv}, \cite{ccjjv}). In the last decade the Haagerup property played also an important role in the study of quantum groups (\cite{Brannan}, \cite{DFSW}); this motivated the extension of the concept beyond finite von Neumann algebras to arbitrary ones (see \cite{cost} and references therein).

In this work we initiate a study of maximal Haagerup von Neumann subalgebras. The difficulties in approaching this problem are two-fold: first of all relatively little seems to be known on maximal Haagerup subgroups, and secondly the only well-known obstruction to the von Neumann algebraic Haagerup property is relative Property (T) (although see \cite{ci}, where Chifan and Ioana proved that the situation in general is more subtle). Thus we begin our study by analysing examples of maximal Haagerup subgroups in concrete groups without the Haagerup property. In particular we characterise all the maximal Haagerup subgroups in $\bz^2 \rtimes SL_2(\bz)$, showing they are of two types:
\begin{itemize}
	\item $\mathbb{Z}^2\rtimes C$, where $C <SL_2(\bz)$ is a maximal amenable subgroup;
	\item $\{(c(g),g): g \in K\}$, where $K <SL_2(\bz)$ is non-amenable and $c:K \to \bz^2$ is a cocycle which cannot be extended to a larger subgroup.
\end{itemize}
We also record concrete examples of maximal non-(T) subgroups in Property (T) groups.

As we pass to the von Neumann algebraic context, we should stress that our operator algebraic techniques are mostly in a spirit opposite to this appearing in the afore-mentioned work of Popa; in his context, namely for the inclusion $L(\bz)\subset L(\mathbb{F}_n)$, there is no hope to describe all the intermediate von Neumann algebras, whereas our methods in most cases require knowing that the intermediate algebras come from groups, from group actions, or from equivalence relations. Such a requirement might seem at first glance somewhat limiting, but in fact it is quite natural: for example given a Cartan inclusion $\bA \subset \bM$ by \cite{femo} we know not only that $\bM$ can be realised as the von Neumann algebra of an equivalence relation, but also that all the intermediate von Neumann algebras are of this form. Further a version of Galois correspondence (see for example \cite{choda}) says that the intermediate algebras for the inclusion $\bM \subset \bM \rtimes \Gamma$ for an outer action of a discrete group $\Gamma$ on a factor $\bM$ must all come from  subgroups of $\Gamma$. Recent years brought many deeper results of this type, notably these in \cite{suzuki}, \cite{Amr}, \cite{packer}, \cite{cs_2} and \cite{chifan_das}. We will use these, together with certain extensions established here (see say Theorem \ref{thm:Amrutamgeneralised} or Lemma \ref{lem: intermediate subalgebras split}) to exhibit concrete examples of maximal Haagerup subalgebras.

Using such strategy, among other things we show that the following group inclusions have the property that $H$ is a maximal Haagerup subgroup of $G$, and similarly the von Neumann algebra $L(H)$ is a maximal Haagerup von Neumann subalgebra of $L(G)$:
\begin{itemize}
\item $\mathbb{Z}^2\rtimes C<\mathbb{Z}^2\rtimes SL_2(\mathbb{Z})$, where $C <SL_2(\bz)$ is a maximal amenable subgroup such that $\mathbb{Z}^2\rtimes C$ is also ICC;
\item $\Lambda \wr_\Gamma K < \Lambda \wr \Gamma$, where $\Lambda$ is an amenable ICC group, and $K < \Gamma$ is a maximal Haagerup subgroup;
\item $\Lambda \wr SL_2(\bz)< (\Lambda^{\oplus  SL_2(\bz)} \times \bz^2)\rtimes SL_2(\bz)$, where $\Lambda$ is an ICC group with the Haagerup property;
\item $SL_2(\mathbb{Z})*SL_2(\mathbb{Z})<\mathbb{Z}^2\rtimes (SL_2(\mathbb{Z})*SL_2(\mathbb{Z}))$, where the action is defined by first factoring onto the first copy of $SL_2(\bz)$, then combing with the standard matrix multiplication;
\item $\bz \wr_\Gamma K<\bz \wr \Gamma$, where $\Gamma = \bz^2 \rtimes SL_2(\bz)$, $K = \bz^2 \rtimes C$ and $C<SL_2(\bz)$ is a maximal amenable subgroup.
\end{itemize}
The above list is not exhaustive; in particular we obtain also some examples which are related to a general crossed product construction. Some of the new results on intermediate von Neumann algebras should be of use also in some other contexts; it is worth noting that stronger versions of some theorems we prove here (notably on profinite actions) were independently obtained in \cite{chifan_das} and applied in the context of the classification of von Neumann algebras. Here we show how to exploit such results to answer certain questions of  Ge from \cite{ge}.

Many questions related to maximal Haagerup subalgebras remain open, and we list what we believe to be the most important ones in the end of our paper.

The detailed plan of the paper is as follows: in Section 1 we recall the definition of the group-theoretic and von Neumann algebraic Haagerup properties, recall their key features to be used in the sequel and prove some elementary facts on existence of maximal objects. In Section 2 we discuss maximal Haagerup subgroups. After analysing general behaviour of this notion in various products and providing first examples, we ask a question about the existence of Haagerup radicals, understood as largest normal Haagerup subgroups, and identify them inside $ SL_3(\bz)$ and $\bz^2 \rtimes SL_2(\bz)$. Then we prove the first of our main results, characterisation of maximal Haagerup subgroups inside $\bz^2 \rtimes SL_2(\bz)$ and discuss in detail the groups which may appear as such. We finish this section by exhibiting concrete maximal Haagerup subgroups inside $SL_3(\bz)$ and  $\bz^3 \rtimes SL_3(\bz)$. In Section 3 we focus on the von Neumann algebraic context, and produce examples of maximal Haagerup subalgebras using respectively the work of Ioana on ergodic equivalence relations inside $SL_2(\bz) \curvearrowright \mathbb{T}^2$, Galois correspondence of Choda, extremely rigid actions, free products, pro-finite actions and finally roughly normal subgroups. Here we also answer in the positive two questions of Ge regarding maximal von Neumann algebras. Section 4 is devoted to Property (T): we exhibit explicit maximal non-(T) subgroups in Property (T) groups, discuss some cases where maximal (T) or non-(T) subalgebras exist and present a concrete example of a maximal non-(T) von Neumann subalgebra inside a II$_1$-factor with Property (T); it is worth mentioning that other explicit examples of the last instance were obtained in parallel in the article \cite{chifan_das_khan}. Lastly in Section 5 we present a short list of open problems.

All the groups will be discrete and countable; von Neumann algebras will be mostly finite (although in Section 1 we will briefly discuss general $\sigma$-finite von Neumann subalgebras). Inclusions of von Neumann algebras will be always unital, and we will sometimes simply write $\bN<\bM$ if $\bN$ is a von Neumann subalgebra of $\bM$; and similarly $H<G$ if $H$ is a subgroup of $G$. 
If $\bM$ is a von Neumann algebra equipped with a faithful normal state $\phi$ then a von Neumann subalgebra $\bN< \bM$ will be called $\phi$-expected if there exists a $\phi$-preserving (normal) conditional expectation from  $\bM$ onto $\bN$. We say that $H$ is a nontrivial subgroup of $G$ if $\{e\}\neq H \neq G$; a group is nontrivial if it has more than one element. If $H,G$ are groups, then $H^{\oplus G}$ will denote the direct sum of copies of $H$ indexed by $G$, so that we have a natural shift action $G\curvearrowright H^{\oplus G}$ and the corresponding wreath product $H \wr G$. Often we will need the case where $K$ is a subgroup of $G$ acting on  $H^{\oplus G}$ by shifts; then we write the corresponding semidirect product as $H \wr_G K$. The term ICC stands for infinite conjugacy classes.

\section{Haagerup property -- general aspects}

In this section we recall the basic definitions and features of the Haagerup property for groups and von Neumann algebras, which will be used in the rest of the paper.

\subsection*{Groups} The notion of the Haagerup property of a (locally compact) group has its roots in the famous article \cite{Haa}.

\begin{defn}
A group $G$ has the Haagerup property if there exists a sequence of positive-definite functions in $c_0(G)$ which converge to $1$ pointwise.
\end{defn}

On the other hand recall one of the equivalent characterisations of Kazhdan's (relative) Property (T).

\begin{defn}
A group $G$ has the Kazhdan Property (T) (relative to a subgroup $H$) if every  sequence of continuous positive-definite functions on $G$ which converges to $1$ pointwise converges to $1$ uniformly (on $H$).	
\end{defn}

For more information on these two properties we refer to the books \cite{ccjjv} and \cite{bdv}. Note that sometimes for brevity we will simply say that $G$ is Haagerup or $G$ is Kazhdan; or that the inclusion $H<G$ is rigid (meaning that $G$ has Property (T) relative to $H$). These properties are often viewed as strong negations of each other: if $G$ is both Haagerup and Kazhdan, then it must be finite.

\begin{prop}
Suppose that $H$ is a subgroup of a group $G$ and that $H$ has the Haagerup property. Then there exists a maximal Haagerup subgroup of $G$ containing $H$.
\end{prop}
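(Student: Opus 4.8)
The plan is to use Zorn's lemma on the poset of Haagerup subgroups of $G$ containing $H$, ordered by inclusion. This set is nonempty since it contains $H$ itself. The standard obstacle in such Zorn's-lemma arguments is verifying that the union of a chain stays within the class in question; so let me explain the structure and where the real work lies.

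First I would fix the poset $\mathcal{P}=\{L : H \le L \le G,\ L \text{ has the Haagerup property}\}$. To apply Zorn's lemma I must show that every nonempty chain $(L_i)_{i\in I}$ in $\mathcal{P}$ has an upper bound in $\mathcal{P}$. The natural candidate is $L=\bigcup_{i\in I} L_i$, which is a subgroup of $G$ precisely because the $L_i$ form a chain (given $g,h\in L$, both lie in some common $L_i$, hence so does $gh^{-1}$), and it clearly contains $H$. The only nonroutine point is that $L$ itself has the Haagerup property. Here I would invoke the fact that the Haagerup property is preserved under increasing unions (directed limits) of subgroups: this is standard and follows from the permanence properties recalled for this class of groups. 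Concretely, since $G$ is countable the chain may be assumed to be at most countable and cofinally increasing, so one can choose an exhaustion $L_1\subseteq L_2\subseteq\cdots$ with $\bigcup_n L_n=L$, and then a diagonal argument over the sequences of $c_0$ positive-definite functions witnessing the Haagerup property of each $L_n$ produces such a sequence for $L$.

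I expect the main obstacle to be precisely this stability under directed unions, so let me indicate the diagonal argument in slightly more detail. For each $n$ pick a sequence $(\varphi_{n,k})_{k}$ of positive-definite functions in $c_0(L_n)$ converging to $1$ pointwise on $L_n$, and for each $n$ choose $k(n)$ so that $\varphi_{n,k(n)}$ is within $1/n$ of $1$ on a fixed finite exhausting family of elements of $L_n$. Extending each $\varphi_{n,k(n)}$ by zero off $L_n$ keeps it positive-definite on $L$ (a positive-definite function on a subgroup extends by zero to one on the whole group) and keeps it in $c_0(L)$; the resulting sequence then converges to $1$ pointwise on $L$ by construction, giving $L\in\mathcal{P}$. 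Thus $L$ is an upper bound for the chain inside $\mathcal{P}$.

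With the chain condition established, Zorn's lemma yields a maximal element $M$ of $\mathcal{P}$. By definition $M$ is a Haagerup subgroup of $G$ containing $H$, and maximality in $\mathcal{P}$ means there is no strictly larger Haagerup subgroup between $M$ and $G$; that is exactly the assertion that $M$ is a maximal Haagerup subgroup of $G$ containing $H$, completing the proof. I would remark that the same skeleton (Zorn plus stability under directed unions) is what underlies the analogous existence statement for maximal Haagerup von Neumann subalgebras, so it is worth isolating the union step cleanly.
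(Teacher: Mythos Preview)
Your proposal is correct and follows essentially the same approach as the paper: a Kuratowski--Zorn argument on the poset of Haagerup subgroups containing $H$, with the key input being that an increasing union of Haagerup groups is Haagerup (the paper simply cites \cite[Proposition 6.1.1]{ccjjv} for this, while you spell out the diagonal argument with zero-extensions of positive-definite functions).
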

\begin{proof}
This follows by a standard Kuratowski-Zorn argument and the fact that a union of an increasing sequence of discrete countable groups with the Haagerup property is Haagerup (\cite[Proposition 6.1.1]{ccjjv}).
\end{proof}

We record here the analogue of this fact for non-Kazhdan groups (noting also that for obvious reasons it cannot hold for Kazhdan groups: it suffices to consider say $(\bz/2\bz)^{\oplus \infty}$).

\begin{prop}\label{maximalNonT}
	Suppose that $H$ is a subgroup of a group $G$ and that $H$ has does not have Kazhdan's Property (T). Then there exists a maximal subgroup of $G$ containing $H$ and not having Property (T).
\end{prop}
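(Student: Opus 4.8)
The plan is to run exactly the same Kuratowski--Zorn argument as in the preceding proposition, with the Haagerup permanence property replaced by a suitable permanence property for the class of groups \emph{without} Property (T). Concretely, I would consider the poset $\mathcal{P}$ of all subgroups $K$ with $H \le K \le G$ that do not have Property (T), ordered by inclusion. This poset is nonempty, since $H \in \mathcal{P}$ by hypothesis, and a maximal element of $\mathcal{P}$ is precisely a maximal subgroup of $G$ containing $H$ and lacking Property (T). Thus everything reduces to verifying the hypothesis of Zorn's lemma: every chain in $\mathcal{P}$ admits an upper bound in $\mathcal{P}$.

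Given a chain $\mathcal{C} \subseteq \mathcal{P}$, the natural candidate for an upper bound is $K_0 := \bigcup_{K \in \mathcal{C}} K$. Since $\mathcal{C}$ is totally ordered by inclusion, $K_0$ is again a subgroup of $G$ containing $H$, and it clearly dominates every member of $\mathcal{C}$. The only point to check is that $K_0 \in \mathcal{P}$, i.e.\ that $K_0$ does not have Property (T). This is the analogue of the fact used before that an increasing union of Haagerup groups is Haagerup; here the relevant permanence statement is that an increasing union of groups without Property (T) again lacks Property (T).

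The key step, and the only genuine input, is this permanence fact, which I would deduce from the classical theorem (see e.g.\ \cite{bdv}) that a discrete group with Property (T) is finitely generated. Arguing by contradiction, suppose $K_0$ had Property (T). Then $K_0$ would be generated by finitely many elements $s_1, \dots, s_m$, and each $s_j$ would lie in some member $K_{i_j} \in \mathcal{C}$. As $\mathcal{C}$ is totally ordered, among the finitely many subgroups $K_{i_1}, \dots, K_{i_m}$ there is a largest one, say $K^\ast$, containing all the $s_j$; hence $K_0 = \langle s_1, \dots, s_m \rangle \le K^\ast \le K_0$, forcing $K_0 = K^\ast \in \mathcal{C}$. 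But every member of $\mathcal{C}$ lies in $\mathcal{P}$ and so fails Property (T), contradicting that $K_0 = K^\ast$ has it. Therefore $K_0$ does not have Property (T), so $K_0 \in \mathcal{P}$ is the desired upper bound, and Zorn's lemma yields a maximal element of $\mathcal{P}$.

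I would flag that the single nontrivial ingredient is the finite generation of discrete Property (T) groups; once that is invoked, the remainder is purely order-theoretic and the main obstacle --- showing that the class of non-(T) groups is closed under increasing unions --- dissolves. Note in particular that no analogue of this closure property holds for the class of Property (T) groups, which is exactly why, as remarked in the text, the statement must fail for Kazhdan groups.
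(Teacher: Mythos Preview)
Your proof is correct and follows essentially the same approach as the paper's: both apply the Kuratowski--Zorn lemma, with the key ingredient being that discrete Property~(T) groups are finitely generated, so that the union of an increasing chain of non-(T) subgroups cannot have Property~(T) without forcing the chain to stabilise. You have simply spelled out the argument in more detail than the paper's two-sentence sketch.
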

\begin{proof}
To apply the Kuratowski-Zorn argument it suffices to note that Kazhdan groups must be finitely generated; so if there was an increasing sequence of non-Kazhdan groups with the union having Property (T), then the sequence would in fact have to stabilise, which gives a contradiction.
\end{proof}

Finally we recall the key permanence result and the key obstacle for the Haagerup property; the first proposition is \cite[Proposition 6.1.5]{ccjjv}, and the second is an obvious consequence of definitions. These will be used further without any comment.

\begin{prop}\label{extensions}
	Suppose that $H$ is a subgroup of a group $G$. If $H$ has the Haagerup property and the algebra $\ell^{\infty}(G/H)$ admits a $G$-invariant state, then $G$ has the Haagerup property. In particular amenable extensions of Haagerup groups are Haagerup, and if $G$ admits a finite index Haagerup subgroup, then $G$ itself is Haagerup.
	\end{prop}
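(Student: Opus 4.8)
The plan is to recast the Haagerup property in terms of weak containment of unitary representations and then transport it from $H$ to $G$ by induction. Write $\mathbf{1}_G$ for the trivial representation and call a unitary representation a $C_0$-representation if all of its matrix coefficients lie in $c_0$. Unravelling the definition, $G$ is Haagerup precisely when $\mathbf{1}_G$ is weakly contained in some $C_0$-representation of $G$: from a sequence of $c_0$ positive-definite functions tending pointwise to $1$ one builds, by taking a GNS triple for each and forming the direct sum, a single $C_0$-representation with almost invariant vectors (each cyclic representation attached to a $c_0$ positive-definite function is automatically $C_0$, since its coefficients are finite linear combinations of translates of that function); conversely, almost invariant vectors in a $C_0$-representation produce, after a diagonal extraction over an exhaustion of the countable group $G$ by finite sets, the required sequence. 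I would first record this reformulation for both $H$ and $G$.

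Granting it, I fix a $C_0$-representation $\sigma$ of $H$ with $\mathbf{1}_H \prec \sigma$ and pass to the induced representation $\mathrm{Ind}_H^G \sigma$. The argument then rests on three standard facts about induction. First, induction is continuous for weak containment (Fell), so $\mathbf{1}_H \prec \sigma$ gives $\mathrm{Ind}_H^G \mathbf{1}_H \prec \mathrm{Ind}_H^G \sigma$. Second, $\mathrm{Ind}_H^G \mathbf{1}_H$ is exactly the quasi-regular representation $\lambda_{G/H}$ of $G$ on $\ell^2(G/H)$. Third, and this is where the hypothesis enters, the existence of a $G$-invariant state on $\ell^\infty(G/H)$ — the co-amenability of $H$ in $G$ — is equivalent, by a Hulanicki-type argument, to the weak containment $\mathbf{1}_G \prec \lambda_{G/H}$. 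Chaining these,
\[
\mathbf{1}_G \prec \lambda_{G/H} = \mathrm{Ind}_H^G \mathbf{1}_H \prec \mathrm{Ind}_H^G \sigma,
\]
so that $\mathbf{1}_G$ is weakly contained in $\mathrm{Ind}_H^G \sigma$.

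To conclude via the reformulation I still need $\mathrm{Ind}_H^G \sigma$ to be a $C_0$-representation, and this is the step I expect to require the most care. After a choice of section the matrix coefficients of an induced representation are coset-sums over $G/H$ of translated coefficients of $\sigma$, and the main task is to verify that such a sum vanishes at infinity on $G$ whenever the coefficients of $\sigma$ vanish at infinity on $H$. I would isolate this as a separate lemma (induction sends $C_0$-representations to $C_0$-representations) and prove it by analysing, for $g$ ranging outside larger and larger finite subsets of $G$, the behaviour of the cocycle values $\sigma(gx)^{-1} g\, \sigma(x) \in H$ and exploiting that the coefficients of $\sigma$ vanish at infinity. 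With this in hand, $\mathbf{1}_G \prec \mathrm{Ind}_H^G \sigma$ together with the $C_0$ property yields, via the reformulation, that $G$ is Haagerup.

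Finally, the two ``in particular'' assertions are immediate specialisations. If $1 \to H \to G \to Q \to 1$ with $Q$ amenable, then $\ell^\infty(G/H) \cong \ell^\infty(Q)$ carries a $G$-invariant state pulled back from an invariant mean on the amenable group $Q$; and if $[G:H] < \infty$, then $\ell^\infty(G/H)$ is finite-dimensional and the normalised counting state is $G$-invariant because $G$ merely permutes the finitely many cosets. In both cases the hypothesis of the main statement is satisfied, so $G$ is Haagerup.
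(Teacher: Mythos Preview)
Your approach is correct and is in fact the standard route to this result. Note, however, that the paper does not supply its own proof here: the statement is simply quoted as \cite[Proposition 6.1.5]{ccjjv}, so there is no argument in the paper to compare against beyond that citation. The proof in \cite{ccjjv} proceeds exactly along the lines you outline --- recasting the Haagerup property as $\mathbf{1}_G$ being weakly contained in a $C_0$-representation, inducing a $C_0$-representation of $H$ weakly containing $\mathbf{1}_H$, invoking Fell continuity and the Eymard-type equivalence between co-amenability and $\mathbf{1}_G\prec\lambda_{G/H}$, and checking that induction preserves the $C_0$ property.

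Two small comments on presentation. First, you use the symbol $\sigma$ both for the $C_0$-representation of $H$ and, in the sketch of the $C_0$-preservation lemma, apparently for a section $G/H\to G$; you should disambiguate. Second, the $C_0$-preservation step, which you rightly flag as the one requiring care, is most cleanly handled by first reducing to vectors in the induced space that are supported on finitely many cosets (a dense subspace), so that the matrix coefficient becomes a finite sum of translated coefficients of the original representation; then for each summand one observes that as $g\to\infty$ in $G$ the corresponding element $s(y)^{-1}g\,s(x)$ (with $s$ a section) either lies outside $H$, making the term vanish, or tends to infinity in $H$, making it vanish by the $C_0$ hypothesis. Your sketch gestures at this but would benefit from making the finite-support reduction explicit.
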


\begin{prop}	\label{relT}
	If a group $G$ has relative Property (T) with respect to an infinite subgroup, then $G$ is not Haagerup. 
\end{prop}

Naturally Proposition \ref{extensions} remains true if one replaces everywhere the Haagerup property by amenability. Furthermore Proposition \ref{relT} implies that neither $\mathbb{Z}^2\rtimes SL_2(\mathbb{Z})$ nor $SL_3(\mathbb{Z})$ are Haagerup (and the latter one is in fact Kazhdan). We record here a relevant lemma due to Burger (\cite{bur}).

\begin{prop}\label{prop:Burger}
	Suppose that $G < SL_2(\mathbb{Z})$ is a non-amenable subgroup. Then the inclusion $\mathbb{Z}^2<\mathbb{Z}^2\rtimes G$ is rigid, so in particular $\mathbb{Z}^2\rtimes G$ is not Haagerup.
\end{prop}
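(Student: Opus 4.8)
The plan is to establish the relative Property (T) of the pair $\mathbb{Z}^2 < \mathbb{Z}^2\rtimes G$ directly, after which the non-Haagerup conclusion is immediate from Proposition \ref{relT}, since $\mathbb{Z}^2$ is infinite. I would use the standard reformulation of relative Property (T): it suffices to show that every unitary representation $\pi$ of $\mathbb{Z}^2\rtimes G$ admitting a sequence of almost invariant unit vectors possesses a nonzero $\mathbb{Z}^2$-invariant vector. The main tool is the dual picture. Restricting $\pi$ to the abelian group $\mathbb{Z}^2$ and applying the spectral theorem, one obtains a projection-valued measure $E$ on the Pontryagin dual $\widehat{\mathbb{Z}^2}=\mathbb{T}^2$ with $\pi(a)=\int_{\mathbb{T}^2}\chi(a)\,dE(\chi)$; the covariance relation $\pi(g)\pi(a)\pi(g)^{*}=\pi(g\cdot a)$ then translates into $G$-equivariance of $E$ for the dual (contragredient linear) action of $G<SL_2(\bz)$ on $\mathbb{T}^2$, which is again an action by automorphisms coming from the linear action on $\mathbb{R}^2$.

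Next I would reduce the statement to a purely dynamical assertion on $\mathbb{T}^2$. Arguing by contradiction, suppose $\pi$ has almost invariant unit vectors $\xi_n$ but no nonzero $\mathbb{Z}^2$-invariant vector; then $E(\{0\})=0$, so the probability measures $\mu_n=\langle E(\cdot)\xi_n,\xi_n\rangle$ live on $\mathbb{T}^2\setminus\{0\}$. Writing the almost invariance under two fixed generators $a_1,a_2$ of $\mathbb{Z}^2$ as $\int|\chi(a_i)-1|^2\,d\mu_n\to 0$ and using that $0$ is the only common zero of $\chi\mapsto|\chi(a_1)-1|^2+|\chi(a_2)-1|^2$, one sees that $\mu_n$ concentrates at $0\in\mathbb{T}^2$. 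On the other hand, equivariance of $E$ together with $\|\pi(g)\xi_n-\xi_n\|\to 0$ for $g$ in a generating set gives $\|g_*\mu_n-\mu_n\|_{\mathrm{TV}}\to 0$, i.e. the $\mu_n$ are asymptotically $G$-invariant.

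The heart of the argument is to turn these two facts into a genuine $G$-invariant probability measure on the projective line. Near $0$ the torus $\mathbb{T}^2$ is identified, through the covering $\mathbb{R}^2\to\mathbb{T}^2$, with a neighbourhood of the origin in $\mathbb{R}^2$ on which the $G$-action is the linear one; since $\mu_n$ concentrates at $0$, for large $n$ almost all of its mass may be lifted to $\mathbb{R}^2\setminus\{0\}$ and pushed forward, via the radial projection $\mathbb{R}^2\setminus\{0\}\to\mathbb{P}^1(\mathbb{R})$, to a probability measure $\nu_n$ on the compact space $\mathbb{P}^1(\mathbb{R})$. As the radial projection intertwines the linear $G$-action with the projective one, asymptotic invariance of the $\mu_n$ passes to the $\nu_n$, and any weak-$*$ limit $\nu$ of $(\nu_n)$ is a genuinely $G$-invariant probability measure on $\mathbb{P}^1(\mathbb{R})$. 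This is the desired contradiction: a non-amenable subgroup $G<SL_2(\bz)<SL_2(\mathbb{R})$ is non-elementary (as $SL_2(\bz)$ is virtually free, a non-amenable subgroup even contains a nonabelian free group), and such groups admit no invariant probability measure on $\mathbb{P}^1(\mathbb{R})$, a classical fact from the theory of boundary actions.

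I expect the main obstacle to be precisely the linearization-and-renormalization step: one must control the wraparound coming from the nonlinearity of the torus action away from $0$, verify that no mass escapes under the radial projection (tightness being automatic only because $\mathbb{P}^1(\mathbb{R})$ is compact), and check that the projection commutes with the $G$-action up to errors that vanish as the concentration improves. This quantitative bookkeeping near the origin is exactly the technical core of Burger's approach in \cite{bur}. A convenient simplification worth recording is that, since relative Property (T) of a pair passes up to any intermediate overgroup containing the normal subgroup, it is enough to treat the case where $G$ is a nonabelian free subgroup of $SL_2(\bz)$, for which the absence of an invariant measure on $\mathbb{P}^1(\mathbb{R})$ is transparent from ping-pong dynamics.
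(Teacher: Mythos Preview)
The paper does not supply a proof of this proposition: it merely records the statement as ``a relevant lemma due to Burger'' and cites \cite{bur}. Your sketch is a correct outline of precisely Burger's argument---spectral measures on $\mathbb{T}^2$, concentration at the origin, radial projection to $\mathbb{P}^1(\mathbb{R})$, and the non-existence of an invariant probability measure there for non-amenable subgroups of $SL_2(\mathbb{R})$---so there is nothing to compare beyond noting that you have unpacked what the paper leaves as a citation. The reduction to a free subgroup at the end is valid (relative Property~(T) of $(\Gamma_1,H)$ passes up to $(\Gamma_2,H)$ for $\Gamma_1<\Gamma_2$), though it is not really needed once one knows the Furstenberg-type fact that any non-amenable subgroup of $SL_2(\mathbb{R})$ fixes no probability measure on $\mathbb{P}^1(\mathbb{R})$.
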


\subsection*{von Neumann algebras}\label{vNas}

The following definition extends the one given in \cite{CoJ} and then studied for example in \cite {Cho} and \cite{Jol} for finite von Neumann algebras. The formulation below comes from \cite{cs2}; it is equivalent to the one proposed by Okayasu and Tomatsu in \cite{ot}, as shown for example in \cite{cost}. For the terminology `KMS-implementation' we refer to \cite{cs2}; if $\bM$ is a finite von Neumann algebra with a fixed trace it is equivalent to the usual $L^2$-implementation of a given unital completely positive  and trace preserving map on the Hilbert GNS-space.

\begin{defn}
Let $\bM$ be a $\sigma$-finite von Neumann algebra. We say that $\bM$ has the Haagerup property if for some faithful normal state $\phi$ on $\bM$ there exists a sequence of unital completely positive $\phi$-preserving maps whose $KMS$-implementations on the Hilbert space $L^2(\bM, \phi)$ are compact and converge strongly to identity. 
\end{defn}

As shown in \cite{cs} and \cite{cs2} in fact the existence of such maps does not depend on the choice of $\phi$. As expected, the terminology is consistent with that discussed earlier for groups. Choda showed in \cite{Cho} that a group $G$ has the Haagerup property if and only if the von Neumann algebra $L(G)$ has the Haagerup property.

Propositions \ref{extensions} and \ref{relT} have their von Neumann algebraic counterparts. The crossed product case of the theorem below is  \cite[Proposition 3.1]{Jol} (it remains valid also for arbitrary, not necessarily finite, von Neumann algebraic crossed products by amenable groups, as shown in \cite[Theorem 6.6]{cs2} or \cite[Theorem 5.13]{ot}); the general statement is \cite[Theorem 5.1]{bf}.

\begin{thm}
Suppose that $\bM$ is a finite von Neumann algebra with a von Neumann subalgebra $\bN$. If $\bN$ has the Haagerup property and the inclusion $\bN< \bM$ is amenable in the sense of \cite{popa_correspondence}, then $\bM$ has the  Haagerup property. In particular if $\bN$ is Haagerup and $G$ is an amenable group, then $\bN\rtimes G$ is Haagerup.
\end{thm}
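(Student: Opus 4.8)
The plan is to translate both hypotheses into the language of Connes correspondences (bimodules), where relative amenability and the Haagerup property admit clean and mutually compatible characterisations, and then to glue them together by an induction functor. Write $\tau$ for a fixed faithful normal trace on $\bM$, let $E\colon \bM\to\bN$ be the associated trace-preserving conditional expectation, and let $e_{\bN}$ be the Jones projection on $L^2(\bM)$. I would rely on three facts. First, by Popa's correspondence picture (\cite{popa_correspondence}) the amenability of the inclusion $\bN<\bM$ is equivalent to the weak containment ${}_{\bM}L^2(\bM)_{\bM}\prec {}_{\bM}\big(L^2(\bM)\otimes_{\bN}L^2(\bM)\big)_{\bM}$, the right-hand bimodule being the standard form of the basic construction $\langle \bM,e_{\bN}\rangle$. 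Second, by the Anantharaman-Delaroche--Jolissaint characterisation (\cite{Jol}) the Haagerup property of $\bN$ is equivalent to the existence of a \emph{mixing} (equivalently $c_0$, or compact) $\bN$-$\bN$ correspondence $\mathcal{H}$ with ${}_{\bN}L^2(\bN)_{\bN}\prec\mathcal{H}$. Third, the same characterisation applied to $\bM$ reduces the conclusion to producing a mixing $\bM$-$\bM$ correspondence that weakly contains ${}_{\bM}L^2(\bM)_{\bM}$.

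The bridge between the two algebras is the induction functor. Given an $\bN$-$\bN$ correspondence $\mathcal{K}$, I would form the $\bM$-$\bM$ correspondence $\mathrm{Ind}(\mathcal{K}):=L^2(\bM)\otimes_{\bN}\mathcal{K}\otimes_{\bN}L^2(\bM)$, which is well defined precisely because $\bM$ is finite and carries the trace-preserving expectation $E$. Induction is monotone for weak containment, so from ${}_{\bN}L^2(\bN)_{\bN}\prec\mathcal{H}$ I obtain $\mathrm{Ind}(L^2(\bN))\prec\mathrm{Ind}(\mathcal{H})$; and since $L^2(\bM)\otimes_{\bN}L^2(\bN)\otimes_{\bN}L^2(\bM)\cong L^2(\bM)\otimes_{\bN}L^2(\bM)$, this reads $L^2(\bM)\otimes_{\bN}L^2(\bM)\prec\mathrm{Ind}(\mathcal{H})$. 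Chaining this with the amenability input and using transitivity of weak containment yields ${}_{\bM}L^2(\bM)_{\bM}\prec L^2(\bM)\otimes_{\bN}L^2(\bM)\prec\mathrm{Ind}(\mathcal{H})$, so it only remains to see that $\mathrm{Ind}(\mathcal{H})$ is itself mixing over $\bM$.

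This last point is where I expect the real work to lie. Establishing that $\mathrm{Ind}$ carries a mixing $\bN$-correspondence to a mixing $\bM$-correspondence is delicate because $L^2(\bM)$ is in general infinite-dimensional over $\bN$, so compactness does not survive tensoring for free; one has to unwind the definition of the mixing property in terms of matrix coefficients vanishing along unitaries tending weakly to $0$, and check that the $\bN$-modular structure of $\mathcal{H}$ together with the expectation $E$ controls the coefficients of $\mathrm{Ind}(\mathcal{H})$. Concretely I would verify it on the dense subspace spanned by simple tensors $\hat{x}\otimes\xi\otimes\hat{y}$ with $x,y\in\bM$ and $\xi\in\mathcal{H}$, reducing the estimate to the mixing property of $\mathcal{H}$ over $\bN$ after applying $E$ to absorb the outer legs.

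Finally, the special case of a crossed product $\bN\rtimes G$ by an amenable group $G$ follows once one notes that such an inclusion $\bN<\bN\rtimes G$ is amenable in Popa's sense: the canonical expectation onto $\bN$ exists, and an invariant mean on $G$ supplies the $\bM$-central state on $\langle\bM,e_{\bN}\rangle$, equivalently the required weak containment. As an alternative to the correspondence argument one can run a more hands-on version: use the F\o lner-type net of almost central finite-rank elements of $\langle\bM,e_{\bN}\rangle$ furnished by amenability to average the Haagerup maps of $\bN$ into trace-preserving unital completely positive maps on $\bM$ whose $L^2$-implementations are products of a compact and a bounded operator, hence compact, and converge strongly to the identity; the verification that these maps are genuinely unital and trace-preserving in the limit is exactly where approximate centrality of the net is used.
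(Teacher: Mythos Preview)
The paper does not supply its own proof of this theorem: it is recorded as a background fact, with the crossed-product case attributed to \cite[Proposition 3.1]{Jol} and the general statement to \cite[Theorem 5.1]{bf}. So there is no ``paper's proof'' to compare against line by line; what matters is whether your argument stands on its own.

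Your correspondence/induction strategy is a legitimate and by now standard route to results of this type, and the chain ${}_{\bM}L^2(\bM)_{\bM}\prec L^2(\bM)\otimes_{\bN}L^2(\bM)\prec \mathrm{Ind}(\mathcal{H})$ is set up correctly. The step you flag as ``the real work'' --- that induction sends a mixing $\bN$-bimodule to a mixing $\bM$-bimodule --- is also correct, and in fact less delicate than you suggest. On simple tensors one has
\[
\langle u\,(\hat a\otimes\xi\otimes\hat b),\,\hat c\otimes\eta\otimes\hat d\rangle
=\langle E(c^{*}ua)\,\xi\,E(bd^{*}),\,\eta\rangle_{\mathcal{H}},
\]
and the crucial observation is simply that if $(u_n)$ is a bounded net in $\bM$ converging weakly to $0$, then $E(c^{*}u_na)$ is a \emph{bounded} net in $\bN$ which also converges weakly to $0$ (normality of $E$). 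Mixing of $\mathcal{H}$ as an $\bN$-bimodule then gives the desired vanishing. Your worry that ``$L^2(\bM)$ is infinite-dimensional over $\bN$'' is a red herring here: compactness is not what is being tensored, the mixing property is tested vector by vector, and the outer legs $a,b,c,d$ are fixed. The symmetric argument handles the right action.

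Two small caveats. First, make sure you are using a formulation of ``mixing bimodule'' that is actually known to characterise the Haagerup property for finite von Neumann algebras (e.g.\ the one in \cite{bf} or in Popa's correspondence notes); you cite \cite{Jol} for this equivalence, but that paper works with approximating cp maps rather than bimodules, so you should point to a reference where the translation is made explicit. Second, in your final paragraph the ``alternative hands-on version'' is really the approach of \cite{Jol} and \cite{bf}; it is fine to mention it, but it is a separate proof, not a reformulation of the bimodule argument.
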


The following result follows directly from the definition of relatively rigid von Neumann subalgebras (\cite[Section 4]{popa}). It is worth noting that until recently it was the only known technique of showing that a von Neumann algebra is not Haagerup, but in \cite{ci} Chifan and Ioana, using earlier results of de Cornulier, exhibited an example of a non-Haagerup von Neumann algebra with no relatively rigid diffuse subalgebras.

\begin{prop}\label{prop:rigidincl}
Suppose that $\bM$ is a finite von Neumann algebra with a diffuse von Neumann subalgebra $\bN$ such that the inclusion $\bN <\bM$ is rigid. Then $\bM$ does not have the Haagerup property. 
\end{prop}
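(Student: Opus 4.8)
The plan is to argue by contradiction, using the completely positive map characterisation of relative rigidity from \cite[Section 4]{popa}, which is tailored to match the definition of the Haagerup property. Recall that the inclusion $\bN < \bM$ (with $\bM$ carrying a faithful normal trace $\tau$) being rigid means precisely that for every $\varepsilon > 0$ there exist $\delta > 0$ and a finite set $F \subseteq \bM$ such that every unital, trace-preserving, completely positive map $\phi : \bM \to \bM$ satisfying $\|\phi(x) - x\|_2 \le \delta$ for all $x \in F$ automatically satisfies $\|\phi(a) - a\|_2 \le \varepsilon$ for all $a$ in the unit ball of $\bN$. In other words, $\|\cdot\|_2$-pointwise closeness to the identity on a fixed finite set forces uniform closeness on the whole unit ball of $\bN$; this is the exact analogue of the group-level statement that pointwise convergence of positive-definite functions to $1$ upgrades to uniform convergence on the rigid subgroup.

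So suppose, towards a contradiction, that $\bM$ has the Haagerup property. Then there is a sequence $(\phi_n)$ of unital, trace-preserving, completely positive maps on $\bM$ whose $L^2$-implementations $T_n$ on $L^2(\bM,\tau)$ are compact and converge strongly to the identity; in particular $\|\phi_n(x) - x\|_2 \to 0$ for every $x \in \bM$. Fixing $\varepsilon = 1/2$ and taking the corresponding $\delta$ and finite set $F$ from the rigidity of $\bN < \bM$, strong convergence evaluated on the finite set $F$ lets me pick a single index $n$ with $\|\phi_n(x) - x\|_2 \le \delta$ for all $x \in F$; rigidity then yields $\|\phi_n(a) - a\|_2 \le 1/2$ for every $a$ in the unit ball of $\bN$.

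Finally I would exploit diffuseness. Since $\bN$ is diffuse it contains a copy of $L(\bz)$ generated by a Haar unitary $u$, so the powers $u^k$ ($k \in \bz$) lie in the unit ball of $\bN$ and their images $\xi_k \in L^2(\bM,\tau)$ form an orthonormal system; in particular $\xi_k \to 0$ weakly as $k \to \infty$. By the previous paragraph $\|T_n \xi_k - \xi_k\| = \|\phi_n(u^k) - u^k\|_2 \le 1/2$, whence $\|T_n \xi_k\| \ge 1/2$ for all $k$. But $T_n$ is compact and $\xi_k \to 0$ weakly, so $\|T_n \xi_k\| \to 0$, a contradiction. Hence $\bM$ cannot be Haagerup. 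The only point to state carefully, rather than a genuine obstacle, is the interplay of quantifiers: the finite set $F$ must be produced first from rigidity (depending only on $\varepsilon$), after which the pointwise/strong convergence built into the Haagerup property supplies a single good map $\phi_n$; everything then reduces to the elementary fact that a compact operator sends weakly null sequences to norm-null ones.
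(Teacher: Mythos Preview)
Your argument is correct and is precisely the unpacking of what the paper means when it says the result ``follows directly from the definition of relatively rigid von Neumann subalgebras'' in \cite[Section~4]{popa}: the paper gives no further proof, and your contradiction via a Haar unitary in the diffuse subalgebra $\bN$ (producing a weakly null orthonormal sequence on which the compact $T_n$ cannot stay uniformly bounded below) is exactly the standard way to make this direct implication explicit.
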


The following result in the finite case follows from Theorem 2.3 (ii) in \cite{Jol}.

\begin{lem}
Suppose $\bM$ is a von Neumann algebra equipped with a faithful normal state $\phi$. Let $(\bN_n)_{n \in \bn}$ be an increasing  sequence of $\phi$-expected von Neumann subalgebras of $\bM$ with the Haagerup property. Then the von Neumann algebra $\bN:=(\bigcup_{n \in \bn} \bN_n)''$ is a $\phi$-expected Haagerup von Neumann subalgebra of $\bM$. 
\end{lem}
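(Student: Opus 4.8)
The plan is to prove the two assertions—that $\bN$ is $\phi$-expected and that it has the Haagerup property—essentially separately, using the conditional expectations onto the $\bN_n$ as a bridge between $\bN$ and its pieces.

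First I would settle the $\phi$-expectedness via the modular characterisation. By Takesaki's theorem the existence of a $\phi$-preserving normal conditional expectation onto a von Neumann subalgebra is equivalent to that subalgebra being globally invariant under the modular automorphism group $(\sigma^\phi_t)_{t\in\mathbb{R}}$. Each $\bN_n$ is $\phi$-expected, hence $\sigma^\phi$-invariant; consequently $\bigcup_n \bN_n$ is $\sigma^\phi$-invariant, and passing to the $\sigma$-weak closure shows that $\bN$ is $\sigma^\phi$-invariant as well. Applying Takesaki's theorem again produces the $\phi$-preserving conditional expectation onto $\bN$, and for each $n$ the $\phi$-preserving conditional expectation $E_n\colon \bN \to \bN_n$. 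The KMS-implementation of $E_n$ on $L^2(\bN,\phi)$ is the orthogonal projection $e_n$ onto the closed subspace $L^2(\bN_n,\phi)\subseteq L^2(\bN,\phi)$; since $\bigcup_n \bN_n$ is $\sigma$-weakly dense in $\bN$, the union $\bigcup_n L^2(\bN_n,\phi)$ is dense in $L^2(\bN,\phi)$, so the increasing projections $e_n$ converge strongly to the identity.

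Next I would transport the Haagerup maps from the pieces to $\bN$. Fix $n$; the Haagerup property of $\bN_n$ provides unital completely positive $\phi$-preserving maps $(\psi_{n,m})_{m\in\bn}$ on $\bN_n$ whose KMS-implementations $T_{n,m}$ on $L^2(\bN_n,\phi)$ are compact and converge strongly to the identity as $m\to\infty$. Set $\Phi_{n,m}:=\psi_{n,m}\circ E_n$, a unital completely positive $\phi$-preserving map on $\bN$ with range inside $\bN_n$. By functoriality of the KMS-implementation for $\phi$-preserving maps, the implementation of $\Phi_{n,m}$ is $T_{n,m}e_n$ (with $T_{n,m}$ understood as an operator on the subspace $L^2(\bN_n,\phi)$), and since it factors through the compact operator $T_{n,m}$ on that subspace it is itself compact on $L^2(\bN,\phi)$. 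It then remains to extract by a diagonal argument a single sequence converging strongly to the identity. Using separability of $L^2(\bN,\phi)$, fix a dense sequence $(\xi_j)_{j\in\bn}$. For each $k$ first choose $n(k)$ so large that $\|e_{n(k)}\xi_j-\xi_j\|<\tfrac1k$ for all $j\le k$, and then choose $m(k)$ so large that $\|T_{n(k),m(k)}e_{n(k)}\xi_j-e_{n(k)}\xi_j\|<\tfrac1k$ for all $j\le k$. The triangle inequality gives $\|T_{n(k),m(k)}e_{n(k)}\xi_j-\xi_j\|<\tfrac2k$ for $j\le k$; as the implementations are contractions, strong convergence on the dense set upgrades to strong convergence to the identity on all of $L^2(\bN,\phi)$. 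This exhibits the Haagerup property of $\bN$ with respect to $\phi|_\bN$, and independence of the choice of state (\cite{cs}, \cite{cs2}) completes the proof.

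The main obstacle I anticipate is the careful bookkeeping with the KMS-implementations in the general $\sigma$-finite setting: one must confirm that the implementation of a $\phi$-preserving conditional expectation is exactly the corresponding orthogonal projection, that the implementation operation is functorial under composition of $\phi$-preserving unital completely positive maps, and that composing a compact implementation on a subspace with the projection onto that subspace yields a compact operator on all of $L^2(\bN,\phi)$. Once these structural facts are in place the double (diagonal) approximation is routine, and in the finite case it reduces to the statement recorded from \cite{Jol}.
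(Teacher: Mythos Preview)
Your proposal is correct and follows essentially the same route as the paper: Takesaki's theorem for $\phi$-expectedness via modular invariance, then approximating maps of the form $\psi_{n,m}\circ E_n$ whose KMS-implementations are compact because they factor through the compact $T_{n,m}$, with strong convergence to the identity coming from the strong convergence of the projections $e_n$. The paper leaves the extraction step as ``the standard argument using finite subsets of $\bN$ and $\epsilon>0$'' (producing a net), whereas you invoke separability of $L^2(\bN,\phi)$ to run a diagonal argument and obtain an honest sequence; this is a cosmetic difference, and your more explicit bookkeeping of the KMS-implementations is exactly the content the paper's sketch is pointing to.
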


\begin{proof}
Takesaki's theorem on existence of $\phi$-preserving conditional expectations	(\cite{Takesaki2}) implies that the modular automorphism group leaves each $\bN_n$ globally invariant; the same is then true for $\bN$, so using Takesaki's theorem again we deduce that $\bN$ is $\phi$-expected. Denote the respective $\phi$-preserving conditional expectations by $\mathbb{E}_n: \bM\to \bN_n$,  $\mathbb{E}: \bM\to \bN$. Then the sequence $(\mathbb{E}_n)_{n \in \bN}$ converges pointwise strongly to $\mathbb{E}$; moreover we can view the Hilbert spaces $L^2(\bN_n, \phi)$ as subspaces of $L^2(\bN, \phi)$ and in this picture the KMS-implementations  of $\mathbb{E}_n$ converge strongly to identity on $L^2(\bN, \phi)$ (see for example \cite[Section 2]{JungeXu}). Denote the approximating maps on each of the $\bN_n$ by $(\Phi_k^{(n)})_{k=1}^\infty$; the standard argument using finite subsets of $\bN$ and $\epsilon>0$ allows us to construct an approximating net on $\bN$ out of the maps of the form $\Phi_k^{(n)} \circ \mathbb{E}_n$. 
\end{proof}

Then we have the following corollary (once again arguing via the Kuratowski-Zorn Lemma).

\begin{cor}
Let $\bM$ be a von Neumann algebra, let $\phi$ be a normal state on $\bM$ and assume that $\bN$ is a $\phi$-expected von Neumann subalgebra of $\bM$ with the Haagerup property. There exists a maximal $\phi$-expected von Neumann subalgebra of $\bM$ containing $\bN$ which has the Haagerup property.	In particular if $\bM$ is a finite von Neumann algebra with a Haagerup von Neumann subalgebra $\bN$ then there exists a maximal Haagerup von Neumann subalgebra of $\bM$ containing $\bN$.
\end{cor}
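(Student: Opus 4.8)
The plan is to invoke the Kuratowski--Zorn Lemma for the partially ordered set $\mathcal{F}$ consisting of all $\phi$-expected von Neumann subalgebras of $\bM$ which contain $\bN$ and have the Haagerup property, ordered by inclusion. This set is nonempty, as $\bN \in \mathcal{F}$, and a maximal element of $\mathcal{F}$ is exactly the object we seek. The only point requiring work is to check that every chain in $\mathcal{F}$ admits an upper bound, and the natural candidate for an upper bound of a chain $(\bN_i)_{i\in I}$ is the von Neumann algebra $\bN_\infty := \bigl(\bigcup_{i\in I} \bN_i\bigr)''$ generated by the chain.

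The heart of the matter is therefore to show that $\bN_\infty$ again belongs to $\mathcal{F}$, i.e.\ that it is $\phi$-expected and has the Haagerup property. The preceding Lemma establishes exactly this statement for an increasing \emph{sequence} of subalgebras, so the remaining issue---and the main obstacle---is to pass from a general chain to a sequence. I would resolve this by working on the Hilbert space $L^2(\bM,\phi)$: the closed subspaces $L^2(\bN_i,\phi)$ form an increasing family whose union is dense in $L^2(\bN_\infty,\phi)$. In the separable setting relevant to our applications (in particular whenever $\bM$ has separable predual, which covers all the von Neumann algebras of countable groups and their actions considered here) this Hilbert space is separable, so one may fix a countable dense subset and, approximating each of its elements from some member of the chain, extract an increasing cofinal sequence $\bN_{i_1}\subseteq \bN_{i_2}\subseteq\cdots$ with $\bigl(\bigcup_n \bN_{i_n}\bigr)''=\bN_\infty$. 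Applying the preceding Lemma to this sequence then gives $\bN_\infty\in\mathcal{F}$. (Alternatively, one checks that the construction of the approximating net in the proof of that Lemma goes through verbatim for an increasing net, using only the density of $\bigcup_i L^2(\bN_i,\phi)$ in $L^2(\bN_\infty,\phi)$; this variant avoids any separability hypothesis.)

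With chains thus shown to have upper bounds, Zorn's Lemma yields a maximal $\phi$-expected Haagerup subalgebra of $\bM$ containing $\bN$, proving the first assertion. For the ``in particular'' statement, suppose $\bM$ is finite with faithful normal trace $\tau$ and take $\phi=\tau$. Every von Neumann subalgebra of a finite von Neumann algebra is the range of a (unique) $\tau$-preserving conditional expectation, so the condition of being $\phi$-expected is automatic; consequently the notions of maximal $\phi$-expected Haagerup subalgebra and of maximal Haagerup subalgebra coincide, and the claim follows at once from the first part.
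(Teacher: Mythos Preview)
Your proposal is correct and follows exactly the approach the paper intends: the paper's entire ``proof'' is the parenthetical remark ``once again arguing via the Kuratowski--Zorn Lemma'' preceding the corollary, relying implicitly on the lemma about increasing sequences just established. You have fleshed this out faithfully, and in fact more carefully than the paper itself: the paper does not comment on the passage from arbitrary chains to countable sequences, whereas you correctly flag this and offer two legitimate resolutions (extracting a cofinal sequence via separability of $L^2(\bM,\phi)$, or observing that the lemma's proof goes through for nets). Either route is fine, and your treatment of the ``in particular'' clause via the automatic existence of trace-preserving conditional expectations in the finite case is exactly right.
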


The following is Theorem 3.12 of \cite{ot}.

\begin{prop}
Let a von Neumann algebra $\bM$ be represented on a Hilbert space $\bH$. Then $\bM$ has the Haagerup property if and only if $\bM'$ has the Haagerup property. 	
\end{prop}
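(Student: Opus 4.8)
The plan is to reduce the statement to the case where $\bM$ acts in standard form, where the Tomita--Takesaki modular conjugation furnishes a canonical identification of $\bM'$ with (an opposite of) $\bM$, and then to recover an arbitrary representation from the standard one using permanence properties of the Haagerup property. Since, as recalled after the definition, the Haagerup property of a $\sigma$-finite von Neumann algebra depends neither on the chosen faithful normal state nor on the representation, it is an intrinsic invariant; the content of the theorem is thus that, for every representation, the intrinsic Haagerup status of the abstract von Neumann algebra $\bM'$ coincides with that of $\bM$. I fix a faithful normal state $\phi$ and realise $\bM$ in its standard form on $L^2(\bM,\phi)$, with cyclic and separating vector $\xi_\phi$ and modular conjugation $J$, so that $\bM' = J\bM J$.

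For the standard-form case, suppose $(\Phi_n)$ is a sequence of unital, completely positive, $\phi$-preserving maps on $\bM$ whose KMS-implementations $T_n$ on $L^2(\bM,\phi)$ are compact and converge strongly to the identity. I define maps on $\bM'$ by $\Phi_n'(JxJ) := J\Phi_n(x)J$ for $x \in \bM$. Because $\mathrm{Ad}\,J$ is an anti-isomorphism and $J\xi_\phi = \xi_\phi$, a direct computation shows that each $\Phi_n'$ is unital, completely positive and preserves the faithful normal state $\phi' = \langle\,\cdot\,\xi_\phi, \xi_\phi\rangle|_{\bM'}$. The decisive point is that the KMS-implementation of $\Phi_n'$ on $L^2(\bM',\phi') = L^2(\bM,\phi)$ is precisely $J T_n J$; this is obtained by transporting the defining KMS-symmetry of $T_n$ through $\mathrm{Ad}\,J$, using that $J$ intertwines the modular data of $\phi$ on $\bM$ with those of $\phi'$ on $\bM'$. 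As $J$ is an anti-unitary involution, $J T_n J$ is compact iff $T_n$ is, and converges strongly to $1$ iff $T_n$ does. Thus $(\Phi_n')$ witnesses the Haagerup property of $\bM'$, and the symmetric argument gives the converse.

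To pass to an arbitrary faithful normal representation on $\bH$, I use that it is unitarily equivalent to the reduction by a projection $p$ of an amplification of the standard form; correspondingly $\bM'$ acting on $\bH$ is isomorphic to $p(\bM'_{\mathrm{std}} \bar\otimes B(K))p$ for a suitable Hilbert space $K$, with $p$ of full central support, where $\bM'_{\mathrm{std}} = J\bM J$ denotes the commutant in standard form. Invoking the standard permanence properties of the von Neumann algebraic Haagerup property -- stability under tensoring with an injective (hence Haagerup) algebra such as $B(K)$, and invariance under stable isomorphism, so that a von Neumann algebra and its corners of full central support share the same Haagerup status -- I conclude that $\bM'$ on $\bH$ has the Haagerup property if and only if $\bM'_{\mathrm{std}}$ does, which by the standard-form case holds if and only if $\bM$ does.

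The technical heart, and the step I expect to require the most care, is the identity in the second paragraph: verifying that $\mathrm{Ad}\,J$ carries the KMS-implementation of $\Phi_n$ to that of $\Phi_n'$ demands a careful treatment of the analytic (KMS) symmetry of the implementing operators within Tomita--Takesaki theory, valid in the possibly type III setting. A secondary, more routine, point is the reduction argument and the associated permanence properties in the non-tracial case.
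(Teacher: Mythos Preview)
The paper does not give its own proof; it simply records the statement as Theorem~3.12 of Okayasu--Tomatsu \cite{ot}. Your outline is the natural argument and is essentially how one would prove the result: reduce to standard form, where the modular conjugation $J$ implements a conjugate-linear $*$-isomorphism $\bM\to\bM'$, transport the approximating maps through $\mathrm{Ad}\,J$, and then handle a general faithful representation via the amplification--reduction theorem together with Morita-type permanence of the Haagerup property. The two technical points you flag are exactly the right ones. For the first, $\mathrm{Ad}\,J$ intertwines the modular automorphism groups of $(\bM,\phi)$ and $(\bM',\phi')$, from which the transport of the KMS-symmetric implementation follows; for the second, the required permanence (stability under tensoring with $B(K)$ and under passing to corners of full central support) is established in \cite{ot} and \cite{cost} in the general setting. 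One caveat worth noting: by working only with faithful normal states you are implicitly assuming that $\bM'$ is $\sigma$-finite as well, which can fail even when $\bM$ is (take $\bM=\mathbb{C}$ on a non-separable space); the fully general statement uses the weight formulation of the Haagerup property, as in \cite{ot}.
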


The idea of the proof of the next proposition was kindly communicated to us by Yuhei Suzuki; in fact exactly the same proof in the context of injectivity may be found in \cite[Proposition 6.8]{Connes}.
\begin{prop}
Let $\bN < \bM$ be an inclusion of von Neumann algebras, with $\bN$ having the Haagerup property. Assume that $\mathcal{G}$ is a group of unitaries contained in the normaliser $\mathcal{N}_{\bM}(\bN)$. If $\mathcal{G}$ is amenable as a discrete group, then the von Neumann algebra generated by $\bN$ and $\mathcal{G}$ has the Haagerup property. In particular if $u \in \mathcal{N}_{\bM}(\bN)$ then $(\bN	\cup \{u\})''$ has the Haagerup property.
\end{prop}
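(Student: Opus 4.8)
The plan is to realise $\bP:=(\bN\cup\mathcal{G})''$ as a \emph{direct summand} of a genuine crossed product of $\bN$ by $\mathcal{G}$, and then to invoke the crossed-product permanence of the Haagerup property. First I would record that, since every $u\in\mathcal{G}$ lies in $\mathcal{N}_{\bM}(\bN)$, the formula $\alpha_g=\operatorname{Ad}(u_g)|_{\bN}$ defines automorphisms of $\bN$, and $g\mapsto\alpha_g$ is a homomorphism precisely because $\mathcal{G}$ is a group of unitaries, so that $u_{gh}=u_gu_h$. This yields a $W^*$-dynamical system $(\bN,\mathcal{G},\alpha)$; I form the crossed product $\tilde\bP:=\bN\rtimes_\alpha\mathcal{G}$. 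As $\bN$ has the Haagerup property and $\mathcal{G}$ is amenable, the permanence result quoted above (that $\bN\rtimes G$ is Haagerup whenever $\bN$ is Haagerup and $G$ is amenable, valid in the present generality by \cite{cs2} and \cite{ot}) shows that $\tilde\bP$ has the Haagerup property.

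Next I would produce a normal surjective $*$-homomorphism $\pi\colon\tilde\bP\to\bP$ sending $\bN$ identically onto $\bN$ and the canonical unitaries $\lambda_g$ to $u_g$. On the algebraic crossed product the assignment $\sum_g x_g\lambda_g\mapsto\sum_g x_g u_g$ is manifestly a $*$-homomorphism; the content lies in its normal extension to $\tilde\bP$, and this is exactly where amenability of $\mathcal{G}$ enters. For amenable $\mathcal{G}$ the full and reduced crossed products coincide, so the covariant pair consisting of $\bN\hookrightarrow\bM$ and $(u_g)_g$ integrates to a normal representation of $\bN\rtimes_\alpha\mathcal{G}$; its image is a von Neumann algebra containing $\bN$ and all the $u_g$, hence equals $\bP$. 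I expect this to be the main obstacle: without amenability the covariant representation need not be weakly contained in the regular one, $\pi$ need not exist, and the conclusion can genuinely fail. This is consistent with the fact that the Haagerup property does \emph{not} pass to arbitrary quotient von Neumann algebras, so some special structure of the quotient is indispensable.

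That special structure is supplied by the last step: the kernel of a normal $*$-homomorphism of von Neumann algebras is of the form $\tilde\bP z$ for a central projection $z\in Z(\tilde\bP)$, so $\pi$ induces an isomorphism $\bP\cong\tilde\bP(1-z)$, exhibiting $\bP$ as a direct summand of $\tilde\bP$. Since the Haagerup property is inherited by direct summands, equivalently by compressions by central projections — one restricts the approximating maps to the corner, where compactness and strong convergence of the KMS-implementations persist on the subspace $(1-z)L^2(\tilde\bP)$, after the routine normalisation rendering the maps unital and state-preserving — it follows that $\bP$ has the Haagerup property. The final assertion is the special case $\mathcal{G}=\langle u\rangle$, which is abelian and hence amenable. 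I would also note the alternative route, closer to the injectivity argument of \cite[Proposition 6.8]{Connes}: average the approximating maps of $\bN$ over $\mathcal{G}$ to make them $\alpha$-equivariant (controlling the $\bN$-direction) and combine them with the completely positive multipliers associated with positive-definite $c_0$-functions on the amenable, hence Haagerup, group $\mathcal{G}$ (controlling the $\mathcal{G}$-direction); well-definedness of these multipliers on $\bP$ again rests on amenability, so both approaches carry the same essential content.
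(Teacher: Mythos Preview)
Your main argument has a genuine gap at the step where you produce a \emph{normal} surjection $\pi\colon \bN\rtimes_\alpha\mathcal{G}\to\bP$. The coincidence of full and reduced crossed products for amenable $\mathcal{G}$ is a $\C^*$-statement: it tells you that the covariant pair $(\mathrm{id}_\bN,\,g\mapsto u_g)$ integrates to a $*$-representation of the reduced $\C^*$-crossed product. It does \emph{not} follow that this representation extends normally to the von Neumann crossed product, which is the weak closure of the reduced $\C^*$-algebra in its \emph{regular} representation on $H\otimes\ell^2(\mathcal{G})$. A concrete counterexample already appears with $\bN=\mathbb{C}$ and $\mathcal{G}=\langle u\rangle\cong\mathbb{Z}$ for a unitary $u\in\bM$ whose spectral measure has an atom: then $\bN\rtimes\mathcal{G}\cong L(\mathbb{Z})\cong L^\infty(\mathbb{T},\text{Lebesgue})$ is diffuse, while $\bP=W^*(u)$ contains a minimal projection, so no normal surjection $L(\mathbb{Z})\to\bP$ exists, and $\bP$ is certainly not a direct summand of $L(\mathbb{Z})$. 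Your ``alternative route'' via averaging may be salvageable, but as written it is only a sketch and faces the same difficulty: one must explain why the group-multiplier maps descend from the regular representation to the given one.

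The paper avoids all of this by passing to commutants, which is the idea credited to Suzuki and modelled on \cite[Proposition 6.8]{Connes}. Since $\mathcal{G}$ normalises $\bN$, conjugation gives an action $\alpha$ of $\mathcal{G}$ on $\bN'$, and one has exactly $(\bN\cup\mathcal{G})'=\Fix_\alpha(\bN')$. Now $\bN'$ is Haagerup because $\bN$ is (Okayasu--Tomatsu), and the fixed-point algebra of an amenable group acting on a Haagerup von Neumann algebra is again Haagerup by \cite[Corollary 5.13]{ot}. Hence $(\bN\cup\mathcal{G})'$ is Haagerup, and one more application of the commutant criterion gives the Haagerup property for $(\bN\cup\mathcal{G})''$. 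This route never needs to compare $\bP$ with a crossed product at all.
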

\begin{proof}
Consider the natural action $\alpha$ of $\mathcal{G}$ on $\bN'$. We then have $(\bN	\cup \mathcal{G})' = \textup{Fix}_{\alpha} (\bN')$. By the previous proposition $\bN'$ is Haagerup. Applying \cite[Corollary 5.13]{ot} we see that $(\bN	\cup  \mathcal{G})'$ is Haagerup and another application of the previous proposition ends the proof.
\end{proof}

\begin{cor}
Maximal Haagerup subalgebras are singular (in other words, if $\bN$ is a maximal Haagerup von Neumann subalgebra of a von Neumann algebra $\bM$ then the normaliser algebra $\mathcal{N}_\bM (\bN)''$ is equal to $\bN$).	
\end{cor}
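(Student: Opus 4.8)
The plan is to derive this as an immediate consequence of the preceding proposition. Suppose $\bN$ is a maximal Haagerup von Neumann subalgebra of $\bM$. I want to show that the normaliser algebra $\mathcal{N}_\bM(\bN)''$ equals $\bN$. Since $\bN \subseteq \mathcal{N}_\bM(\bN)''$ always holds (every element of $\bN$ normalises $\bN$, being unitary on a dense set and extending by linearity), the content is the reverse inclusion. The natural strategy is to show that if the normaliser were strictly larger, one could produce a Haagerup algebra strictly between $\bN$ and $\bM$, contradicting maximality.

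Concretely, I would argue as follows. Let $u \in \mathcal{N}_\bM(\bN)$ be any normalising unitary. By the final sentence of the preceding proposition, the von Neumann algebra $(\bN \cup \{u\})''$ has the Haagerup property, taking $\mathcal{G} = \{1, u, u^*, \dots\}$ to be the (amenable, in fact singly generated abelian) group generated by $u$. This algebra contains $\bN$ and has the Haagerup property, so by maximality of $\bN$ it must coincide with $\bN$; hence $u \in \bN$. Since the normaliser group $\mathcal{N}_\bM(\bN)$ is generated by such unitaries, this gives $\mathcal{N}_\bM(\bN) \subseteq \bN$, and therefore $\mathcal{N}_\bM(\bN)'' \subseteq \bN$, which combined with the trivial inclusion yields equality.

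I expect the argument to be essentially routine once the preceding proposition is in place; the only point requiring mild care is the applicability of the proposition, namely that a single normalising unitary $u$ together with $\bN$ generates a Haagerup algebra. This is exactly what the ``In particular'' clause of the previous proposition supplies, so no separate amenability discussion is needed. The main (and only real) obstacle is therefore conceptual bookkeeping: ensuring that maximality is invoked correctly, i.e. that $(\bN \cup \{u\})''$ is genuinely a candidate competitor, being both Haagerup and containing $\bN$, so that maximality forces collapse rather than merely containment.

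Thus the proof reduces to two lines: apply the previous proposition to a single normalising unitary, and invoke maximality to conclude that each such unitary already lies in $\bN$.
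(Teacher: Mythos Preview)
Your proposal is correct and matches the paper's approach: the paper states the corollary without proof, treating it as immediate from the preceding proposition, and your argument is exactly the intended one-line deduction (apply the ``In particular'' clause to each normalising unitary and invoke maximality).
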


\section{Maximal Haagerup subgroups}

In this section we discuss various   abstract and concrete results concerning maximal Haagerup subgroups.

\subsection*{Maximal Haagerup subgroups in (Cartesian, free, wreath) products}

We begin by discussing the behaviour  of maximal Haagerup subgroups with respect to certain general constructions.

Consider first the case of the Cartesian product. Given a subgroup $K\subset G_1 \times G_2$ define the first (respectively, second) support subgroup of $K$ as $K_1=\{x \in G_1:\exists_{y \in G_2}\, (x,y) \in K\}$ (respectively, $K_2=\{y \in G_2:\exists_{x \in G_1}\, (x,y) \in K\}$). Then obviously $K\subset K_1 \times K_2$.

\begin{prop}\label{mHapCartesian}
	Let $G_1, G_2$ be groups and suppose that $H_i$ are  maximal Haagerup subgroups in $G_i$ for $i=1, 2$, with at least one of them nontrivial.  Then $H_1\times H_2$ is a maximal Haagerup subgroup inside $G_1\times G_2$. 
\end{prop}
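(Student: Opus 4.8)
The plan is to check the two halves of the statement separately: that $H_1\times H_2$ is Haagerup, and that it is maximal among Haagerup subgroups of $G_1\times G_2$. The first half is routine, since the Haagerup property is stable under direct products: if $(\varphi_n^{(i)})_n$ are sequences of positive-definite functions in $c_0(H_i)$ tending to $1$ pointwise, then $\psi_n(h_1,h_2):=\varphi_n^{(1)}(h_1)\varphi_n^{(2)}(h_2)$ are positive-definite on $H_1\times H_2$, tend to $1$ pointwise, and lie in $c_0(H_1\times H_2)$ because $\{|\psi_n|\ge\epsilon\}\subseteq\{|\varphi_n^{(1)}|\ge\epsilon\}\times\{|\varphi_n^{(2)}|\ge\epsilon\}$ is a product of two finite sets.

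For maximality I would take an arbitrary Haagerup subgroup $K$ with $H_1\times H_2\le K\le G_1\times G_2$ and aim to prove $K=H_1\times H_2$. Since $K\subseteq K_1\times K_2$ for the support subgroups $K_1,K_2$, and $H_i\le K_i$, it is enough to show $K_1=H_1$ and $K_2=H_2$; by symmetry I concentrate on the first coordinate. The first step is to examine the slice $N_1:=\{x\in G_1:(x,e)\in K\}$. As $N_1$ is isomorphic to the subgroup $K\cap(G_1\times\{e\})$ of the Haagerup group $K$, it is itself Haagerup, and it contains $H_1$ (because $(h,e)\in H_1\times H_2\subseteq K$ for $h\in H_1$); maximality of $H_1$ in $G_1$ then forces $N_1=H_1$. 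A one-line conjugation computation—if $(x',y')\in K$ and $h\in H_1$, then $(x',y')(h,e)(x',y')^{-1}=(x'hx'^{-1},e)\in K\cap(G_1\times\{e\})=H_1\times\{e\}$—shows in addition that $H_1$ is normal in $K_1$, so that $Q:=K_1/H_1$ is a genuine group.

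The heart of the matter is to show $Q$ is trivial, and this is where the only real subtlety lies. The naive move is to argue that $K_1\cong K/(\{e\}\times H_2)$ is Haagerup because it is a quotient of the Haagerup group $K$; but this is \emph{not} permitted, as the Haagerup property is not inherited by quotients (already $SL_3(\bz)$ is a quotient of a free group). I would sidestep this entirely. Suppose $Q\neq\{e\}$ and choose a nontrivial $\bar q\in Q$. Let $K_1'$ be the preimage in $K_1$ of the cyclic subgroup $\langle\bar q\rangle\le Q$. Then $H_1$ is normal in $K_1'$ with $K_1'/H_1=\langle\bar q\rangle$ cyclic, hence amenable, so by Proposition \ref{extensions} the amenable extension $K_1'$ of the Haagerup group $H_1$ is again Haagerup. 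Since $H_1\subsetneq K_1'\le G_1$, this contradicts the maximality of $H_1$. Therefore $K_1=H_1$, and symmetrically $K_2=H_2$, whence $K\subseteq K_1\times K_2=H_1\times H_2$ and equality follows.

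Thus the main obstacle is precisely the quotient issue flagged above: one cannot simply declare the support subgroups to be Haagerup. The decisive idea is to avoid proving that $K_1$ itself is Haagerup, and instead to extract from any hypothetical enlargement of $H_1$ inside $K_1$ a \emph{cyclic}, hence amenable, extension of $H_1$, which Proposition \ref{extensions} certifies to be Haagerup and which then violates maximality. The nontriviality hypothesis plays only an auxiliary role, ruling out the degenerate case in which both factors collapse to the trivial group.
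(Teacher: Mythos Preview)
Your proof is correct and follows essentially the same approach as the paper's: identify $G_1\cap K=H_1$ by maximality, show $H_1\trianglelefteq K_1$ via the conjugation computation, and then rule out any nontrivial element of $K_1/H_1$ by observing that its preimage is a cyclic (hence amenable) extension of $H_1$, which Proposition~\ref{extensions} forces to be Haagerup, contradicting maximality. The paper phrases this last step as picking coset representatives $s_i$ and showing each $\langle H_1,s_i\rangle$ is Haagerup; your $K_1'$ is exactly this subgroup, so the arguments coincide.
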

\begin{proof}
Consider a Haagerup subgroup $K\subset G$ containing $H_1 \times H_2$. As $H_1 \subset G_1\cap K$ and the latter group is Haagerup, we have $H_1 = G_1 \cap K$. Similarly $H_2=G_2 \cap K$. Then $H_l$ is normal in $K_l$ for $l=1,2$. Indeed, take  for example $l=1$. For any $x\in K_1$ we have $(x, y)\in K$ for some $y\in K_2$, and hence for each $h\in H_1$, as $(h, e)\in K$, we have $(xhx^{-1}, e)=(x, y)(h, e)(x, y)^{-1}\in K$, so $xhx^{-1}\in H_1$.

Consider then the group $K_1/H_1$ and choose representatives of its elements, say $\{s_i: i \in \Ind\}$, where $\Ind$ is some index set. Fix $i \in \Ind$ and consider $\langle H_1, s_i \rangle$. The latter group is Haagerup, 
 as we have a short exact sequence $1\to H_1\to \langle H_1, s_i \rangle\to \langle H_1, s_i \rangle/H_1\to 1$ and the group $\langle H_1 , s_i\rangle/H_1=H_1\langle s_i \rangle/H_1\cong \langle s_i \rangle/(H_1\cap \langle s_i \rangle)$ is amenable. Then since $H_1$ is maximal Haagerup, $H_1=\langle H_1, s_i \rangle$, so that $s_i\in H_1$. As $i\in \Ind$ is arbitrary, $H_1=K_1$. In a completely identical way we show that $H_2 = K_2$, so that $K=H_1 \times H_2$. 
\end{proof}

It is worth noting that  for amenability a stronger result holds (it is likely known, but we record it here).
\begin{prop}\label{mHap_Cartesian}
	Suppose that $G_1, G_2$ are groups and $K\subset G_1 \times G_2$ is a subgroup. Then the following conditions are equivalent:
	\begin{rlist}
	\item $K$ is a maximal amenable subgroup of $G$;	
	\item $K = H_1 \times H_2$, where $H_i$ is a  maximal amenable subgroup of $G_i$ for $i =1,2$.	
	\end{rlist}
\end{prop}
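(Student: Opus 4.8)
The plan is to exploit the single feature of amenability that makes this statement a genuine equivalence, rather than the one-way implication recorded in Proposition \ref{mHapCartesian}: amenability is inherited by \emph{quotients}. Concretely, if $\pi_i\colon G_1\times G_2\to G_i$ denotes the coordinate projection, then for any subgroup $L\subseteq G_1\times G_2$ the support subgroup $L_i$ is precisely $\pi_i(L)$, hence a quotient of $L$; so as soon as $L$ is amenable, both $L_1$ and $L_2$ are amenable as well. This one observation is what the whole argument rests on, and it is exactly the point at which the reasoning cannot be reversed in the Haagerup world, since quotients of Haagerup groups need not be Haagerup (which is why there one only obtains that products of maximal Haagerup subgroups are maximal, together with the nontriviality hypothesis).

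For the implication (i) $\Rightarrow$ (ii), I would start from a maximal amenable $K$ and form its support subgroups $K_1=\pi_1(K)$ and $K_2=\pi_2(K)$. By the observation above both are amenable, hence so is $K_1\times K_2$; since $K\subseteq K_1\times K_2$ (as already noted before Proposition \ref{mHapCartesian}), maximality of $K$ forces $K=K_1\times K_2$. It then remains to check that each $K_i$ is maximal amenable in $G_i$: if, say, $K_1$ were strictly contained in some amenable $H_1'\subseteq G_1$, then $H_1'\times K_2$ would be an amenable subgroup of $G_1\times G_2$ strictly containing $K$, contradicting maximality. (In particular this forces $K_i$ to be nontrivial whenever $G_i$ is, so no nontriviality hypothesis is needed in the statement.) Thus $K=K_1\times K_2$ has the required form with $H_i:=K_i$.

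The converse (ii) $\Rightarrow$ (i) is equally short. Writing $K=H_1\times H_2$ with each $H_i$ maximal amenable, $K$ is certainly amenable. Given any amenable $L$ with $K\subseteq L\subseteq G_1\times G_2$, its support subgroups $L_i=\pi_i(L)$ are amenable and satisfy $H_i\subseteq L_i$; maximality of each $H_i$ then gives $L_i=H_i$. Hence $L\subseteq L_1\times L_2=H_1\times H_2=K$, so $L=K$ and $K$ is maximal amenable.

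I do not expect a serious obstacle here: the entire content is the quotient-closure of amenability, and everything else is the bookkeeping of support subgroups already set up in the Cartesian-product discussion. The only point requiring a little care is phrasing the strict enlargement $K_1\times K_2\subsetneq H_1'\times K_2$ so that the contradiction with maximality of $K$ is unambiguous.
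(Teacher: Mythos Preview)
Your proposal is correct and follows exactly the same approach as the paper: the paper's proof likewise reduces everything to the single observation that the support subgroups $K_i=\pi_i(K)$ are quotients of $K$ and hence amenable, from which $K=K_1\times K_2$ by maximality, with the remaining checks declared ``easy.'' You have simply written out those easy checks explicitly, including the clean symmetric argument for (ii)~$\Rightarrow$~(i) via $L\subseteq L_1\times L_2=H_1\times H_2$.
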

\begin{proof}
It suffices to observe that if $K$ is an amenable subgroup of $G_1 \times G_2$, then its support subgroups are amenable. Indeed, once we know it, it follows that if $K$ is in fact maximal amenable, it must be equal to $K_1\times K_2$ and the rest is easy.  But the support subgroups are images of $K$ with respect to the homomorphisms given by projections on the first/second coordinate, and as quotients of amenable groups are amenable, the proof is finished.
	\end{proof}

We cannot hope that the analogous result would hold for the Haagerup property, as the next example shows. Before we formulate it, we recall that, as stated in \cite{Th10}, there are two known sources of infinite simple groups with Kazhdan’s Property (T). Such groups appear for example as lattices in certain Kac-Moody groups, see \cite{CR06}. Much earlier, it was also
shown by Gromov (\cite{Gr87}) that every infinite  hyperbolic group surjects onto a Tarski monster group $G$ (that is an infinite group whose every proper subgroup is finite cyclic; in particular a simple group), and $G$ is Kazhdan if only the original hyperbolic group was a Kazhdan group.

\begin{proposition}
	Let $G$ be an infinite simple group with Property (T) and $\phi: \mathbb{F}_n\twoheadrightarrow G$ be a surjective group homomorphism for some $n \in \bn$, $n\geq 2$. Let $\Phi: \mathbb{F}_n\hookrightarrow \mathbb{F}_n\times G$ be defined by $\Phi(s)=(s, \phi(s))$, $s \in \mathbb{F}_n$. Then   $\Phi(\mathbb{F}_n)$ is a maximal Haagerup subgroup of $\mathbb{F}_n\times G$ that does not split as a product of two subgroups of $\mathbb{F}_n$ and $G$.
\end{proposition}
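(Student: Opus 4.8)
The plan is to verify directly that $\Phi(\mathbb{F}_n)$ carries the Haagerup property, and then to prove maximality by examining how an arbitrary Haagerup overgroup $K$ meets the two factors, using simplicity of $G$ to force a dichotomy and Property (T) to eliminate the bad branch.

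First I would note that $\Phi$ is an injective homomorphism, since its first coordinate is the identity map on $\mathbb{F}_n$; hence $\Phi(\mathbb{F}_n)\cong \mathbb{F}_n$, which is Haagerup by Haagerup's original theorem on free groups. So it remains only to establish maximality. Fix a Haagerup subgroup $K$ with $\Phi(\mathbb{F}_n)\subseteq K\subseteq \mathbb{F}_n\times G$. The containment of $\Phi(\mathbb{F}_n)$ shows immediately that the first support subgroup is $K_1=\mathbb{F}_n$, and that $K_2=G$ since $\phi$ is surjective.

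The heart of the argument is the analysis of $N:=\{g\in G:(e,g)\in K\}=K\cap(\{e\}\times G)$. I claim $N\trianglelefteq G$: given any $g\in G$, surjectivity of $\phi$ (equivalently $K_2=G$) produces some $(x,g)\in K$, and for $h\in N$ one computes $(x,g)(e,h)(x,g)^{-1}=(e,ghg^{-1})\in K$, so $ghg^{-1}\in N$. Since $G$ is simple, either $N=\{e\}$ or $N=G$. If $N=G$ then $\{e\}\times G\subseteq K$, and multiplying elements $(s,\phi(s))\in\Phi(\mathbb{F}_n)$ by suitable elements of $\{e\}\times G$ recovers all of $\mathbb{F}_n\times\{e\}$, forcing $K=\mathbb{F}_n\times G$. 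But $\mathbb{F}_n\times G$ has Property (T) relative to its infinite direct factor $\{e\}\times G$ (the restriction of positive-definite functions to a direct factor preserves pointwise convergence to $1$, and $G$ has Property (T)), so by Proposition \ref{relT} it is not Haagerup, a contradiction. Hence $N=\{e\}$, which means the projection onto the first coordinate is injective on $K$; as its image is $K_1=\mathbb{F}_n$, the group $K$ is the graph of a homomorphism $\psi:\mathbb{F}_n\to G$, and containment of $\Phi(\mathbb{F}_n)$ forces $\psi=\phi$. Thus $K=\Phi(\mathbb{F}_n)$, proving maximality.

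Finally, the non-splitting assertion is a short separate check: were $\Phi(\mathbb{F}_n)=H_1\times H_2$ with $H_1<\mathbb{F}_n$, $H_2<G$, then every element $(e,h)$ with $h\in H_2$ would lie on the graph $\Phi(\mathbb{F}_n)$ and hence satisfy $h=\phi(e)=e$; so $H_2=\{e\}$ and $\Phi(\mathbb{F}_n)\subseteq\mathbb{F}_n\times\{e\}$, contradicting surjectivity of $\phi$ onto the nontrivial group $G$. I expect the only genuinely delicate point to be the dichotomy step: one must correctly package simplicity of $G$ to split the analysis, and then recognise that the surviving full-product case is ruled out precisely by relative Property (T) rather than by any free-group property.
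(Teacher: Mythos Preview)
Your proof is correct and follows essentially the same strategy as the paper: both arguments hinge on showing that any subgroup strictly containing $\Phi(\mathbb{F}_n)$ must contain a nontrivial element of $\{e\}\times G$, and then invoking simplicity of $G$ (via conjugation by elements of $\Phi(\mathbb{F}_n)$, whose second coordinates cover all of $G$) to force $\{e\}\times G$ into the subgroup, contradicting the Haagerup property. The only difference is packaging---the paper works element-by-element (take $g\notin\Phi(\mathbb{F}_n)$ and show $\langle\Phi(\mathbb{F}_n),g\rangle\supseteq\{e\}\times G$), whereas you phrase it structurally via the dichotomy on $N=K\cap(\{e\}\times G)$; also, in your case $N=G$ you need only observe that $G\subseteq K$ already contradicts Haagerup (as $G$ is infinite Kazhdan), without passing through $K=\mathbb{F}_n\times G$ and relative Property~(T).
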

\begin{proof}
	Since $\Phi$ is injective, $\Phi(\mathbb{F}_n)\cong \mathbb{F}_n$ has the Haagerup property. 
	
	Suppose that $g=(x, y)\in \mathbb{F}_n\times G\setminus \Phi(\mathbb{F}_n)$.  Consider $H:=\langle \Phi(\mathbb{F}_n), g \rangle$; we need to show that $H$ cannot be Haagerup.
	Note that $(e, \phi(x)^{-1}y)=(x, \phi(x))^{-1}(x, y)\in H$ and hence $(z, \phi(z))(e, \phi(x)^{-1}y)(z, \phi(z))^{-1}=(e, \phi(z)\phi(x)^{-1}y\phi(z)^{-1})\in H$ for all $z\in \mathbb{F}_n$. 	Since $y\neq \phi(x)$ and $\phi$ is surjective, we deduce that the normal subgroup in $G$ generated by $\phi(x)^{-1}y$, i.e.\ $G$ (as the latter is assumed to be simple), is contained in $H$. Hence $H$ does not have the Haagerup property.
	
	To see that $\Phi(\mathbb{F}_n)$ does not split as a product, just observe that $\Phi(\mathbb{F}_n)\cap \mathbb{F}_n=Ker(\phi)\neq \mathbb{F}_n=\pi_1(\Phi(\mathbb{F}_n))$, where $\pi_1: \mathbb{F}_n\times G\twoheadrightarrow \mathbb{F}_n$ denotes the projection on the first coordinate.
\end{proof}

In Proposition \ref{mHapCartesian} we showed that maximal Haagerup subgroups behave well under taking Cartesian products. The situation is very different for free products and wreath products, as  next two results show.

\begin{prop}
	Let $G_1, G_2$ be nontrivial groups and suppose that $H_i$ are  maximal Haagerup subgroups in $G_i$ for $i=1, 2$, with at least one of them nontrivial.  Then $H_1*H_2$ is not a maximal Haagerup subgroup inside $G_1*G_2$. 
\end{prop}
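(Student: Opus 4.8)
The plan is to prove non-maximality by exhibiting a single Haagerup subgroup strictly between $H_1*H_2$ and $G_1*G_2$. The only input I need beyond elementary combinatorial group theory is the standard permanence fact that the Haagerup property is stable under free products (\cite{ccjjv}); consequently any subgroup of $G_1*G_2$ that can be recognised as an \emph{internal} free product of Haagerup groups is automatically Haagerup. Since at least one of the $H_i$ is nontrivial, after relabelling I may assume $\{e\}\neq H_1\neq G_1$, fix an element $g\in G_1\setminus H_1$, and recall that $G_2\neq\{e\}$.

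The guiding observation is that one cannot enlarge $H_1*H_2$ by adjoining an element $a\in G_1\setminus H_1$: the subgroup $\langle H_1,a\rangle\leq G_1$ would then strictly contain the maximal Haagerup subgroup $H_1$ and hence fail to be Haagerup, destroying the Haagerup property of any overgroup. The remedy is to enlarge \emph{across} the free product by adjoining a conjugate of one factor's subgroup by an element of the other factor, thereby creating a genuinely new free factor rather than fattening an old one. Concretely, in the main case $H_2\neq\{e\}$ I would set $K=\langle H_1,H_2,gH_2g^{-1}\rangle$; the degenerate case $H_2=\{e\}$ (so that $H_1*H_2=H_1$) is handled even more easily by picking $t\in G_2\setminus\{e\}$ and setting $K=\langle H_1,tH_1t^{-1}\rangle$. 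In each case the claim is that the natural homomorphism from the abstract free product ($H_1*H_2*H_2$, respectively $H_1*H_1$) into $G_1*G_2$ is injective, so that $K$ is an internal free product of Haagerup groups and therefore Haagerup by \cite{ccjjv}.

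To establish this and to verify that $K$ does the job, I would argue with free-product normal forms. Writing out a reduced word in the abstract free product and expanding the conjugated factor $gH_2g^{-1}$ into its three syllables ($G_1,G_2,G_1$), one checks junction by junction that the only places a collapse could occur are where an $H_1$-syllable abuts a $g^{\pm1}$ coming from the conjugate; because $g\notin H_1$, these mergers produce nonidentity $G_1$-syllables rather than cancellations, so the expanded word stays reduced and nonempty, proving injectivity. The same bookkeeping shows that every single-syllable $G_1$-element of $K$ already comes from the $H_1$-factor, i.e.\ $K\cap G_1=H_1$; hence $g\notin K$ and $K\neq G_1*G_2$. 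Finally $K$ strictly contains $H_1*H_2$, since $gH_2g^{-1}$ (respectively $tH_1t^{-1}$) contains elements whose normal form has a $G_1$-syllable outside $H_1$ (respectively a nontrivial $G_2$-syllable), which can lie in no element of $H_1*H_2$. I expect the main obstacle to be exactly this normal-form analysis: the conjugating element must be chosen outside the appropriate subgroup so as to guarantee \emph{simultaneously} that the new generators are not already present (strict enlargement), that no catastrophic cancellation occurs (so that $K$ genuinely is the free product, and thus Haagerup), and that $K\cap G_1=H_1$ (properness).
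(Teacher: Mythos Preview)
Your argument is correct. In fact your ``degenerate'' case $H_2=\{e\}$ never occurs: since $G_2$ is nontrivial, any maximal Haagerup subgroup of $G_2$ contains a nontrivial cyclic (hence Haagerup) subgroup, so $H_2\neq\{e\}$ automatically. Thus your Case~A with $K=\langle H_1,H_2,gH_2g^{-1}\rangle\cong H_1*H_2*H_2$ already covers everything, and the normal-form check you outline goes through exactly as you describe.

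The paper's proof follows the same overall strategy---produce a strictly intermediate subgroup that is an internal free product of Haagerup groups---but instead of adjoining the conjugate $gH_2g^{-1}$, it adjoins a single infinite cyclic factor $\langle x\rangle$ free from $H_1*H_2$. The choice of $x$ depends on a case split: if both $H_i\neq G_i$ one takes $x=s_1s_2$ with $s_i\in G_i\setminus H_i$; if (say) $H_2=G_2$ one uses $[G_1:H_1]=\infty$ to pick $s,t\in G_1\setminus H_1$ in distinct $H_1$-cosets and sets $x=sgt g^{-1}$ for some $e\neq g\in G_2$. Your construction has the virtue of being uniform---one formula handles the case $H_2=G_2$ just as well---and avoids invoking the infinite-index observation. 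The paper's version, on the other hand, adjoins the smallest possible free factor (a copy of $\mathbb{Z}$), which makes the freeness verification marginally shorter in each case.
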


\begin{proof}
Assume first that both $H_1$ and $H_2$ are nontrivial. 	 Take $s_i\in G_i\setminus H_i$ for $i=1, 2$. Then one can check that $x:=s_1s_2$ is free from $H_1*H_2$, and hence $H_1*H_2*\langle x \rangle$ is a group with the Haagerup property by \cite[Proposition 6.2.3]{ccjjv}; it obviously strictly contains $H_1*H_2$.

In the general case  we  may assume $G_2=H_2$ and $H_1\neq G_1$. Observe that $[G_1:H_1]=\infty$; in particular, we may take $s, t\in G_1\setminus H_1$ such that $H_1s\neq H_1t$. Let $x=sgtg^{-1}$, where $g$ is any nontrivial element from $G_2$. Then we can check that $H_1*G_2$ is free from $x$, hence $\langle x \rangle*H_1*G_2$ is a group with the Haagerup property which strictly contains $H_1*G_2$.
\end{proof}

In \cite[Theorem 1.1]{csv} de Cornulier, Stalder and Valette showed that wreath products $G\wr H$ with $G, H$ Haagerup are Haagerup. 

\begin{lem}
	Let $G_1, G_2$ be groups, with $G_1$ non-trivial, and suppose that $H_i$ are  maximal Haagerup subgroups in $G_i$ for $i=1, 2$. Then $H_1\wr H_2$ is maximal Haagerup in $G_1\wr G_2$ if and only if $G_2=H_2$.
\end{lem}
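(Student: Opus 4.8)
The claim is an \emph{if and only if}, so I would prove the two directions separately.

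For the easier direction, suppose $G_2 \neq H_2$. I want to show $H_1 \wr H_2$ fails to be maximal Haagerup inside $G_1 \wr G_2$, by producing a strictly larger Haagerup subgroup. The natural candidate is $H_1 \wr_{G_2} H_2' $, where I enlarge the acting group $H_2$ by picking some $s \in G_2 \setminus H_2$. Concretely, since $H_2$ is maximal Haagerup in $G_2$, the group $\langle H_2, s\rangle$ is strictly larger than $H_2$ and \emph{not} Haagerup in general, so that brute-force enlargement of the top group is the wrong move. Instead, the correct and cleaner construction keeps the top group equal to $H_2$ but enlarges the base. Recall $G_1^{\oplus G_2}$ carries the shift action of $G_2$, and $H_1 \wr H_2 = H_1^{\oplus H_2}\rtimes H_2$ sits inside $G_1 \wr G_2 = G_1^{\oplus G_2}\rtimes G_2$. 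Because $G_2 \neq H_2$, the $H_2$-set $G_2$ has more than one $H_2$-orbit, i.e.\ $|H_2\backslash G_2| \geq 2$. Pick coset representatives and consider the subgroup $H_1 \wr_{G_2} H_2 := H_1^{\oplus G_2}\rtimes H_2$, where now the base is $H_1$-copies indexed by \emph{all} of $G_2$, still acted on by shifts of $H_2$. This strictly contains $H_1 \wr H_2$ (which uses only the index set $H_2 \subsetneq G_2$). It remains Haagerup: it is an extension of the amenable group $H_1^{\oplus(G_2\setminus H_2)}$ (or rather, I would argue that $H_1^{\oplus G_2}\rtimes H_2 \cong H_1^{\oplus H_2}\rtimes H_2$ wreath-producted with extra amenable pieces) by $H_1 \wr H_2$; more carefully, $H_1^{\oplus G_2}$ decomposes along $H_2$-orbits, and the Haagerup property of such a semidirect product follows from the de Cornulier--Stalder--Valette theorem cited just above, applied to the wreath product $H_1 \wr H_2$ together with Proposition \ref{extensions} to absorb the finitely-supported amenable factors over the remaining orbits. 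Hence $H_1 \wr H_2$ is not maximal, giving the contrapositive of the forward implication.

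For the harder direction, assume $G_2 = H_2$ and show $H_1 \wr H_2 = H_1 \wr G_2$ is maximal Haagerup in $G_1 \wr G_2$. So let $K$ be a Haagerup subgroup with $H_1 \wr G_2 \subseteq K \subseteq G_1 \wr G_2$, and I must show $K = H_1 \wr G_2$. The plan is to analyse the base. Set $B = G_1^{\oplus G_2}$ and consider $K \cap B$. Since $K \supseteq H_1 \wr G_2 \supseteq G_2$ (the top copy), $K$ contains the full shift $G_2$, and $K \cap B$ is a $G_2$-invariant subgroup of $G_1^{\oplus G_2}$ containing $H_1^{\oplus G_2}$. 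The key step is to show $K \cap B = H_1^{\oplus G_2}$, i.e.\ that the base cannot be enlarged. Suppose toward a contradiction that some element of $K \cap B$ has a coordinate lying in $G_1 \setminus H_1$. Projecting $K \cap B$ onto a single coordinate (using that $K \cap B$ is a subgroup and applying a shift to center attention on the identity coset) yields a subgroup $L$ of $G_1$ with $H_1 \subseteq L$; I then argue $L$ is Haagerup, because it embeds as a subgroup of the Haagerup group $K$, whence by maximality of $H_1$ we get $L = H_1$, forcing every coordinate into $H_1$ and so $K \cap B = H_1^{\oplus G_2}$. The delicate point is that the coordinate projection of $K \cap B$ need not be a subgroup of $G_1$ a priori when the support is spread out, so I would instead restrict to elements of $K \cap B$ supported at a single index; the $G_2$-invariance of $K \cap B$ guarantees that if some index carries an element outside $H_1$ then, after conjugating by shifts and multiplying, one isolates a singly-supported element, and the set of $G_1$-values appearing in the fixed coordinate of singly-supported elements of $K \cap B$ \emph{is} a subgroup of $G_1$ containing $H_1$.

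Once $K \cap B = H_1^{\oplus G_2}$ is established, I finish by a standard splitting/Galois-type argument for the semidirect product $B \rtimes G_2$. Any $k \in K$ writes uniquely as $k = b \cdot g$ with $b \in B$, $g \in G_2$; since $G_2 \subseteq K$, I have $b = k g^{-1} \in K \cap B = H_1^{\oplus G_2}$, so $k \in H_1^{\oplus G_2}\rtimes G_2 = H_1 \wr G_2$. This gives $K = H_1 \wr G_2$ and completes the proof of maximality. The main obstacle I anticipate is precisely the rigorous extraction of singly-supported elements and the verification that their fixed-coordinate values form a subgroup of $G_1$ containing $H_1$; this is where the $G_2$-invariance of $K \cap B$ and the fact that $G_2 = H_2$ (so that the \emph{whole} top group acts by shifts) are used essentially, and it is the step I would write out with the most care.
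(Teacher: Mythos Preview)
Your proof of the direction $G_2 \neq H_2 \Rightarrow$ not maximal is essentially the paper's argument: enlarge the base to $H_1^{\oplus G_2}$ while keeping the top group $H_2$. However, your justification that $H_1^{\oplus G_2}\rtimes H_2$ is Haagerup is muddled: the group $H_1^{\oplus(G_2\setminus H_2)}$ is \emph{not} amenable in general ($H_1$ is only Haagerup), so the appeals to Proposition~\ref{extensions} are misplaced. The clean argument, which you only gesture at, is the orbit decomposition $H_1^{\oplus G_2}\rtimes H_2 \hookrightarrow \bigoplus_{z\in H_2\backslash G_2}(H_1\wr H_2)$ followed by the de Cornulier--Stalder--Valette theorem; this is exactly Proposition~\ref{prop:strangewreath}.

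The direction $G_2=H_2 \Rightarrow$ maximal has a genuine gap. Your first attempt (project $K\cap B$ to a single coordinate and claim the image ``embeds as a subgroup of the Haagerup group $K$'') is wrong: a coordinate projection is a \emph{quotient}, not a subgroup, and the Haagerup property does not pass to quotients. Your fallback via singly-supported elements correctly shows that $\{x\in G_1:\delta_g(x)\in S\}=H_1$, but the isolation step---``if some coordinate lies outside $H_1$ then, after conjugating by shifts and multiplying, one isolates a singly-supported element''---is not justified and in fact fails without using the Haagerup hypothesis on $S$. For instance, take $G_2=\{e,s\}$, pick $t\in G_1\setminus H_1$, and let $S=\langle H_1\times H_1,(t,t)\rangle$: this is swap-invariant and contains no singly-supported element outside $H_1\times\{e\}$ whenever $H_1$ and $t$ generate freely; what actually rules such $S$ out is that it contains the non-Haagerup diagonal copy of $\langle H_1,t\rangle$. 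The paper exploits precisely this by working with \emph{finite} blocks rather than single coordinates: for each finite $F\subseteq G_2$ the group $S\cap G_1^{\oplus F}$ is a subgroup of $K$ (hence Haagerup) sandwiched between $H_1^{\oplus F}$ and $G_1^{\oplus F}$, and since $H_1^{\oplus F}$ is maximal Haagerup in $G_1^{\oplus F}$ (iterate Proposition~\ref{mHapCartesian}) one gets $S\cap G_1^{\oplus F}=H_1^{\oplus F}$ for all $F$, whence $S=H_1^{\oplus G_2}$. This bypasses the isolation problem entirely.
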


\begin{proof}
	$\Leftarrow$: Assume that $H_1\wr G_2<K<G_1\wr G_2$ and $K$ has the Haagerup property. Then $K=S\rtimes G_2$ for some $G_2$-invariant subgroup $S$ of $G_1^{\oplus{G_2}}$ such that $H_1^{\oplus{G_2}}\subseteq S$. We claim that $S=H_1^{\oplus{G_2}}$. For any finite subset $F$ of $G_2$, we have $H_1^{\oplus F}\subseteq G_1^{\oplus F}\cap S\subseteq G_1^{\oplus F}$ and $G_1^{\oplus F}\cap S\subset S \subset K$ has the Haagerup property.
	
	By a generalised version of Proposition \ref{mHap_Cartesian} we know that $H_1^{\oplus F}$ is a maximal Haagerup subgroup in $G_1^{\oplus_F}$. Hence $H_1^{\oplus F}=S\cap G_1^{\oplus F}$. 
	Since $F$ is an arbitrary finite set in $G_2$, we deduce that $S=H_1^{\oplus{G_2}}$, i.e.\ $K=H_1\wr G_2$.

	$\Rightarrow$: Suppose that $H_2$ is a nontrivial subgroup of $G_2$ and let $K:=(H_1^{\oplus G_2})\rtimes H_2\cong( \oplus_{z\in H_2\backslash G_2}(\oplus_{g\in z }H_1))\rtimes H_2\cong (\oplus_{z\in H_2\setminus G_2}H_1)\wr H_2\supsetneq (H_1\wr H_2)$,
	where the properness of the last inclusion follows from the fact that $G_1$, hence also $H_1$, is non-trivial. As $K$ has the Haagerup property, we reach a contradiction.
\end{proof}

We record a simple observation, which can be shown using the same ideas as these in the proofs above (and which will be of use to us later).

\begin{prop}\label{prop:strangewreath}
Let $G, K$ be groups, and let $H$ be a subgroup of $G$. If $H$ and $K$ have the Haagerup property, then $K\wr_G H$ has the Haagerup property.
\end{prop}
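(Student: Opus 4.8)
The plan is to recognise $K\wr_G H = K^{\oplus G}\rtimes H$ as an \emph{ordinary} (restricted) wreath product and then invoke the de Cornulier--Stalder--Valette theorem \cite[Theorem 1.1]{csv}. The conceptual point to keep in mind is that one cannot argue directly that ``a semidirect product of two Haagerup groups is Haagerup'', since this is false (witness $\bz^2\rtimes SL_2(\bz)$); the special shift structure of the action is essential, and the whole content of the proof is to expose it cleanly.

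First I would analyse the action of $H$ on the index set $G$. Since $H<G$ acts on $G$ by left translation (the restriction of the shift used to define $K^{\oplus G}$), this action is free and its orbits are precisely the right cosets $Hg$, so the orbit space is $H\backslash G$ and each orbit is in bijection with $H$. Fixing a set of representatives of $H\backslash G$ then yields a bijection $G\cong H\times (H\backslash G)$ under which the $H$-action becomes $h'\cdot(h,z)=(h'h,z)$, i.e.\ left translation on the first factor and the identity on the second.

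Next I would transport this decomposition through the direct sum. Writing $L:=K^{\oplus (H\backslash G)}$, the bijection above induces an $H$-equivariant isomorphism $K^{\oplus G}\cong L^{\oplus H}$, where $H$ acts on $L^{\oplus H}$ by the standard shift of the $H$-index. Consequently
\[
K\wr_G H \;=\; K^{\oplus G}\rtimes H \;\cong\; L^{\oplus H}\rtimes H \;=\; L\wr H,
\]
an honest restricted wreath product of $L$ by $H$. This regrouping is exactly the manoeuvre already used in the preceding lemma, and is the only step that requires genuine care.

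Finally I would verify the two hypotheses of \cite[Theorem 1.1]{csv}. The base group $L=K^{\oplus(H\backslash G)}$ is a countable direct sum of copies of the Haagerup group $K$, hence an increasing union of finite direct products of Haagerup groups, and is therefore itself Haagerup by \cite[Proposition 6.1.1]{ccjjv}. Since $H$ is Haagerup by assumption, \cite[Theorem 1.1]{csv} gives that $L\wr H$ is Haagerup, and the isomorphism displayed above finishes the proof. I do not expect any serious obstacle beyond the bookkeeping of the coset identification in the middle step, the essential insight being that the restricted action of $H$ on $G$ must be repackaged as a genuine shift action so that \cite{csv} applies.
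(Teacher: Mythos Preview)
Your argument is correct and rests on the same idea as the paper's proof: decompose the index set $G$ into left $H$-orbits (right cosets $Hg$) and then appeal to \cite[Theorem 1.1]{csv}. The execution differs slightly. The paper distributes the acting copy of $H$ across the cosets, obtaining an embedding
\[
\Big(\bigoplus_{z\in H\backslash G}\bigoplus_{g\in z}K\Big)\rtimes H \;\hookrightarrow\; \bigoplus_{z\in H\backslash G}\big(K\wr H\big),
\]
and then uses that each factor $K\wr H$ is Haagerup. You instead absorb the coset index into the base, setting $L=K^{\oplus(H\backslash G)}$ and obtaining an \emph{isomorphism} $K\wr_G H\cong L\wr H$, so that a single application of \cite{csv} (together with the fact that $L$, as a countable direct sum of copies of $K$, is Haagerup) finishes the job. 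Your packaging is arguably cleaner: it avoids the embedding step altogether, and in particular sidesteps the minor bookkeeping issue that when $[G:H]=\infty$ the diagonal image of $H$ does not actually have finite support in the displayed direct sum (the paper's $\bigoplus$ should really be read as $\prod$ there). Either way, the substantive input is identical.
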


\begin{proof}
We have 
\[ K\wr_G H = \left(\bigoplus_{g \in G} K\right) \rtimes H = 	\left(\bigoplus_{z \in H\backslash G}  \bigoplus_{g \in z}K \right) \rtimes H  \hookrightarrow \bigoplus_{z \in H\backslash G} \left( (\bigoplus_{g \in z} K) \rtimes H\right) \cong \bigoplus_{z \in H\backslash G} K \wr H,\]
so that the result for usual wreath products from \cite{csv} ends the proof.
\end{proof}

We finish this general subsection with some examples where the wreath/semidirect product constructions yield explicit examples of maximal Haagerup subgroups.

\begin{proposition}\label{prop: wreath product}
	Let $G$ be a countably infinite group with the Haagerup property and let $G_0$ be a quotient of $G$ which does not have the Haagerup property; for example let $G_0$ be an infinite group with property (T) generated by $n$ elements and $G=\mathbb{F}_n$. Let $A$ be a nontrivial abelian group. Then $G$ is a maximal Haagerup subgroup of the generalised wreath product $(A^{\oplus{G_0}})\rtimes G$.
\end{proposition}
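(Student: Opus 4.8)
The plan is to establish maximality by a direct argument. Realising $G$ as $\{0\}\rtimes G$, it is clearly Haagerup, so I only need to show that adjoining any single element from outside destroys the Haagerup property: given $w=(a,g)\in(A^{\oplus G_0})\rtimes G$ with $w\notin G$, I want $\langle G,w\rangle$ to be non-Haagerup. Since $(0,g)\in G$, we have $(a,0)=w(0,g)^{-1}\in\langle G,w\rangle$ with $a\neq 0$, and conjugating $(a,0)$ by $(0,h)$ yields $(h\cdot a,0)$. As $\ker\pi$ acts trivially on $A^{\oplus G_0}$, the action on the base factors through $\pi\colon G\twoheadrightarrow G_0$, so $\langle G,w\rangle=S_a\rtimes G$, where $S_a\leq A^{\oplus G_0}$ is the $G_0$-invariant subgroup generated by the shift-orbit of $a$. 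The whole proposition thus reduces to the assertion that $S_a\rtimes G$ is not Haagerup for every $a\neq 0$.

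The generic case I would dispose of first. Suppose $S_a$ contains a genuine coordinate base, i.e.\ a $G_0$-invariant subgroup isomorphic, as a $G_0$-module under the shift, to $B^{\oplus G_0}$ for some nontrivial cyclic $B\leq A$; this happens for instance when some shift of $a$ is supported at a single point, or when $a$ is a difference of point masses and $G_0$ has an element of infinite order (freeness of the orbit being a direct check). Then $B^{\oplus G_0}\rtimes G$ is a subgroup of $S_a\rtimes G$ and is itself an honest generalised wreath product over the $G$-set $G_0$. By the characterisation of the Haagerup property for such wreath products in \cite{csv}, it is Haagerup if and only if $G$ is Haagerup and the action $G\curvearrowright G_0$ has the Haagerup property; but since $\ker\pi$ is normal, a $G$-invariant conditionally negative definite kernel on $G_0=G/\ker\pi$ is exactly a conditionally negative definite function on $G_0$, so the action has the Haagerup property precisely when $G_0$ does. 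As $G_0$ is not Haagerup, $B^{\oplus G_0}\rtimes G$ is not Haagerup, and therefore neither is the larger group $S_a\rtimes G$.

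The main obstacle is the remaining case, in which $G_0$ is a torsion group — as the Property (T) Tarski monsters of Gromov are — and $a$ is not equivalent to a point mass, so that $S_a$ need contain no coordinate base at all: for example a difference orbit $\{\gamma\cdot b-b\}$ of a point mass $b$ generates only a module with nontrivial annihilator. Here one cannot pass to a wreath subgroup, and quotienting out the normal subgroup $\ker\pi$ (which would produce $S_a\rtimes G_0$) is useless, since the Haagerup property does not descend to quotients and $\ker\pi$ is infinite. Instead I would invoke the analysis of \cite{csv} in its form for semidirect products with abelian base: $S_a\rtimes G$ has the Haagerup property if and only if $G$ does and the dual action $G\curvearrowright\widehat{S_a}$ admits a compatible proper equivariant wall structure, the point being that this action factors through $G_0$ acting by the induced shift on the $G_0$-invariant quotient $\widehat{S_a}$ of the Bernoulli object $\widehat{A}^{G_0}$, so that its Haagerup property again forces $G_0$ to be Haagerup. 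The subtlety to be handled with care is that $\widehat{S_a}$ is compact, so the correct condition is measure-theoretic rather than a naive properness; this is exactly why a soft conditionally negative definite argument, and not a relative Property (T) argument (unavailable since $G_0$ is merely assumed non-Haagerup), is the tool required. I would remark finally that in the Property (T) example of the statement this last case can be treated more cheaply: the shift action is non-degenerate enough that $(S_a\rtimes G,S_a)$ is rigid with $S_a$ infinite, and Proposition \ref{relT} applies directly.
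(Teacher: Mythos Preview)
Your reduction to showing that $S_a\rtimes G$ is not Haagerup for every nonzero $a\in A^{\oplus G_0}$ is correct and matches the paper exactly (the paper writes $K$ for your $S_a$). The divergence is in how to finish.

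Your case split is unnecessary, and the second case contains a genuine gap. The criterion you attribute to \cite{csv} --- ``$S_a\rtimes G$ has the Haagerup property if and only if $G$ does and the dual action $G\curvearrowright\widehat{S_a}$ admits a compatible proper equivariant wall structure'' --- is not a theorem of that paper in the generality you need. The results of \cite{csv} concern (permutational) wreath products $B\wr_X G$, and the whole difficulty you yourself identify is that $S_a$ need not contain any coordinate base $B^{\oplus G_0}$. Extracting a usable statement about an arbitrary $G_0$-invariant subgroup of $A^{\oplus G_0}$ from the wall-space machinery is real work that you have not done, and your description of what that statement would even say (``measure-theoretic rather than a naive properness'') is too vague to be a proof. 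Your closing remark about relative Property~(T) covers only the illustrative example with $G_0$ Kazhdan, not the general hypothesis that $G_0$ is merely non-Haagerup.

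The paper bypasses all of this with a single citation: \cite[Theorem~3.1(2)]{ci} of Chifan and Ioana says that if the $G$-action on an abelian group $K$ factors through a non-Haagerup quotient $G_0$ and some element of $K$ has finite $G_0$-stabilizer, then $K\rtimes G$ is not Haagerup. The finite-stabilizer condition is automatic here, since any $g_0\in G_0$ fixing $a$ must permute $\operatorname{supp}(a)$ and hence lie in the finite set $\operatorname{supp}(a)\cdot\operatorname{supp}(a)^{-1}$. This handles every $a\neq 0$ uniformly, with no case analysis and no appeal to relative~(T).
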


\begin{proof}
	First, by \cite[Corollary 3.3]{ci}, we know $(A^{\oplus{G_0}})\rtimes G$ is not Haagerup. We will show that in fact  for any $x\in ((A^{\oplus{G_0}})\rtimes G)\setminus G$  the subgroup $\langle G, x\rangle$ does not have the Haagerup property.
	
	Without loss of generality we may assume that $x\in A^{\oplus{G_0}}$. Consider the subgroup $K:=\langle \tau_g(x): g\in G \rangle\subseteq A^{\oplus{G_0}}$, where $\tau$ is the action used in defining our generalized wreath product. Clearly, $K$ is abelian and $G$-invariant. Then note that we can view $K\rtimes G$ as a subgroup of $(A^{\oplus{G_0}})\rtimes G$ and then $K\rtimes G<\langle G, x\rangle\subseteq K\rtimes G$.  Hence $\langle G, x \rangle=K\rtimes G$. 
	
	Then, since the action $G\curvearrowright K$ factors through the quotient $G_0$, we can  apply \cite[Theorem 3.1(2)]{ci} to deduce that $K\rtimes G$ does not have Haagerup property. Indeed, the stabilizer of $x$ in $G_0$ is contained in $supp(x)supp(x)^{-1}$, hence is finite.
\end{proof}

The last statement can be combined with the action of a free group on the free abelian group, as below.

\begin{proposition} \label{prop:wreathabelian}
Suppose that $m, n \in \bn$ are such that $n\geq 3$ and the free group $G:=\mathbb{F}_m$ admits $G_0:=SL_n(\mathbb{Z})$ as a quotient (implicitly, $m\geq 2$). Let $A$ be a nontrivial abelian group and consider the action $G\curvearrowright \mathbb{Z}^2$ given by some fixed embedding of $\mathbb{F}_m$ into $SL_2(\mathbb{Z})$, e.g. $\mathbb{F}_m\hookrightarrow \mathbb{F}_{\infty}\hookrightarrow \mathbb{F}_2\hookrightarrow SL_2(\mathbb{Z})$ (followed by the matrix multiplication). Then
	$G<(\mathbb{Z}^2\oplus A^{\oplus{G_0}})\rtimes G$ is a maximal Haagerup subgroup.
\end{proposition}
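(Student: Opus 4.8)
The plan is to adapt the strategy of Proposition~\ref{prop: wreath product}. Write $N := \mathbb{Z}^2 \oplus A^{\oplus G_0}$ for the abelian normal part, so that the ambient group is $N \rtimes G$, and let $\phi \colon G \twoheadrightarrow G_0$ be the given quotient map. Since $G \cong \mathbb{F}_m$ is itself Haagerup and the Haagerup property passes to subgroups, it suffices to prove that $\langle G, x\rangle$ fails the Haagerup property for every $x \in (N\rtimes G)\setminus G$. Multiplying $x$ on the right by an element of $G$, we may assume $x = (a,b) \in N\setminus\{0\}$; then $\langle G, x\rangle = K\rtimes G$ with $K := \langle \tau_g(x) : g\in G\rangle \le N$ an abelian $G$-invariant subgroup, where $\tau$ denotes the combined action. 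I will split into two cases according to whether the $\mathbb{Z}^2$-coordinate $a$ vanishes.

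If $a = 0$ then $b \neq 0$ and $K \le A^{\oplus G_0}$, so the whole configuration sits inside $A^{\oplus G_0}\rtimes G$. The action $G\curvearrowright K$ factors through $G_0$, and the $G_0$-stabiliser of $b$ is finite (being contained in $\mathrm{supp}(b)\,\mathrm{supp}(b)^{-1}$); hence \cite[Theorem~3.1(2)]{ci} applies exactly as in Proposition~\ref{prop: wreath product} to show that $K\rtimes G$ is not Haagerup.

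The main case is $a \neq 0$, where I will instead invoke Burger's relative Property~(T) (Proposition~\ref{prop:Burger}) after showing that $K$ already contains a finite-index subgroup of $\mathbb{Z}^2$. The crucial observation is that $S := \mathrm{Stab}_G(b)$ remains non-amenable: it is the $\phi$-preimage of the finite group $\mathrm{Stab}_{G_0}(b)$, hence contains $\ker\phi$, which is a nontrivial normal subgroup of the free group $\mathbb{F}_m$ (nontrivial because an infinite group cannot be simultaneously Haagerup and Kazhdan, so $\phi$ is not injective) and therefore non-amenable. For every $h\in S$ one has $x - \tau_h(x) = (a - h\cdot a,\,0) \in K\cap\mathbb{Z}^2 =: K_0$, so $K_0$ contains the $S$-invariant subgroup $L := \langle a - h\cdot a : h\in S\rangle$. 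A non-amenable subgroup of $SL_2(\mathbb{Z})$ preserves no line in $\mathbb{Q}^2$ (otherwise it would embed in a Borel subgroup and be amenable); in particular $S$ does not fix $a\neq 0$, so $L\otimes\mathbb{Q}$ is a nonzero $S$-invariant subspace that is not a line, hence equals $\mathbb{Q}^2$. Thus $K_0$ has rank $2$, i.e.\ is of finite index in $\mathbb{Z}^2$, and $K_0\rtimes G \le \langle G,x\rangle$ is a finite-index subgroup of $\mathbb{Z}^2\rtimes G$. Burger's result says $\mathbb{Z}^2 < \mathbb{Z}^2\rtimes G$ is rigid; since relative Property~(T) passes to finite-index subgroups, $(K_0\rtimes G, K_0)$ is rigid with $K_0$ infinite, so $K_0\rtimes G$ --- and hence $\langle G,x\rangle$ --- is not Haagerup by Proposition~\ref{relT}.

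The step I expect to be the main obstacle is the mixed subcase $a\neq 0$, $b\neq 0$: one must show that enough of $\mathbb{Z}^2$ survives inside $K$ despite the entanglement of the two coordinates, which is precisely what the non-amenability of $\mathrm{Stab}_G(b)$ delivers, and one must check that Burger's rigidity descends to the finite-index subgroup $K_0\rtimes G$. The latter is standard but worth a line: given a unitary representation of $K_0\rtimes G$ with almost invariant vectors, the representation induced up to $\mathbb{Z}^2\rtimes G$ still has almost invariant vectors (finite index), hence a nonzero $\mathbb{Z}^2$-invariant vector by rigidity, from which --- using that $K_0$ is normal in $\mathbb{Z}^2\rtimes G$ --- one extracts a nonzero $K_0$-invariant vector for the original representation.
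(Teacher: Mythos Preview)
Your proof is correct, and it actually takes a cleaner route than the paper in the mixed case. The paper argues by a trichotomy on the full $G$-invariant subgroup $K\le \mathbb{Z}^2\oplus A^{\oplus G_0}$: either $K\cap\mathbb{Z}^2\neq\{0\}$ (Burger), or $K\cap A^{\oplus G_0}\neq\{0\}$ (Chifan--Ioana), or both intersections are trivial, in which case $K$ is the graph of a $G$-equivariant isomorphism $A'\to B'$ and one transports rigidity through the isomorphism $K\rtimes G\cong A'\rtimes G$. Your dichotomy on the coordinate $a$ of a single generator avoids this third case entirely: the observation that $\operatorname{Stab}_G(b)$ contains the non-amenable kernel $\ker\phi$ lets you manufacture rank-$2$ elements of $K\cap\mathbb{Z}^2$ whenever $a\neq 0$, so the ``graph'' case is in fact vacuous. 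That is a genuine simplification.

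Two small remarks on presentation. First, the final step is cleaner if you note that a $G$-invariant rank-$2$ subgroup $K_0=B\mathbb{Z}^2$ gives an isomorphism $K_0\rtimes G\cong \mathbb{Z}^2\rtimes B^{-1}GB$ with $B^{-1}GB$ non-amenable in $SL_2(\mathbb{Z})$, so Burger applies directly; this replaces the induction sketch (which is fine but slightly delicate, since $K_0\rtimes G$ need not be normal in $\mathbb{Z}^2\rtimes G$, only $K_0$ is). Second, your assertion that a nontrivial normal subgroup of $\mathbb{F}_m$ is non-amenable deserves one more word: such a subgroup is free, and if it were cyclic its generator would have centraliser of index at most $2$, impossible in a non-abelian free group.
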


\begin{proof}
	Let $H$ be a subgroup of $ (\mathbb{Z}^2 \oplus A^{\oplus{G_0}})\rtimes G$, strictly containing $G$. We need to show $H$ does not have the Haagerup property.
	Clearly, $H=K\rtimes G$ for some nontrivial subgroup $K<\mathbb{Z}^2\oplus A^{\oplus{G_0}}$ which is $G$-invariant, i.e.\ $G\cdot K=K$.
	
	If $K\cap \mathbb{Z}^2\neq \{e\}$, then we have $G\cdot (K\cap \mathbb{Z}^2)=k\mathbb{Z}^2$ for some $k\in \mathbb{Z} \setminus  \{0\}$ . Then by considering the rigid inclusion  $k\mathbb{Z}^2<k\mathbb{Z}^2\rtimes G$, we see that $k\mathbb{Z}^2\rtimes G$ is not Haagerup, so $H=K\rtimes G$, which contains $k\mathbb{Z}^2\rtimes G$,  is not Haagerup.
	
	If $K\cap A^{\oplus{G_0}}\neq \{e\}$, then we may argue as in the proof of Proposition \ref{prop: wreath product} to deduce that $(K\cap A^{\oplus{G_0}})\rtimes G$ does not have the Haagerup property; so neither does $H$.
	
It remains to consider the case when $K\cap \mathbb{Z}^2=\{e\}$ and $K\cap A^{\oplus{G_0}}=\{e\}$. Note that then $K=\{(x, t(x)): x\in A'\}$ for some bijection $t: A'\cong B'$, where $A'$ is a subgroup of $\mathbb{Z}^2$ and $B'$ is a subgroup of $A^{\oplus{G_0}}$, and both these subgroups are $G$-invariant. The map $t$ is easily seen to be a $G$-equivariant group homomorphism and thus $K\rtimes G\cong A'\rtimes G$ via the isomorphism $\theta: (x, t(x))g\mapsto xg$. 
	
	Since $A'$ is nontrivial and $G$-invariant, $A'\cong k\mathbb{Z}^2$ for some $k\in \mathbb{Z} \setminus  \{0\}$. Then since $\theta$ provides an isomorphism between inclusions $(\theta^{-1}(A')<K\rtimes G)$ and $(A'<A'\rtimes G)$, and the latter inclusion is rigid, we deduce that $H=K\rtimes G$ does not have the Haagerup property.
\end{proof}

\subsection*{Maximal normal Haagerup subgroups}

Although in this article we are mainly interested in maximal Haagerup subgroups (and later maximal Haagerup subalgebras), in the amenable context it is often important to compute the amenable radical of a given group $G$, i.e.\ the largest amenable \emph{normal} subgroup of $G$. This notion was first introduced and studied in \cite{Day}, where Day showed in particular that such a largest subgroup always exists. Recently it has played a big role for example in the study of the unique trace property for group $\C^*$-algebras (see \cite{bkko}). It is then natural to consider the concept of the \emph{Haagerup radical} of a group $G$, i.e.\ the largest normal Haagerup subgroup of $G$. Contrary to the amenable case, it does not seem to be easy to show that every group admits the Haagerup radical; however in the two cases presented below this is the case and moreover the Haagerup radical can be computed.

We begin with a very easy case of $SL_3(\mathbb{Z})$.

\begin{proposition}
	The group $SL_3(\mathbb{Z})$ does not admit any nontrivial normal subgroups with the Haagerup property.
\end{proposition}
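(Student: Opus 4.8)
The plan is to reduce the statement to the structure theory of normal subgroups of $SL_3(\mathbb{Z})$ and then invoke the finite-index permanence of the Haagerup property. The crucial structural input is Margulis's Normal Subgroup Theorem: since $SL_3(\mathbb{Z})$ is an irreducible lattice in the simple Lie group $SL_3(\mathbb{R})$, whose real rank is $2\geq 2$, every normal subgroup of $SL_3(\mathbb{Z})$ is either contained in the centre or of finite index. (Alternatively, for this concrete group one may cite the solution of the congruence subgroup problem for $SL_n(\mathbb{Z})$ with $n\geq 3$ due to Bass--Milnor--Serre and Mennicke, which yields the same dichotomy.) First I would record that the centre of $SL_3(\mathbb{Z})$ is trivial: a central element is a scalar matrix $\lambda I$ with $\lambda\in\mathbb{Z}$ and $\lambda^3=\det(\lambda I)=1$, forcing $\lambda=1$. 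Hence the dichotomy simplifies to the statement that every nontrivial normal subgroup of $SL_3(\mathbb{Z})$ has finite index.

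With this in hand the conclusion is immediate. Let $N\neq\{e\}$ be a normal subgroup; by the above $N$ has finite index in $SL_3(\mathbb{Z})$, and in particular $N$ is infinite. Suppose towards a contradiction that $N$ has the Haagerup property. Since $N$ is then a finite-index Haagerup subgroup, Proposition \ref{extensions} implies that $SL_3(\mathbb{Z})$ itself has the Haagerup property. But $SL_3(\mathbb{Z})$ is an infinite Kazhdan group, so it cannot be Haagerup, as a group which is simultaneously Haagerup and Kazhdan must be finite. This contradiction shows that no nontrivial normal subgroup of $SL_3(\mathbb{Z})$ can be Haagerup; equivalently, the Haagerup radical of $SL_3(\mathbb{Z})$ is trivial.

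I expect the only genuine ingredient to be the normal subgroup dichotomy, which is precisely why the argument is short (and why the authors describe it as an easy case); everything after it is a one-line application of the permanence statement in Proposition \ref{extensions} together with the incompatibility of the Haagerup property with Property (T). The one point requiring mild care is checking the hypotheses needed to apply Margulis's theorem, namely the irreducibility and higher rank of the ambient lattice, and the triviality of the centre of $SL_3(\mathbb{Z})$; both are standard, so I anticipate no real obstacle.
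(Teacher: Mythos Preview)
Your argument is correct and follows essentially the same route as the paper: both invoke Margulis's Normal Subgroup Theorem for the higher-rank lattice $SL_3(\mathbb{Z})$, use the triviality of the centre, and then conclude via the finite-index permanence of the Haagerup property (Proposition~\ref{extensions}) together with the fact that $SL_3(\mathbb{Z})$ is Kazhdan. The only cosmetic difference is the order of the steps---the paper first rules out the finite-index case and then observes that finite normal subgroups must be central, whereas you first dispose of the central case and then handle finite index---but the ingredients are identical.
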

\begin{proof}
	Suppose that $H$ is a normal subgroup of $SL_3(\mathbb{Z})$ which has the Haagerup property. By Margulis's normal subgroup theorem (Chapter IV in \cite{mar}), we know that $H$ is either finite or has finite index. Since $H$ is Haagerup, it must be  finite. But then $SL_3(\bz)$ admits no finite normal subgroups (as any finite normal subgroup of a higher rank lattice is contained in its centre, see \cite[Section 17.1]{Morris}).

\end{proof}

\begin{proposition}\label{prop:normalSL2}
	Let $G=\mathbb{Z}^2\rtimes SL_2(\mathbb{Z})$. Then  $G$ admits the Haagerup radical, which is $\mathbb{Z}^2\rtimes \mathbb{Z}/2\mathbb{Z}$, where $\mathbb{Z}/2\mathbb{Z}=\langle -I \rangle$.
\end{proposition}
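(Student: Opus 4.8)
The plan is to produce $R := \mathbb{Z}^2 \rtimes \langle -I\rangle$ as a concrete normal Haagerup subgroup, and then to show that \emph{every} normal Haagerup subgroup of $G$ is contained in $R$; together these prove that the Haagerup radical exists and equals $R$. First I would check that $R$ is normal: since $-I$ is central in $SL_2(\mathbb{Z})$ and $\mathbb{Z}^2 \trianglelefteq G$, a one-line conjugation computation (conjugating $(w,B)$ with $B \in \{I,-I\}$ by an arbitrary $(v,A)$) shows $R$ is preserved by all of $G$. As $R$ is a finite extension of $\mathbb{Z}^2$ it is virtually abelian, hence amenable, hence Haagerup, so $R$ is a legitimate normal Haagerup subgroup.

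Now let $N \trianglelefteq G$ be any normal Haagerup subgroup, write $\pi\colon G \to SL_2(\mathbb{Z})$ for the canonical projection and set $\bar N := \pi(N) \trianglelefteq SL_2(\mathbb{Z})$. The starting observation is that $N \cap \mathbb{Z}^2$ is normal in $G$, and because conjugation by $(0,A)$ acts on $\mathbb{Z}^2$ by the matrix $A$, it is an $SL_2(\mathbb{Z})$-invariant subgroup of $\mathbb{Z}^2$. Using that $SL_2(\mathbb{Z})$ acts transitively on primitive vectors, the only such subgroups are $\{0\}$ and $k\mathbb{Z}^2$ with $k \geq 1$, so $N \cap \mathbb{Z}^2$ has one of these forms. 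If $N \cap \mathbb{Z}^2 = \{0\}$, then for $(w,B) \in N$ and $v \in \mathbb{Z}^2$ conjugating $(w,B)$ by $(v,I)$ and multiplying by $(w,B)^{-1}$ gives $((I-B)v,I) \in N \cap \mathbb{Z}^2 = \{0\}$; this forces $(I-B)v = 0$ for all $v$, i.e.\ $B = I$, so $N \subseteq \mathbb{Z}^2$ and hence $N = \{e\} \subseteq R$. Thus any \emph{nontrivial} normal Haagerup subgroup must have $N \cap \mathbb{Z}^2 = k\mathbb{Z}^2$ for some $k \geq 1$.

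In this remaining case I would set $M := \mathbb{Z}^2 \cdot N$, which (as $\mathbb{Z}^2$ is normal and $\pi(M) = \bar N$) is exactly the subgroup $\mathbb{Z}^2 \rtimes \bar N$. Since $N \trianglelefteq M$ and $\mathbb{Z}^2 \cap N = k\mathbb{Z}^2$, the second isomorphism theorem gives $M/N \cong \mathbb{Z}^2/k\mathbb{Z}^2 \cong (\mathbb{Z}/k\mathbb{Z})^2$, so $N$ has finite index $k^2$ in $M$. If $\bar N$ were non-amenable, Proposition \ref{prop:Burger} would show that $M = \mathbb{Z}^2 \rtimes \bar N$ is not Haagerup; but by Proposition \ref{extensions} a group with a finite-index Haagerup subgroup is itself Haagerup, so $N$ could not be Haagerup, contradicting our hypothesis. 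Hence $\bar N$ is amenable. Finally, since the amenable radical of $SL_2(\mathbb{Z})$ is its centre $\langle -I\rangle$ (the amenable radical of $PSL_2(\mathbb{Z}) \cong \mathbb{Z}/2\mathbb{Z} * \mathbb{Z}/3\mathbb{Z}$ is trivial), the amenable normal subgroup $\bar N$ satisfies $\bar N \subseteq \langle -I\rangle$, whence $N \subseteq \pi^{-1}(\langle -I\rangle) = R$.

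The main obstacle is precisely the case $N \cap \mathbb{Z}^2 = k\mathbb{Z}^2$ with $\bar N$ possibly large: one must rule out non-amenable $\bar N$. The key idea that makes this tractable is to recognise $N$ not as an abstract extension of $\bar N$ by $k\mathbb{Z}^2$ (whose extension class one would otherwise have to analyse via relative Property (T) for non-split extensions) but as a \emph{finite-index} subgroup of the honest semidirect product $\mathbb{Z}^2 \rtimes \bar N$, so that Burger's rigidity and the finite-index permanence of the Haagerup property can simply be combined. The only other ingredient requiring a short separate justification is the computation of the amenable radical of $SL_2(\mathbb{Z})$.
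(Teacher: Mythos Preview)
Your proof is correct and follows the same overall strategy as the paper: enlarge the normal Haagerup subgroup to one of the form $\mathbb{Z}^2 \rtimes K$ that is still Haagerup, invoke Proposition~\ref{prop:Burger} to force $K$ amenable, and finish via the amenable radical of $SL_2(\mathbb{Z})$. The paper's execution is a bit more streamlined: instead of analysing $N \cap \mathbb{Z}^2$ and splitting into cases, it simply forms $NU$ with $U = R = \mathbb{Z}^2 \rtimes \langle -I\rangle$ and observes that $NU/N$, being a quotient of the amenable group $U$, is amenable, so $NU$ is Haagerup by Proposition~\ref{extensions}; since $U \subseteq NU$ one has $NU = \mathbb{Z}^2 \rtimes K$ immediately. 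This bypasses both your case split and the finite-index computation, though your route has the small bonus of making explicit that a nontrivial normal Haagerup subgroup must already contain some $k\mathbb{Z}^2$.
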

\begin{proof}
	Let $H$ be a normal subgroup of $G$ with the Haagerup property and $U=\mathbb{Z}\rtimes \mathbb{Z}/2\mathbb{Z}$. We aim to show that $H\subseteq U$. Clearly, $U$ is amenable and normal inside $G$. Thus $HU$ is a normal subgroup of $G$ with the Haagerup property. As $U\subseteq HU$, we see that $HU=\mathbb{Z}^2\rtimes K$ for some group $K$ with $\mathbb{Z}/2\mathbb{Z}\subseteq K\subseteq SL_2(\mathbb{Z})$. Proposition \ref{prop:Burger} implies that $K$ is amenable. As $HU$ is normal in $G$, we know that $K$ is also normal inside $SL_2(\mathbb{Z})$, hence $K$ is contained in the amenable radical of $SL_2(\mathbb{Z})$. 
	
Now observe that the amenable radical of $SL_2(\mathbb{Z})$ coincides with $\mathbb{Z}/2\mathbb{Z}$. This is well-known, but we include a short proof. Recall that $SL_2(\mathbb{Z})$ is hyperbolic and $K$ is contained in a maximal amenable subgroup $C$ of $SL_2(\mathbb{Z})$ which is virtually cyclic and almost malnormal, i.e.\ for all $g\in SL_2(\mathbb{Z})\setminus C$,  $|gCg^{-1}\cap C|<\infty$ (see for example Theorem 7.2 (vi) in \cite{Luck}). Therefore $K$ is finite, and hence elements in $K$ must have eigenvalues equal either $1, 1$ or $-1, -1$, and hence $K=\mathbb{Z}/2\mathbb{Z}$. That is $HU=U$, hence $H\subseteq U$.	
\end{proof}

Note that in the two cases considered above the Haagerup radical coincides with the amenable radical. We end this subsection by observing that the general  question of existence of the Haagerup radical seems to be open even in some apparently elementary cases.

\begin{quest}
Suppose that $H,G$ are groups with the Haagerup property and let $G\times G$ act on $G$ via the left/right shifts. Is it then true that 
$A \wr_G (G \times G)$ has the Haagerup property? Note that $A \wr_G (G \times G)$ is generated by two normal Haagerup subgroups; so its Haagerup radical, if it exists, must be equal to the group itself.
\end{quest}

\subsection*{Maximal Haagerup subgroups in $\mathbb{Z}^2\rtimes SL_2(\mathbb{Z})$}

We are ready to present a central result of this section, i.e.\ a description of all maximal Haagerup subgroups of $\mathbb{Z}^2\rtimes SL_2(\mathbb{Z})$. Recall that if we consider a group $G$ acting on an abelian group $H$ then an $H$-valued 1-cocycle (or just a cocycle) on $G$ is a map $c:G \to H$ such that $c(g_1 g_2) = g_1c(g_2) + c (g_1)$, $g_1,g_2 \in H$, and $c$ is called a coboundary if there is $\xi \in H$ such that $c(g) = \xi - g \xi$, $g \in G$.

\begin{thm}\label{prop: classify mHAP in z2 times sl2z}
	Suppose that $H$ is a maximal Haagerup subgroup in $G=\mathbb{Z}^2\rtimes SL_2(\mathbb{Z})$. Then exactly one of the following cases holds.\\
	(1) $H=\mathbb{Z}^2\rtimes C'$ for some maximal amenable subgroup $C'<SL_2(\mathbb{Z})$.\\
	(2) $H=\{(c(g), g): ~g\in S\}$, where $S<SL_2(\mathbb{Z})$ is a non-amenable subgroup, and $c: S\to \mathbb{Z}^2$ is a cocycle that cannot be extended to a strictly larger subgroup of $SL_2(\mathbb{Z})$. 
	In particular, $H\cong S$, and $H$ is not conjugate to $S$ inside $\mathbb{Z}^2\rtimes SL_2(\mathbb{Z})$ unless $c$ is a coboundary, in which case $S=SL_2(\mathbb{Z})$.
	
	Conversely, each of the subgroups in (1) and (2) is a maximal Haagerup subgroup of $\mathbb{Z}^2\rtimes SL_2(\mathbb{Z})$.
\end{thm}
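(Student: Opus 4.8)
The plan is to analyse a maximal Haagerup subgroup $H < G = \mathbb{Z}^2 \rtimes SL_2(\mathbb{Z})$ through two invariants: its image $S := \pi(H)$ under the quotient map $\pi \colon G \to SL_2(\mathbb{Z})$, and the abelian part $L := H \cap \mathbb{Z}^2$. Writing $N = \mathbb{Z}^2$, since $N \trianglelefteq G$ the subgroup $L$ is invariant under the action of $S$ on $N$: for $h \in H$ and $n \in L$ one has $hnh^{-1} \in H \cap N = L$ and $hnh^{-1} = \pi(h)\cdot n$. The whole argument is then driven by the dichotomy ``$S$ amenable versus $S$ non-amenable'', and the key external inputs are Proposition \ref{prop:Burger} (rigidity of $\mathbb{Z}^2 < \mathbb{Z}^2 \rtimes S$ for non-amenable $S$) together with the permanence statements in Proposition \ref{extensions}.

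First I would treat the case $S$ amenable. Then $N \rtimes S$ is an extension of the abelian group $N$ by the amenable group $S$, hence amenable and so Haagerup; as $H \subseteq \pi^{-1}(S) = N \rtimes S$, maximality of $H$ forces $H = N \rtimes S = \mathbb{Z}^2 \rtimes S$. Maximal amenability of $S$ follows at once: any amenable $S' \supseteq S$ gives an amenable (hence Haagerup) group $\mathbb{Z}^2 \rtimes S' \supseteq H$, so $S' = S$. This is case (1) with $C' = S$, and here $L = N \neq \{0\}$.

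The heart of the matter is the case $S$ non-amenable, where I must show $L = \{0\}$; this is the step I expect to be the main obstacle, as it is where one must rule out intermediate invariant subgroups. A non-amenable subgroup of $SL_2(\mathbb{Z})$ cannot preserve a rational line (the line-stabiliser in $SL_2$ is a solvable Borel, so any subgroup fixing a line is amenable), hence an $S$-invariant $L$ cannot have rank $1$; thus $L$ is either $\{0\}$ or of finite index in $N$. If $L$ had finite index, then $\langle N, H\rangle = N \rtimes S$ would contain $H$ with $[N\rtimes S : H] = [N:L] < \infty$, so by Proposition \ref{extensions} the group $\mathbb{Z}^2 \rtimes S$ would be Haagerup, contradicting Proposition \ref{prop:Burger}. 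Hence $L = \{0\}$, so $\pi|_H \colon H \to S$ is an isomorphism; its inverse is a homomorphic section $g \mapsto (c(g), g)$, and unwinding the product in $G$ shows precisely that $c$ satisfies the cocycle identity, giving $H = \{(c(g), g) : g \in S\}$ as in (2). Non-extendability of $c$ is then equivalent to maximality: any extension $\tilde c \colon \tilde S \to \mathbb{Z}^2$ with $\tilde S \supsetneq S$ yields $\{(\tilde c(g), g)\} \cong \tilde S < SL_2(\mathbb{Z})$, which is Haagerup and strictly contains $H$. Since cases (1) and (2) are distinguished by $L = N$ versus $L = \{0\}$, exactly one holds.

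For the converse I would reverse these arguments. Given $H = \mathbb{Z}^2 \rtimes C'$ with $C'$ maximal amenable, any Haagerup $K \supsetneq H$ contains $N$, so $K = \mathbb{Z}^2 \rtimes \pi(K)$ with $\pi(K) \supseteq C'$; here $\pi(K)$ must be amenable by Proposition \ref{prop:Burger}, whence $\pi(K) = C'$ by maximal amenability, and $K = H$. Given $H = \{(c(g),g): g \in S\}$ as in (2), for any Haagerup $K \supsetneq H$ the same rank-and-finite-index analysis (using Propositions \ref{extensions} and \ref{prop:Burger}) shows $K \cap N = \{0\}$, so $K$ is the graph of a cocycle $\tilde c$ on $\pi(K) \supseteq S$ restricting to $c$, and non-extendability gives $K = H$. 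Finally, the ``in particular'' claims follow by direct computation: $H \cong S$ via $\pi|_H$, and conjugating the canonical copy $\{0\} \rtimes S$ by $(\xi, e) \in N$ produces the graph of the coboundary $g \mapsto \xi - g\xi$, so $H$ is conjugate to $S$ exactly when $c$ is a coboundary; as a coboundary extends to all of $SL_2(\mathbb{Z})$ by the same formula, non-extendability then forces $S = SL_2(\mathbb{Z})$.
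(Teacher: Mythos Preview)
Your proof is correct and in fact takes a cleaner route than the paper. Both arguments ultimately split according to the rank of $L = H \cap \mathbb{Z}^2$, but the paper organises the proof by that rank from the outset and, in its Case~1 (rank two), works rather hard: it must show that if $\pi(H)$ were non-amenable then $H$ would nevertheless sit inside some $\mathbb{Z}^2 \rtimes C'$ with $C'$ maximal amenable, and for this it invokes the almost malnormality of maximal amenable subgroups of the hyperbolic group $SL_2(\mathbb{Z})$, appealing to the Dahmani--Guirardel--Osin machinery on hyperbolically embedded subgroups. Your key simplification is the observation that when $S = \pi(H)$ is non-amenable and $L$ has finite index in $\mathbb{Z}^2$, then $[\,\mathbb{Z}^2 \rtimes S : H\,] = [\mathbb{Z}^2 : L] < \infty$, so $\mathbb{Z}^2 \rtimes S$ would inherit the Haagerup property from $H$ by Proposition~\ref{extensions}, contradicting Proposition~\ref{prop:Burger}. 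This index argument is entirely elementary and replaces all the hyperbolic-group input; the paper's more structural approach does yield slightly finer information along the way (e.g.\ locating infinite-order elements in $H \cap SL_2(\mathbb{Z})$), but none of that is needed for the statement as formulated. Your treatment of the rank-one case, the cocycle description, the converse, and the ``in particular'' claims is essentially the same as the paper's, just phrased more uniformly through the amenable/non-amenable dichotomy for $S$.
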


\begin{proof}
	We split the argument into three cases according to the rank of $H\cap \mathbb{Z}^2$.
	
\textbf{Case 1}: Assume $H\cap \mathbb{Z}^2\cong \mathbb{Z}^2$. We claim that $H=\mathbb{Z}^2\rtimes C'$ for some maximal amenable subgroup $C'< SL_2(\mathbb{Z})$.

Write $C=H\cap SL_2(\mathbb{Z})$ and $H\cap \mathbb{Z}^2=B\mathbb{Z}^2$ for some matrix $B\in M_2(\mathbb{Z})$ with $det(B)\neq 0$. Note that both $H$ and $\mathbb{Z}^2$, hence also $B\mathbb{Z}^2$, are globally invariant under the natural action of $C$, so we can consider the semidirect product $B\mathbb{Z}^2\rtimes C=(H\cap \mathbb{Z}^2)\rtimes C< H$. Observe that $CB\mathbb{Z}^2=B\mathbb{Z}^2$ implies $B^{-1}CB< SL_2(\mathbb{Z})$, hence $B\mathbb{Z}^2\rtimes C\cong \mathbb{Z}^2\rtimes B^{-1}CB$ via the map $(Bx, c)\mapsto (x, B^{-1}cB)$. The fact that $H$ has the Haagerup property implies via Proposition \ref{prop:Burger} that $B^{-1}CB$ is amenable and hence $C$ is an amenable subgroup of $SL_2(\mathbb{Z})$. Since $SL_2(\mathbb{Z})$ is word hyperbolic, $C$ is virtually cyclic (again see for example \cite[Theorem 7.2 (vi)]{Luck}).
	
Now, observe that $K:=\{t\in SL_2(\mathbb{Z}): \exists_{a\in \mathbb{Z}^2}\,  (a,t)\in H\}$ is a subgroup of $SL_2(\mathbb{Z})$ and $H< \mathbb{Z}^2\rtimes K$; therefore, without loss of generality, we may assume $K$ is not amenable; otherwise, $H=\mathbb{Z}^2\rtimes K$, $K$ is automatically maximal amenable and we are done. Now, since $K$ is non-amenable and $SL_2(\mathbb{Z})$ is linear, then Tits' alternative theorem for linear groups implies that $K$ contains $\mathbb{F}_2$, in particular, there exists some $t\in K$ of infinite order. Let then $a\in \mathbb{Z}^2$ be such that $(a, t)\in H$.
	
We claim that there is also some element $t'\in C$ of infinite order.	To see this, for each $n \in \bz$ write $(a,t)^n=(a_n,t^n)$ for some $a_n\in\mathbb{Z}^2$. As we have now the inclusion $H\supseteq (a,t)^{-1}(B\mathbb{Z}^2,e)(a,t)=(t^{-1}\cdot (-a+B\mathbb{Z}^2), t^{-1})(a,t) = (t^{-1}\cdot (B\mathbb{Z}^2), e)$, 
 we conclude that $t^{-1}\cdot(B\mathbb{Z}^2)\subseteq H\cap \mathbb{Z}^2=B\mathbb{Z}^2$. Similarly by considering $H\supseteq (a,t)(B\mathbb{Z}^2,e)(a,t)^{-1}$  we deduce that $t\cdot (B\mathbb{Z}^2)\subseteq B\mathbb{Z}^2$. Thus $t\cdot (B\mathbb{Z}^2)=B\mathbb{Z}^2$. Now, as $[\mathbb{Z}^2: B\mathbb{Z}^2]<\infty$, we can find $n,m \in \bz$, $n\neq m$ such that $a_n\equiv a_m~mod~B\mathbb{Z}^2$, then using $H\ni (a_n,t^n)^{-1}(a_m,t^m)=(t^{-n}\cdot(-a_n+a_m), t^{-n+m})$ and $t^{-n}\cdot(-a_n+a_m)\in t^{-n}(B\mathbb{Z}^2)=B\mathbb{Z}^2\subseteq H$, we deduce that $t^{-n+m}\in C$. Define $t'=t^{-n+m}$.   
	
Now, we claim that $H< \mathbb{Z}^2\rtimes C'$, where $C'$ is a maximal amenable subgroup containing $C$.
	
Suppose this is not the case. Then there exists some $(a', s)\in H$ such that $s\not\in C'$. For any $c\in C$ we have $(a',s)c(a',s)^{-1}=(a'-(scs^{-1})\cdot a', scs^{-1})\in H$, and $[\mathbb{Z}^2: B\mathbb{Z}^2]<\infty$. Considering $c$ of infinite order, whose existence we deduced in the above paragraph, we can find some $n,m \in \bz$, $n\neq m$, such that $a'-(sc^ns^{-1})\cdot a'\equiv a'-(sc^ms^{-1})\cdot a'~~mod~~B\mathbb{Z}^2$. This in turn implies that $sc^{n-m}s^{-1}\in H\cap SL_2(\mathbb{Z})=C$, so that $|sCs^{-1}\cap C|=\infty$ and hence $|sC's^{-1}\cap C'|=\infty$. 

The last statement cannot hold. Indeed by \cite{dgo} we know that any hyperbolic embedded subgroup in a group is almost malnormal. So it suffices to show that $C'$ is hyperbolic embedded in the hyperbolic group $SL_2(\mathbb{Z})$, which is clear by \cite[Corollary 6.6+Theorem 6.8]{dgo}.

	Therefore, $H< \mathbb{Z}^2\rtimes C'$. Since $H$ is a maximal Haagerup subgroup, we deduce that $H=\mathbb{Z}^2\rtimes C'$.

	\textbf{Case 2}: Assume $H\cap \mathbb{Z}^2\cong \mathbb{Z}$. We will deduce a contradiction.
	
Let $x\in H$ be then such that $\langle x\rangle=H\cap \mathbb{Z}^2$. Take any $(a, g)\in H$, where $a\in \mathbb{Z}^2$ and $g\in SL_2(\mathbb{Z})$. Since $x(a,g)x^{-1}(a,g)^{-1}=(x-g\cdot x, e)\in H\cap \mathbb{Z}^2=\langle x \rangle$, we deduce that $g\cdot x\in \langle x \rangle$. Similarly, from $x(a,g)^{-1}x^{-1}(a,g)\in \langle x \rangle$, we deduce that $g^{-1}\cdot x\in \langle x \rangle$. Thus $g\cdot x\in \{\pm x\}$. This in turn means that $H\subseteq \mathbb{Z}^2\rtimes C$, where $C=\{g\in SL_2(\mathbb{Z}): g\cdot x\in \{\pm x\}\}$. Let $C':=\{g\in C: g\cdot x=x\}$. Then $[C: C']\leq 2$ and we observe that $C'$ is cyclic. Indeed, write $x=(m, n)^t$ with $m,n \in \bz $ and $gcd(m, n)=d\neq 0$. There exists some $h\in SL_2(\mathbb{Z})$ such that $h\cdot x=(d, 0)^t$. It follows that $hC'h^{-1}=\{g\in SL_2(\mathbb{Z}): g\cdot (d, 0)^t=(d, 0)^t\}=Id+\mathbb{Z}e_{1, 2}$, which is cyclic. Therefore $C$ is amenable. Finally as $H$ is maximal Haagerup we deduce that $H=\mathbb{Z}^2\rtimes C$, but then $H\cap \mathbb{Z}^2=\mathbb{Z}^2$, which is a desired contradiction.

	\textbf{Case 3}: Assume $H\cap \mathbb{Z}^2$ is trivial. We claim that $H$ is of the form in the second choice in the conclusion.

Clearly, if $(a, g)\in H$, then $a$ is uniquely determined by $g$, so that we can write $a=c(g)$ for some map $c: \pi(H)\to \mathbb{Z}^2$, where $\pi(H)=\{g\in SL_2(\mathbb{Z}): \exists_{b \in \bz^2}\, (b, g)\in H\}$ is a subgroup of $SL_2(\mathbb{Z})$.
Then $H=\{(c(g), g): g\in \pi(H)\}$. As $H\subsetneq \mathbb{Z}^2\rtimes \pi(H)$ is a maximal Haagerup subgroup, $\pi(H)$ is non-amenable. 
Note that $c$ is a cocycle and  $H\cong \pi(H)$ via the map $(c(g), g)\mapsto g$. 
	
Now for any nonamenable subgroup $\pi(H)< SL_2(\mathbb{Z})$, the following are equivalent:
\begin{rlist}
	\item  $H:=\{(c(g), g):~g\in \pi(H)\}$ is maximal Haagerup;
	\item the cocycle $c$ can not be extended to a strictly larger subgroup of $SL_2(\mathbb{Z})$.
\end{rlist}
	Indeed, (i)$\Rightarrow$ (ii) is trivial; to see (ii)$\Rightarrow$(i) holds, assume $H\subsetneq K$ and $K$ is a maximal Haagerup subgroup of $G$. Then since $K\cap\mathbb{Z}^2\not\cong \mathbb{Z}$ by case 2, we deduce $K\cap \mathbb{Z}^2\cong \mathbb{Z}^2$ or $K\cap \mathbb{Z}^2=\{e\}$. If the first choice holds, then $K=\mathbb{Z}^2\rtimes C'$ for some maximal amenable subgroup $C'$ of $SL_2(\mathbb{Z})$ by case 1. But then $\pi(H)<\pi(K)=C'$ is amenable, which is a contradiction. Hence $K\cap\mathbb{Z}^2=\{e\}$ and further $K=\{(c'(g), g): ~g\in \pi(K)\}$ for some cocycle $c': \pi(K)\to\mathbb{Z}^2$. Clearly, $c'|_{\pi(H)}=c$ and $\pi(H)\subsetneq \pi(K)$, so that (ii) cannot hold.

The rest of the theorem follows now by standard arguments (and by Proposition \ref{prop:Burger}).
	\end{proof}

In view of the above theorem it is natural to ask whether one can understand better the subgroups appearing in its conclusion. We begin by analysing maximal amenable subgroups of $SL_2(\bz)$.

\begin{prop}\label{maxamenSL2}
	Let $H<SL_2(\mathbb{Z})$ be an infinite maximal amenable subgroup. Then there exists $g\in SL_2(\mathbb{Z})$ which has infinite order and nonnegative trace such that $\langle g, -I_2\rangle\subseteq H$. Moreover \\
	(1) If $g$ is not similar to $\begin{pmatrix}
	1&1\\
	0&1\\
	\end{pmatrix}$ in $SL_2(\mathbb{R})$ (for example if $Tr(g)>2$), then $\mathbb{Z}^2\rtimes H$ is ICC.;\\
	(2) If $g\sim \begin{pmatrix}
		1&1\\
		0&1\\
	\end{pmatrix}$, then $\mathbb{Z}^2\rtimes H$ is not ICC.
\end{prop}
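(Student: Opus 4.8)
The plan is to first extract the element $g$ and then translate the ICC condition into a statement about the linear action $H\curvearrowright\mathbb{Z}^2$.

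\emph{Producing $g$.} First I would use that $SL_2(\mathbb{Z})$ is word hyperbolic (virtually free), so that its infinite amenable subgroups are virtually cyclic; in particular $H$ contains an element of infinite order generating a finite-index cyclic subgroup. I would also note that $-I_2\in H$: the group $\langle H,-I_2\rangle$ is a central $\mathbb{Z}/2\mathbb{Z}$-extension of $H$, hence amenable, so maximality of $H$ forces $-I_2\in H$. Multiplying our infinite-order element by $-I_2$ if necessary (this keeps it of infinite order, since $g^n=\pm I_2$ forces $n=0$) I can arrange $Tr(g)\ge 0$, and then $\langle g,-I_2\rangle\subseteq H$ as required. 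Because $g$ has infinite order it is not elliptic, so $|Tr(g)|\ge 2$; together with $Tr(g)\ge0$ this leaves exactly $Tr(g)=2$ (parabolic, $g\sim\left(\begin{smallmatrix}1&1\\0&1\end{smallmatrix}\right)$) or $Tr(g)>2$ (hyperbolic).

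\emph{Reducing ICC to orbits.} Writing elements of $G:=\mathbb{Z}^2\rtimes H$ as pairs $(v,h)$, I would record the conjugation identity $(w,k)(v,h)(w,k)^{-1}=(w+k\cdot v-(khk^{-1})\cdot w,\ khk^{-1})$. Taking $k=e$ shows that conjugating $(v,h)$ by $(w,e)$ yields $(v+(I_2-h)w,h)$, so as soon as $h\ne I_2$ the first coordinate already sweeps out the infinite coset $v+(I_2-h)\mathbb{Z}^2$; hence every element with nontrivial $SL_2(\mathbb{Z})$-part has infinite conjugacy class. Consequently the only possible finite classes are those of $(v,e)$ with $v\ne0$, and for these the class is exactly $\{(k\cdot v,e):k\in H\}$, i.e.\ the $H$-orbit of $v$. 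Thus $G$ is ICC if and only if every nonzero $v\in\mathbb{Z}^2$ has infinite $H$-orbit, and since $\langle g\rangle$ has finite index in $H$ this is in turn equivalent to: no nonzero $v\in\mathbb{Z}^2$ is fixed by some power $g^k$, $k\ge1$.

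\emph{Splitting into the two cases.} For (1), if $g$ is hyperbolic then each $g^k$ ($k\ge1$) has eigenvalues $\lambda^{\pm k}\ne1$, so $g^k-I_2$ is invertible and fixes no nonzero vector; by the reduction $G$ is ICC. For (2), if $g$ is parabolic then $\ker(g-I_2)$ is a rational line, so it contains a nonzero $v_0\in\mathbb{Z}^2$ with $g\cdot v_0=v_0$; then $\langle g\rangle\subseteq\mathrm{Stab}_H(v_0)$ has finite index, the $H$-orbit of $v_0$ is finite, and $(v_0,e)$ is a nontrivial element with finite conjugacy class, so $G$ is not ICC.

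The computations here are light, so the hard part is really the bookkeeping in the reduction: one must check uniformly that nothing outside $\mathbb{Z}^2$ can have a finite class, and in the parabolic case that fixity of $v_0$ under $g$ upgrades to finiteness of the full $H$-orbit, which is exactly where finite index of $\langle g\rangle$ in $H$ enters. I would also add a remark that when $Tr(g)=2$ the element is $SL_2(\mathbb{R})$-conjugate to $\left(\begin{smallmatrix}1&1\\0&1\end{smallmatrix}\right)$ or to its inverse $\left(\begin{smallmatrix}1&-1\\0&1\end{smallmatrix}\right)$; both fix a rational line, so the non-ICC conclusion does not see this distinction, and replacing $g$ by $g^{-1}$ interchanges the two, confirming that the genuine dichotomy in (1)--(2) is hyperbolic versus parabolic.
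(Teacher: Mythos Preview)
Your argument is correct and follows the same overall strategy as the paper: extract an infinite-order generator $g$ from the virtually cyclic group $H$, and decide ICC by analysing the $H$-orbits on $\mathbb{Z}^2$. The execution, however, is noticeably more economical than the paper's. You first reduce uniformly to the statement ``$\mathbb{Z}^2\rtimes H$ is ICC iff every nonzero $v\in\mathbb{Z}^2$ has infinite $H$-orbit'', whereas the paper intertwines this reduction with the case split. In case~(1) you invoke directly that a hyperbolic $g$ has powers with no eigenvalue $1$; the paper instead argues by contradiction, deducing from $g^n\sim\left(\begin{smallmatrix}1&1\\0&1\end{smallmatrix}\right)$ that the eigenvalues of $g$ are roots of unity and then running through the possibilities $Tr(g)\in\{0,1,2\}$. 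In case~(2) you use only that $[H:\langle g\rangle]<\infty$ to pass from a $g$-fixed vector to a finite $H$-orbit; the paper works harder, showing that every infinite-order $h\in H$ has the form $P\left(\begin{smallmatrix}1&*\\0&1\end{smallmatrix}\right)P^{-1}$ for one fixed $P$, so that all of $H$ acts on the line $P\cdot(1,0)^t$ by $\pm 1$. The paper's explicit description of $H$ in the parabolic case gives slightly more information, but for the ICC conclusion your finite-index shortcut is all that is needed. Your closing remark on $\left(\begin{smallmatrix}1&-1\\0&1\end{smallmatrix}\right)$ versus $\left(\begin{smallmatrix}1&1\\0&1\end{smallmatrix}\right)$ is also well taken: the similarity in the statement should be read in $GL_2(\mathbb{R})$ (as the paper's own proof of~(2) does, taking $P\in GL_2(\mathbb{Q})$), so that the dichotomy is exactly hyperbolic versus parabolic.
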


\begin{proof}
As noted before,  since $SL_2(\mathbb{Z})$ is hyperbolic, $H$ is virtually cyclic. Then note that $\mathcal{Z}(SL_2(\mathbb{Z}))=\{\pm I_2\}\in H$. Then we can take $g$ to be a generator of $\mathbb{Z}< H$. We may assume $Tr(g)\geq 0$; otherwise, we can replace $g$ by $-g$. Now consider the two cases separately.

\vspace*{0.2cm}
Case (1): 

Assume that $g\not\sim \begin{pmatrix}
	1&1\\
	0&1\\
	\end{pmatrix}$. We will show that all points $(x, y)^t\in \mathbb{Z}^2\setminus {(0, 0)^t}$ have infinite orbits under the action of $\langle g\rangle$. Indeed, suppose this is not the case.
	 Then there is $n>0$ such that $g^n(x, y)^t=(x, y)^t$ for some $(0, 0)\neq (x, y)\in\mathbb{Z}^2$, and the two eigenvalues of $g^n$ are equal to 1. Hence either $g^n\sim I_2$ or $g^n\sim \begin{pmatrix}
		1&1\\
		0&1\\
		\end{pmatrix}$. The first case cannot hold, as $g^n=I_2$ would contradict the fact that $g$ has infinite order. Now we just need to show that the second case implies $g\sim\begin{pmatrix}
		1&1\\
		0&1\\
		\end{pmatrix}$.		To see this, 
		note first that we know that the two eigenvalues of $g$ are $\omega^k$ and $\omega^{n-k}=\overline{\omega^k}$,  where $\omega$ is the $n$th primitive root of 1 and $k\in\mathbb{Z}$. We may write $\omega^k=a+ib$. Then since $Tr(g)\in \mathbb{Z}^{\geq 0}$, we know $a=m/2$ for some $ m\in\mathbb{Z}^{\geq 0}$. Further $|a|\leq |\omega^k|=1$ implies $m=0, 1$ or $2$. If $m=0$, the eigenvalues of $g$ are $\pm i$, so that $g\sim \begin{pmatrix}
		i&0\\0&-i\\
		\end{pmatrix}$ so $g^n\not\sim \begin{pmatrix}
		1&1\\
		0&1\\
		\end{pmatrix}
		$.		If $m=1$, the eigenvalues of $g$ are $(1\pm \sqrt{3}i)/2$. Hence $g\sim \begin{pmatrix}
		(1+\sqrt{3}i)/2&0\\
		0&(1-\sqrt{3}i)/2\\
		\end{pmatrix}$, so $g^n\not\sim\begin{pmatrix}
		1&1\\
		0&1\\
		\end{pmatrix}
		$. So $m=2$, and the two eigenvalues of $g$ are equal to 1. Then $g\sim \begin{pmatrix}
		1&1\\
		0&1\\
		\end{pmatrix}$. This is however a contradiction with the assumption in (1). 
We are ready to check that $\mathbb{Z}^2\rtimes H$ is ICC. Let then $(a,h)\in \mathbb{Z}^2\rtimes H$ be any nontrivial element. If $a\neq (0, 0)$, then $\{(g^n a,g^nhg^{-n})=(0,g^n)(a,h)(0,g^{-n}): n\in\mathbb{Z}\}$ is infinite by what we discussed above. Hence we may assume $a=(0, 0)$ and $h\neq e_H=I_2$. To show that the conjugacy class of $(0,h)$ is infinite, it suffices to find a sequence of elements $a_n\in\mathbb{Z}^2$ such that $\{a_n + h(-a_n)\}_{n \in \bn}$ are pairwise distinct.  As $a_n + h(-a_n)=(I_2-h)a_n$ and $I_2-h\neq 0$, we can just take $a_n=(n, 0)^t$ or $(0, n)^t$ depending on which column of $I_2-h$ has nonzero entries.

\vspace*{0.2cm}

Case (2): 

Take any $h\in H$ with infinite order, without loss of generality, we may assume that $Tr(h)\geq 0$. Since $H$ is infinite and virtually cyclic, there exist $k, n\in\mathbb{Z}\setminus\{0\}$ such that $h^n=g^k=P\begin{pmatrix}
	1&k\\
	0&1\\
	\end{pmatrix}P^{-1}$, where $g=P\begin{pmatrix}
	1&1\\
	0&1
	\end{pmatrix}P^{-1}$ for some invertible matrix $P\in M_2(\mathbb{Q})$. 
	
	We claim that $h=P\begin{pmatrix}
	1&k/n\\
	0&1\\
	\end{pmatrix}P^{-1}$.
	To see this, observe that exactly as above we can show that the two eigenvalues of $h$ are of the form $a+ib, a-ib$ with $a=m/2$ for $m=0, 1$ or $2$.	If $m=0$, the eigenvalues of $h$ are $\pm i$, hence $h\sim \begin{pmatrix}
	i&0\\0&-i\\
	\end{pmatrix}$ so $h^n\not\sim \begin{pmatrix}
	1&1\\
	0&1\\
	\end{pmatrix}
	\sim \begin{pmatrix}
	1&k\\
	0&1\\
	\end{pmatrix}$.
	If $m=1$, the eigenvalues of $h$ are $(1\pm \sqrt{3}i)/2$. Hence $h\sim \begin{pmatrix}
	(1+\sqrt{3}i)/2&0\\
	0&(1-\sqrt{3}i)/2\\
	\end{pmatrix}$, so $h^n\not\sim\begin{pmatrix}
	1&1\\
	0&1\\
	\end{pmatrix}
	\sim \begin{pmatrix}
	1&k\\
	0&1\\
	\end{pmatrix}$.
	Thus $m=2$, and the two eigenvalues of $h$ are equal to 1. Then $h\sim \begin{pmatrix}
	1&1\\
	0&1\\
	\end{pmatrix}$ and we may write $h=Q\begin{pmatrix}
	1&1\\
	0&1\\
	\end{pmatrix}Q^{-1}$ for some invertible matrix $Q$. 
	
	Writing $P^{-1}Q=\begin{pmatrix}
	a&b\\
	c&d\\
	\end{pmatrix}$, and plugging in the identity $h^n=P\begin{pmatrix}
	1&k\\0&1\\
	\end{pmatrix}P^{-1}$, we deduce that $c=0$ and $an=kd$. Then another  calculation shows that $h=P\begin{pmatrix}
	1&k/n\\0&1\\
	\end{pmatrix}P^{-1}$.
	
Finally we can show  that $\mathbb{Z}^2\rtimes H$ has a finite conjugacy class. Indeed, there is a non-zero vector $v\in\mathbb{Z}^2$ which is fixed by $g$, just take $v$ to be a suitable multiple of $P(1,0)^t$. Then the above calculation shows that for every $h \in  H$ with infinite order, $hv=\pm v$, which implies that $(v,e)\in \bz^2\rtimes H$ has a finite conjugacy class as $H$ contains only finitely many torsion elements.
\end{proof}

We devote the remaining part of this subsection to understanding better the groups appearing in the case (2) of Theorem \ref{prop: classify mHAP in z2 times sl2z}.
We begin with a simple lemma.

\begin{lem}\label{lem: cocycles from sl2z to z2 are coboundaries}
Every cocycle	$c: SL_2(\mathbb{Z})\to \mathbb{Z}^2$ is a coboundary.
\end{lem}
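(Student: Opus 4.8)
The plan is to exploit the central element $-I \in SL_2(\mathbb{Z})$, which acts on $\mathbb{Z}^2$ as multiplication by $-1$. First recall that the cocycle identity forces $c(I)=0$, and that any coboundary $g \mapsto \xi - g\xi$ is automatically a cocycle. The natural candidate for the ``primitive'' $\xi$ is $\tfrac12 c(-I)$: indeed, if $c(g)=\xi-g\xi$ held for all $g$, then taking $g=-I$ would give $c(-I)=\xi-(-I)\xi=2\xi$, so $\xi$ is forced to equal $\tfrac12 c(-I)$. Thus the whole point is (a) to check that $\tfrac12 c(-I)$ actually lies in $\mathbb{Z}^2$, and then (b) to verify that this $\xi$ does the job for every $g$.

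The main (and essentially the only) obstacle is step (a): showing $c(-I)\in 2\mathbb{Z}^2$. Here I would use the order-six element $R:=\begin{pmatrix} 0 & -1 \\ 1 & 1 \end{pmatrix}\in SL_2(\mathbb{Z})$ (a product of the standard generators), whose cube is $R^3=-I$, as one checks by a direct $2\times 2$ computation. Iterating the cocycle relation gives $c(-I)=c(R^3)=(I+R+R^2)\,c(R)$, and a short computation yields
\[ I+R+R^2 = \begin{pmatrix} 0 & -2 \\ 2 & 2 \end{pmatrix} = 2\begin{pmatrix} 0 & -1 \\ 1 & 1 \end{pmatrix}. \]
Since $c(R)\in\mathbb{Z}^2$ and this matrix has all entries even, we immediately obtain $c(-I)\in 2\mathbb{Z}^2$, so that $\xi:=\tfrac12 c(-I)\in\mathbb{Z}^2$, as required.

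For step (b) I would pass to the cocycle $c'(g):=c(g)-(\xi-g\xi)$, which is cohomologous to $c$ and satisfies $c'(-I)=c(-I)-2\xi=0$. Now the centrality of $-I$ finishes the argument: reading the cocycle relation for the product $(-I)g$ gives $c'((-I)g)=(-I)c'(g)+c'(-I)=-c'(g)$, while reading it for $g(-I)$ gives $c'(g(-I))=g\,c'(-I)+c'(g)=c'(g)$. As $(-I)g=g(-I)$, we conclude $2c'(g)=0$, and since $\mathbb{Z}^2$ is torsion-free this forces $c'(g)=0$, i.e.\ $c(g)=\xi-g\xi$ for all $g\in SL_2(\mathbb{Z})$. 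Hence $c$ is a coboundary. I expect no difficulty beyond the parity bookkeeping in step (a); notably the argument never requires a full presentation of $SL_2(\mathbb{Z})$, relying only on the single relation $R^3=-I$ together with the fact that $-I$ is central and acts by $-1$.
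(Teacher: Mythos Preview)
Your proof is correct and takes a genuinely different route from the paper's. The paper argues via the standard presentation of $SL_2(\mathbb{Z})$: it fixes the generators $s=\begin{pmatrix}0&1\\-1&0\end{pmatrix}$ and $t=\begin{pmatrix}0&-1\\1&1\end{pmatrix}$ (your $R$), uses the single relation $s^2=t^3$ to solve for $c(s)$ in terms of $c(t)=(x,y)^t$, and then explicitly writes down $\xi=(-y,x+y)^t$ and verifies $c(g)=\xi-g\xi$ on the generators. Your argument is more conceptual: you only need that $-I$ is central, acts as $-1$ on $\mathbb{Z}^2$, and has a cube root $R$ in the group; the cube root forces $c(-I)\in 2\mathbb{Z}^2$, and then the centrality trick kills the adjusted cocycle $c'$ everywhere at once without ever touching a second generator. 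The payoff is that your proof is presentation-free and visibly generalizes (for instance to any subgroup containing $R$, or to any situation where the center acts by $-1$ and contains an element with an odd root), whereas the paper's approach has the virtue of producing the explicit $\xi$ in coordinates, which it uses elsewhere. Both are short; yours is the cleaner of the two.
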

\begin{proof}
	Fix first a cocycle $c$. It is uniquely determined by the values of $c(s)$ and $c(t)$  where  $s=\begin{pmatrix}
	0&1\\
	-1&0
	\end{pmatrix}$ and $t=\begin{pmatrix}
	0&-1\\
	1&1
	\end{pmatrix}$, so that $\langle s\rangle=\mathbb{Z}/4\mathbb{Z}$, $\langle t \rangle=\mathbb{Z}/6\mathbb{Z}$, and the pair $\{s, t\}$ generates the whole group. 
	We have $c(s^2)=c(t^3)$, i.e.\ $\begin{pmatrix}
	1&1\\
	-1&1
	\end{pmatrix}c(s)=\begin{pmatrix}
	0&-2\\2&2
	\end{pmatrix}c(t)$. So if we set $c(t)=(x, y)\in \mathbb{Z}^2$, we deduce that $c(s)=(-x-2y, x)$. Then a simple calculation shows  that $c$ is a coboundary with $c(g)=\xi-g\xi, g \in SL_2(\bz)$, where $\xi=(-y, x+y)^t$. Note that it suffices to verify the formula on the generators $s,t$.
\end{proof}

We now stop to record a simple group-theoretic corollary of the results of this section; the conclusion itself is likely well-known, but we give a simple proof.

\begin{cor}
	Let $\phi\in Aut(\mathbb{Z}^2\rtimes SL_2(\mathbb{Z}))$. Then there are  $L\in Aut(\mathbb{Z}^2)=GL_2(\mathbb{Z})$ and $\xi\in \mathbb{Z}^2$
	 such that $\phi(a, s)=(L(a)+\xi-(Ls L^{-1}) \xi, LsL^{-1})$ for all $(a,s) \in \mathbb{Z}^2\rtimes SL_2(\mathbb{Z})$. 
\end{cor}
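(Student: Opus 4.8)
The plan is to first establish that $\mathbb{Z}^2$ is a \emph{characteristic} subgroup of $G:=\mathbb{Z}^2\rtimes SL_2(\mathbb{Z})$, so that any $\phi\in\mathrm{Aut}(G)$ must preserve it, and then to analyse separately the restriction $\phi|_{\mathbb{Z}^2}$ and the automorphism induced on the quotient $G/\mathbb{Z}^2\cong SL_2(\mathbb{Z})$.

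For the first step I would show that $\mathbb{Z}^2$ is the unique maximal normal abelian subgroup of $G$. Given a normal abelian $N\trianglelefteq G$, its image under the projection $\pi\colon G\to SL_2(\mathbb{Z})$ is a normal abelian subgroup of $SL_2(\mathbb{Z})$; since $PSL_2(\mathbb{Z})\cong\mathbb{Z}/2\mathbb{Z}*\mathbb{Z}/3\mathbb{Z}$ has no nontrivial normal abelian subgroups, $\pi(N)\subseteq\{\pm I\}$. If $\pi(N)=\{\pm I\}$, then $N$ contains some $(a,-I)$; conjugating by the elements $(b,I)$ and using normality gives $(2b+a,-I)\in N$, and multiplying by $(a,-I)=(a,-I)^{-1}\in N$ forces $2\mathbb{Z}^2\subseteq N$. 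But $(a,-I)$ does not centralise $2\mathbb{Z}^2$, contradicting that $N$ is abelian. Hence $\pi(N)=\{e\}$ and $N\subseteq\mathbb{Z}^2$. (Alternatively one may invoke Proposition \ref{prop:normalSL2}: the radical $\mathbb{Z}^2\rtimes\mathbb{Z}/2\mathbb{Z}$ is characteristic, and $\mathbb{Z}^2$ is its unique torsion-free subgroup of index two.) Since an automorphism carries maximal normal abelian subgroups to maximal normal abelian subgroups, we conclude $\phi(\mathbb{Z}^2)=\mathbb{Z}^2$.

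Having fixed $\phi(\mathbb{Z}^2)=\mathbb{Z}^2$, I set $L:=\phi|_{\mathbb{Z}^2}\in\mathrm{Aut}(\mathbb{Z}^2)=GL_2(\mathbb{Z})$ and write $\phi(0,s)=(\beta(s),\psi(s))$, where $\psi$ denotes the induced automorphism of $SL_2(\mathbb{Z})$. The key observation is that I need not determine $\mathrm{Aut}(SL_2(\mathbb{Z}))$ at all: applying $\phi$ to the identity $(0,s)(n,I)(0,s)^{-1}=(s\cdot n,I)$ and comparing $\mathbb{Z}^2$-components yields $L(sn)=\psi(s)L(n)$ for all $n\in\mathbb{Z}^2$, whence $\psi(s)=LsL^{-1}$ (this lies in $SL_2(\mathbb{Z})$ since $\det(LsL^{-1})=\det s=1$ and $L,L^{-1}$ are integral). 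It then remains to pin down $\beta$. The homomorphism property $\phi((0,s)(0,t))=\phi(0,s)\phi(0,t)$ gives the twisted cocycle identity $\beta(st)=\beta(s)+(LsL^{-1})\beta(t)$; untwisting by setting $c(s):=L^{-1}\beta(s)$ converts this into the genuine cocycle relation $c(st)=c(s)+s\cdot c(t)$ for the standard action, so by Lemma \ref{lem: cocycles from sl2z to z2 are coboundaries} we have $c(s)=\eta-s\eta$ for some $\eta\in\mathbb{Z}^2$. Putting $\xi:=L\eta$ gives $\beta(s)=Lc(s)=\xi-(LsL^{-1})\xi$, and writing $(a,s)=(a,I)(0,s)$ and multiplying $\phi(a,I)=(L(a),I)$ by $\phi(0,s)$ produces exactly the asserted formula.

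I expect the main obstacle to be the first step, namely verifying that $\mathbb{Z}^2$ is characteristic; the other two steps are essentially forced once the equivariance relation and Lemma \ref{lem: cocycles from sl2z to z2 are coboundaries} are in hand. The delicate point is excluding normal abelian subgroups that project onto the centre $\{\pm I\}$ of $SL_2(\mathbb{Z})$, which is handled by the short commutator computation above (or by the radical argument via Proposition \ref{prop:normalSL2}).
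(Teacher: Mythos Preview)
Your proof is correct and follows essentially the same route as the paper: first argue that $\mathbb{Z}^2$ is characteristic, then split $\phi$ into the pieces $L$, $\psi$, $\beta$, derive $\psi(s)=LsL^{-1}$ from the conjugation relation, and reduce $\beta$ to a genuine cocycle so that Lemma~\ref{lem: cocycles from sl2z to z2 are coboundaries} applies. The only cosmetic differences are that the paper obtains characteristicness via Proposition~\ref{prop:normalSL2} (as in your parenthetical alternative) rather than your direct maximal-normal-abelian argument, and it untwists the cocycle by precomposing with $T^{-1}$ rather than premultiplying by $L^{-1}$; both are equivalent.
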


\begin{proof}
	By Proposition \ref{prop:normalSL2}, $\mathbb{Z}^2\rtimes \mathbb{Z}/2\mathbb{Z}$ is a characteristic subgroup, so that we have  $\phi(\mathbb{Z}^2\rtimes \mathbb{Z}/2\mathbb{Z})=\mathbb{Z}^2\rtimes \mathbb{Z}/2\mathbb{Z}$. This implies that $\phi(\mathbb{Z}^2)=\mathbb{Z}^2$ as  $\mathbb{Z}^2$ is the largest nontrivial normal subgroup of $\mathbb{Z}^2\rtimes \mathbb{Z}/2\mathbb{Z}$. Below, we write $L=\phi|_{\mathbb{Z}^2}$. 
	Then, for any $s\in SL_2(\mathbb{Z})$, we write $\phi((0,s))=(c(s), T(s))$ for some maps $c: SL_2(\mathbb{Z})\to \mathbb{Z}^2$ and $T: SL_2(\mathbb{Z})\to SL_2(\mathbb{Z})$. It is routine to check that $T\in Aut(SL_2(\mathbb{Z}))$ and $c\circ T^{-1}$ is a cocycle. As $c\circ T^{-1}$ is always a coboundary by Lemma \ref{lem: cocycles from sl2z to z2 are coboundaries}, we know $c(s)=\xi-T(s) \xi$ for some $\xi\in \mathbb{Z}^2$. Now the homomorphic property of $\phi$ is equivalent to the fact that $T(s)L(b) = L(sb)$ for all $s \in SL_2(\bz)$, $b \in \bz^2$. 
\end{proof}

We will now present some examples of congruence subgroups of $SL_2(\bz)$ (and explicit cocycles) which satisfy the assumptions of statement (2) in Theorem 
\ref{prop: classify mHAP in z2 times sl2z}.

\begin{proposition}
	Let $N \in \bn$, $N \geq 2$, let 
	\[	\Gamma_1(N)=\left\{g\in SL_2(\bz): g_{21} \equiv 0~mod~N, g_{11}\equiv g_{22}\equiv 1~mod~N\right\}\]
and let $\xi=(1/N, 0)^t$. Then the formula $c(g)=\xi-g\xi$ for all $g\in \Gamma_1(N)$ defines a cocycle $c: \Gamma_1(N)\to \mathbb{Z}^2$, which
cannot be extended to any strictly larger subgroup $H<SL_2(\bz)$. Therefore $\{(c(g), g): g\in \Gamma_1(N)\}$ is a maximal Haagerup subgroup in $\mathbb{Z}^2\rtimes SL_2(\mathbb{Z})$.
\end{proposition}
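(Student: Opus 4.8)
The plan is to record one short computation, then reduce everything to the single assertion that the cocycle $c$ cannot be extended, at which point the converse direction of Theorem \ref{prop: classify mHAP in z2 times sl2z}(2) finishes the argument. Writing $g=\begin{pmatrix} a & b \\ c & d\end{pmatrix}$, a direct computation gives $c(g)=\xi-g\xi=\left(\tfrac{1-a}{N},\,-\tfrac{c}{N}\right)^t$. For $g\in\Gamma_1(N)$ we have $a\equiv 1$ and $c\equiv 0\pmod N$, so $c(g)\in\mathbb{Z}^2$, and $c$ is automatically a cocycle, the identity being immediate from the coboundary formula with $\xi\in\mathbb{Q}^2$. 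For later use I record the converse: if $\xi-g\xi\in\mathbb{Z}^2$ then $a\equiv 1$ and $c\equiv 0\pmod N$, and combining $c\equiv 0$ with $\det g=ad-bc=1$ gives $ad\equiv 1$, hence $d\equiv 1\pmod N$; thus $\xi-g\xi\in\mathbb{Z}^2$ is in fact \emph{equivalent} to $g\in\Gamma_1(N)$.

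To prove non-extendability, suppose towards a contradiction that $\Gamma_1(N)\subsetneq H<SL_2(\bz)$ and that $\tilde c\colon H\to\mathbb{Z}^2$ is a cocycle with $\tilde c|_{\Gamma_1(N)}=c$. The key idea is to replace the non-normal $\Gamma_1(N)$ by the principal congruence subgroup $\Gamma(N)=\{g\in SL_2(\bz):g\equiv I\pmod N\}$, which is normal in $SL_2(\bz)$ and contained in $\Gamma_1(N)$, so that $\tilde c$ coincides with $g\mapsto\xi-g\xi$ on all of $\Gamma(N)$. Fix $h\in H$ and $g\in\Gamma(N)$. Expanding $\tilde c(hg)$ via the cocycle identity, and expanding it again as $\tilde c\big((hgh^{-1})h\big)$ -- legitimate since $hgh^{-1}\in\Gamma(N)$ by normality -- and inserting the known values of $\tilde c$ on $\Gamma(N)$, a short rearrangement yields $(I-hgh^{-1})\big(\tilde c(h)-\xi+h\xi\big)=0$. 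As $g$ runs over $\Gamma(N)$, the element $hgh^{-1}$ runs over all of $\Gamma(N)$, so the vector $w:=\tilde c(h)-\xi+h\xi\in\mathbb{Q}^2$ is fixed by every element of $\Gamma(N)$. Since $\Gamma(N)$ contains $\begin{pmatrix} 1 & N \\ 0 & 1\end{pmatrix}$ and $\begin{pmatrix} 1 & 0 \\ N & 1\end{pmatrix}$, which have no common nonzero fixed vector, we conclude $w=0$, that is $\tilde c(h)=\xi-h\xi$ for every $h\in H$. But then $\xi-h\xi=\tilde c(h)\in\mathbb{Z}^2$, which by the first paragraph forces $h\in\Gamma_1(N)$; hence $H\subseteq\Gamma_1(N)$, contradicting $\Gamma_1(N)\subsetneq H$.

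I expect the only delicate point to be the manipulation producing the identity $(I-hgh^{-1})(\tilde c(h)-\xi+h\xi)=0$; the conceptual crux, however, is the passage from $\Gamma_1(N)$ to the normal subgroup $\Gamma(N)$, which keeps conjugates inside the domain where $\tilde c$ is already pinned down and thereby converts the extension problem into the elementary fact that $\Gamma(N)$ has no nonzero invariant vector in $\mathbb{Q}^2$. Finally, $\Gamma_1(N)$ has finite index in $SL_2(\bz)$ and is therefore non-amenable, so the non-extendability of $c$ together with the converse part of Theorem \ref{prop: classify mHAP in z2 times sl2z}(2) shows that $\{(c(g),g):g\in\Gamma_1(N)\}$ is a maximal Haagerup subgroup of $\mathbb{Z}^2\rtimes SL_2(\mathbb{Z})$.
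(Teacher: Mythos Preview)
Your proof is correct and follows essentially the same line as the paper's: both compute $c(g)=((1-g_{11})/N,-g_{21}/N)^t$, pass to the normal subgroup $\Gamma(N)$, derive the identity $(I-hgh^{-1})(\tilde c(h)-\xi+h\xi)=0$ via the cocycle relation, and then conclude $\tilde c(h)=\xi-h\xi$, forcing $h\in\Gamma_1(N)$. The only cosmetic difference is that the paper picks a single $g\in\Gamma(N)$ with $1$ not an eigenvalue (so $I-hgh^{-1}$ is invertible outright), whereas you let $g$ vary and use that $\Gamma(N)$ has no nonzero fixed vector in $\mathbb{Q}^2$; both arguments are equally short and valid.
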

\begin{proof}
	First, recall the definition of some more subgroups in $SL_2(\mathbb{Z})$:
\[\Gamma_0(N)=\left\{g\in SL_2(\mathbb{Z}): g_{21}\equiv 0~mod~N\right\},
		\Gamma(N) = \left\{g\in \Gamma_1(N): g_{12}\equiv 0~mod~N\right\}.
	\]
Clearly, we have $\Gamma(N)\leq \Gamma_1(N)\leq \Gamma_0(N)\leq SL_2(\mathbb{Z})$. Further it is easy to see that the prescription above indeed defines a cocycle on $\Gamma_1(N)$, as \[c\left(\begin{pmatrix}
g_{11}&g_{12}\\g_{21}&g_{22}
\end{pmatrix} \right)= ((1-g_{11})/N, -g_{21}/N)^t.\] 
Suppose that $c$ can  be extended to a larger subgroup $H$ inside $SL_2(\bz)$. Since $\Gamma(N)$ is normal in $SL_2(\mathbb{Z})$, we have $c(sgs^{-1})=\xi-(sgs^{-1})\xi$ for every $s\in SL_2(\mathbb{Z})$ and every $g\in \Gamma(N)$. If now  $s\in H$, we can expand $c(sgs^{-1})$ using cocycle identity to deduce that $(1-sgs^{-1})c(s)=(1-sgs^{-1})(\xi-s\xi)$. Taking any $g\in \Gamma(N)$ such that $1$ is not an eigenvalue of $g$, so that $(1-sgs^{-1})$ is invertible, we conclude that $c(s)=\xi-s\xi$ for every $s\in H$.
	
	Therefore, to get a contradiction, we need to show that for any $s\in SL_2(\mathbb{Z})$, $\xi-s\xi\in \mathbb{Z}^2$ implies $s\in \Gamma_1(N)$. This is a simple calculation based on the  formula displayed above. 

The last statement follows from Theorem 
\ref{prop: classify mHAP in z2 times sl2z}: as $\Gamma_1(N)$ is of finite index in $SL_2(\bz)$, it is clearly non-amenable.
\end{proof}

One can produce other examples as above, using say the free subgroup generated inside $SL_2(\bz)$  by $\begin{pmatrix}
	1&2\\0&1
\end{pmatrix}$ and $\begin{pmatrix}
1&0\\2&1
\end{pmatrix}$ or the subgroup generated by $\begin{pmatrix}
1&2\\0&1
\end{pmatrix}$ and $\begin{pmatrix}
0&1\\-1&0
\end{pmatrix}$. Examples of this type are similar in that they all have finite index in  $SL_2(\bz)$. In fact this turns out to be the case very often, as we have the following proposition.

\begin{prop}
Suppose that $S$ is a non-amenable subgroup of $SL_2(\bz)$ admitting a cocycle $c:S \to \bz^2$ which cannot be extended to a strictly larger subgroup. If either $-I \in S$ or $S$ is finitely generated, then $S$ is of finite index in $SL_2(\bz)$.	
\end{prop}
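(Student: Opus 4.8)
The plan is to treat the two sufficient conditions separately, in both cases comparing $c$ to a \emph{rational} coboundary. For $\xi \in \mathbb{Q}^2$ set $H_\xi := \{g \in SL_2(\mathbb{Z}) : (I-g)\xi \in \mathbb{Z}^2\}$; the identity $(I-gh)\xi = (I-g)\xi + g(I-h)\xi$ together with the fact that $SL_2(\mathbb{Z})$ preserves $\mathbb{Z}^2$ shows that $H_\xi$ is a subgroup and that $b_\xi(g) := (I-g)\xi$ defines a $\mathbb{Z}^2$-valued cocycle on it. Writing $\xi = \tfrac{1}{N}(p,q)^t$ with $p,q,N \in \mathbb{Z}$, one checks that $\Gamma(N) \subseteq H_\xi$ (if $g \equiv I \bmod N$ then $(I-g)\xi \in \mathbb{Z}^2$), so $H_\xi$ always has finite index in $SL_2(\mathbb{Z})$. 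Thus the whole point is to show that $c$ agrees on $S$ with some such $b_\xi$: once this is known, $S \subseteq H_\xi$, the cocycle $b_\xi$ restricts to $c$, and non-extendability forces $S = H_\xi$, which is of finite index.

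Suppose first that $-I \in S$. Since $-I$ is central in $SL_2(\mathbb{Z})$, comparing $c((-I)s)$ and $c(s(-I))$ via the cocycle identity yields $2c(s) = (I-s)c(-I)$ for every $s \in S$; hence $c(s) = (I-s)\xi$ with $\xi := \tfrac12 c(-I) \in \tfrac12 \mathbb{Z}^2$. By the previous paragraph $S \subseteq H_\xi$ and $b_\xi$ extends $c$, so non-extendability gives $S = H_\xi \supseteq \Gamma(2)$, which finishes this case. Note that this argument uses neither finite generation nor non-amenability, which is exactly why the first hypothesis can dispense with finite generation.

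Now suppose that $S$ is finitely generated, and argue by contradiction assuming $[SL_2(\mathbb{Z}) : S] = \infty$. Here I would abandon the cohomological viewpoint and instead exploit the geometry of $SL_2(\mathbb{Z})$ as a lattice in $SL_2(\mathbb{R})$: the image of $S$ in $PSL_2(\mathbb{R})$ is a finitely generated Fuchsian group of infinite covolume, hence geometrically finite of the second kind, so its limit set $\Lambda_S \subsetneq \partial \mathbb{H}^2$ has nonempty open complement. Since the fixed points of hyperbolic elements of $SL_2(\mathbb{Z})$ are dense in $\partial \mathbb{H}^2$, I can choose a hyperbolic $g \in SL_2(\mathbb{Z})$ whose two fixed points lie in a gap of $\Lambda_S$; replacing $g$ by a sufficiently high power shrinks its attracting and repelling neighbourhoods into that gap, so a Klein ping-pong (Maskit combination) argument gives $\langle S, g^n\rangle = S * \langle g^n\rangle$. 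But then $c$ extends to this strictly larger subgroup simply by declaring $\tilde c(g^n) = 0$ and using the free-product structure (a cocycle on $A*B$ is freely determined by cocycles on the factors), contradicting non-extendability. Hence $[SL_2(\mathbb{Z}):S] < \infty$.

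The main obstacle is the geometric input in the finitely generated case, namely producing the free-product complement $\langle g^n\rangle$; everything else is formal. One could phrase this combinatorially instead, using that $SL_2(\mathbb{Z}) = \mathbb{Z}/4 *_{\mathbb{Z}/2} \mathbb{Z}/6$ acts on its Bass--Serre tree and that a finitely generated subgroup of infinite index has a proper quotient graph, which again permits a free (HNN or free-product) extension; but the Fuchsian formulation makes the existence of the ping-pong element most transparent. It is worth stressing that the two hypotheses enter in genuinely different ways: $-I \in S$ delivers a rational coboundary essentially for free, whereas finite generation is precisely what guarantees geometric finiteness and hence the dichotomy between finite index and a proper limit set.
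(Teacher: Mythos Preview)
Your proof is correct.

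For the case $-I\in S$, your argument and the paper's rest on the same computation: both derive $c(s)=(I-s)\xi$ with $\xi=\tfrac12 c(-I)$ from centrality of $-I$. You then package the conclusion uniformly via the subgroup $H_\xi\supseteq\Gamma(2)$ and the extending cocycle $b_\xi$, whereas the paper carries out an explicit case analysis on the parities of the two coordinates of $c(-I)$, identifying in each case the maximal extension domain concretely (as $SL_2(\mathbb{Z})$ itself, or as an explicit index-$3$ subgroup described through the reduction map $SL_2(\mathbb{Z})\to SL_2(\mathbb{Z}/2)$). Your formulation is cleaner and more conceptual; the paper's gives the exact groups.

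For the finitely generated case, your route is genuinely different. The paper passes to $PSL_2(\mathbb{Z})\cong\mathbb{Z}/2*\mathbb{Z}/3$ and invokes the M.~Hall property for free products (due to Burns): every finitely generated subgroup is a free factor of a finite-index overgroup, which immediately yields an element free from $\pi(S)$, hence a lift free from $S$, and the cocycle extends. You instead exploit the geometry of $\mathbb{H}^2$: a finitely generated infinite-index subgroup of the lattice $PSL_2(\mathbb{Z})$ is geometrically finite of the second kind, so its limit set is proper, and a Klein--Maskit combination with a hyperbolic element whose fixed points lie in a gap produces the free complement. Both approaches manufacture an element of $SL_2(\mathbb{Z})$ free from $S$; the paper's is purely combinatorial and cites one clean theorem, while yours relies on a small package of classical Fuchsian-group facts (geometric finiteness of finitely generated Fuchsian groups, density of hyperbolic fixed-point pairs for a lattice, the combination theorem) which you reasonably take as known. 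One detail worth making explicit: for the ping-pong you need the fixed points of $g$ to lie not merely in a gap of $\Lambda_S$ but in an arc moved off itself by every nontrivial element of $S$ (a fundamental arc for the $S$-action on the ordinary set); this is available from geometric finiteness but deserves a sentence.
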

\begin{proof}
Assume first that $-I \in S$. Suppose that $c:S \to \bz/2\bz$ is a cocycle not admitting a proper extension. Then for each $g\in S$ we have $c(g)+gc(-I)=c(g(-I))=c((-I)g)=c(-I)+(-I)c(g)$, which implies that $c(g)=\frac{(I-g)c(-I)}{2}\in \mathbb{Z}^2$; in particular the value of $c$ at $-I$ determines it uniquely. Write then $c(-I)=(m, n)\in \mathbb{Z}^2$. We will consider then several cases, depending on parity of $m$ and $n$. 
	
	Case 1: Both $m$ and $n$ are even:	then $c(-I)/2\in \mathbb{Z}^2$, so that $c: S\to\mathbb{Z}^2$ is a coboundary and $S=SL_2(\mathbb{Z})$.
	
	Case 2: $m$ is odd and $n$ is even:	in this case, the fact that $c(g)=\frac{(I-g)c(-I)}{2}\in\mathbb{Z}^2$ for all $g\in S$ means that for every $g=\begin{pmatrix}
	g_{11}&g_{12}\\
	g_{21}&g_{22}
	\end{pmatrix}\in S$ the coefficient $g_{11}$ is odd and $g_{21}$ is even. This implies that $\pi(S)\subseteq \pi(H)$, where $\pi: SL_2(\mathbb{Z})\to SL_2(\mathbb{Z}/2\mathbb{Z})$ is the natural homomorphism and $H=\begin{pmatrix}
	1&*\\
	0&1
	\end{pmatrix}\subset SL_2(\mathbb{Z})$. Hence, $S\subseteq Ker(\pi)H$. As the above formula for $c$ defines also a $\bz^2$-valued cocycle on $Ker(\pi)H$, we know that $S=Ker(\pi)H$, hence is of finite index as $Ker(\pi)$ has finite index in $SL_2(\mathbb{Z})$.
	
	Case 3: $m$ is even and $n$ is odd: analogous to Case 2.
	
	Case 4: Both $m$ and $n$ are odd: one can check that in this case, for every $g\in S$, we have $g_{11}+g_{12}\equiv 1 (mod ~2)$ and $g_{21}+g_{22}\equiv 1 (mod ~2)$. Thus, it is clear that the conjugation of $S$ by the matrix $\begin{pmatrix}
	1&0\\1&1
	\end{pmatrix}$ is contained in $Ker(\pi)H$, where $\pi$ and $H$ are defined in Case 2. Therefore, we know that $S$ must be conjugate to the finite index subgroup $Ker(\pi)H$, hence is of finite index itself.

Assume then that $-I \notin S$, but $S$ is finitely generated.	Consider the quotient map $\pi: SL_2(\mathbb{Z})\twoheadrightarrow SL_2(\mathbb{Z})/\{\pm I\}=PSL_2(\mathbb{Z})\cong \mathbb{Z}/2\mathbb{Z}*\mathbb{Z}/3\mathbb{Z}$. Clearly, $S\cong \pi(S)\subseteq \mathbb{Z}/2\mathbb{Z}*\mathbb{Z}/3\mathbb{Z}$. Assume $[SL_2(\bz): S]=\infty$, then $[\mathbb{Z}/2\mathbb{Z}*\mathbb{Z}/3\mathbb{Z}: \pi(S)]=\infty$.  Recall that a group $G$ has the M.~Hall property if every finitely generated subgroup of $G$ is a free factor of some subgroup of $G$ of finite index. In \cite{hall}, M.~Hall proved the non-abelian free groups satisfy this property, and in \cite{burns} R.G.~Burns showed that this property is stable under free products. Thus we know that $\mathbb{Z}/2\mathbb{Z}*\mathbb{Z}/3\mathbb{Z}$ also has the M.~Hall property. Thus, as $\pi(S)$ is finitely generated and of infinite index, there is an element in $PSL_2(\bz)$ which is free from $\pi(S)$. If we now consider any lift of this element to $SL_2(\bz)$, say $g$, we see that it is free from $S$, so that the cocycle $c$ can be extended to $\langle S,g\rangle$. This yields a contradiction.
\end{proof}

We finish this section by discussing an example of an infinite index subgroup $S\subset SL_2(\bz)$ satisfying the assumptions of Theorem \ref{prop: classify mHAP in z2 times sl2z}, case (2).

\begin{prop}
There exists an infinite index non-amenable subgroup $S<SL_2(\bz)$ and a cocycle $c: S \to \bz^2$ such that $c$ does not admit an extension to a larger subgroup.
\end{prop}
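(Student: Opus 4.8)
The plan is to realise the example as an infinite-index maximal Haagerup subgroup of type (2): by Theorem \ref{prop: classify mHAP in z2 times sl2z} it suffices to exhibit an infinite-index non-amenable subgroup $S<SL_2(\mathbb{Z})$ together with a cocycle $c:S\to\mathbb{Z}^2$ that cannot be extended to a strictly larger subgroup. The previous proposition already tells us that such an $S$ must be infinitely generated and cannot contain $-I$, so I would manufacture $S$ from a normal subgroup of $PSL_2(\mathbb{Z})$. Concretely, fix a surjection $PSL_2(\mathbb{Z})=\mathbb{Z}/2\mathbb{Z}*\mathbb{Z}/3\mathbb{Z}\twoheadrightarrow\Delta$ onto the infinite $(2,3,7)$-triangle group $\Delta$, and let $N$ be its kernel. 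Then $N$ is normal, of infinite index, torsion-free (hence free, by the Kurosh subgroup theorem), infinitely generated and non-amenable. Since $N$ is free, the central extension $1\to\{\pm I\}\to\pi^{-1}(N)\to N\to 1$ (where $\pi:SL_2(\mathbb{Z})\to PSL_2(\mathbb{Z})$) splits, so $N$ lifts to a subgroup $S<SL_2(\mathbb{Z})$ with $S\cong N$ and $-I\notin S$; this $S$ is free of countably infinite rank, non-amenable and of infinite index.

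The crucial structural feature, inherited from the normality of $N$, is that every conjugate of $S$ meets $S$ in a subgroup of index at most $2$. Indeed, for $g\in SL_2(\mathbb{Z})$ we have $\pi(gSg^{-1})=\pi(S)=N$, so $S$ and $gSg^{-1}$ are two complements of $\{\pm I\}$ inside $\pi^{-1}(N)$ and hence differ by a homomorphism $\eta_g:N\to\{\pm I\}$; thus $S\cap gSg^{-1}=\ker\eta_g$ has index at most $2$ in $S$ and, in particular, is infinitely generated. I would then record that extending $c$ across a fixed $g\notin S$ forces, for every $s\in\ker\eta_g$ (so that $gsg^{-1}\in S$), the relation $c(gsg^{-1})=c(g)+g\cdot c(s)-(gsg^{-1})c(g)$. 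This is an infinite family of conditions relating the prescribed values of $c$ at the points $s$ and $gsg^{-1}$, in which the only adjustable parameter is $c(g)\in\mathbb{Z}^2$.

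To conclude, I would view the cocycles as the Polish group $Z^1(S,\mathbb{Z}^2)\cong\prod_{n}\mathbb{Z}^2$, coming from the free choice of $c$ on the free generators of $S$. For each $g\in SL_2(\mathbb{Z})\setminus S$ the set $E_g$ of cocycles extending over $g$ is the image of the restriction homomorphism $Z^1(\langle S,g\rangle,\mathbb{Z}^2)\to Z^1(S,\mathbb{Z}^2)$, hence a Borel subgroup; by the preceding paragraph it is cut out by infinitely many conditions against only the two parameters supplied by $c(g)$, so it is a proper subgroup of infinite index and therefore meager (equivalently, one diagonalises directly over the generators of $S$). As $SL_2(\mathbb{Z})$ is countable, $\bigcup_{g\notin S}E_g$ is meager, so some $c_0\in Z^1(S,\mathbb{Z}^2)$ lies in no $E_g$. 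Since any subgroup strictly containing $S$ contains some $g\notin S$ and an extension of $c_0$ to it would restrict to an extension over $\langle S,g\rangle$, the cocycle $c_0$ admits no extension whatsoever, and $S,c_0$ are as required.

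The main obstacle is precisely the claim that each $E_g$ has infinite index (is nowhere dense): one must check that the infinite family of relations above genuinely annihilates infinitely many degrees of freedom of $c$, rather than being automatically consistent for every $c$. This is exactly where the large-intersection property $S\cap gSg^{-1}=\ker\eta_g$ enters, since it guarantees that infinitely many generators of $S$ are simultaneously constrained while $c(g)$ contributes only two free coordinates; turning this heuristic into a uniform independence statement valid for every $g$ is the technical heart of the argument.
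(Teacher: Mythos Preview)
Your construction of $S$ as a lift of the kernel of $PSL_2(\mathbb{Z})\twoheadrightarrow\Delta(2,3,7)$ is sound, and the Baire-category strategy is a natural idea, but the gap you yourself flag is genuine and not merely cosmetic. The constraints $c(gsg^{-1})-gc(s)=(I-gsg^{-1})w$ do not act on fresh coordinates: both $s$ and $gsg^{-1}$ lie in $S$, so each relation is an identity among values already determined by $\{c(s_n)\}$. Whether these identities are independent depends on how $\mathrm{Ad}(g)$ interacts with a free basis of $S$, and since $S\cap g^{-1}Sg$ need not be generated by any subset of the $s_n$, the ``infinitely many equations versus two parameters'' count does not by itself give infinite corank. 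One can handle the case $-I\in\langle S,g\rangle$ directly (the constraint then forces $2c(s)=(I-s)v$ for all $s$, so $E_g$ sits inside the coboundaries over $\tfrac12\mathbb{Z}^2$), but when $g$ normalises $S$ and $-I\notin\langle S,g\rangle$ one is left with showing that $c\mapsto [c\circ\mathrm{Ad}(g)-g\cdot c]$ has infinite-rank image in the twisted $H^1$, uniformly in $g$; this is exactly the missing ``technical heart'' and it requires real work.

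The paper avoids this entirely by a much cheaper device. It takes the concrete subgroup $S'=\langle b^kab^{-k}:k\in\mathbb{Z}\rangle\cong\mathbb{F}_\infty$ inside $\mathbb{F}_2=\langle a,b\rangle\subset SL_2(\mathbb{Z})$, where $\mathrm{Ad}(b)$ simply shifts the free generators, so the extension condition over any $b^n$ reduces to an explicit parity obstruction that a single hand-picked cocycle $c'$ violates. This only blocks extensions \emph{within} $\mathbb{F}_2$; the paper then applies Zorn's lemma to pairs $(K,d)$ with $S'\le K\le SL_2(\mathbb{Z})$ and $d|_{S'}=c'$, obtaining a maximal $(S,c)$ for free. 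Non-amenability is inherited from $S'$, and infinite index follows because finite index would force $S\cap\mathbb{F}_2\supsetneq S'$, contradicting the non-extendability already established inside $\mathbb{F}_2$. The key economy is that Zorn's lemma replaces your uniform meagerness requirement over \emph{all} $g\notin S$ by a single explicit obstruction in one carefully chosen direction.
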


\begin{proof}
Denote $a=\begin{pmatrix}
	1&2\\
	0&1
	\end{pmatrix}$ and $b=\begin{pmatrix}
	1&0\\
	2&1
	\end{pmatrix}$. It is known that $\langle a, b\rangle\cong \mathbb{F}_2$ and it has finite index in $SL_2(\mathbb{Z})$.
Consider then  the free group decomposition $\mathbb{F}_2\cong \mathbb{F}_{\infty}\rtimes_{\sigma} \mathbb{Z}$, where  $\mathbb{Z}=\langle b\rangle$, $\mathbb{F}_{\infty}=\langle b^kab^{-k}: k\in \mathbb{Z} \rangle$ and $\sigma_b(b^kab^{-k})=b^{k+1}ab^{-k-1}$, $k \in \bz$. We then view $\mathbb{F}_\infty$ as an infinite index subgroup of $SL_2(\bz)$ and denote it by $S'$.  

Any cocycle $c':S' \to \bz^2$ is uniquely determined by the values $c'(b^k ab^{-k}) =(x_k,y_k) \in \bz^2$, $k \in \bz$; conversely by freeness any choice of $(x_k,y_k)_{k \in \bz}$ determines a cocycle by the above formula. Suppose that such a cocycle can be extended to the subgroup $\langle\mathbb{F}_\infty, b^n\rangle$ inside  $\mathbb{F}_{\infty}\rtimes_{\sigma} \mathbb{Z}$ for some $n \in \bz\setminus\{0\}$. Then $c'(b^nab^{-n})=c'(b^n)+b^nc'(a)-b^nab^{-n}c'(b^n)$; in other words, 
\[(x_n, y_n)^t =   (x_0, 2nx_0 + y_0)^t + \begin{pmatrix} 4n & -2 \\ 8n^2 & -4n \end{pmatrix} c'(b^n)\]
In particular, both $x_n - x_0$ and $y_n - 2nx_0 - y_0$ are even. Thus if we define $x_0=y_0 = 0$, $x_k = y_k = 1$ for $k \in \bz, k \neq 0$, we obtain a cocycle $c':S' \to \bz^2$ which cannot be extended to a larger subgroup of   $\mathbb{F}_{\infty}\rtimes_{\sigma} \mathbb{Z}$. 

Consider then $S'$ again as a subgroup of $SL_2(\bz)$. A standard Kuratowski-Zorn argument (applied to pairs $(d,K)$, where $K$ is a subgroup of $SL_2(\bz)$ containing $S'$ and $d:K \to \bz^2$ is a cocycle extending $c'$) shows that there is a subgroup $S$ and a cocycle $c:S \to \bz^2$ such that $S' \subset S$, $c|_{S'} = c'$, and $c$ does not extend to a strictly larger subgroup. Then $S$ is obviously non-amenable, and moreover it has infinite index. To see the latter, it suffices to note that if it had finite index in $SL_2(\bz)$, then $S \cap \mathbb{F}_2$ would be of finite index in $\mathbb{F}_2$, so in particular would strictly contain $\mathbb{F}_\infty$. That would contradict the fact that the cocycle $c'$ could not be extended inside $\mathbb{F}_2$.
\end{proof}

Naturally $S$ constructed in the above proof cannot contain $-I$ and cannot be finitely generated. We suspect that in fact $S=S'$.

\subsection*{Maximal Haagerup subgroups inside Property (T) groups}

In this subsection we present other explicit examples of maximal Haagerup subgroups, this time inside Property (T) groups, $SL_3(\bz)$ and $\bz^3 \rtimes SL_3(\bz)$. Recall that the latter has Property (T), as noted for example in \cite[Exercise 1.8.7]{bdv}.

\begin{proposition}\label{prop:maxHAPSL3Z}
	Denote by $H(\bz)$ the subgroup of $SL_3(\bz)$ consisting of all upper-triangular matrices. Then $ H(\bz)$ is a maximal Haagerup subgroup of $SL_3(\bz)$.
\end{proposition}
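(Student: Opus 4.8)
The plan is to prove the statement in two movements: first that $H(\mathbb{Z})$ is Haagerup for a soft reason, and then that adjoining any element outside it produces a group containing a rigid copy of $\mathbb{Z}^2\rtimes SL_2(\mathbb{Z})$, which by Burger's Proposition \ref{prop:Burger} (together with Proposition \ref{relT}) cannot be Haagerup. For the first point I would note that $H(\mathbb{Z})$ is \emph{amenable}: writing $N$ for the subgroup of unitriangular matrices (the discrete Heisenberg group, which is nilpotent) and $D\cong(\mathbb{Z}/2\mathbb{Z})^2$ for the diagonal sign matrices of determinant $1$, one has $H(\mathbb{Z})=N\rtimes D$, a nilpotent-by-finite group. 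Hence $H(\mathbb{Z})$ has the Haagerup property (indeed by Proposition \ref{extensions}). Throughout I use the elementary matrices $E_{ij}(t)=I+te_{ij}$ and the commutator relations $[E_{ij}(s),E_{jk}(t)]=E_{ik}(st)$ for distinct indices, noting that $H(\mathbb{Z})$ contains the full upper unipotent group $\langle E_{12},E_{13},E_{23}\rangle$ together with $D$.

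The structural heart is the observation that two overgroups of $H(\mathbb{Z})$ are visibly non-Haagerup. The stabiliser $P_1$ of the plane $\mathbb{Q}e_1+\mathbb{Q}e_2$ (the matrices with vanishing bottom row except in the corner) contains a copy of $\mathbb{Z}^2\rtimes SL_2(\mathbb{Z})$ in which $\mathbb{Z}^2$ is the last-column subgroup $\langle E_{13},E_{23}\rangle$ and $SL_2(\mathbb{Z})$ is the top-left block; similarly the stabiliser $P_2$ of the line $\mathbb{Q}e_1$ contains $\mathbb{Z}^2\rtimes SL_2(\mathbb{Z})$ with $\mathbb{Z}^2$ the top-row subgroup $\langle E_{12},E_{13}\rangle$ and $SL_2(\mathbb{Z})$ the bottom-right block. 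In each case $SL_2(\mathbb{Z})$ acts on the infinite group $\mathbb{Z}^2$ by (a twist of) the standard representation, so the inclusion $\mathbb{Z}^2<\mathbb{Z}^2\rtimes SL_2(\mathbb{Z})$ is rigid by Proposition \ref{prop:Burger}. Since relative Property (T) with respect to an infinite subgroup passes to any overgroup, \emph{any} subgroup of $SL_3(\mathbb{Z})$ containing such a configuration fails to be Haagerup.

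I would then reduce the whole problem to producing, inside $L:=\langle H(\mathbb{Z}),g\rangle$, a single nonzero \emph{lower} elementary matrix. Indeed, if $E_{31}(s)\in L$ with $s\neq0$, then since $E_{12}(1),E_{23}(1)\in H(\mathbb{Z})$ the commutators $[E_{31}(s),E_{12}(1)]=E_{32}(s)$ and $[E_{23}(1),E_{31}(s)]=E_{21}(s)$ both lie in $L$; and given a nonzero $E_{21}(s)$ (resp.\ $E_{32}(s)$), the group $\langle E_{12}(1),E_{21}(s)\rangle$ (resp.\ $\langle E_{23}(1),E_{32}(s)\rangle$) is a non-amenable subgroup $S$ of the relevant corner $SL_2(\mathbb{Z})$. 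Together with the commuting upper $\mathbb{Z}^2\subseteq H(\mathbb{Z})$ (the last column, resp.\ the top row, on which $S$ acts standardly) this $S$ generates exactly a rigid $\mathbb{Z}^2\rtimes S$ of the type described above. Thus it suffices to show that $L$ contains at least one nonzero lower-triangular elementary matrix.

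The remaining step is this extraction, and I expect it to be the main obstacle. Since $H(\mathbb{Z})$ is precisely the stabiliser in $SL_3(\mathbb{Z})$ of the standard flag $\mathbb{Q}e_1\subset\mathbb{Q}e_1+\mathbb{Q}e_2$, the hypothesis $g\notin H(\mathbb{Z})$ means exactly that $g$ has a nonzero entry below the diagonal. Using left and right multiplication by the upper unipotent and sign matrices of $H(\mathbb{Z})$ (that is, Gaussian elimination carried out only from above) one normalises $g$ so that the obstruction to upper-triangularity is concentrated in a single simple-root position, from which a nonzero $E_{21}(t)$ or $E_{32}(t)$ can be read off inside $L$. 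The order-two automorphism $x\mapsto w_0(x^{-1})^{t}w_0^{-1}$, with $w_0$ the antidiagonal permutation matrix, preserves $H(\mathbb{Z})$ and interchanges the two simple roots, so it halves the casework. The delicacy is that the reduction is not uniform: the degenerate configurations that no upper operation can move (for instance when the first column of $g$ equals $\pm e_3$) must be handled separately, typically by passing to $g^{-1}$ or working with the last row in place of the first column. Once a lower elementary matrix has been produced, the reduction above shows $L$ is not Haagerup, and therefore $H(\mathbb{Z})$ is a maximal Haagerup subgroup of $SL_3(\mathbb{Z})$.
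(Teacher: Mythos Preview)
Your overall strategy is sound: show that $L=\langle H(\mathbb{Z}),g\rangle$ contains a copy of $\mathbb{Z}^2\rtimes S$ with $S\le SL_2(\mathbb{Z})$ non-amenable, and invoke Proposition~\ref{prop:Burger}. For $g$ already lying in one of the two maximal standard parabolics this essentially matches what the paper does in its Cases~1 and~2. The gap is in the extraction step, and it is structural rather than cosmetic. Your claim that left/right multiplication by $H(\mathbb{Z})$ normalises $g$ so that its sub-diagonal support sits in a single simple-root position is false in general: the entry $g_{31}$ is invariant (up to sign) under all such operations, since row~$3$ is untouched by upper row-operations and column~$1$ is untouched by upper column-operations; and when $g_{31}=0$ the same reasoning shows that $g_{21}$ and $g_{32}$ are each separately invariant up to sign. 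Hence for any $g$ of Weyl length at least~$2$---the Bruhat cells $(123)$, $(132)$ and the big cell $(13)$---no element of $H(\mathbb{Z})\,g\,H(\mathbb{Z})$ has the single-simple-root form you want. Your suggested fixes do not help generically: passing to $g^{-1}$ only swaps the cells $(123)\leftrightarrow(132)$ and fixes $(13)$, and the $w_0$-involution likewise permutes these three cells among themselves.

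The paper handles exactly these three cells by a different mechanism. Working with the Bruhat decomposition of $SL_3(\mathbb{Q})$, for $g$ in the $(123)$-cell it invokes a result of Meiri to conclude that $\langle g, E_{13}(1)\rangle$ already has \emph{finite index} in $SL_3(\mathbb{Z})$, so that $L$ inherits Property~(T); for the cells $(13)$ and $(132)$ it manufactures, via a carefully chosen conjugate $gXg^{-1}$ with $X\in H(\mathbb{Z})$ (respectively a power $g^2$), an element of $L$ lying in the $(123)$-cell and reduces to the previous case. The missing ingredient in your argument is therefore not a sharper elimination but either this finite-index input or an explicit conjugation trick that drops the Bruhat length, and neither is supplied by your sketch.
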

\begin{proof}
	
Denote  by $H(\mathbb{Q})$ the subgroup of $SL_3(\mathbb{Q})$ consisting of all upper triangular matrices (with rational coefficients).

The proof uses the Bruhat decomposition of $SL_3(\mathbb{Q})$ (see \cite[P. 398]{knapp} or \cite{meiri}), i.e.\ the fact that we have $SL_3(\mathbb{Q})=\cup_{\sigma\in Sym(3)}H(\mathbb{Q})p_{\sigma}H(\mathbb{Q})$, with certain $p_\sigma \in SL_3(\bz)$. We begin by listing explicitly all the elements $p_{\sigma}$.

\begin{table}[h!]
	\centering
	\begin{tabular}{|c | c |c | c|c|c|c|}
		\hline
		
		$\sigma$ & id &(12) &(13)&(23)&(123)&(132)\\
		\hline
		$p_{\sigma}$ &id &$\begin{pmatrix}
		0&1&0\\1&0&0\\0&0&-1
		\end{pmatrix}$ &$\begin{pmatrix}
		0&0&-1\\0&1&0\\1&0&0
		\end{pmatrix}$&$\begin{pmatrix}
		1&0&0\\0&0&-1\\0&1&0
		\end{pmatrix}$& $\begin{pmatrix}
		0&0&1\\1&0&0\\0&1&0
		\end{pmatrix}$&$\begin{pmatrix}
		0&1&0\\0&0&1\\1&0&0
		\end{pmatrix}$\\
		\hline
	\end{tabular}
\end{table}

Let $K(\mathbb{Z})_1=\left\{\begin{pmatrix}
A&*\\
0&*
\end{pmatrix}:~ A\in M_2(\mathbb{Z})\right\}\cap SL_3(\mathbb{Z})$ and $K(\mathbb{Z})_2=\left\{\begin{pmatrix}
*&*\\
0&A
\end{pmatrix}:~ A\in M_2(\mathbb{Z})\right\}\cap SL_3(\mathbb{Z})$. Similarly, we define $K(\mathbb{Q})_i$ using $\mathbb{Q}$-coefficients as above for $i=1, 2$. Recall that both $K(\mathbb{Z})_1$ and $K(\mathbb{Z})_2$ have infinite relative (T) subgroups, so do not have the Haagerup property.

Observe the following facts -- the first two are immediate.

\textbf{Fact 1}: $\langle H(\mathbb{Z}), p_{(12)} \rangle=K(\mathbb{Z})_1$.

%
%

\textbf{Fact 2} : $\langle H(\mathbb{Z}), p_{(23)} \rangle=K(\mathbb{Z})_2$.

(The above facts hold also if we replace integer coefficients by rational coefficients.)

\textbf{Fact 3} : For any element $g\in H(\mathbb{Q})$, $p_{(13)}gp_{(13)}\in H(\mathbb{Q})p_{(123)}H(\mathbb{Q})$ if and only if $g_{12}g_{23}\neq 0$ and $g_{13}=0$.

This is a straightforward, but lengthy calculation.  We include a key element of it below.

To see that $\Rightarrow$ holds, write $g=\begin{pmatrix}
x&y&0\\
0&a&b\\
0&0&c
\end{pmatrix}$.  If $by\neq 0$, then 
\[p_{(13)}gp_{(13)}=\begin{pmatrix}
-c&0&0\\
b&a&0\\
0&y&-x
\end{pmatrix}
=\begin{pmatrix}
-by/ac&-y/a&1\\
0&1&0\\
0&0&1
\end{pmatrix}^{-1}p_{(123)}\begin{pmatrix}
b&a&0\\
0&y&-x\\
0&0&-x
\end{pmatrix}.\]
The other implication is easy to see.

\textbf{Fact 4} : For any element $g\in H(\mathbb{Q})$, $p_{(132)}gp_{(132)}\in H(\mathbb{Q})p_{(123)}H(\mathbb{Q})$ if and only if $g_{13}=0$. Similarly $p_{(132)}gp_{(132)}\in H(\mathbb{Q})p_{(13)}H(\mathbb{Q})$ if and only if $g_{13}\neq 0$. 

This is again a lengthy calculation. We have for example, keeping $g$ as above except that we replace $g_{13}$, i.e.\ 0 there by $z$. First, assuming that $z=0$,

\[p_{(132)}gp_{(132)}=\begin{pmatrix}
b&0&a\\
c&0&0\\
z&x&y
\end{pmatrix}=\begin{pmatrix}
1&-b/c&0\\
0&1&0\\
0&0&1
\end{pmatrix}^{-1}p_{(123)}\begin{pmatrix}
c&0&0\\
0&x&y\\
0&0&a
\end{pmatrix}.\]
Then assuming that $z \neq 0$ we have
\[p_{(132)}gp_{(132)}=\begin{pmatrix}
1&-b/c&0\\
0&1&-c/z\\
0&0&1
\end{pmatrix}^{-1}p_{(13)}\begin{pmatrix}
z&x&y\\
0&-cx/z&-cy/z\\
0&0&-a
\end{pmatrix}.\]

Now we are ready to prove that $H(\mathbb{Z})$ is maximal Haagerup inside $SL_3(\mathbb{Z})$.

Let $g\in SL_3(\mathbb{Z})\setminus H(\mathbb{Z})$. Then $g\in H(\mathbb{Q})p_{\sigma}H(\mathbb{Q})\cap SL_3(\mathbb{Z})$ for some $\sigma\neq id$. We will consider all five possibilities.

\textbf{Case 1}: $\sigma=(12)$. 

Then $\langle H(\mathbb{Z}), g \rangle\subseteq \langle H(\mathbb{Q}), g \rangle=\langle H(\mathbb{Q}), p_{(12)} \rangle=K(\mathbb{Q})_1$ (by Fact 1).

Hence, $H(\mathbb{Z})\subsetneq \langle H(\mathbb{Z}), g \rangle\subseteq K(\mathbb{Q})_1\cap SL_3(\mathbb{Z})=K(\mathbb{Z})_1$.

Let $\widetilde{K(\bz)_1}$ denote $\{g \in K(\bz)_1: g_{33}=1\}$. It is a subgroup of $K(\mathbb{Z})_1$ of index 2, isomorphic to $\mathbb{Z}^2\rtimes SL_2(\mathbb{Z})$. Write $\widetilde{H(\bz)}:= H(\bz) \cap \widetilde{K(\bz)_1}$,
$\widetilde{\langle H(\bz),g\rangle}:= \langle H(\bz), g \rangle \cap \widetilde{K(\bz)_1}$. We can use properties of $\mathbb{Z}^2\rtimes SL_2(\mathbb{Z})$ to observe that  $\widetilde{H(\bz)}$ is a maximal Haagerup subgroup of  $\widetilde{K(\bz)_1}$ (as upper triangular matrices are a maximal amenable subgroup of $SL_2(\bz)$). Thus it remains to note that we cannot have $\widetilde{H(\bz)}=\widetilde{\langle H(\bz),g\rangle}$ by simple index considerations.


\textbf{Case 2}: $\sigma=(23)$.

This can be handled similarly as the first case.

\textbf{Case 3}: $\sigma=(123)$.

This case can be handled by an argument appearing in \cite{meiri}. Indeed, consider  an element $g\in H(\mathbb{Q})p_{(123)}H(\mathbb{Q})$.  A direct calculation shows that 
\begin{align*}
g&=\begin{pmatrix}
x&y&z\\
0&a&b\\
0&0&\frac{1}{ax}
\end{pmatrix}p_{(123)}\begin{pmatrix}
x'&y'&z'\\
0&a'&b'\\
0&0&\frac{1}{a'x'}
\end{pmatrix}
=\begin{pmatrix}
yx'&yy'+za'&yz'+zb'+\frac{x}{a'x'}\\
ax'&ay'+ba'&z'a+bb'\\
0&\frac{a'}{ax}&\frac{b'}{ax}
\end{pmatrix} 
\\ &=\begin{pmatrix}
\frac{x}{a'x'}&yx'&yy'+za'\\
0&ax'&ay'+ba'\\
0&0&\frac{a'}{ax}
\end{pmatrix}p_{(123)}\begin{pmatrix}
1&0&\frac{z'}{x'}-\frac{b'y'}{a'x'}\\
0&1&\frac{b'}{a'}\\
0&0&1
\end{pmatrix}   
\end{align*}
where $a,a', b, b', x, x', y, y', z, z' z \in \mathbb{Q}$ are such that $ax'\neq 0$ and $\frac{a'}{ax}\neq 0$. Thus in fact $g \in H(\mathbb{Q})p_{(123)}U_\tau(\mathbb{Q})$, where $U_\tau(\mathbb{Q}) =\{g \in H(\mathbb{Q}):g_{12}=0\}$. Now in \cite[last line in p. 422]{meiri}, it was shown that for such $g$ in fact $[SL_3(\mathbb{Z}): \langle g, e_{1,3}(1)\rangle]<\infty$, where  $e_{1, 3}(1)$ denotes the matrix with 1 on the diagonal and 1 on the (1, 3)-entry. As $e_{1, 3}(1)\in H(\mathbb{Z})$, we see that $[SL_3(\mathbb{Z}): \langle H(\mathbb{Z}), g \rangle]<\infty$.

\textbf{Case 4}: $\sigma=(13)$.

A calculation shows that if $g\in H(\mathbb{Q})p_{(13)}H(\mathbb{Q})$, then $g^{-1}\in H(\mathbb{Q})p_{(13)}H(\mathbb{Q})$. Indeed, this is because $p_{(13)}^{-1}=p_{(13)}\begin{pmatrix}
-1&0&0\\
0&1&0\\
0&0&-1
\end{pmatrix}$.

Write $g=Ap_{(13)}B$, where $A\in H(\mathbb{Q})$ and write $B=\begin{pmatrix}
1&b&c\\
0&1&e\\
0&0&1
\end{pmatrix}\in H(\mathbb{Q})$. Note that here we are using a (finer) Bruhat decomposition as applied in \cite{meiri}. From above, we know that $g^{-1}=B^{-1}p_{(13)}\begin{pmatrix}
-1&0&0\\
0&1&0\\
0&0&-1
\end{pmatrix}A^{-1}\in B^{-1}p_{(13)}H(\mathbb{Q})$.

Now it suffices to show that there exists some $X\in H(\mathbb{Z})$ such that 
\[(BXB^{-1})_{12}\neq 0, (BXB^{-1})_{23}\neq 0~\mbox{and}~(BXB^{-1})_{13}=0,\] 
as then we can argue that $gXg^{-1}\in H(\mathbb{Q})p_{(123)}H(\mathbb{Q})$ by Fact 3 and hence we can deduce that $\langle H(\mathbb{Z}), g \rangle\supseteq \langle H(\mathbb{Z}), gXg^{-1} \rangle$, which does not have the Haagerup property by Case 3.

If we write  $X':=BXB^{-1}$ we aim to find $X\in H(\mathbb{Z})$ such that $X'_{12}X'_{23}\neq 0$ and $X'_{13}=0$.

Calculation shows we can just take $X=\begin{pmatrix}
1&n&m-m'\\
0&1&n'\\
0&0&1
\end{pmatrix}$, where we write $e=\frac{m}{n}, b=\frac{m'}{n'}$ for $m, n, m', n'\in \bz$ with $nn'\neq 0 $.

\textbf{Case 5}: $\sigma=(132)$.

We can use Fact 4 to reduce this case to one of the previous two cases.
Indeed, for any $g\in H(\mathbb{Q})p_{(132)}H(\mathbb{Q})\cap SL_3(\mathbb{Z})$, by Fact 4, we know that $g^2\in H(\mathbb{Q})p_{\sigma'} H(\mathbb{Q})$, where $\sigma'=(13)$ or $(123)$.

Hence $\langle H(\mathbb{Z}), g \rangle\supseteq \langle H(\mathbb{Z}), g^2 \rangle$ does not have the Haagerup property.

\end{proof}

The above proposition yields immediately the fact that $\mathbb{Z}^3\rtimes H(\bz)$ is a maximal Haagerup subgroup of $\mathbb{Z}^3\rtimes SL_3(\bz)$; we state it below and give a second, completely different proof.

\begin{proposition}\label{prop:maxHAPSL3Zprod}
	Denote by $H(\bz)$ the subgroup of $SL_3(\bz)$ consisting of all upper-triangular matrices. Then $\mathbb{Z}^3\rtimes H(\bz)$ is a maximal Haagerup subgroup of $\mathbb{Z}^3\rtimes SL_3(\bz)$.
\end{proposition}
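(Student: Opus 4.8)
The plan is to first dispense with the Haagerup property by a triviality: $H(\bz)$ is an extension of the (finite) group of admissible diagonal signs by the integral Heisenberg group, hence virtually nilpotent and in particular amenable; consequently $\mathbb{Z}^3\rtimes H(\bz)$ is an amenable (indeed polycyclic-by-finite) group and so has the Haagerup property. The whole content is therefore the maximality, and here I would avoid the Bruhat-decomposition analysis of Proposition \ref{prop:maxHAPSL3Z} entirely, exploiting instead the extra abelian part $\mathbb{Z}^3$ together with Burger's rigidity (Proposition \ref{prop:Burger}).

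The first step is a reduction. Let $K$ satisfy $\mathbb{Z}^3\rtimes H(\bz)\subsetneq K\subseteq \mathbb{Z}^3\rtimes SL_3(\bz)$. Since $\mathbb{Z}^3\subseteq K$ and $\mathbb{Z}^3$ is normal in $\mathbb{Z}^3\rtimes SL_3(\bz)$, the subgroup $K$ splits as $K=\mathbb{Z}^3\rtimes\Lambda$, where $\Lambda:=\pi(K)$ is the image of $K$ under the projection $\pi\colon \mathbb{Z}^3\rtimes SL_3(\bz)\to SL_3(\bz)$, and $\Lambda\supsetneq H(\bz)$. Thus it suffices to prove that $\mathbb{Z}^3\rtimes\Lambda$ fails the Haagerup property whenever $H(\bz)\subsetneq\Lambda\leq SL_3(\bz)$.

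The heart of the argument is the claim that such a $\Lambda$ always contains a nontrivial transvection strictly below the diagonal, i.e.\ some $e_{ij}(m)$ with $i>j$ and $m\neq 0$. To see this I would use that $H(\bz)$ contains the full upper unipotent group $N^+=\langle e_{12}(\bz),e_{13}(\bz),e_{23}(\bz)\rangle$, whose normaliser in $SL_3(\bz)$ is precisely the Borel $H(\bz)$. Hence, as $g\in\Lambda\setminus H(\bz)$ does not normalise $N^+$, at least one conjugate $g\,e_{ij}(1)\,g^{-1}$ (with $i<j$) is a unipotent element lying outside $N^+$; for otherwise all these conjugates, generating $N^+$, would stay inside $N^+$ and force $g\in H(\bz)$. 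A short computation, multiplying and conjugating by suitable elements of $N^+\subseteq\Lambda$ (exactly in the spirit of the identity $p_{(13)}\,e_{12}(1)\,p_{(13)}^{-1}=e_{32}(1)$ coming from the table in the previous proof), then yields a genuine coordinate transvection $e_{ij}(m)$ with $i>j$. I expect this extraction to be the main obstacle, since one must ensure the rank-one unipotent produced is, or can be cleaned up to, a coordinate transvection rather than a transvection along a skew direction; if this proves awkward, one can instead settle for two transvections sharing a common rank-two invariant sublattice, which is all that the final step really needs.

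Once a lower coordinate transvection $e_{ij}(m)$ is in hand, the proof concludes quickly. Pairing it with the upper transvection $e_{ji}(\bz)\subseteq H(\bz)\subseteq\Lambda$ in the same two coordinates, the subgroup $\Sigma:=\langle e_{ij}(m),e_{ji}(1)\rangle$ is a non-amenable subgroup of the coordinate $SL_2(\bz)$-block on the indices $\{i,j\}$ (it is either all of $SL_2(\bz)$ or a finite-index free subgroup). This block acts in the standard way on the rank-two coordinate sublattice $L=\langle e_i,e_j\rangle\cong\mathbb{Z}^2\subseteq\mathbb{Z}^3$, so $L\rtimes\Sigma\cong\mathbb{Z}^2\rtimes\Sigma$ sits as a subgroup of $\mathbb{Z}^3\rtimes\Lambda$. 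By Burger's Proposition \ref{prop:Burger} the inclusion $L<L\rtimes\Sigma$ is rigid, so $L\rtimes\Sigma$ is not Haagerup; since the Haagerup property passes to subgroups, neither is $\mathbb{Z}^3\rtimes\Lambda=K$. This establishes maximality and completes the argument, by a route wholly independent of the case-by-case Bruhat computation.
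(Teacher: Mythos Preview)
Your reduction and your endgame are both fine: once $K = \mathbb{Z}^3 \rtimes \Lambda$ with $H(\bz) \subsetneq \Lambda$, and once a lower coordinate transvection $e_{ij}(m)$ lies in $\Lambda$, pairing it with $e_{ji}(1) \in H(\bz)$ gives a non-amenable $\Sigma$ inside a coordinate $SL_2(\bz)$-block, and Proposition~\ref{prop:Burger} finishes the job. The gap is exactly where you place it, but it is more serious than you suggest. The claim that a lower coordinate transvection can be extracted ``by a short computation'' from $g\,e_{kl}(1)\,g^{-1}$ using $N^+$-moves is not justified: that conjugate is a general rank-one unipotent $I + vw^T$, and conjugation by all of $N^+$ fixes, for instance, the $(3,1)$-entry of any matrix, so there is no uniform cleanup available. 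Your fallback---two unipotents sharing a rank-two invariant sublattice---is no easier: it amounts to producing an element of $\Lambda$ lying in one of the maximal parabolics $Q(\bz)$ or $Q'(\bz)$ but outside $H(\bz)$, which is again a statement about the Bruhat position of $g$. In fact, showing that $\langle H(\bz), g\rangle$ meets $N^-$ nontrivially for every $g \notin H(\bz)$ seems to require a case analysis by Bruhat cell of essentially the same shape as Proposition~\ref{prop:maxHAPSL3Z}; in the ``long'' cells one ends up needing a finite-index statement (as in \cite{meiri}) to capture a congruence subgroup and hence all $e_{ij}(N)$. So the route is not ``wholly independent'' of that computation.

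The paper gives two proofs, neither matching yours. The first is the one-line deduction from Proposition~\ref{prop:maxHAPSL3Z}: any $\Lambda \supsetneq H(\bz)$ already fails the Haagerup property, hence so does $\mathbb{Z}^3 \rtimes \Lambda$. The second, closer in spirit to yours but sidestepping the transvection hunt, uses Burger's higher-rank criterion directly: $\mathbb{Z}^3 < \mathbb{Z}^3 \rtimes \Lambda$ has relative Property~(T) unless $\Lambda$ preserves a probability measure on $\mathbb{P}(\mathbb{R}^3)$; by Zimmer's description of such measures this forces a finite-index subgroup of $\Lambda$ into a maximal parabolic $Q$ or $Q'$, where the $\mathbb{Z}^2 \rtimes SL_2(\bz)$ structure (together with Theorem~\ref{prop: classify mHAP in z2 times sl2z} and the maximal amenability of $H(\bz)$) is transparent. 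That argument handles precisely the case your approach finds hardest---$\Lambda$ not virtually parabolic---without having to locate any explicit $SL_2$-block inside $\Lambda$.
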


\begin{proof} 
	First note once again that this result is an easy corollary of Proposition \ref{prop:maxHAPSL3Z}; below we present an alternative proof.

	We begin by introducing some more notations: write $\Gamma=SL_3(\bz)$, $G=SL_3(\mathbb{R})$, let $P< G$ be the subgroup of all upper-triangular matrices in $G$, and let $Q:=\{g\in G:g_{21}=g_{31}=0\}$, $Q':=\{g\in G:g_{31}=g_{32}=0\}$.
	Then $\Lambda=\Gamma\cap P$ and $P\subsetneq Q$. Let then $H$ be a subgroup such that $\Lambda\subsetneq H\subseteq \Gamma$; we aim to show that  $\mathbb{Z}^3\rtimes H$ is not Haagerup. We will consider two separate cases.
	
\vspace*{0.2cm}	
	Case (1): there exists a finite index subgroup $H_0 < H$ such that   $H_0<Q\cap \Gamma$ or $H_0< Q'\cap \Gamma$.
	
	It suffices to consider the case $H_0< Q\cap \Gamma$, the other one can be argued analogously. As $Q \cap \Gamma$ has an index 2 subgroup isomorphic to $\mathbb{Z}^2\rtimes SL_2(\mathbb{Z})$, after passing to this subgroup (so changing $H_0$ to another finite index subgroup of $H$), we see that $\mathbb{Z}^2\rtimes \Lambda_2< H_0<\mathbb{Z}^2\rtimes SL_2(\mathbb{Z})$, where $\Lambda_2<SL_2(\mathbb{Z})$ is the maximal amenable subgroup consisting of all upper triangular matrices. If $H_0 \neq \mathbb{Z}^2\rtimes \Lambda_2$, then Proposition \ref{prop:Burger}  implies that $H_0$ (so also $H$) has relative property (T) with respect to an infinite subgroup, and $H$ cannot be Haagerup. It remains to note that if $H$ is strictly larger than $\Lambda$, then $\Lambda$ cannot be of finite index in $H$. This however follows from \cite[Corollary B]{remi_carderi2}, where it is shown that $\Lambda$ is a maximal amenable subgroup of $SL_3(\bz)$.

\vspace*{0.2cm}	
Case (2): no finite index subgroups of $H$ are contained in  $Q\cap \Gamma$ or $Q'\cap \Gamma$.

By \cite[Proposition 7]{bur}, we know that $\mathbb{Z}^3< \mathbb{Z}^3\rtimes H$ has relative Property (T) if and only if there is no $H$-invariant probability measure on the projective space $X:=\mathbb{P}(\mathbb{R}^3):=\mathbb{R}^3-\{0\}/\sim$, where $(a, b, c)^t\sim (a',b',c')^t$ if the two vectors are parallel to each other. Denote by 	 $\pi: SL_3(\mathbb{R})\twoheadrightarrow PSL_3(\mathbb{R})=SL_3(\mathbb{R})/(\mathbb{R}^{\neq 0}\cdot I)$ the natural quotient map and note that $PSL_3(\mathbb{R})$ acts naturally on $X$.

As hinted in \cite[P. 62]{bur}, by \cite[Corollary 3.2.2]{zimmer} it suffices to check that  no finite index subgroup of $\pi(H)$ could fix $[V]$, where $V$ is a subspace of $\mathbb{R}^3$ with dimension 1 or 2 and $[V]$ denotes the image of $V$ in $X$. Suppose then that we have such a finite index subgroup and a subspace $V$. Note that $[V]$ is invariant under $\pi(\Lambda)$. Then either $\textup{dim} V = 1$, so that  $V=(\mathbb{R}, 0, 0)^t$ or  $\textup{dim} V = 2$, so that $V=(\mathbb{R}, \mathbb{R}, 0)^t$. In the first situation the stabilizer subgroup of $[V]$ in $PSL_3(\mathbb{R})$ is equal to $\pi(Q)$. Therefore, $H$ has a finite index subgroup contained in $Q\cap \Gamma$, which contradicts our assumption in Case (2). Similarly in the second situation the stabilizer subgroup of $[V]$ in $PSL_3(\mathbb{R})$ is equal to $\pi(Q')$. Therefore, $H$ has a finite index subgroup contained in $Q'\cap \Gamma$ and we again reach a contradiction.
\end{proof}

\section{Maximal Haagerup von Neumann subalgebras}

In this section we will present several examples of maximal Haagerup von Neumann subalgebras; in most cases (but not all) the proofs will be based on the knowledge of the form of  all intermediate von Neumann subalgebras. 
Some results will be first phrased in a rather general language, but we will always strive to present concrete examples of the form $L(H)< L(G)$, where $H$ is a (neccessarily maximal Haagerup) subgroup of a non-Haagerup group $G$.

\subsection*{Maximal Haagerup subalgebras inside $L(\mathbb{Z}^2\rtimes SL_2(\mathbb{Z}))$}

We begin by the example where we do not know all the intermediate algebras explicitly.

\begin{thm}
	Suppose that $H$ is an infinite maximal amenable subgroup of $SL_2(\mathbb{Z})$ such that $L(\bz^2\rtimes H)$ is a factor; for example let $H$ be a maximal amenable subgroup of $SL_2(\bz)$ containing $\begin{pmatrix} 1& 1 \\1 & 2\end{pmatrix}$. Then $L(\mathbb{Z}^2\rtimes H)<L(\mathbb{Z}^2\rtimes SL_2(\mathbb{Z}))$ is a maximal Haagerup subalgebra.
\end{thm}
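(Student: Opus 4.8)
The plan is to realise both algebras as crossed products associated with the natural action of $SL_2(\mathbb{Z})$ on the dual of $\mathbb{Z}^2$, and then to analyse all intermediate algebras through the associated equivalence relations. Write $G=\mathbb{Z}^2\rtimes SL_2(\mathbb{Z})$ and $K=\mathbb{Z}^2\rtimes H$. Since $H$ is amenable and $\mathbb{Z}^2$ is abelian, $K$ is amenable, so $L(K)$ is injective and in particular has the Haagerup property; thus only maximality requires work. By Pontryagin duality I identify $L(\mathbb{Z}^2)=L^\infty(\mathbb{T}^2)$, and hence $L(G)=L^\infty(\mathbb{T}^2)\rtimes SL_2(\mathbb{Z})$ and $L(K)=L^\infty(\mathbb{T}^2)\rtimes H$, where $SL_2(\mathbb{Z})$ acts on $\mathbb{T}^2=\widehat{\mathbb{Z}^2}$ in the dual way, which is essentially free, ergodic and measure-preserving. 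In particular $\bA:=L^\infty(\mathbb{T}^2)$ is a Cartan subalgebra of $L(G)$, and $L(K)=L(\mathcal{R}_H)$, where $\mathcal{R}_H$ is the orbit equivalence relation of $H\curvearrowright\mathbb{T}^2$ inside the orbit relation $\mathcal{R}$ of $SL_2(\mathbb{Z})\curvearrowright\mathbb{T}^2$.

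First I would record that $L(K)$ is in fact \emph{maximal amenable} in $L(G)$. Indeed $H$ is a maximal amenable subgroup of $SL_2(\mathbb{Z})$ which, being virtually cyclic inside a hyperbolic group, is almost malnormal (this is the hyperbolically embedded property already exploited in the proof of Theorem \ref{prop: classify mHAP in z2 times sl2z}); since moreover $SL_2(\mathbb{Z})\curvearrowright\mathbb{T}^2$ is free, ergodic and probability-measure-preserving, the maximal amenability of $L^\infty(\mathbb{T}^2)\rtimes H$ inside $L^\infty(\mathbb{T}^2)\rtimes SL_2(\mathbb{Z})$ follows from the results of Boutonnet and Carderi (\cite{remi_carderi2}).

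Now take any intermediate von Neumann algebra $L(K)\subseteq\bN\subseteq L(G)$ with the Haagerup property; the aim is to show $\bN=L(K)$. Since $\bA\subseteq L(K)\subseteq\bN$ and $\bA$ is Cartan in $L(G)$, Feldman--Moore theory (\cite{femo}) guarantees that every such $\bN$ has the form $L(\mathcal{S})$ for a measurable subequivalence relation $\mathcal{R}_H\subseteq\mathcal{S}\subseteq\mathcal{R}$. As $L(K)$ is assumed to be a factor, $\mathcal{R}_H$ is ergodic, and therefore so is the coarser relation $\mathcal{S}$. The key input is then the work of Ioana on ergodic subequivalence relations of $\mathcal{R}$: because $SL_2(\mathbb{Z})\curvearrowright\mathbb{T}^2$ is a rigid action (Proposition \ref{prop:Burger}), any ergodic subequivalence relation $\mathcal{S}\subseteq\mathcal{R}$ is either hyperfinite or yields a rigid inclusion $\bA\subseteq L(\mathcal{S})$. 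In the latter case $\bA$ is diffuse, so Proposition \ref{prop:rigidincl} shows $L(\mathcal{S})=\bN$ is not Haagerup, contradicting our hypothesis. Hence $\mathcal{S}$ is hyperfinite and $\bN$ is amenable; combined with the maximal amenability of $L(K)$ this forces $\bN=L(K)$. This proves that no Haagerup subalgebra strictly contains $L(K)$, so $L(K)$ is maximal Haagerup. (That the concrete $H\ni\begin{pmatrix}1&1\\1&2\end{pmatrix}$ satisfies the factor hypothesis is immediate from Proposition \ref{maxamenSL2}(1), as the trace is $3>2$.)

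I expect the main obstacle to be the legitimate invocation of Ioana's dichotomy: one must verify that its hypotheses genuinely apply to $SL_2(\mathbb{Z})\curvearrowright\mathbb{T}^2$ and, crucially, to \emph{arbitrary} intermediate subequivalence relations $\mathcal{S}$ (which need not be induced by a subgroup), and one must make sure that the Feldman--Moore correspondence really does capture \emph{all} intermediate algebras between the Cartan subalgebra and $L(G)$, rather than only those regularly generated over $\bA$. Once these two structural facts are in place, the remaining argument is the clean bookkeeping sketched above.
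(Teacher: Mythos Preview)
Your proof is correct and follows essentially the same strategy as the paper: maximal amenability via Boutonnet--Carderi followed by Ioana's dichotomy for intermediate objects over $L(\mathbb{Z}^2)$. The only cosmetic difference is that you pass through Feldman--Moore and phrase Ioana's result at the level of ergodic subequivalence relations, whereas the paper stays at the level of subfactors (deducing factoriality of the intermediate $\bP$ from $\bP'\cap\bP\subseteq\bN'\cap\bM\subseteq\bN'\cap\bN=\mathbb{C}$ and then invoking Ioana's theorem for subfactors containing $L(\mathbb{Z}^2)$); these are two sides of the same coin, and your concerns about the applicability of both Feldman--Moore and Ioana are unwarranted here.
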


\begin{proof}
Consider a von Neumann algebra $\bP$ such that $\bN<\bP<\bM$, where $\bN=L(\mathbb{Z}^2\rtimes H)$, $\bM=L(\mathbb{Z}^2\rtimes SL_2(\mathbb{Z}))$. Begin by noting that since $SL_2(\mathbb{Z})$ is hyperbolic  \cite[Theorem D]{remi_carderi1} or \cite[Theorem A+ Corollary B(1)]{remi_carderi2} show that $\bN$  is a maximal amenable subalgebra of $\bM$. This means in particular that $\bN'\cap \bM \subset \bN$.
Thus, as $\bN$ is a factor, so is $\bP$ (as $\bP'\cap \bP\subseteq \bN'\cap \bM=\bN' \cap \bN = \bc 1$).

It now remains to use the main theorem of \cite{i}, where Ioana showed that when $\bP$ is a subfactor of $\bM$ which contains $L(\bz^2)$, then $\bP$ is either amenable (in which case it equals $\bN$, as discussed above), or the inclusion $L(\bz^2)<\bP$ is rigid, in which case $\bP$ cannot be Haagerup. This ends the proof of the main part of the theorem; the fact that any maximal amenable subgroup of $SL_2(\bz)$ containing $\begin{pmatrix} 1& 1 \\1 & 2\end{pmatrix}$ satisfies the above assumptions follows from Proposition \ref{maxamenSL2}.	
\end{proof}

\subsection*{Examples related to Galois correspondence}

The next example is an almost immediate consequence of the results of \cite{choda}.

\begin{thm}\label{thm:Galois}
	Suppose that $G$ is a group and $\Lambda$ is an amenable ICC group. Then the following conditions are equivalent for a von Neumann algebra $\bP$ such that $L(\Lambda^{\oplus G}) <\bP < L(\Lambda\wr G)$:
	\begin{itemize}
		\item $\bP$ is a maximal Haagerup subalgebra of  $L(\Lambda\wr G)$;
		\item $\bP = L(\Lambda \wr_G H)$ for some maximal Haagerup subgroup $H <G$.
	\end{itemize}
\end{thm}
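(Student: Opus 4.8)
The plan is to deduce the theorem from Choda's Galois correspondence \cite{choda}, combined with the group-level results already at our disposal. First I would record the structural setup. Since $\Lambda$ is ICC, the direct sum $\Lambda^{\oplus G}$ is again ICC (a nontrivial finitely supported element has infinite conjugacy class, as each nontrivial coordinate does), so that $\bM:=L(\Lambda^{\oplus G})$ is a $\mathrm{II}_1$ factor; moreover $L(\Lambda\wr G)=\bM\rtimes G$, where $G$ acts on $\bM$ via the Bernoulli-type shift automorphisms. The crucial point is that this action is \emph{outer}: the left-translation action of $G$ on itself is free and $\Lambda$ is nontrivial, so for every $g\neq e$ the induced shift automorphism of $\bM$ is not inner. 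Granting this, Choda's Galois correspondence applies and tells us that every intermediate von Neumann algebra $\bM\subseteq \bP\subseteq \bM\rtimes G$ is of the form $\bP=\bM\rtimes H=L(\Lambda\wr_G H)$ for a unique subgroup $H<G$, and that the assignment $H\mapsto L(\Lambda\wr_G H)$ is an inclusion-preserving bijection between subgroups of $G$ and such intermediate algebras.

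Next I would translate the Haagerup property across this bijection. Because $\Lambda\wr_G H$ is a genuine group, $L(\Lambda\wr_G H)$ has the Haagerup property if and only if the group $\Lambda\wr_G H$ does, by \cite{Cho}. Since $\Lambda$ is amenable, hence Haagerup, Proposition \ref{prop:strangewreath} shows that $\Lambda\wr_G H$ is Haagerup whenever $H$ is; conversely, if $\Lambda\wr_G H$ is Haagerup then so is its subgroup $H$. Thus $L(\Lambda\wr_G H)$ is Haagerup if and only if $H$ is a Haagerup subgroup of $G$.

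Finally I would combine the two observations with maximality. Any Haagerup subalgebra $\bN$ with $\bP\subsetneq \bN\subseteq L(\Lambda\wr G)$ automatically contains $\bM$ (as $\bM\subseteq \bP$), hence by the correspondence equals $L(\Lambda\wr_G H')$ for some $H'$ with $H\subsetneq H'$; and then $\bN$ being Haagerup forces $H'$ to be a Haagerup subgroup. Reading this dictionary in both directions shows at once that $\bP=L(\Lambda\wr_G H)$ is a maximal Haagerup subalgebra of $L(\Lambda\wr G)$ precisely when $H$ is a maximal Haagerup subgroup of $G$, which yields both implications of the theorem.

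I expect the only genuine obstacle to be the verification of the hypotheses needed for Choda's correspondence, namely factoriality of $\bM$ (immediate from the ICC condition) and, more delicately, outerness of the shift action; once these are in place the remainder is bookkeeping with Proposition \ref{prop:strangewreath} and \cite{Cho}. A secondary point to handle with care is that the phrase ``maximal Haagerup subalgebra of $L(\Lambda\wr G)$'' a priori permits competitors not containing $\bM$, but since every subalgebra strictly containing $\bP$ contains $\bM$ this causes no difficulty.
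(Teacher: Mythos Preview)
Your proposal is correct and follows essentially the same approach as the paper: both invoke Choda's Galois correspondence (using outerness of the Bernoulli shift on the ICC factor $L(\Lambda^{\oplus G})$) to identify intermediate algebras with subgroups of $G$, then use Proposition~\ref{prop:strangewreath} together with \cite{Cho} to match the Haagerup property across this bijection. The paper cites \cite[Proposition~4.9]{bb} for (strict) outerness rather than arguing it directly, but otherwise the arguments coincide.
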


\begin{proof}
	Consider the Bernoulli action of $G$ on the algebra $\overline{\bigotimes}_G R$, where $R$ is the hyperfinite $II_1$-factor, so that $R\simeq L(\Lambda)$. As the action is strictly outer (see for example \cite[Proposition 4.9]{bb}), we can apply \cite[Corollary 4]{choda} (generalized in \cite[Theorem 5.3]{bb}) to deduce that any intermediate  von Neumann algebra between 
	$\bigotimes_G R$ and $(\overline{\bigotimes}_G R) \rtimes G$ is of the form $\bP_K:=(\overline{\bigotimes}_G R) \rtimes K$, where $K$ is a subgroup of $G$. Now as $L(K) <\bP_K$, if $\bP_K$ is Haagerup, then $K$ must be Haagerup. It remains to note that if $K$ is Haagerup, so is $\Lambda \wr_G K$, which is the content of Proposition \ref{prop:strangewreath}. This ends the proof.
\end{proof}

Concrete instances of the above theorem can be produced for example using Theorem \ref{prop: classify mHAP in z2 times sl2z}.

\subsection*{Extremely rigid actions}

The next class of examples comes from actions which do not admit non-rigid nontrivial quotients.
Recall that an action $G\curvearrowright (X,\mu)$ is called rigid (\cite{popa}) if the inclusion of von Neumann algebras $L^\infty(X, \mu)<L^\infty(X,\mu) \rtimes G$ is rigid; as we recalled in Proposition \ref{prop:rigidincl} unless $(X,\mu)$ is not diffuse, this gives an obstruction to the Haagerup property of $L^\infty(X,\mu) \rtimes G$.

\begin{defn}
	A p.m.p.\ ergodic action  $G\curvearrowright (X,\mu)$ is said to be extremely rigid if the following two conditions are satisfied:
\begin{enumerate}
	\item there are no atomic quotient actions of $G\curvearrowright (X,\mu)$ other than the trivial action;
	\item all the quotient actions of $G\curvearrowright (X,\mu)$ are rigid.
\end{enumerate}
\end{defn}

We first formulate and prove a general result regarding extremely rigid actions. Note that very similar methods are used to show maximal injectivity of certain subalgebras in \cite[Corollary 3.6]{chifan_das}.

\begin{thm}\label{thm: general version for 2nd example}
	Let $\Lambda$ be an ICC group with the Haagerup property and let $G\curvearrowright (X,\mu)$ be a p.m.p.\ ergodic extremely rigid action.
	Moreover, assume that $G$ has the Haagerup property. Let $\bN=\bar{\otimes }_{G} L(\Lambda)$ and consider the Bernoulli action of $G$ on $\bN$. Then $\bN\rtimes G < (\bN\bar{\otimes}L^{\infty}(X,\mu))\rtimes G$ is a maximal Haagerup subalgebra.
\end{thm}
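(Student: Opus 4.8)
The plan is to verify the Haagerup property of the subalgebra directly and then prove maximality by showing that any strictly larger intermediate algebra contains a diffuse, rigidly embedded subalgebra coming from a nontrivial quotient of the action, and so fails to be Haagerup by Proposition~\ref{prop:rigidincl}. First I would record that $\bN\rtimes G$ is Haagerup: since $\Lambda$ is a discrete group, the tracial infinite tensor product identifies $\bN=\bar{\otimes}_G L(\Lambda)$ with $L(\Lambda^{\oplus G})$, carrying the Bernoulli action of $G$ to the shift, so that $\bN\rtimes G\cong L(\Lambda\wr G)$. As $\Lambda$ and $G$ are both Haagerup, \cite{csv} shows $\Lambda\wr G$ is Haagerup, whence $\bN\rtimes G$ is Haagerup by Choda's theorem \cite{Cho}.

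For maximality, set $\bA:=L^{\infty}(X,\mu)$, so that $\bM=(\bN\bar{\otimes}\bA)\rtimes G$ with $G$ acting diagonally (Bernoulli on $\bN$, the given action on $\bA$). Let $\bN\rtimes G\subsetneq\bP\subseteq\bM$; I must show $\bP$ is not Haagerup. Since $\Lambda$ is ICC, $\Lambda^{\oplus G}$ is ICC and $\bN$ is a $\mathrm{II}_1$ factor. Using that the Bernoulli action is weakly mixing and properly outer, a standard Fourier-coefficient computation gives $\bN'\cap\bM=\bA$, so that $\bA_0:=\bN'\cap\bP$ is a $G$-invariant von Neumann subalgebra of $\bA$. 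The crucial step is the splitting
\[ \bP=(\bN\bar{\otimes}\bA_0)\rtimes G. \]
The inclusion $\supseteq$ is clear, as $\bN$, $\bA_0$ and the canonical unitaries $\{u_g\}_{g\in G}$ all lie in $\bP$. For the reverse inclusion I would argue in two stages: first, the factoriality of $\bN$ together with the tensor-splitting theorem of Ge and Kadison yields $\bP\cap(\bN\bar{\otimes}\bA)=\bN\bar{\otimes}\bA_0$; second, the strict outerness of the Bernoulli action --- via the Galois-type correspondence already exploited in Theorem~\ref{thm:Galois}, in the form of the intermediate-subalgebra splitting results established earlier (Lemma~\ref{lem: intermediate subalgebras split}, Theorem~\ref{thm:Amrutamgeneralised}) --- forces $\bP$ to be generated by $\bP\cap(\bN\bar{\otimes}\bA)$ and the unitaries $u_g$.

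Granting the splitting, $\bP\supsetneq\bN\rtimes G$ forces $\bA_0\neq\bc 1$, so $\bA_0=L^{\infty}(Y,\nu)$ corresponds to a nontrivial quotient action $G\curvearrowright(Y,\nu)$ of $G\curvearrowright(X,\mu)$. By condition~(1) of extreme rigidity the atomic factor of this quotient (itself a quotient of $(X,\mu)$) must be trivial, so $(Y,\nu)$ is diffuse and $\bA_0$ is a diffuse subalgebra; by condition~(2) the quotient is rigid, i.e.\ $\bA_0\subseteq\bA_0\rtimes G$ is a rigid inclusion. Since $\bA_0\rtimes G\subseteq(\bN\bar{\otimes}\bA_0)\rtimes G=\bP$ and relative Property~(T) of an inclusion passes to any larger ambient algebra (every $\bP$-bimodule restricts to an $\bA_0\rtimes G$-bimodule), the inclusion $\bA_0\subseteq\bP$ is rigid with $\bA_0$ diffuse. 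Proposition~\ref{prop:rigidincl} then shows $\bP$ is not Haagerup, proving maximality.

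I expect the main obstacle to be the splitting $\bP=(\bN\bar{\otimes}\bA_0)\rtimes G$. The tensor-splitting over the factor $\bN$ is routine, but the group-splitting --- ruling out intermediate algebras not generated by their intersection with the base and the group unitaries --- is precisely where the weak mixing and strict outerness of the Bernoulli action must be invoked, and where one must check that the available Galois-type results genuinely apply in this \emph{relative} setting (intermediate algebras between $\bN\rtimes G$ and $(\bN\bar{\otimes}\bA)\rtimes G$, rather than between the base $\bN\bar{\otimes}\bA$ and the full crossed product).
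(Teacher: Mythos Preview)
Your overall architecture is exactly that of the paper: identify $\bN\rtimes G$ with $L(\Lambda\wr G)$ to get the Haagerup property, split any strictly larger intermediate $\bP$ as $(\bN\bar\otimes\mathsf{B})\rtimes G$ for a nontrivial $G$-invariant $\mathsf{B}\subseteq L^\infty(X)$ via Ge--Kadison, and then use extreme rigidity to exhibit a diffuse rigid subalgebra. The last paragraph of your proposal correctly isolates the only nontrivial point.

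The gap is precisely there, and the tools you name do not close it. Lemma~\ref{lem: intermediate subalgebras split} concerns inclusions $L^\infty(X)\rtimes H\subseteq L^\infty(X)\rtimes G$ with a \emph{commutative} base, a roughly normal subgroup and a mixing action; Theorem~\ref{thm:Amrutamgeneralised} concerns $L(G)\subseteq L^\infty(X)\rtimes G$ with a subgroup acting trivially; and Choda's correspondence (behind Theorem~\ref{thm:Galois}) needs a \emph{factorial} base, whereas $\bN\bar\otimes L^\infty(X)$ has centre $L^\infty(X)$. None of these gives the relative splitting you want. What the paper actually invokes is Suzuki's intermediate-subalgebra theorem \cite[Theorem~4.6]{suzuki}: one checks that the inclusion $\bN\subseteq\bN\bar\otimes L^\infty(X)$ is \emph{centrally $G$-free} (by \cite[Remark~4.4(3)]{suzuki} this reduces to central $G$-freeness of $\bN\subseteq\bN$, which holds for Bernoulli shifts by \cite[Example~4.13]{suzuki}), and then Suzuki's theorem forces every intermediate $\bP$ between $\bN\rtimes G$ and $(\bN\bar\otimes L^\infty(X))\rtimes G$ to be of the form $\mathsf{A}\rtimes G$ for a $G$-invariant $\bN\subseteq\mathsf{A}\subseteq\bN\bar\otimes L^\infty(X)$. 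Only then does Ge--Kadison give $\mathsf{A}=\bN\bar\otimes\mathsf{B}$. So your diagnosis of the obstacle is right, but the remedy is Suzuki's central-freeness result, not the Galois-type lemmas you cite.
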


\begin{proof}
First note that $\bN\rtimes G$ has the Haagerup property, as $\bN\rtimes G=L(\Lambda \wr G)$ and $\Lambda \wr G$ has the Haagerup property.

Then, let $\bP$ be any intermediate von Neumann algebra such that $\bN\rtimes G\subsetneq \bP< \bM$, where $\bM= (\bN\bar{\otimes}L^{\infty}(X,\mu))\rtimes G$. Then we claim that $\bP$ does not have the Haagerup property.

Observe that $\bN<\bN\bar{{\otimes}}L^{\infty}(X)$ is centrally $G$-free in the sense of \cite[Theorem 4.3]{suzuki}. Indeed, by \cite[Remark 4.4(3)]{suzuki} (and using the notation of that paper) $\bN^{\omega}\cap (\bN\bar{\otimes}L^{\infty}(X))_{\omega}=\bN^{\omega}\cap (\bN\bar{\otimes}L^{\infty}(X))'=\bN^{\omega}\cap \bN'$ and hence we just need to check $\bN\subset \bN$ is centrally $G$-free, which holds by \cite[Example 4.13]{suzuki}. Therefore, we may apply \cite[Theorem 4.6]{suzuki} to conclude that $\bP=\mathsf{A}\rtimes G$, where $\mathsf{A}$ is a $G$-invariant intermediate subalgebra such that $\bN\subsetneq\mathsf{A}< \bN\bar{\otimes}L^{\infty}(X)$. Since $\bN$ is a finite factor, we deduce that $\mathsf{A}\cong \bN\bar{\otimes }\mathsf{B}$ for some $G$-invariant von Neumann subalgebra $\mathsf{B}$ of $L^{\infty}(X)$ by Ge-Kadison's splitting theorem \cite{gk}.

Then $\bP=(\bN\bar{\otimes}\mathsf{B})\rtimes G$.  As the action is assumed to be extremely rigid, $\bP$ cannot have the Haagerup property as it contains $\mathsf{B}\rtimes G$, $\mathsf{B}$ is diffuse and the inclusion $\mathsf{B} <\mathsf{B} \rtimes G$ is rigid.
\end{proof}

Let us then discuss an example of an extremely rigid action.

\begin{lem}\label{lem: (corrected version) quotients of GL_2(Z)-action}
	Let $G=SL_2(\mathbb{Z})$. The standard (algebraic) action $G\curvearrowright \widehat{\mathbb{Z}^2}\cong \mathbb{T}^2$ is extremely rigid. 
\end{lem}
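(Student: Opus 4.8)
The plan is to identify all quotients of the action --- equivalently, all $G$-invariant von Neumann subalgebras of $L^\infty(\widehat{\mathbb{Z}^2})$ --- and to check that every nontrivial one is $G$-equivariantly isomorphic to the original action, so that its rigidity follows from Proposition \ref{prop:Burger}. Throughout I identify $A:=L^\infty(\mathbb{T}^2)$ with $L(\mathbb{Z}^2)$, writing $u_n$, $n\in\mathbb{Z}^2$, for the canonical unitaries (the characters), so that $G=SL_2(\mathbb{Z})$ permutes the $u_n$ according to its orbits on $\mathbb{Z}^2$. These orbits are $\{0\}$ and, for each $d\geq 1$, the infinite set $O_d=\{v\in\mathbb{Z}^2:\gcd(v)=d\}$ (since $SL_2(\mathbb{Z})$ is transitive on primitive vectors), and the stabiliser of any $v\neq 0$ is infinite cyclic (a conjugate of the unipotent subgroup). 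A quotient action corresponds to a $G$-invariant subalgebra $\mathsf{B}\subseteq A$ together with the trace-preserving, hence $G$-equivariant, conditional expectation $E\colon A\to\mathsf{B}$.

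For condition (1), recall that a nontrivial atomic ergodic quotient is a transitive action of $G$ on a finite set, i.e.\ a $G$-invariant partition of $\mathbb{T}^2$ into $k\geq 2$ sets of measure $1/k$; the kernel of the induced homomorphism $G\to\mathrm{Sym}(k)$ is then a finite-index subgroup $N\leq G$ fixing a set of measure strictly between $0$ and $1$. Thus it suffices to show that every finite-index $N\leq SL_2(\mathbb{Z})$ acts ergodically on $\mathbb{T}^2=\widehat{\mathbb{Z}^2}$. Ergodicity is equivalent to the absence of a nonzero finite $N$-orbit in $\mathbb{Z}^2$; but a finite $N$-orbit of some $v\neq 0$ would force a finite-index subgroup of $N$ into the stabiliser of $v$, which is virtually cyclic, contradicting the fact that finite-index subgroups of $SL_2(\mathbb{Z})$ are non-amenable. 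Hence there are no nontrivial atomic quotients.

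For condition (2) I analyse an arbitrary nontrivial $G$-invariant $\mathsf{B}$. Set $\Lambda=\{n\in\mathbb{Z}^2:u_n\in\mathsf{B}\}$, a $G$-invariant subgroup; an elementary orbit argument, together with the irreducibility of $SL_2(\mathbb{F}_p)$ on $\mathbb{F}_p^2$, shows that the only $G$-invariant subgroups of $\mathbb{Z}^2$ are $\{0\}$ and the $d\mathbb{Z}^2$, $d\geq 1$. If $\Lambda=d\mathbb{Z}^2\neq\{0\}$, then $L(d\mathbb{Z}^2)\subseteq\mathsf{B}\subseteq A$ is a finite (degree $d^2$) extension, and applying the Galois correspondence for the finite abelian group $\widehat{\mathbb{Z}^2/d\mathbb{Z}^2}$ acting on $A$ with fixed-point algebra $L(d\mathbb{Z}^2)$ shows that every intermediate subalgebra has the form $L(\Lambda')$ for an intermediate subgroup $\Lambda'$; $G$-invariance then forces $\mathsf{B}=L(d'\mathbb{Z}^2)$ for some $d'\mid d$. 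Since $d'\mathbb{Z}^2\cong\mathbb{Z}^2$ as a $G$-module (via $d'x\leftrightarrow x$, which is equivariant because $g(d'x)=d'(gx)$), the inclusion $\mathsf{B}\subseteq\mathsf{B}\rtimes G$ is isomorphic to $L(\mathbb{Z}^2)\subseteq L(\mathbb{Z}^2\rtimes SL_2(\mathbb{Z}))$, which is rigid by Proposition \ref{prop:Burger}; in particular $\mathsf{B}$ is diffuse, in agreement with condition (1).

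The main obstacle is to exclude the remaining case $\Lambda=\{0\}$ with $\mathsf{B}\neq\mathbb{C}$, i.e.\ a nontrivial invariant subalgebra containing no character. Here I would use that $E$, being $G$-equivariant, commutes with the Koopman representation and hence preserves the orbit decomposition $L^2(A)\ominus\mathbb{C}=\bigoplus_{d\geq 1}\ell^2(O_d)$: for every $n$ the element $E(u_n)\in\mathsf{B}$ is Fourier-supported on the single orbit $O_{\gcd(n)}$, and as $\mathsf{B}\neq\mathbb{C}$ some $E(u_n)$ is a nonzero, bounded, non-constant element of $\mathsf{B}$ supported on one orbit $O_d$. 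The aim is then to contradict $\Lambda=\{0\}$; the most promising route is spectral, using that each $\ell^2(O_d)\cong\ell^2(SL_2(\mathbb{Z})/\mathbb{Z}_{\mathrm{unip}})$ is a quasi-regular representation modulo the amenable parabolic $\mathbb{Z}_{\mathrm{unip}}$, so that a characterless invariant subalgebra would, through its almost-invariant vectors, yield an $SL_2(\mathbb{Z})$-invariant probability measure on the boundary $\mathbb{P}^1(\mathbb{R})$ --- impossible, since $SL_2(\mathbb{Z})$ is non-amenable and acts on $\mathbb{P}^1(\mathbb{R})$ without invariant measure. (Alternatively, bypassing the classification for condition (2) entirely, this same non-existence of a $G$-invariant measure on $\mathbb{P}^1(\mathbb{R})$ can be inserted directly into Burger's criterion applied to $L^2(\mathsf{B})\ominus\mathbb{C}$, which is a subrepresentation of $\bigoplus_d\ell^2(O_d)$, to deduce rigidity of $\mathsf{B}\subseteq\mathsf{B}\rtimes G$ for \emph{every} nontrivial factor.) Together, the two conditions give that the action is extremely rigid.
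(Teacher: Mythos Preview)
Your argument for condition (1) is fine and in fact more self-contained than the paper's (the paper reads it off the Witte--Park classification of factors, whereas you deduce it directly from ergodicity of every finite-index subgroup of $SL_2(\mathbb{Z})$ on $\mathbb{T}^2$).

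The gap is in condition (2). Your dichotomy on $\Lambda=\{n:u_n\in\mathsf{B}\}$ is correct, but the case $\Lambda=\{0\}$ with $\mathsf{B}\neq\mathbb{C}$ \emph{cannot} be excluded: it genuinely occurs. For each $k\geq 1$ the ``even'' subalgebra
\[
\mathsf{B}_k\;=\;L^\infty\bigl(\widehat{k\mathbb{Z}^2}\bigr)^{\mathbb{Z}/2\mathbb{Z}}
\;=\;\overline{\mathrm{span}}\{u_n+u_{-n}:n\in k\mathbb{Z}^2\}
\]
(the fixed points of the flip $(x,y)\mapsto(-x,-y)$) is a nontrivial $G$-invariant subalgebra containing no nontrivial character, so $\Lambda=\{0\}$. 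Thus your spectral/boundary sketch aimed at ``contradicting $\Lambda=\{0\}$'' cannot succeed, and your classification of factors as exactly the $L(d\mathbb{Z}^2)$ is incomplete. The paper obtains the full list of nontrivial factors (namely $\widehat{k\mathbb{Z}^2}$ and $\widehat{k\mathbb{Z}^2}/\{\pm1\}$) by quoting Witte's correction of a result of Park.

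Your alternative---``insert Burger's criterion into $L^2(\mathsf{B})\ominus\mathbb{C}$ to get rigidity for every nontrivial factor''---is not a proof as stated. Burger's criterion is about the pair $\mathbb{Z}^n<\mathbb{Z}^n\rtimes H$; knowing that $L^2(\mathsf{B})\ominus\mathbb{C}$ sits inside the Koopman representation of $G$ on $\ell^2(\mathbb{Z}^2\setminus\{0\})$ gives you rigidity of $\mathsf{B}$ inside the \emph{ambient} crossed product $L^\infty(\mathbb{T}^2)\rtimes G$, not inside $\mathsf{B}\rtimes G$, and rigidity does not in general descend to factors. For the even quotients the paper handles exactly this descent: one first gets rigidity of $\mathsf{B}_k$ in $L^\infty(\widehat{k\mathbb{Z}^2})\rtimes G$, observes that $\mathsf{B}_k\rtimes G$ sits there with finite (in fact $\tfrac12$-Markov) index, and then invokes Popa's result that rigidity passes down through finite-index Markov inclusions. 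That finite-index step is the missing ingredient in your proposal.
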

\begin{proof}
	By \cite[Example 5.9]{witte} (which is essentially a minor correction of \cite[Theorem 2.3]{park}) we know that every nontrivial quotient of the considered action is either $G\curvearrowright \widehat{k\mathbb{Z}^2}$ or $G\curvearrowright \widehat{k\mathbb{Z}^2}/{\sim}$, where $(x,y)\sim (-x, -y)$,  for some $k\geq 1$. Indeed, although in \cite[Example 5.9]{witte}, it is not stated clearly what is the subgroup $\Lambda'$ appearing there,  as a $G$-invariant closed discrete subgroup of $\mathbb{R}^2$ which contains $\mathbb{Z}^2$, $\Lambda'=\mathbb{Z}^2/k$ for some $k\geq 1$. So $G\curvearrowright \Lambda'\setminus\mathbb{R}^2$ just corresponds to the algebraic action $G\curvearrowright \widehat{k\mathbb{Z}^2}$. Then as explained in \cite[Example 5.9]{witte}, we can identify $G\curvearrowright \widehat{k\mathbb{Z}^2}/{\sim}$ with $G\curvearrowright \widehat{k\mathbb{Z}^2}/({\mathbb{Z}/2\mathbb{Z}})$, where $\mathbb{Z}/2\mathbb{Z}\curvearrowright \widehat{k\mathbb{Z}^2}$ as $(x, y)\mapsto (-x, -y)$.
	
	As the inclusion $(k\mathbb{Z}^2<k\mathbb{Z}^2\rtimes G)\cong (\mathbb{Z}^2<\mathbb{Z}^2\times G)$ has relative Property (T), we know that $G\curvearrowright \widehat{k\mathbb{Z}^2}$ is rigid. To see that $G\curvearrowright \widehat{k\mathbb{Z}^2}/{\sim}$ is also rigid, we argue as follows. Set $\mathsf{B}=L^{\infty}(\widehat{k\mathbb{Z}^2}/{\sim})$, $\bN_0=\mathsf{B}\rtimes G$ and $\bN=L^{\infty}(\widehat{k\mathbb{Z}^2})\rtimes G$. Then, as $\mathsf{B}< L^{\infty}(\widehat{k\mathbb{Z}^2})< \bN$ and the inclusion $L^{\infty}(\widehat{k\mathbb{Z}^2})< \bN$ is rigid as the above shows, we deduce that the inclusion $\mathsf{B}< \bN$ is also rigid. 
	Moreover, by considering the action $G\times \mathbb{Z}/2\mathbb{Z}\curvearrowright \widehat{k\mathbb{Z}^2}$ as above, we see that $\bN_0=(L^{\infty}(\widehat{k\mathbb{Z}^2}))^{\mathbb{Z}/2\mathbb{Z}}\rtimes G=\bN^{\mathbb{Z}/2\mathbb{Z}}$, hence $[\bN: \bN_0]=[\bN: \bN^{\mathbb{Z}/2\mathbb{Z}}]<\infty$. Moreover the inclusion $\bN_0\subset\bN$ is $\frac{1}{2}$-Markov in the sense described in \cite{popa_book}. This can be seen by first identifying $\mathbb{T}^2$ with the set $[-\frac{1}{2}, \frac{1}{2}] \times [-\frac{1}{2}, \frac{1}{2}] $ and then choosing an explicit orthonormal basis $\{1,f\}$ for our inclusion, where $f$ is the function on $\mathbb{T}^2$ defined by the formula
\[ f(x,y)= \begin{cases} 1 & \textup{ for }  x+y \geq 0 \\ -1 &	\textup{ for }   x+y \leq 0 \end{cases}, \;\;\; (x, y) \in \mathbb{T}^2.\]
	Thus we can apply \cite[Proposition 4.6.2]{popa} to conclude that the inclusion $\mathsf{B}< \mathsf{N}_0$ is rigid, i.e.\ the action $G\curvearrowright \widehat{k\mathbb{Z}^2}/{\sim}$ is rigid. 
\end{proof}

\begin{cor}
	Let $G = SL_2(\bz)$ and let $\Lambda$ be an ICC group with the Haagerup property. Then $L(\Lambda \wr G)< L(\Lambda^{\oplus G} \times \bz^2)\rtimes G$ is a maximal Haagerup subalgebra.
\end{cor}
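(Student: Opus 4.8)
The plan is to recognise this corollary as a direct instance of Theorem \ref{thm: general version for 2nd example}, applied to the extremely rigid action furnished by Lemma \ref{lem: (corrected version) quotients of GL_2(Z)-action}; essentially all the substantive work has already been done, and what remains is to match the two setups. First I would make the relevant identification of algebras explicit. By Pontryagin duality $L(\bz^2) \cong L^\infty(\widehat{\bz^2}) \cong L^\infty(\mathbb{T}^2)$, and under this isomorphism the action of $G = SL_2(\bz)$ on $L(\bz^2)$ induced by matrix multiplication on $\bz^2$ corresponds exactly to the standard (algebraic) action $G \curvearrowright \widehat{\bz^2} \cong \mathbb{T}^2$. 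Writing $\bN := \bar\otimes_G L(\Lambda) = L(\Lambda^{\oplus G})$, equipped with the Bernoulli shift of $G$, and using that the group von Neumann algebra of the direct product $\Lambda^{\oplus G} \times \bz^2$ is the corresponding tensor product, with $G$ acting diagonally (shift on the first factor, algebraic action on the second), I obtain
\[ L\big((\Lambda^{\oplus G} \times \bz^2) \rtimes G\big) \cong \big(\bN \,\bar\otimes\, L^\infty(\mathbb{T}^2)\big) \rtimes G =: \bM, \]
while the subalgebra in question is $L(\Lambda \wr G) = \bN \rtimes G$, sitting inside $\bM$ in precisely the way described in Theorem \ref{thm: general version for 2nd example}.

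It then remains to verify the hypotheses of that theorem with $(X,\mu) = (\mathbb{T}^2, \mathrm{Haar})$. The group $\Lambda$ is ICC and has the Haagerup property by assumption. The group $G = SL_2(\bz)$ has the Haagerup property, being virtually free (indeed $PSL_2(\bz) \cong \bz/2\bz * \bz/3\bz$, so $G$ is commensurable with a free group, and free groups are Haagerup by \cite{Haa}). Finally Lemma \ref{lem: (corrected version) quotients of GL_2(Z)-action} supplies the last ingredient: the standard action $G \curvearrowright \widehat{\bz^2} \cong \mathbb{T}^2$ is p.m.p., ergodic and extremely rigid. Applying Theorem \ref{thm: general version for 2nd example} then yields immediately that $\bN \rtimes G < \bM$ is a maximal Haagerup subalgebra, which is exactly the assertion of the corollary.

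Since the two genuinely hard inputs — the structural description of the intermediate algebras (through the results of Suzuki and the Ge--Kadison splitting theorem, already packaged into Theorem \ref{thm: general version for 2nd example}) and the extreme rigidity of the $SL_2(\bz)$-action on $\mathbb{T}^2$ (Lemma \ref{lem: (corrected version) quotients of GL_2(Z)-action}) — are established beforehand, the corollary is essentially bookkeeping. The one place I would pause is the compatibility of the two incarnations of the $G$-action on the $\bz^2$-part: one must confirm that the Pontryagin dual of the matrix multiplication $g\cdot v$ on $\bz^2$ is exactly the transformation of characters featuring in the standard action analysed in the lemma, so that extreme rigidity genuinely applies to the tensor factor $L^\infty(\mathbb{T}^2)$ as it sits inside $\bM$. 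This is a routine duality check rather than a real obstacle.
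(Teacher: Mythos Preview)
Your proof is correct and follows exactly the paper's approach: the paper's own proof is the single line ``An immediate consequence of Theorem \ref{thm: general version for 2nd example} and Lemma \ref{lem: (corrected version) quotients of GL_2(Z)-action}'', and you have simply unpacked the identifications and hypothesis checks that make this immediate.
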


\begin{proof}
	An immediate consequence of Theorem \ref{thm: general version for 2nd example} and Lemma \ref{lem: (corrected version) quotients of GL_2(Z)-action}.
\end{proof}

\subsection*{Free products and pro-finite  actions}

The next result is connected to a recent work of Amrutam (\cite{Amr}, with the appendix by Amrutam and the first-named author) on intermediate algebras in the $\C^*$-context. The proof is motivated by the proof of \cite[Theorem 1.7]{packer}. Note that the results of Packer were later generalized (in particular dropping the assumptions of ergodicity and the existence of a normal conditional expectation) in \cite{suzuki}, using a different method.
\begin{thm} \label{thm:Amrutamgeneralised}
	Let $G\curvearrowright (X, \mu)$ be a p.m.p.\ ergodic action and $H< G$ be a strong relative ICC group, i.e.\ $\#\{hgh^{-1}: h\in H\}=\infty$ for all $g\neq e$ in $G$. Assume that $H$ acts on $X$ trivially. Then every intermediate von Neumann subalgebra $\bP$ between $L(G)$ and $L^{\infty}(X)\rtimes G$ is of the form $\bP=L^{\infty}(Y)\rtimes G$ for some $G$-factor $Y$ of $X$.
\end{thm}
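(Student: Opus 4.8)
The plan is to show that the canonical conditional expectation onto $\bA := L^\infty(X)$ restricts to a conditional expectation of $\bP$ onto $\mathsf{B} := \bP \cap \bA$, that $\mathsf{B}$ is globally $G$-invariant, and that $\bP$ is recovered as the crossed product $\mathsf{B} \rtimes G$. Throughout I write $\bM = \bA \rtimes G$, let $(u_g)_{g \in G}$ be the canonical implementing unitaries (so that $L(G) = \{u_g : g \in G\}''$), and let $\mathbb{E}_{\bA}\colon \bM \to \bA$ be the trace-preserving conditional expectation; every $x \in \bM$ then has a Fourier expansion $x = \sum_{g \in G} a_g u_g$ with $a_g = \mathbb{E}_{\bA}(x u_g^*) \in \bA$. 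The key point, as in the proof of \cite[Theorem 1.7]{packer}, will be the inclusion $\mathbb{E}_{\bA}(\bP) \subseteq \bP$.

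First I would observe that, because $H$ acts trivially on $X$, each $u_h$ with $h \in H$ commutes with $\bA$, and conversely a short Fourier-coefficient computation using the strong relative ICC hypothesis shows that $L(H)' \cap \bM = \bA$. Indeed, for $x = \sum_g a_g u_g$ one computes $u_h x u_h^* = \sum_g a_g u_{hgh^{-1}}$ (the $H$-action on the coefficients being trivial), so an element of $L(H)' \cap \bM$ has Fourier coefficients constant along $H$-conjugacy classes; since $\{hgh^{-1} : h \in H\}$ is infinite for every $g \neq e$, square-summability of the coefficients forces $a_g = 0$ for $g \neq e$, i.e.\ $x = a_e \in \bA$.

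The main step — and the place where I expect the real work to be — is the averaging argument yielding $\mathbb{E}_{\bA}(x) \in \bP$ for every $x \in \bP$. Here I would consider, for a fixed $x \in \bP$, the $\|\cdot\|_2$-closed convex hull $K$ of $\{u_h x u_h^* : h \in H\}$ inside $L^2(\bM)$. Since $u_h \in L(G) \subseteq \bP$, all of these conjugates lie in $\bP$, and as $\bP$ is convex and $\|\cdot\|_2$-closed on norm-bounded sets, the unique element $y_0 \in K$ of minimal $\|\cdot\|_2$-norm belongs to $\bP$. Uniqueness of $y_0$ forces it to be fixed by every $\mathrm{Ad}(u_h)$, so $y_0 \in L(H)' \cap \bM = \bA$; on the other hand $\mathbb{E}_{\bA}(u_h x u_h^*) = a_e = \mathbb{E}_{\bA}(x)$ for all $h$, so $\mathbb{E}_{\bA}$ is constant equal to $\mathbb{E}_{\bA}(x)$ on $K$, whence $y_0 = \mathbb{E}_{\bA}(y_0) = \mathbb{E}_{\bA}(x)$. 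The delicate points to verify carefully are that the operator-norm ball is $\|\cdot\|_2$-closed, so that $y_0 \in \bM$, and the identification of the minimal-norm element with the conditional expectation.

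Finally, set $\mathsf{B} := \bP \cap \bA = \mathbb{E}_{\bA}(\bP)$, a von Neumann subalgebra of $\bA$. Since $L(G) \subseteq \bP$, conjugation by each $u_g$ preserves $\bP$, and it preserves $\bA$ because $\bA$ is $G$-invariant; hence it preserves $\mathsf{B}$, so $\mathsf{B}$ is $G$-invariant and thus equals $L^\infty(Y)$ for a $G$-factor $Y$ of $X$. For the concluding identification, given $x = \sum_g a_g u_g \in \bP$ and $g \in G$, the element $x u_g^*$ lies in $\bP$, so $a_g = \mathbb{E}_{\bA}(x u_g^*) \in \bP \cap \bA = \mathsf{B}$; thus all Fourier coefficients of every $x \in \bP$ lie in $\mathsf{B}$, which is exactly the condition characterizing membership in $\mathsf{B} \rtimes G = L^\infty(Y) \rtimes G$. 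Together with the evident inclusion $L^\infty(Y) \rtimes G \subseteq \bP$ this gives $\bP = L^\infty(Y) \rtimes G$, as required.
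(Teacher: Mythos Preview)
Your argument is correct, and it reaches the same conclusion via the same relative-commutant computation $L(H)'\cap\bM=\bA$, but the way you use that computation is different from the paper's. The paper works with the trace-preserving conditional expectation $E\colon\bM\to\bP$ and shows $E(\bA)\subseteq\bA$: for $a\in\bA$ and $h\in H$ one has $u_hE(a)u_h^{-1}=E(u_hau_h^{-1})=E(a)$ (using $u_h\in\bP$ and that $H$ acts trivially), so $E(a)\in L(H)'\cap\bM=\bA$; the rest of the argument is then the same Fourier-coefficient bookkeeping you do. You instead work with $\mathbb{E}_{\bA}\colon\bM\to\bA$ and prove the dual inclusion $\mathbb{E}_{\bA}(\bP)\subseteq\bP$ by a hands-on convex-averaging / minimal-norm argument. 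The paper's route is shorter, trading your averaging step for the bimodule property of $E$, but it invokes the existence of $E$ (hence the observation that $\bM$ is a II$_1$ factor). Your route avoids mentioning $E$ altogether and is entirely self-contained once $L(H)'\cap\bM=\bA$ is known. The two approaches are essentially mirror images of each other; the ``delicate points'' you flag (closedness of the norm ball in $\|\cdot\|_2$, identification of the minimal element) are standard and go through exactly as you sketch.
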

\begin{proof}
	The strong relative ICC assumption implies that $G$ is ICC itself, so that by ergodicity of the action $L^{\infty}(X)\rtimes G$ is a II$_1$ factor. In particular we have the trace-preserving normal  conditional expectation $E: L^{\infty}(X)\rtimes G\twoheadrightarrow \bP$.
	
As in \cite{packer}, we just need to check that $E(L^{\infty}(X))\subseteq L^{\infty}(X)$. Indeed, if this holds, then  $E(L^{\infty}(X))$ is a $G$-invariant von Neumann subalgebra and $E(L^{\infty}(X))\rtimes G< \bP$; Moreover, $\bP< E(L^{\infty}(X))\rtimes G$ by applying $E$ to the Fourier expansion of an element in $\bP$. Therefore, $\bP=E(L^{\infty}(X))\rtimes G$ and $E(L^{\infty}(X))=L^{\infty}(Y)$ for some $G$-factor $Y$.

	To prove $E(L^{\infty}(X))\subseteq L^{\infty}(X)$, first observe that 	for every $a\in L^{\infty}(X)$, and $h\in H$, we have $u_hE(a)u_h^{-1}=E(h.a)=E(a)$ as $H$ acts on $X$ trivially. Thus
	$E(L^{\infty}(X))\subseteq L(H)'\cap \left(L^{\infty}(X)\rtimes G\right)$. Thus it suffices to show that the strong relative ICC assumption implies that $L(H)'\cap (L^{\infty}(X)\rtimes G)=L^{\infty}(X)$.
	Consider  $a\in L(H)'\cap \left(L^{\infty}(X)\rtimes G\right)$ and its Fourier expansion $a=\sum_{g\in G} f_g u_g$. Then for all $h\in H$ we have $u_ha=au_h$, which means that $f_{hgh^{-1}}=\sigma_h(f_g)=f_g$ for all $g\in G$.  Finally as $\sum_{g\in G}||f_g||_2^2<\infty$, the relative ICC assumption allows us to conclude that $f_g=0$ for all $g\neq e$.
	This implies that $E(L^{\infty}(X))\subseteq L^{\infty}(X)$ and ends the proof. 
\end{proof}

\begin{remark}
	The above theorem generalizes the one in \cite{Amr}. Indeed, one just observes that if $H<G$ is plump in the sense of \cite{Amr}, then it satisfies the strong relative ICC assumption. Indeed, suppose this is not the case, so that there exists some $ g\in G\setminus\{e\}$ such that $\#\{hgh^{-1}: h\in H\}<\infty$. Enumerate $\{hgh^{-1}: h \in H\}$ as $\{g_1,\ldots, g_N\}$ and define a function $\phi:X_N\to \mathbb{R}^{\geq 0}$, where $X_N=\{(t_1, \ldots,t_N):t_1,\ldots, t_N \geq 0, \sum_{i=1}^N t_i = 1\}$ by $\phi(t_1,\ldots, t_N)=||\sum_{i=1}^Nt_iu_{g_i}||$. Clearly, $\phi$ is continuous. Moreover the fact that $H$ is plump in $G$, means that $0=\inf_{\mathbf{t} \in X_N} \phi(\mathbf{t})$. Then, as $\phi$ is continuous, there is some $\mathbf{t}=(t_1,\ldots, t_N)\in X_N$ such that $\phi(\mathbf{t})=0$, i.e.\ $\sum_{i=1}^nt_iu_{g_i}=0$. This implies that $t_i=0$ for all $i=1,\ldots,N$ as $u_{g_1}, \ldots, u_{g_N}$ are linearly independent,  and contradicts the fact that $\mathbf{t}\in X_N$.
\end{remark}

\begin{cor}\label{prop: free product verison of main thm} 
Consider the action $SL_2(\mathbb{Z})*SL_2(\mathbb{Z})\curvearrowright \mathbb{Z}^2$ defined by first factoring onto the first copy of the free product: $SL_2(\mathbb{Z})*SL_2(\mathbb{Z})\overset{p_1}{\twoheadrightarrow} SL_2(\mathbb{Z})\curvearrowright \mathbb{Z}^2$. Then 	$L(SL_2(\mathbb{Z})*SL_2(\mathbb{Z}))<L(\mathbb{Z}^2\rtimes (SL_2(\mathbb{Z})*SL_2(\mathbb{Z})))$ is a maximal Haagerup subalgebra.
\end{cor}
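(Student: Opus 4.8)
The plan is to realise $\bM:=L(\bz^2\rtimes G)$, where $G=SL_2(\bz)*SL_2(\bz)$ with the two copies denoted $G_1$ (the first, acting by matrix multiplication) and $G_2$ (the second, acting trivially), as the crossed product $L^{\infty}(\mathbb{T}^2)\rtimes G$ for the action factoring through $p_1$, and then to combine Theorem \ref{thm:Amrutamgeneralised} with the extreme rigidity of $SL_2(\bz)\curvearrowright\mathbb{T}^2$. First I would record that $\bN:=L(G)$ has the Haagerup property, since $G$ is a free product of two copies of the Haagerup group $SL_2(\bz)$, and that the action $G\curvearrowright\mathbb{T}^2\cong\widehat{\bz^2}$ is p.m.p.\ and ergodic, as it factors through $p_1$ onto the ergodic measure-preserving action $SL_2(\bz)\curvearrowright\mathbb{T}^2$. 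The overall strategy is to show that every intermediate algebra $\bP$ with $\bN<\bP<\bM$ is a crossed product $L^{\infty}(Y)\rtimes G$ attached to a $G$-factor $Y$ of $\mathbb{T}^2$, and that such a $\bP$ is Haagerup precisely when $Y$ is trivial.

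To extract the crossed-product form I would apply Theorem \ref{thm:Amrutamgeneralised} with $H$ a trivially acting, strong relative ICC subgroup. The naive choice $H=G_2$ fails, because the central element $-I\in G_2$ satisfies $h(-I)h^{-1}=-I$ for all $h\in G_2$, so its $G_2$-conjugation orbit is finite. The correct choice is $H:=\ker p_1$, the normal closure of $G_2$ in $G$, which acts trivially on $\mathbb{T}^2$ by construction. The hard part will be verifying that this $H$ is strong relative ICC, i.e.\ that the centraliser $C_H(g)$ has infinite index in $H$ for every $g\neq e$. Here I would use the Kurosh description of $H$ as a free product of infinitely many copies of $SL_2(\bz)$, together with the standard description of centralisers in free products, splitting into three cases: if $g$ is conjugate into $G_2$ then $C_G(g)$ lies in a conjugate of $G_2\subseteq H$, which has infinite index in $H$; if $g$ is conjugate into $G_1$ then, since $H$ is normal and $H\cap G_1=\{e\}$, one gets $C_H(g)=\{e\}$; and if $g$ is not conjugate into either factor then $C_G(g)$ is cyclic, again of infinite index in the non-cyclic group $H$. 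Theorem \ref{thm:Amrutamgeneralised} then gives $\bP=L^{\infty}(Y)\rtimes G$ for some $G$-factor $Y$ of $\mathbb{T}^2$.

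The final step is the rigidity argument. Because $G$ acts through $p_1$, the $G$-factors of $\mathbb{T}^2$ coincide with the $SL_2(\bz)$-factors, which are classified in Lemma \ref{lem: (corrected version) quotients of GL_2(Z)-action}; in particular $SL_2(\bz)\curvearrowright\mathbb{T}^2$ is extremely rigid, so every nontrivial $G$-factor $Y$ is diffuse and the quotient action $SL_2(\bz)\curvearrowright Y$ is rigid. Hence for nontrivial $Y$ the inclusion $L^{\infty}(Y)<L^{\infty}(Y)\rtimes G_1$ is rigid, and since rigidity of a subinclusion passes to any larger algebra, $L^{\infty}(Y)<L^{\infty}(Y)\rtimes G=\bP$ is rigid with $L^{\infty}(Y)$ diffuse. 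By Proposition \ref{prop:rigidincl} such a $\bP$ cannot have the Haagerup property. Only the trivial factor $Y$ yields a Haagerup intermediate algebra, namely $\bP=L(G)=\bN$ itself, which establishes that $\bN$ is a maximal Haagerup subalgebra of $\bM$.

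The main obstacle throughout is purely group-theoretic: recognising that the obvious trivially acting subgroup $G_2$ does not satisfy the strong relative ICC hypothesis, and then carrying out the centraliser computation in the free product $G$ to confirm that the full kernel $\ker p_1$ does. Once that verification is in place, the operator-algebraic part is a direct application of the two cited results.
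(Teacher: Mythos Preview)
Your proposal is correct and follows essentially the same approach as the paper: take $H=\ker p_1$, verify it is strong relative ICC in $G$ and acts trivially, apply Theorem \ref{thm:Amrutamgeneralised} to get $\bP=L^{\infty}(Y)\rtimes G$, and finish via the extreme rigidity of $SL_2(\bz)\curvearrowright\mathbb{T}^2$ from Lemma \ref{lem: (corrected version) quotients of GL_2(Z)-action}. The paper simply asserts the strong relative ICC property of $\ker p_1$ as ``clear'', whereas you supply a full centraliser analysis via the Kurosh decomposition; your final rigidity step (passing rigidity of $L^{\infty}(Y)<L^{\infty}(Y)\rtimes G_1$ up to $\bP$) is equivalent to the paper's observation that $\bP$ contains the non-Haagerup algebra $L^{\infty}(Y)\rtimes G_1$.
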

\begin{proof}
Note first that the action of $SL_2(\mathbb{Z})$ (so also of $SL_2(\mathbb{Z})* SL_2(\mathbb{Z})$)	on $\mathbb{T}^2$ is p.m.p.\ and ergodic. Let $H=ker(p_1)$, i.e. the normal subgroup generated by the second copy of $SL_2(\bz)$, so $H$ acts trivially on $\bz^2$. Clearly $H \subset SL_2(\bz)*SL_2(\bz)$ is a strong relative ICC group. Hence by Theorem \ref{thm:Amrutamgeneralised} any von Neumann algebra $\bP$ such that $L(SL_2(\mathbb{Z})*SL_2(\mathbb{Z}))<\bP < L(\mathbb{Z}^2)\rtimes (SL_2(\mathbb{Z})*SL_2(\mathbb{Z})))$ is of the form $E(\bP)\rtimes (SL_2(\mathbb{Z})*SL_2(\mathbb{Z}))$, where $E$ is the trace preserving conditional expectation onto $L(\mathbb{Z}^2)$. If the subalgebra $E(\bP)$ is nontrivial, then, as it is $SL_2(\bz)$ invariant, by Lemma \ref{lem: (corrected version) quotients of GL_2(Z)-action} the algebra 
$ E(\bP)\rtimes SL_2(\mathbb{Z})$ is not Haagerup, and the inclusion $\bP \supset  E(\bP)\rtimes SL_2(\mathbb{Z})$ ends the proof. 
\end{proof}

We continue this part by explaining how Theorem \ref{thm:Amrutamgeneralised} and its proof can be used to determine intermediate von Neumann algebras for pro-finite actions of ICC groups.
Begin by noting that the proof of Theorem \ref{thm:Amrutamgeneralised} can be adopted to `localise' the necessary assumptions.

\begin{thm} \label{thm:Amrutamlocalised}
	Let $\alpha: G\curvearrowright (X, \mu)$ be a p.m.p.\ ergodic action. Let $A\subseteq L^{\infty}(X)$ be a subset such that $L^{\infty}(X)\subseteq\overline{span ~A}^{||\cdot||_2}$. Assume further that for each $a\in A$ there exists a subgroup $H_a\subseteq G$ such that
	\begin{rlist}
	\item $\alpha_{h_a}(a)=a$ for all $h_a\in H_a$;
	\item $\#\{hgh^{-1}: h\in H_a\}=\infty$ for all $g\neq e$ in $G$.
	\end{rlist}
	Then every intermediate von Neumann subalgebra $\bP$ between $L(G)$ and $L^{\infty}(X)\rtimes G$ is of the form $\bP=L^{\infty}(Y)\rtimes G$, where $Y$ is a $G$-factor of $X$.
\end{thm}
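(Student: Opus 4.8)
The plan is to follow closely the proof of Theorem \ref{thm:Amrutamgeneralised}, the only genuinely new ingredient being a passage from the generating set $A$ to all of $L^\infty(X)$. As there, assumption (ii) forces $G$ to be ICC, so by ergodicity $\bM:=L^\infty(X)\rtimes G$ is a II$_1$ factor with a faithful normal trace, and there is a trace-preserving conditional expectation $E:\bM\twoheadrightarrow \bP$. Since $L(G)\subseteq \bP$, the algebra $\bP$ is globally invariant under conjugation by each $u_g$; consequently $E$ is $G$-equivariant and a $\bP$-bimodule map. Exactly as in Theorem \ref{thm:Amrutamgeneralised}, the whole statement reduces to proving that $E(L^\infty(X))\subseteq L^\infty(X)$: once this holds, $E(L^\infty(X))$ is an abelian $G$-invariant subalgebra, one has $\bP=E(L^\infty(X))\rtimes G$ by applying $E$ to the Fourier expansion of an arbitrary element of $\bP$ and using the bimodule identity $E(f_g u_g)=E(f_g)u_g$, and $E(L^\infty(X))=L^\infty(Y)$ for the desired $G$-factor $Y$ of $X$.

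To establish the inclusion I would first treat a single $a\in A$. Using (i) together with the $G$-equivariance of $E$, for every $h\in H_a$ we get $u_hE(a)u_h^{-1}=E(\alpha_h(a))=E(a)$, so that $E(a)\in L(H_a)'\cap \bM$. Then, precisely as in the computation in Theorem \ref{thm:Amrutamgeneralised}, I expand an element $b=\sum_g f_g u_g$ of $L(H_a)'\cap \bM$ in Fourier series: commutation with $u_h$ forces $\|f_{hgh^{-1}}\|_2=\|f_g\|_2$ for all $h\in H_a$, and since $\sum_g\|f_g\|_2^2<\infty$ while assumption (ii) makes every orbit $\{hgh^{-1}:h\in H_a\}$ infinite for $g\neq e$, all coefficients except $f_e$ vanish. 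Hence $L(H_a)'\cap \bM=L^\infty(X)$ and $E(a)\in L^\infty(X)$.

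The main (and new) point is the propagation from the generators to all of $L^\infty(X)$, where one must be careful because $L^\infty(X)$ is not $\|\cdot\|_2$-closed in $L^2(\bM)$. From the previous step and linearity, $E(\mathrm{span}\,A)\subseteq L^\infty(X)$. Given $b\in L^\infty(X)$, choose $b_n\in \mathrm{span}\,A$ with $\|b_n-b\|_2\to 0$; as $E$ is a $\|\cdot\|_2$-contraction, $E(b_n)\to E(b)$ in $\|\cdot\|_2$ with each $E(b_n)\in L^\infty(X)$. Applying the canonical ($\|\cdot\|_2$-continuous) conditional expectation $E_0:\bM\to L^\infty(X)$ onto the $u_e$-component yields $E_0(E(b))=\lim_n E_0(E(b_n))=\lim_n E(b_n)=E(b)$ in $L^2(\bM)$. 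Since $E(b)$ is a bounded element of $\bM$, the equality of its $L^2$-class with its $u_e$-Fourier coefficient shows that all other Fourier coefficients of $E(b)$ vanish, i.e.\ $E(b)\in L^\infty(X)$. This gives $E(L^\infty(X))\subseteq L^\infty(X)$ and, by the first paragraph, completes the proof. I expect this last propagation step to be the only real obstacle, the rest being a direct transcription of the argument for Theorem \ref{thm:Amrutamgeneralised}.
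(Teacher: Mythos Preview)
Your proof is correct and follows essentially the same route as the paper's: reduce to $E(L^\infty(X))\subseteq L^\infty(X)$, verify $E(a)\in L^\infty(X)$ for each $a\in A$ via the commutant computation, and then propagate to all of $L^\infty(X)$. The paper dispatches the propagation step in one line (``as $E$ is normal it suffices to prove $E(A)\subseteq L^\infty(X)$''), whereas you spell it out carefully using the $\|\cdot\|_2$-continuous expectation $E_0$ onto $L^\infty(X)$; this is a welcome clarification, since---as you note---$L^\infty(X)$ is not $\|\cdot\|_2$-closed in $L^2(\bM)$, and your argument with $E_0$ is exactly what shows it \emph{is} $\|\cdot\|_2$-closed inside $\bM$.

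One small slip: you write $L(H_a)'\cap \bM=L^\infty(X)$, but in this localised setting $H_a$ need not act trivially on $X$, so only the inclusion $L(H_a)'\cap \bM\subseteq L^\infty(X)$ holds (the reverse would require $\alpha_h(f)=f$ for all $f\in L^\infty(X)$, $h\in H_a$). This does not affect your argument, since the inclusion is all you use.
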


\begin{proof}
	The proof follows similarly to the proof of Theorem \ref{thm:Amrutamgeneralised}. 
	Indeed, observe that as the conditional expectation $E$ is normal, to show $E(L^{\infty}(X))\subseteq L^{\infty}(X)$ it suffices  to prove that $E(A)\subseteq L^{\infty}(X)$. Take then any $a\in A$. By assumption (i) we  know that $E(a)\in L(H_a)'\cap (L^{\infty}(X)\rtimes G)$. Then assumption (ii) on $H_a$ implies, as before, that $L(H_a)'\cap (L^{\infty}(X)\rtimes G)\subseteq L^{\infty}(X)$. 
\end{proof}

\begin{cor}
Let $G$ be a residually finite ICC group $G$, and let $\{G_n\}_{n \in \bn}$ be a decreasing sequence of normal subgroups of $G$ with trivial intersection.	Suppose that \[G\curvearrowright X:=lim_{\leftarrow}(G/G_n, \mu_n)\] is a profinite action. Then every von Neumann algebra $\bP$ such that $L(G)< \bP < L^{\infty}(X)\rtimes G$ is of the form $\bP=L^{\infty}(Y)\rtimes G$ where $Y$ is a $G$-factor of $X$.
\end{cor}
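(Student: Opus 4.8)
The plan is to deduce this from Theorem~\ref{thm:Amrutamlocalised}. Since the action is profinite, each quotient $G/G_n$ is finite and the normal subgroups $G_n$ have finite index; write $\pi_n\colon X\to G/G_n$ for the canonical equivariant projections. Let $A$ be the collection of all functions of the form $a=\mathbf{1}_{gG_n}\circ\pi_n$, i.e.\ the indicator functions of the basic clopen cylinder sets $\pi_n^{-1}(gG_n)$, as $g$ ranges over $G$ and $n$ over $\bn$. The functions in $A$ together span the $*$-subalgebra of $L^{\infty}(X)$ consisting of those functions that factor through some finite level $G/G_n$; by Stone--Weierstrass this subalgebra is uniformly dense in $C(X)$ and hence $\|\cdot\|_2$-dense, so that $L^{\infty}(X)\subseteq\overline{\mathrm{span}\,A}^{\,\|\cdot\|_2}$, as required in Theorem~\ref{thm:Amrutamlocalised}.

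Next I would exhibit the subgroups $H_a$. The natural (left-translation) action satisfies $\alpha_h(\mathbf{1}_{gG_n})=\mathbf{1}_{hgG_n}$, so $\alpha_h$ fixes $a=\mathbf{1}_{gG_n}$ precisely when $hgG_n=gG_n$, that is when $h\in gG_ng^{-1}=G_n$, using normality of $G_n$. Thus, setting $H_a:=G_n$ for every $a\in A$ at level $n$, we obtain condition (i) of Theorem~\ref{thm:Amrutamlocalised}.

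The main point is to verify the relative ICC condition (ii), namely that $\#\{hgh^{-1}:h\in G_n\}=\infty$ for every $g\neq e$; this is where the hypotheses that $G$ is ICC and that $[G:G_n]<\infty$ come in. For fixed $g\neq e$ the size of the $G_n$-conjugacy class of $g$ equals $[G_n:G_n\cap C_G(g)]=[G_nC_G(g):C_G(g)]$, where $C_G(g)$ denotes the centraliser of $g$ in $G$. Factoring the index through $G_nC_G(g)$ gives
\[ [G:C_G(g)] = [G:G_nC_G(g)]\,[G_nC_G(g):C_G(g)], \]
in which the left-hand side is infinite (as $G$ is ICC) while $[G:G_nC_G(g)]\leq[G:G_n]<\infty$. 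Hence $[G_nC_G(g):C_G(g)]=\infty$, which is exactly condition (ii). Note that this argument only uses that $G_n$ has finite index, not its normality; normality is what makes the common choice $H_a=G_n$ (rather than a conjugate) available simultaneously for all cylinders at level $n$.

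Theorem~\ref{thm:Amrutamlocalised} then applies directly and yields that every intermediate algebra $\bP$ with $L(G)<\bP<L^{\infty}(X)\rtimes G$ is of the form $\bP=L^{\infty}(Y)\rtimes G$ for some $G$-factor $Y$ of $X$, which is the assertion. The only routine verification left to record is the inverse-limit description underlying the first paragraph, namely that the cylinder indicators generate the finite-level functions and that these are $\|\cdot\|_2$-dense in $L^{\infty}(X)$; this is immediate from the definition of the profinite action.
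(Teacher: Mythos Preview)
Your proof is correct and follows essentially the same approach as the paper: both take $A$ to be the cylinder indicators $1_{gG_n}$, set $H_a=G_n$, and verify the relative ICC condition via an index computation exploiting $[G:G_n]<\infty$ and the ICC hypothesis, then invoke Theorem~\ref{thm:Amrutamlocalised}. The only cosmetic difference is in how the index inequality is arranged: the paper writes $[G_n:C_{G_n}(g)]=[G:C_{G_n}(g)]/[G:G_n]\geq[G:C_G(g)]/[G:G_n]$, while you route through $G_nC_G(g)$; note that your version of the computation, as written, does use that $G_nC_G(g)$ is a subgroup (hence normality of $G_n$), so your parenthetical remark that the index argument does not use normality applies more cleanly to the paper's formulation than to your own.
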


\begin{proof}
	Take $A=\{1_{gG_n}: n\geq 1, g\in G\}$. The fact that the span of $A$ is $\|\cdot\|_2$ dense in $L^\infty(X)$ follows from the definition of a profinite action.  For any $n\geq 1, g\in G$ and $a=1_{gG_n}$, let $H_a=G_n$.
	Then for any $n \in \bn$ and  $g\neq e$ in $G$ we have, denoting by $C_H(g)$ the centralizer of $g$ inside $H$, $\#\{hgh^{-1}: h\in G_n\}=[G_n: C_{G_n}(g)]=[G: C_{G_n}(g)]/[G: G_n]\geq [G: C_G(g)]/[G: G_n]=\infty$, as $G$ is assumed to be ICC. This means that we can apply Theorem \ref{thm:Amrutamlocalised}.
\end{proof}

The above result can be generalised from profinite actions to arbitrary compact actions, as we were kindly informed by R\'emi Boutonnet. After this work was completed, we learned that Chifan and Das proved a more general version of 
the above corollary, see \cite[Theorem 3.10]{chifan_das}, and used it, together with other results, to characterise  intermediate finite index subfactors for the inclusion $L(\Gamma) < L^{\infty}(X) \rtimes \Gamma$ for a free ergodic p.m.p.\ action,  and to provide an alternative proof of a version of Ioana's orbit equivalence superrigidity result \cite[Theorem A]{i_duke}.

\subsection*{Examples coming from roughly normal subgroups}

The next class of examples is related to what we call roughly normal subgroups and uses a variation on the work on intermediate operator algebras due to Cameron and Smith (\cite{cs_2}). It is worth mentioning that in the von Neumann algebraic context ideas similar to these below occur already in \cite{Haga}. 

If $H<G$ is an infinite subgroup then we call $H$ \emph{roughly normal} if for every $g \in G$ the set $H\cap g^{-1}Hg$ is infinite. A word of warning is in place: some authors call such subgroups almost normal, but it seems that in  the group theoretic terminology the latter usually means a subgroup with finitely many conjugate subgroups -- and the two notions are not related.

\begin{lem}\label{lem: intermediate subalgebras split}
	Let $H\subseteq G$ be a roughly normal subgroup and let $G\overset{\sigma}{\curvearrowright} (X,\mu)$ be a free  mixing p.m.p.\ action. Suppose that $\bP$ is a von Neumann algebra such that $L^{\infty}(X)\rtimes H< \bP<L^{\infty}(X)\rtimes G$. Then $P=L^{\infty}(X)\rtimes K$ for some group $K$ with $H< K<G$.
\end{lem}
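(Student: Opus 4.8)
The plan is to use that, since $\sigma$ is free and (being mixing) ergodic, $\bA:=L^{\infty}(X)$ is a Cartan subalgebra of $\bM:=L^{\infty}(X)\rtimes G$, and then to combine the resulting description of intermediate algebras with rough normality and mixing. Throughout write $\{u_g\}_{g\in G}$ for the canonical unitaries, $E_{\bA}\colon\bM\to\bA$ for the trace-preserving conditional expectation, and $\Phi_g(x):=E_{\bA}(xu_g^{*})$ for the $g$-th Fourier coefficient, so that $x=\sum_{g\in G}\Phi_g(x)u_g$ for every $x\in\bM$.

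First I would record the Fourier-type splitting of $\bP$. Since $\bA\subseteq\bP$ and, by freeness, $\bA$ is a masa in $\bM$, the algebra $\bP$ is an $\bA$-bimodule; a short computation gives $\Phi_g(axb)=a\,\Phi_g(x)\,\sigma_g(b)$ for $a,b\in\bA$, so that $\{\Phi_g(x):x\in\bP\}$ is an $\bA$-submodule of $\bA$ and hence has a support projection $1_{E_g}$ for some measurable $E_g\subseteq X$. The substance of the Cameron--Smith analysis (\cite{cs_2}), which rests on the Cartan structure in the spirit of \cite{femo}, is that these components actually lie in $\bP$, so that
\[\bP=\overline{\operatorname{span}}^{\,w}\{\,a\,u_g : g\in G,\ a\in L^{\infty}(E_g)\,\}.\]
The family $(E_g)_{g\in G}$ satisfies, up to null sets, $E_e=X$, $E_{g^{-1}}=g^{-1}E_g$ and $E_{g_1g_2}\supseteq E_{g_1}\cap g_1E_{g_2}$; moreover $E_h=X$ for every $h\in H$ because $L^{\infty}(X)\rtimes H\subseteq\bP$.

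The decisive step is to exploit rough normality. From $E_h=X$ together with the product and inverse relations one checks the identities $E_{gh}=E_g$ and $E_{hg}=hE_g$ for all $g\in G$ and $h\in H$. Now fix $g\in G$ and put $L:=H\cap g^{-1}Hg$, which is infinite by hypothesis. For $h\in L$ set $k:=ghg^{-1}\in H$; since $kg=gh$, the identities above yield $kE_g=E_{kg}=E_{gh}=E_g$, so $E_g$ is invariant under every element of the infinite set $gLg^{-1}=(gHg^{-1})\cap H\subseteq H$. Choosing distinct elements $k_n$ in this set, so that $k_n\to\infty$, and applying mixing to the pair $(E_g,E_g)$, we obtain $\mu(E_g)=\mu(k_nE_g\cap E_g)\to\mu(E_g)^2$, whence $\mu(E_g)\in\{0,1\}$.

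Finally I would assemble the conclusion. Setting $K:=\{g\in G:\mu(E_g)=1\}$, the dichotomy just proved collapses the splitting of $\bP$ to $\bP=\overline{\operatorname{span}}^{\,w}\{au_g:g\in K,\ a\in\bA\}=L^{\infty}(X)\rtimes K$. The relations $E_e=X$, $E_{g^{-1}}=g^{-1}E_g$ and $E_{g_1g_2}\supseteq E_{g_1}\cap g_1E_{g_2}$ immediately show that $K$ is a subgroup, while $E_h=X$ for $h\in H$ gives $H\subseteq K\subseteq G$, as required. The main obstacle is the first step: pinning down the bimodule splitting with each Fourier component genuinely inside $\bP$, for which the Cartan picture coming from freeness and ergodicity and the Cameron--Smith machinery are essential. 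Once it is available, rough normality produces exactly the infinite subgroup of $H$ fixing each set $E_g$, and mixing does the rest.
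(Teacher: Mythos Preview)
Your proof is correct and follows essentially the same route as the paper's. Both arguments cite Cameron--Smith to obtain, for each $g\in G$, a projection $z_g=1_{E_g}\in L^\infty(X)$ with $\bP=\overline{\operatorname{span}}^{w^*}\{L^\infty(X)z_gu_g:g\in G\}$, then use rough normality to find infinitely many $h\in H$ acting on $z_{g}$ (the paper works with $z_{g^{-1}}$), and finish with mixing. The only cosmetic difference is that you derive the two-sided identity $kE_g=E_g$ and apply mixing directly to get $\mu(E_g)\in\{0,1\}$, whereas the paper records only the one-sided inequality $\sigma_{h_n}(z_{g^{-1}})\geq z_{g^{-1}}$ before invoking mixing; this is the same idea packaged slightly differently.
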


\begin{proof}
	This is essentially a corollary of \cite[Theorem 3.3]{cs_2}. For completeness, we sketch the proof  in our setting.
	
Let $E: L^{\infty}(X)\rtimes G\twoheadrightarrow \bP$ be a faithful normal conditional expectation. Notice that for every $g \in G$ we have $u_g^*E(u_g)\in L^{\infty}(X)'\cap (L^{\infty}(X)\rtimes G)=L^{\infty}(X)$ as the action is free. We may thus write $E(u_g)=z_gu_g$ for some $z_g\in L^{\infty}(X)$. Clearly, $z_h=1$ for all $h\in H$. Therefore (see \cite{cs_2}) $\bP=\overline{span}^{w^*}\{L^{\infty}(X)z_gu_g: g\in G\}$.
	
	Now we show that $z_g$ are projections  and moreover $z_g\sigma_g(z_h)=z_gz_{gh}$, $z_{g^{-1}}=\sigma_{g^{-1}}(z_g^*)$ and $z_e=1$ for all $g, h\in G$. Indeed, since $z_{g^{-1}}u_{g^{-1}}=E(u_{g^{-1}})=E(u_g)^*=(z_gu_g)^*=\sigma_{g^{-1}}(z_g^*)u_{g^{-1}}$, we get $\sigma_{g^{-1}}(z_g^*)=z_{g^{-1}}$. Further note that $z_gz_{gh}u_{gh}=E(z_gu_{gh})=E(E(u_g)u_h)=E(u_g)E(u_h)=z_gu_gz_hu_h=z_g\sigma_g(z_h)u_{gh}$, hence $z_gz_{gh}=z_g\sigma_g(z_h)$. Taking $h=e$, we deduce that $z_g^2=z_g\in L^{\infty}(X)$, hence $z_g$ is an orthogonal projection.
	
Define $K=\{g\in G: z_g=1\}$. Clearly, $K$ is a subgroup of $G$ containing $H$. 
Take then any $g\in G\setminus K$. As $H$ is roughly normal in $G$, there exist infinitely many distinct $h_n\in H$ such that $h_n':=gh_ng^{-1}\in H$. Since $z_{h_n}=1$, we get $\sigma_{h_n}(z_{g^{-1}})=z_{h_ng^{-1}}=z_{g^{-1}h_n'}\geq z_{g^{-1}}$, where the last inequality holds since $z_{g^{-1}}z_{g^{-1}h_n'}=z_{g^{-1}}\sigma_{g^{-1}}(z_{h_n'})=z_{g^{-1}}.$ As $G\curvearrowright X$ is mixing and $z_{g^{-1}}\neq 1$, we deduce that $z_{g^{-1}}=0$. 
This means that  $\bP=\overline{span}^{w^*}\{L^{\infty}(X)u_g: g\in K\}$, which ends the proof.
\end{proof}

An extension of the above argument yields the next theorem.

\begin{thm}
	Let $G=\mathbb{Z}^2\rtimes SL_2(\mathbb{Z})$ and $H=\mathbb{Z}^2\rtimes C$, where $C< SL_2(\mathbb{Z})$ is a maximal amenable subgroup. Let $\sigma: G\curvearrowright (X, \mu)$ be a free p.m.p.\ action such that $\sigma|_{\mathbb{Z}^2}$ is ergodic. Then $L^{\infty}(X)\rtimes H<L^{\infty}(X)\rtimes G$ is a maximal Haagerup subalgebra.
\end{thm}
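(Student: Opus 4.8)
The plan is to adapt the proof of Lemma \ref{lem: intermediate subalgebras split}, replacing the mixing hypothesis used there by the assumption that $\sigma|_{\mathbb{Z}^2}$ is ergodic; the point is that $H$ contains the subgroup $\mathbb{Z}^2$, which is normal in the whole of $G$, so that one may run the cocycle argument using elements of $\mathbb{Z}^2$ alone. First I would check that $L^{\infty}(X)\rtimes H$ is Haagerup: since $C$ is amenable, $H=\mathbb{Z}^2\rtimes C$ is an extension of amenable groups, hence amenable, and as $L^{\infty}(X)$ is abelian the crossed product $L^{\infty}(X)\rtimes H$ is amenable and in particular has the Haagerup property.

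The core step is to determine the intermediate algebras. Let $\bP$ satisfy $L^{\infty}(X)\rtimes H\subseteq \bP\subseteq L^{\infty}(X)\rtimes G$ and let $E$ be the trace-preserving conditional expectation onto $\bP$, which exists as the ambient algebra is finite. Freeness of $\sigma$ gives $u_g^*E(u_g)\in L^{\infty}(X)'\cap (L^{\infty}(X)\rtimes G)=L^{\infty}(X)$, so we may write $E(u_g)=z_gu_g$ with $z_g\in L^{\infty}(X)$. Exactly as in Lemma \ref{lem: intermediate subalgebras split}, each $z_g$ is a projection, $z_h=1$ for $h\in H$, and the identities $z_g\sigma_g(z_h)=z_gz_{gh}$ and $z_{g^{-1}}=\sigma_{g^{-1}}(z_g^*)$ hold; none of this uses mixing. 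Set $K=\{g\in G:z_g=1\}$, a subgroup of $G$ containing $H$.

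The main obstacle, and the place where the argument genuinely departs from the lemma, is to prove that $z_g\in\{0,1\}$ for every $g$ using only ergodicity of $\sigma|_{\mathbb{Z}^2}$ in place of mixing of the full action. Here I would fix $g\notin K$ and observe that for \emph{every} $h\in\mathbb{Z}^2$ we have $h':=ghg^{-1}\in\mathbb{Z}^2\subseteq H$, because $\mathbb{Z}^2$ is normal in $G$; hence $z_h=z_{h'}=1$. Using the cocycle identities this yields $\sigma_h(z_{g^{-1}})=z_{hg^{-1}}=z_{g^{-1}h'}\geq z_{g^{-1}}$, and since $\sigma_h$ preserves the trace, the inequality $\sigma_h(z_{g^{-1}})\geq z_{g^{-1}}$ between projections of equal trace forces $\sigma_h(z_{g^{-1}})=z_{g^{-1}}$. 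As this holds for all $h\in\mathbb{Z}^2$, the projection $z_{g^{-1}}$ is $\mathbb{Z}^2$-invariant, and ergodicity of $\sigma|_{\mathbb{Z}^2}$ forces it to be scalar, so $z_{g^{-1}}\in\{0,1\}$; since $g\notin K$ we have $z_{g^{-1}}\neq 1$, whence $z_{g^{-1}}=0$ and therefore $z_g=0$. It follows that $\bP=\overline{\operatorname{span}}^{w^*}\{L^{\infty}(X)u_g:g\in K\}=L^{\infty}(X)\rtimes K$.

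Finally I would deduce maximality. As $\mathbb{Z}^2\subseteq H\subseteq K$ with $\mathbb{Z}^2$ normal, we get $K=\mathbb{Z}^2\rtimes C''$ for some $C\subseteq C''\subseteq SL_2(\mathbb{Z})$. If $C''=C$ then $\bP=L^{\infty}(X)\rtimes H$. Otherwise $C''\supsetneq C$ is non-amenable by maximal amenability of $C$, so Proposition \ref{prop:Burger} shows that the inclusion $\mathbb{Z}^2<K$ is rigid; consequently $L^{\infty}(X)\rtimes\mathbb{Z}^2\subseteq\bP$ is a rigid inclusion of a diffuse subalgebra, and Proposition \ref{prop:rigidincl} shows that $\bP$ is not Haagerup. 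Hence every intermediate algebra strictly containing $L^{\infty}(X)\rtimes H$ fails the Haagerup property, which is precisely the assertion that $L^{\infty}(X)\rtimes H<L^{\infty}(X)\rtimes G$ is a maximal Haagerup subalgebra.
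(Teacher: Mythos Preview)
Your proof is correct and follows essentially the same route as the paper: adapt the Cameron--Smith/Packer cocycle argument of Lemma~\ref{lem: intermediate subalgebras split}, replacing mixing by the ergodicity of $\sigma|_{\mathbb{Z}^2}$ and using that $\mathbb{Z}^2\lhd G$, to deduce that every intermediate algebra has the form $L^\infty(X)\rtimes K$ for some $H\leq K\leq G$, and then invoke the group-theoretic classification. The only small remark concerns your last step: the assertion that rigidity of $\mathbb{Z}^2<K$ passes to rigidity of the inclusion $L^\infty(X)\rtimes\mathbb{Z}^2\subseteq L^\infty(X)\rtimes K$ is true (this is in \cite{popa}), but it is not stated in the paper and is more than you need---it is quicker to note, as the paper does via Theorem~\ref{prop: classify mHAP in z2 times sl2z} (or directly via Propositions~\ref{prop:Burger} and~\ref{relT}), that $K$ itself is not Haagerup, hence $L(K)\subseteq\bP$ is not Haagerup.
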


\begin{proof}
	Notice first that $H$ is a roughly normal subgroup of $G$. Since the action is not assumed to be mixing, we cannot apply Lemma \ref{lem: intermediate subalgebras split} directly. If we however follow its proof, and use the properties of our groups, we see that  for any $g\in G\setminus K\subseteq G\setminus H$ (with $K$ defined as in that proof) and any $h\in \bz^2$ we have $h'=ghg^{-1}\in H$ and deduce that $\sigma_{h}(z_{g^{-1}})\geq z_{g^{-1}}$. This means that actually $\sigma_h(z_{g^{-1}})=z_{g^{-1}}$ for all $h\in \mathbb{Z}^2$. Since $\sigma|_{\mathbb{Z}^2}$ is ergodic and $z_{g^{-1}}\neq 1$, we deduce again that $z_{g^{-1}}=0$. Hence every intermediate von Neumann subalgebra must be of the form $L^{\infty}(X)\rtimes K$ for a subgroup $K$ of $\bz^2 \rtimes SL_2(\mathbb{Z})$ containing $H$. Now Theorem \ref{prop: classify mHAP in z2 times sl2z} ends the proof.
\end{proof}
We can now present concrete examples of the above situation.

\begin{cor}
	Let $G=\mathbb{Z}^2\rtimes SL_2(\mathbb{Z})$ and $H=\mathbb{Z}^2\rtimes C$, where $C< SL_2(\mathbb{Z})$ is a maximal amenable subgroup. Let $\sigma$ be the classical Bernoulli shift $G\curvearrowright \mathbb{T}^G$ or a generalized Bernoulli shift $G\curvearrowright \mathbb{T}^{\mathbb{Z}^2}$ induced by the affine action $G\curvearrowright \mathbb{Z}^2$. Then $\sigma$ is free and $\sigma|_{\mathbb{Z}^2}$ is ergodic; hence $L^{\infty}(X)\rtimes H<L^{\infty}(X)\rtimes G$ is a maximal Haagerup subalgebra. In particular for example $L(\bz \wr_G H)<L(\bz \wr G)$ is a maximal Haagerup subalgebra. 
\end{cor}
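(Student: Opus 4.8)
The plan is to deduce the statement from the preceding theorem by verifying, for each of the two actions $\sigma$, its two hypotheses: essential freeness of $\sigma$ and ergodicity of the restriction $\sigma|_{\mathbb{Z}^2}$. Since everything will reduce to the standard behaviour of generalized Bernoulli shifts, I do not expect a genuine obstacle; the only points requiring care are the bookkeeping of orbits of the relevant permutation actions on the index set and, for the final assertion, getting the Pontryagin duality to be $G$-equivariant.

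I would first record two elementary facts about a generalized Bernoulli shift $\Gamma \curvearrowright (\mathbb{T}, \textup{Haar})^I$ over the diffuse base $\mathbb{T}$. It is essentially free exactly when the permutation action $\Gamma \curvearrowright I$ is faithful: if $g \neq e$ moves some $i \in I$, then the orbit $\langle g \rangle i$ has size at least two, and the set of configurations constant along it — which contains $\textup{Fix}(g)$ — is null. It is ergodic exactly when every $\Gamma$-orbit on $I$ is infinite (a finite orbit would produce a non-constant invariant symmetric function, whereas all orbits being infinite gives weak mixing).

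Applying this, for the classical shift $G \curvearrowright \mathbb{T}^G$ the index set is $G$ with $G$ acting by left translation, which is free and hence faithful, so $\sigma$ is essentially free; restricting to $\mathbb{Z}^2$, the orbits are the cosets $\mathbb{Z}^2 g$, each infinite, so $\sigma|_{\mathbb{Z}^2}$ is ergodic. For the shift $G \curvearrowright \mathbb{T}^{\mathbb{Z}^2}$ induced by the affine action $(b, A)\cdot v = Av + b$, faithfulness is immediate since $Av + b = v$ for all $v$ forces $A = I$ and $b = 0$; and the restriction of the affine action to $\mathbb{Z}^2 \leq G$ is the translation action $v \mapsto v + b$ on $\mathbb{Z}^2$, a single infinite orbit, so again $\sigma|_{\mathbb{Z}^2}$ is ergodic. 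In either case the preceding theorem gives that $L^{\infty}(X)\rtimes H < L^{\infty}(X)\rtimes G$ is a maximal Haagerup subalgebra.

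For the final sentence I would pass through Pontryagin duality. Writing $\mathbb{Z}\wr G = \mathbb{Z}^{\oplus G}\rtimes G$ and using the $G$-equivariant identification $L(\mathbb{Z}^{\oplus G}) \cong L^{\infty}(\widehat{\mathbb{Z}^{\oplus G}}) = L^{\infty}(\mathbb{T}^G)$, under which the shift action on $\mathbb{Z}^{\oplus G}$ dualises to the classical Bernoulli shift $G \curvearrowright \mathbb{T}^G$, one obtains $L(\mathbb{Z}\wr G)\cong L^{\infty}(\mathbb{T}^G)\rtimes G$ and likewise $L(\mathbb{Z}\wr_G H)\cong L^{\infty}(\mathbb{T}^G)\rtimes H$ for the restricted action. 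This is precisely the classical-Bernoulli instance just treated, so the conclusion transfers, completing the proof.
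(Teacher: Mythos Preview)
Your proposal is correct and follows exactly the approach the paper intends: apply the preceding theorem after checking freeness of $\sigma$ and ergodicity of $\sigma|_{\mathbb{Z}^2}$ via the standard orbit criteria for (generalized) Bernoulli shifts, and then use Pontryagin duality to recast $L(\mathbb{Z}\wr G)$ and $L(\mathbb{Z}\wr_G H)$ as the crossed products arising from the classical Bernoulli shift. The paper itself leaves these verifications implicit (adding only the side remark that the generalized Bernoulli shift over $\mathbb{Z}^2$ is not mixing), so you have simply filled in the intended details.
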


Note that the generalized Bernoulli shift action is not mixing as $(1, 0)\in \mathbb{Z}^2$ has infinite stabilizer subgroup in $G$.

\subsection*{Questions of Ge}
The following natural definition of a maximal von Neumann subalgebra was introduced by Ge \cite{ge}.

\begin{definition}
Let $\bM$ be a von Neumann algebra and $\bN$ be a von Neumann subalgebra. We say $\bN$ is a \emph{maximal} von Neumann subalgebra  of $\bM$ if for any von Neumann subalgebra $\bP$ of $\bM$ with $\bN\subseteq \bP$, either $\bN=\bP$ or $\bP=\bM$. 
\end{definition}

In \cite[Section 3, Question 2]{ge} Ge asked the following questions.
\begin{quest} \label{q:Ge}
	Can a non-hyperfinite factor of type II$_1$ have a hyperfinite factor as a maximal von Neumann subalgebra? Can a maximal von Neumann subalgebra of the hyperfinite factor of type II$_1$ be a subfactor of an infinite Jones index? \end{quest}

We will now show how the knowledge of intermediate von Neumann algebras quoted and developed in this section gives positive answers to both of the above. We begin with a lemma on properties of upper-triangular matrices inside $SL_2(\mathbb{Q})$. Recall that the notion of a roughly normal subgroup was introduced before Lemma \ref{lem: intermediate subalgebras split}.

\begin{lem}\label{lem:SLQ}
	Let $G=SL_2(\mathbb{Q})$ and let $H$ denote the upper-triangular matrices in $SL_2(\mathbb{Q})$. Then $H$ is a maximal subgroup in $G$; moreover $H$ is amenable and roughly normal in $G$.
\end{lem}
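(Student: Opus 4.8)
The plan is to identify $H$ with the Borel subgroup $B$ of $SL_2(\mathbb{Q})$, namely the stabilizer of the line $\langle e_1\rangle\subset\mathbb{Q}^2$, consisting of all matrices $\begin{pmatrix} a & b \\ 0 & a^{-1}\end{pmatrix}$ with $a\in\mathbb{Q}^{*}$, $b\in\mathbb{Q}$, and then to treat the three assertions separately. Amenability is the easiest: the unipotent part (the elements with $a=1$) is a normal subgroup of $H$ isomorphic to $(\mathbb{Q},+)$, with quotient the diagonal torus $\cong\mathbb{Q}^{*}$, so $H$ is metabelian, hence solvable, hence amenable.

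For maximality I would use the Bruhat decomposition of $SL_2(\mathbb{Q})$. Setting $w=\begin{pmatrix} 0 & -1 \\ 1 & 0\end{pmatrix}$, a matrix $\begin{pmatrix} a & b \\ c & d\end{pmatrix}$ lies in $H$ precisely when $c=0$, and when $c\neq 0$ a direct computation (using $ad-bc=1$) gives the explicit factorisation
\[
\begin{pmatrix} a & b \\ c & d\end{pmatrix}=\begin{pmatrix} 1 & a/c \\ 0 & 1\end{pmatrix}\,w\,\begin{pmatrix} c & d \\ 0 & c^{-1}\end{pmatrix},
\]
so that $SL_2(\mathbb{Q})=H\sqcup HwH$. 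Given any $g\in G\setminus H$, I would write $g=h_1 w h_2$ with $h_1,h_2\in H$; then $w=h_1^{-1}gh_2^{-1}\in\langle H,g\rangle$, whence $\langle H,g\rangle\supseteq\langle H,w\rangle$, and the Bruhat decomposition immediately gives $\langle H,w\rangle=G$. Thus $H\subsetneq\langle H,g\rangle=G$ for every $g\notin H$, which is maximality.

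For rough normality I would exploit the line-stabiliser description: for each $g\in G$ the conjugate $g^{-1}Hg$ is the stabiliser of the rational line $L:=g^{-1}\langle e_1\rangle$. If $L=\langle e_1\rangle$ then $H\cap g^{-1}Hg=H$, which is infinite. If $L\neq\langle e_1\rangle$, I would choose rational vectors $f_1$ spanning $\langle e_1\rangle$ and $f_2$ spanning $L$; the elements stabilising both lines are exactly those that are diagonal in the basis $(f_1,f_2)$, i.e.\ a torus $\{\operatorname{diag}(\lambda,\lambda^{-1}):\lambda\in\mathbb{Q}^{*}\}$ conjugated into $SL_2(\mathbb{Q})$ by a rational change of basis, hence isomorphic to $\mathbb{Q}^{*}$ and in particular infinite. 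Since this torus is contained in $H\cap g^{-1}Hg$, the latter is infinite for every $g\in G$, establishing rough normality.

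None of the three steps is deep; the only points requiring genuine care are verifying that the big-cell factorisation is solvable over $\mathbb{Q}$ (so that the Bruhat argument yields \emph{abstract}, not merely algebraic-group, maximality) and, in the rough-normality step, confirming that the common torus is genuinely rational — both of which reduce to the short calculations indicated above.
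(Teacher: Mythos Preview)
Your proof is correct and follows essentially the same approach as the paper: the same explicit Bruhat factorisation $G=H\sqcup HwH$ for maximality, and solvability for amenability. For rough normality the paper argues slightly more tersely---it exhibits the diagonal torus $\{\operatorname{diag}(x,x^{-1})\}$ inside $wHw^{-1}\cap H$ and implicitly uses the Bruhat decomposition to reduce the general $g$ to $w$---whereas you give the direct line-stabiliser argument for arbitrary $g$; the two arguments amount to the same observation that a rational split torus lies in the intersection.
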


\begin{proof}
	Put $s = \begin{pmatrix} 0 & -1 \\ 1 & 0\end{pmatrix}$. Suppose that $g \in G\setminus H$ and write $g =\begin{pmatrix} a & b \\ c & d\end{pmatrix}$, where $a,b,c,d \in \bq$, $c \neq 0$. Then 
	\[g = \begin{pmatrix} 1 & a/c \\ 0 & 1\end{pmatrix}   \begin{pmatrix} 0 & -1 \\ 1 & 0\end{pmatrix} \begin{pmatrix} c & d \\ 0 & 1/c\end{pmatrix}.\]
This means that $G = H \sqcup Hs H$, so that $H$ is a maximal subgroup.	As $H$ is solvable, it is amenable. Finally the set 
\[\left\{\begin{pmatrix} x & 0 \\ 0 & 1/x\end{pmatrix}:x\in \bq, x \neq 0   \right\}\]
is infinite and contained in $sHs^{-1} \cap H$, so that $H$ is roughly normal in $G$.
\end{proof}

\begin{prop}
		Let $G=SL_2(\mathbb{Q})$, let $H$ denote the upper-triangular matrices in $SL_2(\mathbb{Q})$ and let $\Lambda$ be an amenable ICC group. The following factor inclusions have the property that the smaller factor is an amenable maximal subalgebra of the larger one:
	\begin{enumerate}
	\item $L(\Lambda \wr_G H ) < L(\Lambda \wr G)$;
	\item $L(\bz \wr_G H ) < L(\bz \wr G)$.
	\end{enumerate}
\end{prop}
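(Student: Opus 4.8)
The plan is to handle the two inclusions separately, using respectively the Galois-type correspondence from the proof of Theorem~\ref{thm:Galois} and Lemma~\ref{lem: intermediate subalgebras split}, and in both cases to convert the maximality of $H$ inside $G$ recorded in Lemma~\ref{lem:SLQ} into maximality of the corresponding subalgebra in the sense of Ge.

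First I would clear the book-keeping common to both parts. Since $\Lambda$ (respectively $\bz$) is amenable and $H$ is amenable by Lemma~\ref{lem:SLQ}, the group $\Lambda\wr_G H=\Lambda^{\oplus G}\rtimes H$ (respectively $\bz\wr_G H$) is amenable, being an extension of the amenable group $\Lambda^{\oplus G}$ (respectively $\bz^{\oplus G}$) by $H$; hence its group von Neumann algebra is hyperfinite. For factoriality I would check that all four groups are ICC: here $\Lambda$ (respectively $\bz$) and $H$ are infinite, and both $H$ and $G$ act freely on the index set $G$ by translation, so the usual conjugation computation produces infinitely many conjugates of any nontrivial element. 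Concretely, conjugating $(f,b)$ with $b\neq e$ by a single generator $\delta^a_x$ moves the value $f(x)$ to $af(x)$ (with $a$ ranging over an infinite group), while for $b=e$, $f\neq e$ one translates the finite support of $f$ along an infinite orbit. Thus all four algebras are $\mathrm{II}_1$ factors and the two smaller ones are hyperfinite.

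For part (1), I would observe that any von Neumann algebra $\bP$ with $L(\Lambda\wr_G H)\subseteq\bP\subseteq L(\Lambda\wr G)$ automatically contains the base algebra $L(\Lambda^{\oplus G})$, since $L(\Lambda\wr_G H)=L(\Lambda^{\oplus G})\rtimes H\supseteq L(\Lambda^{\oplus G})$. The Bernoulli action of $G$ on $\overline{\bigotimes}_G L(\Lambda)\cong\overline{\bigotimes}_G R$ is strictly outer, so the Galois correspondence invoked in the proof of Theorem~\ref{thm:Galois} applies and forces $\bP=L(\Lambda\wr_G K)$ for some subgroup $K$ with $H\subseteq K\subseteq G$. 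By Lemma~\ref{lem:SLQ} the subgroup $H$ is maximal in $G$, so $K=H$ or $K=G$, i.e.\ $\bP$ equals $L(\Lambda\wr_G H)$ or $L(\Lambda\wr G)$, which is exactly the required maximality.

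Part (2) runs in parallel but in the commutative setting: $L(\bz\wr_G H)=L^{\infty}(\mathbb{T}^G)\rtimes H$ and $L(\bz\wr G)=L^{\infty}(\mathbb{T}^G)\rtimes G$, where $G$ acts by the classical Bernoulli shift on $\mathbb{T}^G$, which is free, measure preserving and mixing. Any intermediate $\bP$ again contains $L^{\infty}(\mathbb{T}^G)=L(\bz^{\oplus G})$, and since $H$ is roughly normal in $G$ by Lemma~\ref{lem:SLQ}, Lemma~\ref{lem: intermediate subalgebras split} gives $\bP=L^{\infty}(\mathbb{T}^G)\rtimes K$ with $H\subseteq K\subseteq G$; maximality of $H$ finishes the argument as before. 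I do not expect a serious obstacle: the whole proof is an assembly of the already-established structural theorems, and the only points needing genuine care are verifying that the base algebra is automatically contained in $\bP$ (so that these theorems apply at all) and checking the freeness/mixing/strict-outerness hypotheses together with the ICC and amenability bookkeeping.
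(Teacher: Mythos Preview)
Your proof is correct and follows essentially the same route as the paper: for (1) you invoke the Galois correspondence (Choda, via the strict outerness of the Bernoulli shift, as in Theorem~\ref{thm:Galois}), for (2) you use Lemma~\ref{lem: intermediate subalgebras split} together with the rough normality of $H$, and in both cases you conclude via the maximality of $H$ recorded in Lemma~\ref{lem:SLQ}. The paper's proof is the same, only terser; your additional bookkeeping (the explicit ICC check and the observation that any intermediate $\bP$ contains the base algebra) is accurate and harmless.
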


\begin{proof}
All the groups in sight are ICC; the smaller ones are amenable by Lemma \ref{lem:SLQ}, the larger ones are non-amenable as $G$ is not amenable. To see that all the intermediate von Neumann algebras must come from intermediate subgroups between $H$ and $G$ in the first case we can apply \cite[Corollary 4]{choda} exactly as was done in the proof of Theorem \ref{thm:Galois}, and in the second case appeal to Lemma \ref{lem: intermediate subalgebras split}. Another application of Lemma \ref{lem:SLQ} ends the proof. 	
\end{proof}

For the second part of Question \ref{q:Ge} the positive answer was given already by Suzuki in  \cite[Example 4.14]{suzuki}; we recall again Suzuki's example and present another one, using only group von Neumann algebras.  

Recall that  a p.m.p.\ action $\mathbb{Z}\curvearrowright X$ is called prime if it has no nontrivial proper factors. For existence of such actions (e.g.\ a Cha\`con system), see \cite[Theorem 16.6]{glasner}. The result below follows as in the proof of  Theorem \ref{thm:Galois}.

\begin{prop}[\cite{suzuki}]
Let $\Lambda$ be an ICC amenable group. If  $\mathbb{Z}\curvearrowright X$ is a prime action, then the following infinite index inclusion of amenable factors is such that the smaller factor is a maximal von Neumann subalgebra of the larger one:
\[ L(\Lambda\wr \bz) <(L(\Lambda^{\oplus \bz}) \otimes L^\infty(X))\rtimes \bz. \]	
\end{prop}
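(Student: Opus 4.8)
The plan is to classify every von Neumann algebra $\bP$ with
\[L(\Lambda \wr \bz) \subseteq \bP \subseteq (L(\Lambda^{\oplus \bz}) \otimes L^\infty(X)) \rtimes \bz\]
and to show that each such $\bP$ equals one of the two endpoints. Write $\bN = L(\Lambda^{\oplus \bz})$. Since $\Lambda$ is ICC, the direct sum $\Lambda^{\oplus \bz}$ is ICC as well, so $\bN$ is a $\mathrm{II}_1$ factor (and, as $\Lambda$ is amenable, $\bN \cong R$). The group $\bz$ acts on $\bN$ by the Bernoulli shift, on $L^\infty(X)$ by the prime action, and diagonally on $\bN \otimes L^\infty(X)$; with this convention $L(\Lambda \wr \bz) = \bN \rtimes \bz$ and the larger algebra is $(\bN \otimes L^\infty(X)) \rtimes \bz$. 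We are thus exactly in the situation treated in the proof of Theorem \ref{thm: general version for 2nd example} (and, in spirit, Theorem \ref{thm:Galois}), the difference being that primeness of the action will now play the role of the extreme rigidity used there.

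First I would reduce the problem to the analysis of $\bz$-invariant intermediate algebras. Exactly as in the proof of Theorem \ref{thm: general version for 2nd example}, the inclusion $\bN \subset \bN \otimes L^\infty(X)$ is centrally $\bz$-free: by \cite[Remark 4.4(3)]{suzuki} one has $\bN^\omega \cap (\bN \otimes L^\infty(X))_\omega = \bN^\omega \cap \bN'$, which reduces the claim to the central $\bz$-freeness of the Bernoulli shift on $\bN$ itself, supplied by \cite[Example 4.13]{suzuki}. Then \cite[Theorem 4.6]{suzuki} applies and shows that every intermediate $\bP$ is of the form $\bP = \mathsf{A} \rtimes \bz$ for a $\bz$-invariant von Neumann algebra $\bN \subseteq \mathsf{A} \subseteq \bN \otimes L^\infty(X)$. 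Because $\bN$ is a factor, the Ge-Kadison splitting theorem \cite{gk} forces $\mathsf{A} = \bN \otimes \mathsf{B}$ for some $\bz$-invariant von Neumann subalgebra $\mathsf{B} \subseteq L^\infty(X)$.

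The final step is to invoke primeness. A $\bz$-invariant von Neumann subalgebra $\mathsf{B}$ of $L^\infty(X)$ is the algebra of a $\bz$-invariant sub-$\sigma$-algebra, i.e.\ it corresponds to a measure-theoretic factor of the action $\bz \curvearrowright (X,\mu)$. Since this action is prime, the only such factors are the trivial ones, so $\mathsf{B} = \bc 1$ or $\mathsf{B} = L^\infty(X)$; correspondingly $\bP = \bN \rtimes \bz = L(\Lambda \wr \bz)$ or $\bP = (\bN \otimes L^\infty(X)) \rtimes \bz$, which is precisely Ge-maximality. The remaining assertions are quick: the inclusion has infinite Jones index because $L^\infty(X)$ is diffuse, while both algebras are amenable $\mathrm{II}_1$ factors, as $\Lambda \wr \bz$ is amenable and ICC and the larger algebra is the crossed product of the amenable algebra $\bN \otimes L^\infty(X)$ by $\bz$, which is a factor since the prime action of $\bz$ on its centre $L^\infty(X)$ is ergodic.

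I expect the only genuinely delicate point to be the verification of central $\bz$-freeness needed to apply \cite[Theorem 4.6]{suzuki}; once the intermediate algebras are known to be crossed products of $\bz$-invariant subalgebras, the Ge-Kadison splitting and the translation of primeness into the absence of nontrivial $\bz$-invariant subalgebras of $L^\infty(X)$ are routine.
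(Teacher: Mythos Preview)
Your proof is correct and follows precisely the template of Theorem \ref{thm: general version for 2nd example}, substituting primeness of the $\bz$-action for extreme rigidity; this is exactly the argument implicit in the paper's one-line reference (the paper cites the method of Theorem \ref{thm:Galois}, but the detailed mechanism---Suzuki's central freeness theorem plus Ge--Kadison splitting---is the one from Theorem \ref{thm: general version for 2nd example}, which you have identified and reproduced correctly).
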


To present the second example, we need some preparations. The so-called Houghton groups were introduced in \cite{houghton}. Let us recall their definition,  following \cite[Example 3.6]{de2}, .

Fix an integer $n\in \bn$ and set $\Omega_n=\mathbb{N}\times \{1,\ldots, n\}$. We may think of $\Omega_n$ as the disjoint union of $n$ copies $\mathbb{N}_1,\ldots, \mathbb{N}_n$ of $\mathbb{N}$. The Houghton group $G_n$ is the group of all permutations $\sigma$ of $\Omega_n$ such that, for each $i\in \{1, \ldots,n\}$ the set $\sigma(\mathbb{N}_i)\Delta \mathbb{N}_i$ is finite, and $\sigma$ is eventually a translation on $\mathbb{N}_i$, i.e.\ there exist an $n$-tuple $(m_1,\ldots, m_n)\in\mathbb{Z}^n$ and a finite set $K_{\sigma}\subseteq \Omega_n$ such that $\sigma(k,i)=(k+m_i, i)$ for all $(k,i)\in \Omega_n\setminus K_{\sigma}$. It is easy to see that the action of $G_n$ on $\Omega_n$ is transitive.
Note that $G_1=S_\infty$.

\begin{prop}
 Let $n \in \bn$, and let $G_n$ denote the corresponding Houghton group acting on $\Omega_n$ as above, let $H_n$ denote the stabilizer group of a point in $\Omega_n$ 	and let $\Lambda$ be an ICC amenable group. The following infinite index inclusion of amenable factors is such that the smaller factor is a maximal von Neumann subalgebra of the larger one:
	\[ L(\Lambda\wr_{G_n} H_n) <L(\Lambda \wr G_n). \]	
\end{prop}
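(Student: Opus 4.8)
The plan is to reduce the maximality of the subalgebra inclusion to the purely group-theoretic statement that the point stabiliser $H_n$ is a maximal subgroup of the Houghton group $G_n$, using the Galois correspondence already exploited in the proof of Theorem \ref{thm:Galois}. First I would record that since $\Lambda$ is ICC and amenable, $R:=L(\Lambda)$ is the hyperfinite II$_1$ factor, so that $L(\Lambda^{\oplus G_n}) = \overline{\bigotimes}_{G_n} R$ is again a (hyperfinite) factor carrying the Bernoulli shift of $G_n$. Both $G_n$ and $H_n$ are amenable: indeed $G_n$ contains the finitary symmetric group $\mathrm{Sym}_{\mathrm{fin}}(\Omega_n)$ as a locally finite normal subgroup with quotient $\bz^{n-1}$, so it is elementary amenable, and $H_n < G_n$. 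Since the Bernoulli action is strictly outer and its restriction to each nontrivial $h \in H_n$ stays outer (left translation by $h\neq e$ being fixed-point free on $G_n$), both $L(\Lambda\wr_{G_n} H_n) = (\overline{\bigotimes}_{G_n} R)\rtimes H_n$ and $L(\Lambda\wr G_n) = (\overline{\bigotimes}_{G_n} R)\rtimes G_n$ are amenable factors, as asserted in the statement.

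Next I would invoke Choda's Galois correspondence exactly as in the proof of Theorem \ref{thm:Galois}: strict outerness of the Bernoulli action (\cite[Proposition 4.9]{bb}) together with \cite[Corollary 4]{choda} (see also \cite[Theorem 5.3]{bb}) shows that every von Neumann algebra intermediate between $L(\Lambda^{\oplus G_n})$ and $L(\Lambda \wr G_n)$ is of the form $L(\Lambda \wr_{G_n} K)$ for some subgroup $K < G_n$. Now for any $\bP$ with $L(\Lambda\wr_{G_n} H_n) \subseteq \bP \subseteq L(\Lambda \wr G_n)$ we automatically have $L(\Lambda^{\oplus G_n}) \subseteq \bP$, so $\bP = L(\Lambda\wr_{G_n} K)$ for a subgroup $K$ with $H_n \leq K \leq G_n$.

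It therefore remains to prove that $H_n$ is a \emph{maximal} subgroup of $G_n$, and this is the only genuinely new point. Since $H_n$ is the stabiliser of a point for the transitive action $G_n \curvearrowright \Omega_n$, maximality of $H_n$ is equivalent to primitivity of this action. Here I would argue that $G_n$ already contains $\mathrm{Sym}_{\mathrm{fin}}(\Omega_n)$, which acts highly transitively, hence $2$-transitively, on the countable set $\Omega_n$; thus $G_n \curvearrowright \Omega_n$ is $2$-transitive and in particular primitive, so $H_n$ is maximal. Consequently $K = H_n$ or $K = G_n$, i.e.\ $\bP = L(\Lambda\wr_{G_n} H_n)$ or $\bP = L(\Lambda\wr G_n)$, which is precisely the claimed maximality. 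Finally the index of the inclusion is infinite since $[G_n : H_n] = |\Omega_n| = \infty$. The main thing to be careful about is the correct set-up that licenses the application of Choda's theorem (strict outerness of the Bernoulli action) and the verification that the point stabiliser of the Houghton action is maximal; both, as indicated, follow from standard facts, so I expect no essential obstacle once the reduction is in place.
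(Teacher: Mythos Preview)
Your proof is correct and follows essentially the same route as the paper: both reduce via Choda's Galois correspondence (exactly as in Theorem \ref{thm:Galois}) to the group-theoretic fact that the point stabiliser $H_n$ is a maximal subgroup of $G_n$, and both establish this maximality through $2$-transitivity of the Houghton action (you via the finitary symmetric subgroup, the paper by citing \cite[Example 3.6]{de2}). The remaining observations on amenability, factoriality and infinite index are likewise the same.
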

\begin{proof}
	Let $n \in \mathbb{N}$. As explained in \cite[Example 3.6]{de2}, $G_n$ is elementary amenable. Moreover, the action of $G_n$ on $\Omega_n$ is $k$-transitive for all $k\in \bn$.  Taking $k=2$ we see that the diagonal action $G_n\curvearrowright \Omega_n\times \Omega_n$ has two orbits; equivalently, $|H_n/ G_n\backslash H_n|=2$. Hence $H_n$ is a maximal subgroup of $G_n$. It has infinite index, as $G_n\curvearrowright \Omega_n$ is transitive and $\Omega_n$ is an infinite set.	
The conclusion follows once again by \cite[Corollary 4]{choda}, as in the proof of Theorem \ref{thm:Galois}.	
\end{proof}

\section{Maximal (T) and non-(T) subgroups and subalgebras}

In this short section we discuss some facts concerning maximal non-(T) (and also (T)) subgroups and subalgebras.

\subsection*{Explicit maximal non-(T) subgroups in groups with Property (T)}

Variations of the example to be described below were studied for example in \cite{iv,dv}.

\begin{proposition} \label{prop:maxnonT}
	Consider the group 
$$G=\Bigg\{\begin{pmatrix}
1&b&c\\
0&A&d\\
0&0&1\\
\end{pmatrix}~\bigm\vert\ A\in SL_3(\mathbb{Z}), b\in \mathbb{Z}^{1\times 3}, c\in \mathbb{Z}, d\in\mathbb{Z}^{3\times 1}\Bigg\}.
$$
and its subgroup
$$H=\Bigg\{\begin{pmatrix}
1&0&c\\
0&A&d\\
0&0&1\\
\end{pmatrix}~\bigm\vert\ A\in SL_3(\mathbb{Z}), c\in \mathbb{Z}, d\in\mathbb{Z}^{3\times 1}\Bigg\}.$$
Then $H$ is a maximal non-(T) subgroup inside $G$.
\end{proposition}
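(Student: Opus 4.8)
The plan is to realise $G$ as a semidirect product, classify all subgroups lying strictly between $H$ and $G$, and verify that each of them has Property (T).

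Writing an element of $G$ as a quadruple $(b,c,A,d)$ with $b\in\mathbb{Z}^{1\times 3}$, $c\in\mathbb{Z}$, $A\in SL_3(\mathbb{Z})$ and $d\in\mathbb{Z}^{3\times 1}$, the product rule becomes
\[ (b,c,A,d)(b',c',A',d')=(b'+bA',\,c+c'+bd',\,AA',\,d+Ad'). \]
Thus $G=V\rtimes SL_3(\mathbb{Z})$, where $V=\{(b,c,I,d)\}$ is the integral Heisenberg group with centre $Z=\{(0,c,I,0)\}\cong\mathbb{Z}$, and $SL_3(\mathbb{Z})$ acts by the dual representation on the $b$-coordinate, the standard representation on the $d$-coordinate, and trivially on $Z$. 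Under this identification $H=U\rtimes SL_3(\mathbb{Z})$ with $U=\{(0,c,I,d)\}\cong\mathbb{Z}^4$ abelian; since the $c$-coordinate adds and is central, projection onto it is a surjection $H\twoheadrightarrow\mathbb{Z}$. As groups with Property (T) have finite abelianisation, $H$ fails Property (T) (indeed $H\cong\mathbb{Z}\times(\mathbb{Z}^3\rtimes SL_3(\mathbb{Z}))$).

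Next I would classify the subgroups $K$ with $H\subsetneq K\subseteq G$. Because $H$ already contains the copy $\{(0,0,A,0):A\in SL_3(\mathbb{Z})\}$, this copy splits the projection $K\to SL_3(\mathbb{Z})$, so $K=(K\cap V)\rtimes SL_3(\mathbb{Z})$ with $K\cap V$ an $SL_3(\mathbb{Z})$-invariant subgroup of $V$ containing $U$. Since $U$ contains $[V,V]=Z$ it is normal, and such subgroups correspond bijectively to the $SL_3(\mathbb{Z})$-invariant subgroups of $V/U\cong\mathbb{Z}^3$ (the dual representation); a short argument using that $SL_3(\mathbb{Z})$ acts transitively on primitive vectors shows that the only nonzero invariant sublattices are $m\mathbb{Z}^3$ with $m\geq 1$. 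Hence either $K=H$, or $K=K_m:=S_m\rtimes SL_3(\mathbb{Z})$ with $S_m=\{(b,c,I,d):b\in m\mathbb{Z}^3\}$; moreover $[G:K_m]=[V:S_m]=m^3<\infty$. In particular every subgroup strictly containing $H$ has finite index in $G$.

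It then suffices to prove that $G$ itself has Property (T), for then each finite-index $K_m$ inherits it and $H$ is maximal among non-(T) subgroups. As $G/V\cong SL_3(\mathbb{Z})$ has Property (T), by the standard permanence property it is enough to show that the pair $(G,V)$ has relative Property (T); this is the step I expect to be the main obstacle. Its abelianised version, relative Property (T) of $(G/Z,V/Z)=(\mathbb{Z}^6\rtimes SL_3(\mathbb{Z}),\mathbb{Z}^6)$ with $\mathbb{Z}^6=\mathbb{Z}^3\oplus(\mathbb{Z}^3)^{*}$, is routine, following from the absence of an $SL_3(\mathbb{Z})$-invariant probability measure on the projective space of $\mathbb{R}^6$, in the spirit of \cite{bdv}. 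The genuine difficulty is to lift this across the central extension $1\to Z\to V\to V/Z\to 1$: since $SL_3(\mathbb{Z})$ fixes $Z\cong\mathbb{Z}$ pointwise, relative Property (T) of the abelianisation does not formally imply that of $(G,V)$, and one must rule out representations of $G$ whose restriction to $Z$ is almost, but not genuinely, trivial. This is precisely the Heisenberg phenomenon treated by de Cornulier's techniques for relative Property (T) of nilpotent radicals, and the statement needed for this $SL_3$--Heisenberg group and its variants is available in \cite{iv,dv}. Granting it, $G$ has Property (T), hence so does every $K_m$, and the proof is complete.
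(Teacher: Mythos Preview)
Your argument is correct and follows the same route as the paper: both show that $H$ surjects onto $\mathbb{Z}$ and hence is non-(T), both reduce the classification of intermediate subgroups to $SL_3(\mathbb{Z})$-invariant sublattices of $\mathbb{Z}^3$ (you via $V/U$, the paper via the quotient by the normal subgroup $N=\{(0,c,I,d)\}$, arriving at $\tilde G\cong\mathbb{Z}^3\rtimes SL_3(\mathbb{Z})$), and both conclude by invoking Property (T) for $G$. One correction on the citation: Property (T) for $G$ is established in de Cornulier \cite{de}, not in \cite{iv,dv}; the latter two papers study related constructions but do not contain the relative Property (T) statement you need.
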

\begin{proof}
	Note first that the fact that $G$ is a Kazhdan group is observed in \cite{de}. If we consider $$K:=\Bigg\{\begin{pmatrix}
	1&0&0\\
	0&A&d\\
	0&0&1\\
	\end{pmatrix}~\bigm\vert\ A\in SL_3(\mathbb{Z}), d\in\mathbb{Z}^{3\times 1}\Bigg\},$$
then a direct computation shows that   $K$ is a normal subgroup of $H$ and $H/K\cong \mathbb{Z}$. Hence  $H$ is non-(T), as it admits a non-(T) quotient.

We will now show that for any $g\in G\setminus H$ the subgroup $\langle H, g \rangle$ has finite index in $G$ (and hence has Property (T); and so does any subgroup of $G$ containing $\langle H,g\rangle$).  

Consider the normal subgroup $N$ of $G$ given by 
$$N=\Bigg\{\begin{pmatrix}
1&0&c\\
0&I&d\\
0&0&1\\
\end{pmatrix}~\bigm\vert\ c\in \mathbb{Z}, d\in\mathbb{Z}^{3\times 1}\Bigg\}.$$
We have naturally $N\subset H$, so that it suffices to show that $\langle H, g \rangle/N$ has finite index in $G/N$. This however allows us to reduce the dimension. Put 
\[ \tilde{H} = \Bigg\{\begin{pmatrix}
1&0\\
0&A
\end{pmatrix}~\bigm\vert\ A\in SL_3(\mathbb{Z}) \Bigg\},\]
\[ \tilde{g} = \begin{pmatrix}
1&b\\
0&I
\end{pmatrix},\] where $0\neq b=n(b_1, b_2, b_3)$ with $b_1,b_2,b_3, n \in \bz$, $n\neq 0$, $gcd(b_1, b_2, b_3)=1$. It suffices to show that the subgroup $\langle \tilde{H}, \tilde{g} \rangle$ is of finite index in $\tilde{G} = \begin{pmatrix}
1&\bz^{1 \times 3}\\
0&SL_3(\mathbb{Z}) \end{pmatrix}.$

But $\tilde{G}\cong \mathbb{Z}^3\rtimes SL_3(\mathbb{Z})$ and $\tilde{H}$ is identified as the copy of $SL_3(\mathbb{Z})$ in $\mathbb{Z}^3\rtimes SL_3(\mathbb{Z})$, thus, for any $\tilde{g}\in \tilde{G}\setminus \tilde{H}$, we must have $\langle \tilde{H}, \tilde{g}\rangle \cong n\mathbb{Z}^3\rtimes SL_3(\mathbb{Z})$ for some $n\neq 0$, so that $\langle \tilde{H}, \tilde{g}\rangle$  has finite index in $\mathbb{Z}^3\rtimes SL_3(\mathbb{Z})$.

\end{proof}

In fact another example of similar nature may be deduced from the results in \cite{meiri} and \cite{venk}, as kindly communicated to us by Chen Meiri.

\begin{proposition}\label{prop: max non-(T) in sl3z}
Let $n\geq 3$ and define \[ \widetilde{H}=\left\{\begin{pmatrix}
	A&B\\
	0& C
	\end{pmatrix}:~ A\in GL_2(\mathbb{Z}),  B\in M_{2\times(n-2)}(\bz), C \in GL_{n-2}(\bz) \right\}~\mbox{and}~H=\widetilde{H}\cap SL_n(\bz).\] Then $H$ (resp. $\widetilde{H}$) is a maximal non-(T) subgroup in $SL_n(\bz)$ (resp. $GL_n(\bz)$).
\end{proposition}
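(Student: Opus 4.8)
The plan is to handle both statements at once, reducing the $GL_n$ case to the $SL_n$ case at the very end, and to split the work into an easy non-(T) part and a harder maximality part. First I would check that $H$ is non-(T). The block projection $\pi\colon H\to GL_2(\bz)$, $\left(\begin{smallmatrix} A & B \\ 0 & C\end{smallmatrix}\right)\mapsto A$, is a surjective homomorphism: given $A$ with $\det A=\pm1$ one picks $C=\mathrm{diag}(\det A,1,\dots,1)$ so that $\det A\det C=1$. Since $GL_2(\bz)$ is virtually free it is infinite and non-(T), and because Property (T) passes to quotients, $H$ is non-(T); the same projection (or the fact that $[\widetilde H:H]=2$ together with finite-index insensitivity of (T)) handles $\widetilde H$.

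For maximality, recall that $SL_n(\bz)$ has Property (T) for $n\geq3$ and that (T) is inherited by finite-index subgroups; thus it suffices to show that $\langle H,g\rangle$ has finite index in $SL_n(\bz)$ for every $g\in SL_n(\bz)\setminus H$. I would reduce this to producing a single lower-left elementary matrix. Indeed $H$ contains every elementary matrix except the lower-left ones $e_{ij}$ with $i\geq3$, $j\leq2$, and a short computation with the Steinberg commutator relations shows that adjoining one lower-left elementary $e_{i_0 j_0}(1)$ recovers all the rest; since $SL_n(\bz)=E_n(\bz)$, this gives $\langle H,e_{i_0 j_0}(1)\rangle=SL_n(\bz)$, while adjoining $e_{i_0 j_0}(d)$ with $d\neq0$ still yields a finite-index (congruence-type) subgroup.

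The core step, and the main obstacle, is to extract such an element from an arbitrary $g\notin H$. In $2+(n-2)$ block form the condition $g\notin H$ says exactly that the lower-left block $D$ is nonzero. Since $\langle H,g\rangle=\langle H,h_1gh_2\rangle$ for all $h_1,h_2\in H$, and since left multiplication by $\mathrm{diag}(A,C)\in H$ replaces $D$ by $CD$ while right multiplication by $\mathrm{diag}(A',C')\in H$ replaces $D$ by $DA'$ (the off-diagonal blocks absorbing any determinant correction), I can run integral row and column operations to bring $D$ into Smith normal form, and so assume the lower-left block has a single nonzero entry. From this normalised $g$ one isolates an opposite-unipotent contribution and concludes finite index using the generation theorems of Venkataramana (\cite{venk}) — that in higher rank the positive unipotent radical together with a nontrivial opposite element generates a finite-index subgroup — and the explicit matrix estimates of Meiri (\cite{meiri}) already invoked in Case 3 of Proposition \ref{prop:maxHAPSL3Z}. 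The delicate point is the integral, rather than rational, bookkeeping in this final reduction: over $\bq$ the Bruhat decomposition relative to the maximal parabolic trivialises the double cosets $H\backslash SL_n(\bq)/H$ (indexed by $\dim(V\cap gV)$ for $V=\langle e_1,e_2\rangle$), but matching this rational picture with the integral structure and with Venkataramana's hypotheses is exactly where the real work lies, and is what the cited results supply.

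Finally the $GL_n$ statement follows from the $SL_n$ one: for $g\in GL_n(\bz)\setminus\widetilde H$ one produces $g'\in SL_n(\bz)\setminus H$ with $\langle\widetilde H,g\rangle\supseteq\langle H,g'\rangle$ (multiplying $g$ by a determinant $-1$ element of $\widetilde H$ when $\det g=-1$), so that $\langle\widetilde H,g\rangle$ contains a finite-index subgroup of $SL_n(\bz)$ and is therefore of finite index in $GL_n(\bz)$, hence has Property (T).
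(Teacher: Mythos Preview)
Your outline is essentially correct and lands on the same references (Venkataramana, Meiri) as the paper, but the paper's route is shorter and its use of Venkataramana is different from yours. You normalise $g$ via Smith normal form on the lower-left block and then try to ``isolate an opposite-unipotent contribution'', summarising Venkataramana's theorem as ``positive unipotent radical together with a nontrivial opposite element generates a finite-index subgroup''. The paper bypasses all of this: the single observation is that $H$ is the group of integral points of a \emph{maximal parabolic} subgroup of $SL_n(\bq)$, so for any $g\notin H$ the group $\langle H,g\rangle$ is automatically Zariski-dense in $SL_n$. Venkataramana's Theorem~3.7 is then applied in the form ``a Zariski-dense subgroup of $SL_n(\bz)$ containing a copy of $\bz^2$ generated by unipotents has finite index''; two commuting elementary matrices in $H$ supply the $\bz^2$, and no matrix reduction or extraction of an opposite unipotent is needed. (The paper also gives a separate, fully elementary argument for $n=3$ via Meiri, as you anticipated.) Your Smith-normal-form step is harmless but superfluous, and the one place your write-up is actually shaky is the phrasing of Venkataramana's hypothesis: you never verify Zariski-density, and the ``opposite unipotent'' version you state is not quite what is in \cite{venk}. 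Replacing your normalisation step by the maximal-parabolic/Zariski-density observation would both shorten the argument and make the citation honest. The $GL_n$ reduction and the non-(T) part match the paper.
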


\begin{proof}
Assume we have proved $H$ is a maximal non-(T) subgroup in $SL_n(\bz)$, then $\widetilde{H}$ is also a maximal non-(T) subgroup in $GL_n(\bz)$. Indeed, $\widetilde{H}$ is non-(T) as $[\widetilde{H}: H]=2$. Moreover, if $\widetilde{H}\lneq \widetilde{K}<GL_n(\bz)$, then one must have $H\lneq \widetilde{K}\cap SL_n(\bz)$ by index considerations. Hence $\widetilde{K}\cap SL_n(\bz)$ has (T), which implies $\widetilde{K}$ has property (T) as $[\widetilde{K}: \widetilde{K}\cap SL_n(\bz)]=2$. So we only deal with $H$ below.

	Note first that $H$ does not have Property (T) since it has a quotient isomorphic to $GL_2(\bz)$, which does not have Property (T).
	
We will first give a proof in a special case of $n=3$, where it is fully elementary.	
Let then $g\in SL_3(\bz), g\not\in H$. Multiplying $g$ by elements of $H$ and using basic number-theoretic properties one can first reduce the situation to the case where $g_{31}=0$ (so that $g_{32}\neq 0$) and then in addition to the case where also $g_{21}\neq 0$. Thus $g = \begin{pmatrix}  * & * & * \\ a_1&  * & * \\ 0 &a_2 & *    \end{pmatrix}$ with $a_1 a_2 \neq 0$.
Now the proof of \cite[Theorem 2]{meiri} shows that $[SL_3(\mathbb{Z}): \langle g, Id+e_{1, 3} \rangle]<\infty$, where $e_{1, 3}$ denotes the matrix unit with 1 at the $(1, 3)$-entry. Hence $[SL_3(\mathbb{Z}): \langle H, g \rangle]<\infty$ as $Id+e_{1, 3}\in H$, and $\langle H, g\rangle$ (as well as any subgroup containing it) must have Property (T).

Consider then the general case. 

The group  $H$ consists of the integral points of a maximal parabolic subgroup of $SL_n(\mathbb{Q})$ (see for example the discussion in \cite[p. 86--87]{mar}). Thus, if $g \in SL_n(\bz)\setminus H$, then $\langle H,g\rangle$ is Zariski-dense in $SL_n(\mathbb{Z})$. It is easy to see that $H$ admits two unipotent elements which generate a copy of $\bz^2$. It then follows from Venkataramana's  result \cite[Theorem 3.7]{venk} that  $\langle H,g\rangle$ is of finite index in $SL_3(\bz)$ and thus has property (T). 

 \end{proof}

\subsection*{Maximal (T) and non-(T)-subalgebras}

As we saw in Proposition \ref{maximalNonT}, it is very easy to show that maximal non-(T) groups exist. We do not know how to extend this result to general von Neumann algebras, but below we record one special case. For the notions related to the von Neumann algebraic Property (T)  and to the relative Property (T) we refer again to \cite{popa}.

\begin{proposition}
	Let $\bM$ be a II$_1$ factor and $\bN< \bM$ be a non-(T) von Neumann subalgebra, which is ireducible, i.e.\ $\bN'\cap \bM=\mathbb{C}$. If $\bM$ has property (T), then there is a maximal non-(T) von Neumann subalgebra $\bP$ such that $\bN< \bP< \bM$.
\end{proposition}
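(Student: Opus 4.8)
The plan is to run a Kuratowski--Zorn argument on the family $\mathcal{P}$ of non-(T) von Neumann subalgebras $\mathsf{Q}$ with $\bN\subseteq \mathsf{Q}\subseteq \bM$, exactly as in the group-theoretic Proposition \ref{maximalNonT}, the only real work being to verify that the union of an increasing chain of such algebras stays non-(T). First I would record the consequence of irreducibility used throughout: since $\bN\subseteq \mathsf{Q}\subseteq \bM$ forces $\mathsf{Q}'\cap \bM\subseteq \bN'\cap \bM=\bc$, every intermediate algebra $\mathsf{Q}$ (including $\bN$ and any chain union) is an \emph{irreducible} subfactor of $\bM$; in particular each is a $\mathrm{II}_1$ factor and each inclusion $\mathsf{Q}_1\subseteq \mathsf{Q}_2$ of intermediate algebras is irreducible, $\mathsf{Q}_1'\cap \mathsf{Q}_2=\bc$. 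As $\bN$ is non-(T) it is infinite-dimensional, so $\mathcal{P}\neq\emptyset$.

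Given a chain $(\bP_i)_i$ in $\mathcal{P}$, set $\bP=(\bigcup_i \bP_i)''$, again an irreducible subfactor of $\bM$, and argue by contradiction that $\bP$ is non-(T). Assume $\bP$ has property (T). Using the correspondence characterisation of property (T) (\cite{CoJ}; see also \cite{popa}), fix $\epsilon=1/2$ and obtain a finite set $F\subseteq \bP$ and $\delta>0$ such that any $\bP$-$\bP$ correspondence carrying a tracial unit vector that is $(F,\delta)$-almost central must also carry a $\bP$-central vector within distance $1/2$ of it. Let $E_i:\bP\to \bP_i$ be the trace-preserving conditional expectations. Since the projections onto $L^2(\bP_i)$ form an increasing net with supremum the projection onto $L^2(\bP)=\overline{\bigcup_i L^2(\bP_i)}$, the $E_i$ converge to the identity pointwise in $\|\cdot\|_2$, so I may choose an index $i_0$ in the chain with $\|a-E_{i_0}(a)\|_2\le \delta/\sqrt{2}$ for all $a\in F$.

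The key device is then the inclusion correspondence $\mathcal{H}=L^2(\bP)\otimes_{\bP_{i_0}}L^2(\bP)$ with cyclic vector $\xi_0=\hat 1\otimes \hat 1$. A direct computation gives $\langle a\xi_0,\xi_0\rangle=\langle \xi_0 a,\xi_0\rangle=\tau(a)$ and $\|a\xi_0-\xi_0 a\|=\sqrt{2}\,\|a-E_{i_0}(a)\|_2$, so $\xi_0$ is a tracial unit vector which is $(F,\delta)$-almost central by the choice of $i_0$. Property (T) of $\bP$ therefore produces a $\bP$-central vector within distance $1/2$ of $\xi_0$. Identifying $\mathcal{H}$ with $L^2(\langle \bP,e_{\bP_{i_0}}\rangle,\mathrm{Tr})$, under which $\xi_0$ corresponds to $\widehat{e_{\bP_{i_0}}}$, the $\bP$-central vectors correspond to the $L^2$-part of $\bP'\cap \langle \bP,e_{\bP_{i_0}}\rangle$. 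Here irreducibility is decisive: by basic-construction duality $\bP'\cap \langle \bP,e_{\bP_{i_0}}\rangle\cong \bP_{i_0}'\cap \bP=\bc$, so the space of central vectors is $\bc\,\widehat{1}$ when $[\bP:\bP_{i_0}]<\infty$ and is $\{0\}$ when the index is infinite. A one-line computation then shows that the distance from $\xi_0$ to the central subspace equals $\sqrt{1-1/[\bP:\bP_{i_0}]}$, interpreted as $1$ in the infinite-index case. Combined with the bound $1/2$ this forces $[\bP:\bP_{i_0}]=1$, i.e.\ $\bP_{i_0}=\bP$; but then $\bP_{i_0}$ would have property (T), contradicting $\bP_{i_0}\in\mathcal{P}$. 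Hence $\bP$ is non-(T), and since $\bM$ has property (T) we automatically get $\bP\neq\bM$, so $\bP\in\mathcal{P}$ is an upper bound for the chain. Zorn's lemma then yields a maximal element of $\mathcal{P}$, which is the desired maximal non-(T) subalgebra between $\bN$ and $\bM$.

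I expect the main obstacle to be precisely this union step: the identification of the space of $\bP$-central vectors in the inclusion correspondence and the resulting index computation. This is exactly where the hypothesis $\bN'\cap \bM=\bc$ is indispensable, since it collapses the central vectors to at most a line and lets property (T), applied with a single fixed $\epsilon<1$, simultaneously rule out proper finite-index steps and all infinite-index steps. The remaining ingredients — the correspondence computation for $\xi_0$ and the martingale-type convergence $E_i\to\mathrm{id}$ — are routine.
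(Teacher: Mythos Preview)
Your argument is correct and follows the same global strategy as the paper: apply Zorn's lemma to the family of non-(T) intermediate subalgebras, the only issue being closure under chain unions. The paper disposes of this step in one line by invoking \cite[Theorem 4.4.1]{popa_correspondence}, which says precisely that a property (T) $\mathrm{II}_1$ factor cannot be a nontrivial increasing limit of irreducible subfactors; what you have written is essentially an inline proof of that theorem via the inclusion bimodule $L^2(\bP)\otimes_{\bP_{i_0}}L^2(\bP)$, the basic-construction identification of its central vectors with $\bP_{i_0}'\cap\bP=\mathbb{C}$, and the Jones index gap to force $[\bP:\bP_{i_0}]=1$ from $[\bP:\bP_{i_0}]\le 4/3$. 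Your version is more self-contained and makes transparent exactly where the irreducibility hypothesis $\bN'\cap\bM=\mathbb{C}$ enters (namely, to kill all nontrivial central vectors), at the cost of importing Jones' index rigidity; the paper's version is shorter but relies on a black-box citation to Popa's unpublished preprint. Either way the hypothesis that $\bM$ has property (T) is used only at the end, to ensure the maximal element produced by Zorn is a proper subalgebra.
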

\begin{proof}
	Consider the class of non-(T) von Neumann subalgebras of $\bM$ which contain $\bN$, as usual partially ordered by inclusion. To conclude the proof via the the Kuratowski-Zorn lemma it suffices to show that for any ascending chain $(\bN_i)_{i \in \mathcal{I}}$ in the class, the von Neumann algebra $\bN_{\infty}:=(\cup_{i \in \mathcal{I}}\bN_i)''$ is in the class. As we are working inside a II$_1$-factor we may assume that the index set is countable.
	Note that $\bN_{\infty}$ is a factor and $\bN_i'\cap \bM=\mathbb{C}$ for each $i \in \mathcal{I}$, since $\bN$ is assumed to be irreducible.  
	Suppose that $\bN_{\infty}$ has Property (T). Then \cite[Theorem 4.4.1]{popa_correspondence} implies that $\bN_{\infty}=\bN_i$ for some $i \in \mathcal{I}$, which yields a contradiction. This ends the proof.
\end{proof}

As noted before Proposition \ref{maximalNonT} one cannot expect a general result of this type for Property (T). Having said that, using free products and one can exhibit explicit examples of maximal Property (T) subgroups/subalgebras.

\begin{proposition}
	Let $\bM, \bN$ be type II$_1$ factors. If $\bM$ has Property (T), then $\bM< \bM*\bN$ is a maximal rigid embedding, i.e.\ if $\bP$ is any von Neumann algebra with $\bM< \bP< \bM *\bN$ and $\bP< \bM*\bN$ is a rigid embedding, then $\bP=\bM$. In particular $\bM$ is a maximal (T) von Neumann subalgebra in $\bM*\bN$.
\end{proposition}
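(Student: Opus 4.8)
The plan is to set $\mathsf{Q} = \bM * \bN$ (the tracial free product), which is a II$_1$ factor since $\bM,\bN$ are diffuse, and to reduce the ``in particular'' clause to the rigid-embedding statement. Property (T) of $\bM$ means that $\bM \subseteq \bM$ is rigid, and I would use the bimodule formulation of relative Property (T) from \cite{popa} to see that rigidity passes to any ambient algebra: a $\mathsf{Q}$--$\mathsf{Q}$ bimodule restricts to an $\bM$--$\bM$ bimodule, so an almost $\mathsf{Q}$-central tracial vector is in particular almost $\bM$-central, and Property (T) of $\bM$ forces it to be almost $\bM$-central in norm. Hence $\bM \subseteq \mathsf{Q}$ is rigid, and likewise any intermediate $\bP$ with Property (T) is rigidly embedded. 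It therefore suffices to prove: if $\bM \subseteq \bP \subseteq \mathsf{Q}$ and $\bP \subseteq \mathsf{Q}$ is rigid, then $\bP = \bM$.

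The engine I would use is Popa's malleable deformation adapted to free products (as developed by Ioana, Peterson and Popa). Set $\tilde{\bN} = \bN * L(\bz)$, let $u$ be the canonical Haar unitary generating $L(\bz)$, write $u = \exp(2\pi i a)$ with $a=a^*$, and put $u_t = \exp(2\pi i t a)$. With $\alpha_t^0 = \mathrm{Ad}(u_t) \in \mathrm{Aut}(\tilde{\bN})$ and $\gamma$ the flip $\gamma(u)=u^*$ on $L(\bz)$, I would form on $\tilde{\mathsf{Q}} := \bM * \tilde{\bN}$ the free-product automorphisms $\alpha_t = \mathrm{id}_\bM * \alpha_t^0$ and $\beta = \mathrm{id}_\bM * (\mathrm{id}_\bN * \gamma)$. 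Then $(\alpha_t)$ is an s-malleable deformation of $\mathsf{Q} \subseteq \tilde{\mathsf{Q}}$: it fixes $\bM$ pointwise, $\beta|_{\mathsf{Q}} = \mathrm{id}$, $\beta\alpha_t\beta = \alpha_{-t}$, and $\alpha_t \to \mathrm{id}$ pointwise in $\|\cdot\|_2$. Writing $\phi_t = E_{\mathsf{Q}} \circ \alpha_t|_{\mathsf{Q}}$, these are trace-preserving u.c.p.\ maps converging pointwise to the identity, so rigidity of $\bP$ gives $\sup_{x \in (\bP)_1}\|\phi_t(x)-x\|_2 \to 0$, and Popa's transversality inequality upgrades this to $\sup_{x \in (\bP)_1}\|\alpha_t(x)-x\|_2 \to 0$. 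By the standard analysis of the free malleable deformation, such uniform convergence forces the rigid subalgebra $\bP$ to intertwine into one of the free factors, i.e.\ $\bP \prec_{\mathsf{Q}} \bM$ or $\bP \prec_{\mathsf{Q}} \bN$. I expect this dichotomy to be \textbf{the main obstacle}, since it packages the entire deformation/spectral-gap machinery; in a fully detailed write-up one must also reconcile the fixed-point algebra $\bM * L(\bz)$ of the deformation with the target $\bM$ via the amalgamated-freeness of $\bM*\bN$ and $\bM*L(\bz)$ over $\bM$ inside $\tilde{\mathsf{Q}}$.

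Next I would rule out the second alternative. Since $\bM \subseteq \bP$, restricting the left action shows that $\bP \prec_{\mathsf{Q}} \bN$ would imply $\bM \prec_{\mathsf{Q}} \bN$. But $\bM$ is diffuse and free from $\bN$: choosing a Haar unitary $u \in \bM$, freeness gives $\|E_{\bN}(a u^n b)\|_2 \to 0$ for all $a,b \in \mathsf{Q}$, so by Popa's intertwining criterion $\bM \not\prec_{\mathsf{Q}} \bN$. Hence $\bP \prec_{\mathsf{Q}} \bM$.

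Finally I would run a bimodule endgame to convert intertwining into actual containment. The relation $\bP \prec_{\mathsf{Q}} \bM$ yields a nonzero $\bP$--$\bM$ sub-bimodule $H \subseteq L^2(\mathsf{Q})$ of finite right $\bM$-dimension, which by $\bM \subseteq \bP$ is in particular an $\bM$--$\bM$ bimodule. The free-product structure gives the $\bM$--$\bM$ decomposition of $L^2(\mathsf{Q})$ into $L^2(\bM)$ and its complement $L^2(\mathsf{Q}) \ominus L^2(\bM)$, the latter being a coarse bimodule (a multiple of $L^2(\bM) \otimes \overline{L^2(\bM)}$); since a coarse bimodule over a II$_1$ algebra admits no nonzero sub-bimodule of finite right dimension, I conclude $H \subseteq L^2(\bM)$. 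As $\bM$ is a factor, $L^2(\bM)$ is irreducible as an $\bM$--$\bM$ bimodule, so $H = L^2(\bM)$; left $\bP$-invariance of $H$ together with $\hat 1 \in H$ then gives $\bP \cdot \hat 1 \subseteq L^2(\bM)$, that is $\bP \subseteq L^2(\bM) \cap \mathsf{Q} = \bM$. Combined with $\bM \subseteq \bP$ this yields $\bP = \bM$, and hence $\bM$ is a maximal rigid, and in particular a maximal (T), von Neumann subalgebra of $\bM * \bN$.
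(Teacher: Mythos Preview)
Your argument is essentially correct, and it is in the same spirit as the paper's, since both ultimately rest on the Ioana--Peterson--Popa machinery for free products. However, the paper's proof is considerably shorter because it quotes the output of that machinery as a black box rather than rebuilding it. Specifically, the paper invokes \cite[Theorem~5.1]{ipp} directly: for a diffuse rigid $\bP\subseteq\bM*\bN$ one obtains projections $q_1,q_2\in\bP'\cap(\bM*\bN)$ with $q_1+q_2=1$ and unitaries $u_i\in\bM*\bN$ such that $u_1(\bP q_1)u_1^*\subseteq\bM$ and $u_2(\bP q_2)u_2^*\subseteq\bN$. Since $\bM\subseteq\bP$ forces $\bP'\cap(\bM*\bN)\subseteq\bM'\cap\bM=\bc$, one of the $q_i$ vanishes; the Kurosh-type rigidity \cite[Theorem~1.1]{ipp} then rules out $u_2\bM u_2^*\subseteq\bN$ and shows that any unitary with $u_1\bM u_1^*\subseteq\bM$ already lies in $\bM$, giving $\bP\subseteq u_1^*\bM u_1=\bM$. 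So the paper gets \emph{unitary conjugation}, not merely intertwining, and the endgame is a one-line appeal to \cite[Theorem~1.1]{ipp} rather than your bimodule analysis.

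Two remarks on your write-up. First, with the specific deformation you chose ($\alpha_t$ fixing $\bM$ and deforming only the $\bN$-side), uniform convergence on $(\bP)_1$ already yields $\bP\prec_{\mathsf{Q}}\bM$ directly; the dichotomy ``$\bP\prec\bM$ or $\bP\prec\bN$'' as stated would require a symmetric deformation of both free factors. This does not affect your conclusion but streamlines step~4. Second, your step~6 is correct: the orthogonal complement $L^2(\bM*\bN)\ominus L^2(\bM)$ is indeed a multiple of the coarse $\bM$--$\bM$ bimodule, and a diffuse $\bM$ never embeds into $\bM^{\mathrm{op}}$ inside $\bM\bar\otimes\bM^{\mathrm{op}}$, so no nonzero sub-bimodule of finite right $\bM$-dimension exists there; projecting $H$ along the $\bM$-bimodular decomposition then forces $H\subseteq L^2(\bM)$, and irreducibility of $L^2(\bM)$ finishes as you wrote. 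What your longer route buys is a self-contained argument not relying on the precise statement of \cite[Theorem~5.1]{ipp}; what the paper's route buys is brevity and a stronger intermediate conclusion (actual unitary conjugation) that makes the endgame immediate.
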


\begin{proof}
	It suffices to prove the first part since if a von Neumann subalgebra $\bP<\bM$ has Property (T), then $\bP< \bM*\bN$ is a rigid embedding.
	
	If $\bM<\bP$ and $\bP< \bM*\bN$ is a rigid embedding, then $\bP$ is diffuse since $\bM$ is diffuse. By \cite[Theorem 5.1]{ipp} (taking $\mathsf{B}=\mathbb{C}=\bM_0$ there), there exists a unique pair of projections $q_1, q_2\in \bP'\cap (\bM*\bN)$ such that $q_1+q_2=1$,  $u_1(\bP q_1)u_1^*\subseteq \bM$ and $u_2(\bP q_2)u_2^*\subseteq \bN$ for some unitary elements $u_1, u_2\in\bM*\bN$.
	Since $\bP'\cap (\bM*\bN)< \bM'\cap (\bM*\bN)=\bM'\cap \bM=\mathbb{C}$, either $(q_1, q_2)=(0, 1)$ or $(q_1, q_2)=(1, 0)$.
	If $(q_1, q_2)=(0,1)$, then $u_2 \bP u_2^*< \bN$ and hence $u_2\bM u_2^*< \bN$. Then by \cite[Theorem 1.1]{ipp} it follows that $u=0$, which is  a contradiction. 
	Thus $(q_1, q_2)=(1, 0)$ and $u_1\bM u_1^*< u_1\bP u_1^*< \bM$. Again, by \cite[Theorem 1.1]{ipp}  it follows that $u_1^*\in L^2(\bM)$, hence $u_1^*\in \bM$. Now, $\bM< \bP< u_1^*\bM u_1=\bM$, i.e.\ $\bM=\bP$.
\end{proof}
	
\begin{cor}
Suppose that $G,H$ are ICC groups and $G$ has Property (T). Then $G$ is a maximal Property (T) subgroup of $G*H$.	
\end{cor}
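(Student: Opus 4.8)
The plan is to lift the assertion to the level of group von Neumann algebras and apply the preceding proposition. First I would note that, since $G$ and $H$ are ICC, the algebras $L(G)$ and $L(H)$ are II$_1$ factors, and that there is a canonical trace-preserving identification $L(G*H)\cong L(G)*L(H)$. Moreover, as $G$ has Property (T) and is ICC, the factor $L(G)$ has Property (T) in the von Neumann algebraic sense, by the Connes--Jones characterisation. Hence the preceding proposition is available with $\bM=L(G)$ and $\bN=L(H)$. (Note that $G*H$ itself, being a nontrivial free product of infinite groups, acts on its Bass--Serre tree without a fixed point and so does not have Property (T); thus maximality will mean here that $G$ is not properly contained in any (T) subgroup of $G*H$.)

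Now let $K$ be a subgroup with $G\le K\le G*H$ such that $K$ has Property (T); the goal is to prove $K=G$. The crucial point is that the inclusion $L(K)\subseteq L(G*H)$ is a rigid embedding. Indeed, the absolute Property (T) of $K$ forces the pair $(G*H,K)$ to have relative Property (T): any unitary representation of $G*H$ with almost invariant vectors restricts on $K$ to a representation with almost invariant, hence truly invariant, vectors. Since the canonical trace-preserving conditional expectation $L(G*H)\to L(K)$ exists, the group-theoretic relative Property (T) translates into rigidity of $L(K)\subseteq L(G*H)$ in the sense of Popa (\cite{popa}).

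I would then feed this into the preceding proposition. We have $L(G)\le L(K)\le L(G)*L(H)$ with $L(K)\subseteq L(G)*L(H)$ a rigid embedding, so the proposition yields $L(K)=L(G)$. Finally, comparing Fourier expansions finishes the argument: the canonical unitaries $\{u_x:x\in G*H\}$ form an orthonormal basis of $L^2(L(G*H))$, and $L(G)$ is the $\|\cdot\|_2$-closed span of $\{u_g:g\in G\}$. Thus $L(K)=L(G)$ gives $u_k\in L(G)$ for every $k\in K$, which forces $k\in G$; hence $K=G$, as required.

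The step I expect to be the main obstacle is the translation in the second paragraph, namely deducing rigidity of $L(K)\subseteq L(G*H)$ from Property (T) of the group $K$ without assuming that $K$ is a factor. A clean way to sidestep this is to apply the Kurosh subgroup theorem: a subgroup strictly containing $G$ must decompose as $G*L$ with $L$ nontrivial, and is therefore ICC; then $L(K)$ is itself a II$_1$ factor with Property (T) by Connes--Jones, and the ``maximal (T) subalgebra'' clause of the preceding proposition applies verbatim to give $L(K)=L(G)$.
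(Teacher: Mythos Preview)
Your proof is correct and is exactly the unpacking of the paper's one-line ``Immediate from the last proposition.'' The route via relative Property~(T) of the pair $(G*H,K)$ and Popa's translation to rigidity of $L(K)\subset L(G*H)$ is the intended one, and it does not require $L(K)$ to be a factor: Popa's result that relative Property~(T) of a group pair implies rigidity of the corresponding inclusion of group von Neumann algebras holds for arbitrary subgroups (the trace-preserving conditional expectation always exists), so your worry in the final paragraph is unfounded. The Kurosh/Bass--Serre alternative you sketch is a pleasant detour---it does give that any $K$ strictly containing $G$ splits as $G*L$ and is therefore ICC---but it is not needed, and the direct argument is both shorter and closer to what the paper has in mind.
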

	\begin{proof}
Immediate from the last proposition.		
\end{proof}

We finish the section by exhibiting a concrete example of a maximal non-(T) von Neumann subalgebra inside the II$_1$ factor with Property (T), based on recent results of Kaluba, Nowak and Ozawa \cite{kno} and Kaluba, Kielak and Nowak \cite{kkn}, together with Proposition \ref{prop: max non-(T) in sl3z}. In the first version of our paper we asked for such explicit examples: and  Chifan, Das and Khan showed later in \cite{chifan_das_khan}, independently of our construction below, that they can be obtained with the help of the group-theoretic Rips construction.

Once again we begin by some group-theoretic observations.

\begin{lem}\label{lem: Inn<Aut is relative ICC} Let $n \in \bn$, $n \geq 2$.
Then 	$Inn(\mathbb{F}_n)$ is relative ICC in $Aut(\mathbb{F}_n)$, i.e.\ $\#\{s\phi s^{-1}: s\in Inn(\mathbb{F}_n)\}=\infty$ for all $id\neq \phi\in Aut(\mathbb{F}_n)$.
\end{lem}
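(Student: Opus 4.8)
The plan is to translate the relative ICC condition into a statement about the fixed subgroup $\Fix(\phi) = \{x \in \mathbb{F}_n : \phi(x) = x\}$. First I would carry out the basic computation of the conjugation action. Writing $\mathrm{ad}_g$ for the inner automorphism $x \mapsto gxg^{-1}$, a direct calculation gives
\[ \mathrm{ad}_g \circ \phi \circ \mathrm{ad}_g^{-1} = \mathrm{ad}_{g\phi(g)^{-1}} \circ \phi, \qquad g \in \mathbb{F}_n. \]
Since $n \geq 2$, the group $\mathbb{F}_n$ has trivial centre, so $\mathrm{ad}_{w_1} = \mathrm{ad}_{w_2}$ forces $w_1 = w_2$; hence two such conjugates coincide exactly when the corresponding elements $g\phi(g)^{-1}$ agree. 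Therefore the cardinality of $\{s\phi s^{-1} : s \in Inn(\mathbb{F}_n)\}$ equals $|\{g\phi(g)^{-1} : g \in \mathbb{F}_n\}|$. Writing $f(g) = g\phi(g)^{-1}$, one checks that $f(g_1) = f(g_2)$ if and only if $g_2^{-1}g_1 = \phi(g_2^{-1}g_1)$, i.e.\ $g_2^{-1}g_1 \in \Fix(\phi)$, so the fibres of $f$ are precisely the left cosets of $\Fix(\phi)$. Consequently the number of distinct conjugates equals the index $[\mathbb{F}_n : \Fix(\phi)]$.

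It thus remains to prove that if $\phi \neq id$ then $\Fix(\phi)$ has infinite index, equivalently (by contraposition) that a finite-index fixed subgroup forces $\phi = id$. Suppose $H := \Fix(\phi)$ has finite index $m$ in $\mathbb{F}_n$. For any $x \in \mathbb{F}_n$, among the cosets $x^0 H, x^1 H, \ldots, x^m H$ two must coincide, so some positive power $x^k$ lies in $H = \Fix(\phi)$. Applying $\phi$ gives $\phi(x)^k = \phi(x^k) = x^k$. Now I would invoke the standard fact that free groups admit unique roots: if $a^k = b^k$ with $k \geq 1$ in a free group then $a = b$ (this follows since the centraliser of any nontrivial element is infinite cyclic and free groups are torsion-free). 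Hence $\phi(x) = x$ for every $x$, i.e.\ $\phi = id$, completing the argument.

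The computation in the first paragraph is routine, and the coset description of the fibres of $f$ is the conceptual heart of the reduction; I expect the only point needing care is the step $\phi(x)^k = x^k \Rightarrow \phi(x) = x$, where the unique-root property of free groups is essential. It is worth emphasising that this argument is entirely elementary and avoids any appeal to deep results on fixed subgroups of free-group automorphisms (such as the bound on the rank of $\Fix(\phi)$); the hypothesis $n \geq 2$ enters only through the triviality of the centre of $\mathbb{F}_n$, used to identify $Inn(\mathbb{F}_n)$ with $\mathbb{F}_n$ and to count conjugates faithfully.
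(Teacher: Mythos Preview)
Your proof is correct and follows essentially the same route as the paper: reduce the count of $Inn(\mathbb{F}_n)$-conjugates of $\phi$ to the index $[\mathbb{F}_n:\Fix(\phi)]$, then show that a finite-index fixed subgroup forces $\phi=id$ via the unique-root property of free groups. Your packaging is in fact a little tidier --- you identify the conjugates with the coset space directly through the map $g\mapsto g\phi(g)^{-1}$, and you correctly let the exponent $k$ depend on $x$, whereas the paper writes $\phi(g)^k=g^k$ with $k$ the fixed index (which strictly speaking requires $\Fix(\phi)$ to be normal, or replacing $k$ by $k!$).
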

\begin{proof}
	Take any nontrivial $\phi\in Aut(\mathbb{F}_n)$. Write $\textup{Fix}(\phi)=\{g\in \mathbb{F}_n: \phi(g)=g\}$. 
	
	We claim first that $[\mathbb{F}_n: \Fix(\phi)]=\infty$. Indeed, suppose this is not the case and  say $[\mathbb{F}_n: \Fix(\phi)]=k\in \bn$. Then for any nontrivial element $g\in \mathbb{F}_n$, $\phi(g)^k=g^k$. Fix such an element and note that the subgroup $\langle \phi(g), g\rangle$ is a free group, isomorphic either to $\mathbb{Z}$ or to $\mathbb{F}_2$ (since the minimal number of generators for the free group $\mathbb{F}_m$ is $m$). Clearly, $\langle \phi(g), g \rangle\not\cong \mathbb{F}_2$; otherwise, $\phi(g)$ and $g$ would have to be free, which contradicts the above relation. Therefore $\langle \phi(g), g \rangle\cong \mathbb{Z}=\langle s \rangle$, so that $\phi(g)=s^a, g=s^b$ for some nonzero $a, b\in \bz$. Then $s^{ak-bk}=e$, so $a=b$, i.e.\ $\phi(g)=g$. As $g$ was arbitrary,  $\phi$ must be trivial and we have reached a contradiction.

	We can thus find  an infinite sequence of elements $g_i\in \mathbb{F}_n$ such that $g_i\Fix(\phi)\cap g_j\Fix(\phi)=\emptyset$ for all $i, j \in \bn, i\neq j$.
To finish the proof	it suffices to check that $Ad(g_i)\circ \phi \circ Ad(g_i^{-1})\neq Ad(g_j)\circ \phi \circ Ad(g_j^{-1})$ for all $i,j\in \bn, i \neq j$.
	
	One can check that if $x \in G$ then  $Ad(g_i)\circ \phi\circ  Ad(g_i^{-1})(x)=(Ad(g_j)\circ \phi \circ Ad(g_j^{-1})(x)$ if and only if $\phi(g_j)g_j^{-1}g_i\phi(g_i^{-1})$ commutes with $\phi(x)$. Hence, as $\phi$ is an automorphism, if $Ad(g_i)\circ \phi \circ Ad(g_i^{-1})=Ad(g_j)\circ \phi \circ Ad(g_j^{-1})$, then 
	$$\phi(g_j)g_j^{-1}g_i\phi(g_i^{-1})=e, $$ i.e.\ $g_j^{-1}g_i\in \Fix(\phi)$, This contradicts the conditions imposed earlier on the elements $g_i$.
\end{proof}

Let $n \in \bn$.
 The group $Out(\mathbb{F}_n)$ acts  naturally on $\mathbb{F}_n/[\mathbb{F}_n, \mathbb{F}_n]\cong \mathbb{Z}^n$, which induces a group homomorphism $\phi: Out(\mathbb{F}_n)\to GL_n(\mathbb{Z}):=G$. 
Moreover, we have a short exact sequence $0\to Inn(\mathbb{F}_n)\cong \mathbb{F}_n\to Aut(\mathbb{F}_n)\overset{\pi}{\to} Out(\mathbb{F}_n)\to 0$.

\begin{lem}\label{lem:maxnonT}  Let $n \in \bn, n \geq 5$, and let $H_0:=\phi^{-1}(\widetilde{H})<Out(\mathbb{F}_n)$,
	where \[\widetilde{H}:=\left\{\begin{pmatrix}
	A &B\\
	0& C
	\end{pmatrix}: A\in GL_2(\mathbb{Z}), C\in GL_{n-2}(\mathbb{Z}), B\in M_{2, n-2}(\mathbb{Z})\right\}.\] Then $H_0$ is a maximal non-(T) subgroup in $Out(\mathbb{F}_n)$ with the extra property that $[Out(\mathbb{F}_n): \langle H_0, g \rangle]<\infty$ for all $g\in Out(\mathbb{F}_n)\setminus H_0$. Further $\pi^{-1}(H_0)$ is a maximal non-(T) subgroup of $Aut(\mathbb{F}_n)$.
\end{lem}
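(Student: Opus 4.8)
The plan is to transport the conclusion of Proposition~\ref{prop: max non-(T) in sl3z} about $\widetilde{H}<GL_n(\mathbb{Z})$ upward through the two surjections $\phi:Out(\mathbb{F}_n)\twoheadrightarrow GL_n(\mathbb{Z})$ and $\pi:Aut(\mathbb{F}_n)\twoheadrightarrow Out(\mathbb{F}_n)$, using the correspondence between subgroups containing a kernel and subgroups of the quotient. The three external inputs I would invoke are: (a) the homomorphism $\phi$ induced on the abelianization is surjective, a classical fact going back to Nielsen; (b) for $n\geq 5$ both $Out(\mathbb{F}_n)$ and $Aut(\mathbb{F}_n)$ have Property~(T), by \cite{kno} and \cite{kkn}; and (c) Proposition~\ref{prop: max non-(T) in sl3z}, together with the finite-index conclusion established inside its proof, which yields that $\widetilde{H}$ is non-(T) and that $[GL_n(\mathbb{Z}):\langle \widetilde{H},h\rangle]<\infty$ for every $h\in GL_n(\mathbb{Z})\setminus \widetilde{H}$.

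For the $Out(\mathbb{F}_n)$ assertion, first note that $H_0=\phi^{-1}(\widetilde{H})$ surjects onto $\widetilde{H}$, which is non-(T); since Property~(T) passes to quotients, $H_0$ is non-(T) as well. Now fix $g\in Out(\mathbb{F}_n)\setminus H_0$, so that $\phi(g)\notin \widetilde{H}$. The subgroup $\langle H_0,g\rangle$ contains $\ker\phi\subseteq H_0$, hence equals $\phi^{-1}(\langle\widetilde{H},\phi(g)\rangle)$, and the correspondence theorem gives $[Out(\mathbb{F}_n):\langle H_0,g\rangle]=[GL_n(\mathbb{Z}):\langle\widetilde{H},\phi(g)\rangle]$, which is finite by (c). This is exactly the claimed extra property; moreover a finite-index subgroup of the Property~(T) group $Out(\mathbb{F}_n)$ again has Property~(T), so $\langle H_0,g\rangle$ is (T). As $g$ was arbitrary outside $H_0$, the group $H_0$ is a maximal non-(T) subgroup.

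The $Aut(\mathbb{F}_n)$ statement follows by repeating the argument one level down along $\pi$, whose kernel $Inn(\mathbb{F}_n)$ is contained in $\pi^{-1}(H_0)$. Concretely, $\pi^{-1}(H_0)$ surjects onto $H_0$ and is therefore non-(T); and for $g\in Aut(\mathbb{F}_n)\setminus \pi^{-1}(H_0)$ one has $\langle \pi^{-1}(H_0),g\rangle=\pi^{-1}(\langle H_0,\pi(g)\rangle)$, which has finite index in $Aut(\mathbb{F}_n)$ by the previous paragraph and the index-preservation of $\pi$, hence has Property~(T). I do not expect a genuine obstacle in this lemma itself: all the depth sits in the cited inputs (the Property~(T) of the automorphism groups, and the finite-index property coming from Meiri's and Venkataramana's theorems via Proposition~\ref{prop: max non-(T) in sl3z}). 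The one point requiring care is the bookkeeping of the correspondence theorem---namely that for a surjection whose kernel lies inside the subgroup under consideration, passing to preimages preserves the index, and so transfers non-(T)-ness downward and finiteness of index upward.
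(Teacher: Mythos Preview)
Your proposal is correct and follows essentially the same route as the paper: both arguments pull the finite-index conclusion of Proposition~\ref{prop: max non-(T) in sl3z} back along the surjections $\phi$ and $\pi$ via the correspondence theorem (using $\ker\phi\subseteq H_0$ and $\ker\pi\subseteq\pi^{-1}(H_0)$), then invoke Property~(T) of $Out(\mathbb{F}_n)$ and $Aut(\mathbb{F}_n)$ for $n\geq 5$ from \cite{kno,kkn} to conclude that every strictly larger subgroup has~(T). The only cosmetic difference is that you state the index equality $[Out(\mathbb{F}_n):\langle H_0,g\rangle]=[GL_n(\mathbb{Z}):\langle\widetilde{H},\phi(g)\rangle]$ directly (using surjectivity of $\phi$), whereas the paper writes the slightly weaker chain of inequalities that already suffices.
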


\begin{proof}
	It follows from Proposition \ref{prop: max non-(T) in sl3z} and its proof that $H:=\widetilde{H}\cap SL_n(\bz)$  is a maximal non-(T) subgroup of $SL_n(\bz)$ with the extra property that $[SL_n(\bz): \langle H, g \rangle]<\infty$ for all $g\in SL_n(\bz)\setminus H$. Clearly, this also implies $[GL_n(\bz): \langle \widetilde{H}, g\rangle]<\infty$ for all $g\in GL_n(\bz)\setminus \widetilde{H}$.  
	
	As Property (T) passes to quotients, we deduce that $H_0$ is non-(T).
	
	Let then $H_0\lneq K\lneq Out(\mathbb{F}_n)$ be any group. Then it is easy to check that $\widetilde{H}\lneq \phi(K)\lneq G$ as $ker(\phi)\leq H_0$.   We know that $[G: \phi(K)]<\infty$; therefore, $[Out(\mathbb{F}_n): K]\leq [\phi(Out(\mathbb{F}_n)): \phi(K)]\leq [G: \phi(K)]<\infty$ (as $ker(\phi)\leq K$ implies that $\phi^{-1}(\phi(K))=K$). Thus $K$ has Property (T) as $Out(\mathbb{F}_n)$ has Property (T) for all $n\geq 5$ by \cite{kkn, kno}.

The second statement follows in a very similar way.
\end{proof}

\begin{proposition}\label{prop:maxnonTvNa}
Let $n \in \bn, n \geq 5$ and let $\gamma: Aut(\mathbb{F}_n) \to GL_n(\bz)$ be the homomorphism obtained by the composition of $\pi: Aut(\mathbb{F}_n) \to Out(\mathbb{F}_n)$ and $\phi:Out(\mathbb{F}_n) \to GL_n(\bz)$ (see the discussion before Lemma \ref{lem:maxnonT}). Further let $\widetilde{H}<GL_n(\bz)$ be the subgroup defined in Lemma \ref{lem:maxnonT}. Then
$L(\gamma^{-1}(\widetilde{H}))$ is  a maximal non-(T) von Neumann subalgebra inside  $L(Aut(\mathbb{F}_n))$ (where the latter is a II$_1$ factor with Property (T)). 
\end{proposition}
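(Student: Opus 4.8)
The plan is to transfer the group-theoretic statement of Lemma \ref{lem:maxnonT} to the von Neumann algebra level, using the relative ICC property from Lemma \ref{lem: Inn<Aut is relative ICC} together with the characterisation of intermediate subalgebras provided by Theorem \ref{thm:Amrutamgeneralised}. First I would record that $L(Aut(\mathbb{F}_n))$ is a II$_1$ factor with Property (T): since $Aut(\mathbb{F}_n)$ is ICC (which follows from Lemma \ref{lem: Inn<Aut is relative ICC}, as relative ICC of a subgroup forces the ambient group to be ICC) and, by \cite{kkn, kno}, $Out(\mathbb{F}_n)$ has Property (T) for $n \geq 5$, hence so does the extension $Aut(\mathbb{F}_n)$ and thus $L(Aut(\mathbb{F}_n))$. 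Writing $\bN = L(\gamma^{-1}(\widetilde{H}))$, the algebra $\bN$ is non-(T) because $\gamma^{-1}(\widetilde{H})$ is a non-(T) group (as $\pi^{-1}(H_0) = \gamma^{-1}(\widetilde{H})$ is non-(T) by Lemma \ref{lem:maxnonT}) and Choda's theorem identifies the group and von Neumann algebraic properties.

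The heart of the argument is to show that every intermediate subalgebra $\bN < \bP < L(Aut(\mathbb{F}_n))$ must itself come from an intermediate subgroup. Here I would set up the standard crossed-product picture: $Inn(\mathbb{F}_n) \cong \mathbb{F}_n$ is normal in $Aut(\mathbb{F}_n)$ with quotient $Out(\mathbb{F}_n)$, so that $L(Aut(\mathbb{F}_n)) = L(\mathbb{F}_n) \rtimes Out(\mathbb{F}_n)$, where the crossed product is with respect to the (outer) action of $Out(\mathbb{F}_n)$ on $L(\mathbb{F}_n)$ induced by $Aut(\mathbb{F}_n) \curvearrowright \mathbb{F}_n$. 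However, to apply Theorem \ref{thm:Amrutamgeneralised} I need an abelian base algebra acted on by a group containing a strong relative ICC subgroup acting trivially, so instead I would work with the subgroup $H_0 = \phi^{-1}(\widetilde{H}) < Out(\mathbb{F}_n)$ and exploit the relative ICC property of $Inn(\mathbb{F}_n)$ inside $Aut(\mathbb{F}_n)$ directly on the Fourier expansion. Concretely, given any $\bP$ with $\bN < \bP < L(Aut(\mathbb{F}_n))$, I would take the trace-preserving conditional expectation $E$ onto $\bP$ and analyse $E$ on the group unitaries $u_\psi$, $\psi \in Aut(\mathbb{F}_n)$, using that $u_\phi E(x) u_\phi^{-1} = E(x)$ for $\phi \in Inn(\mathbb{F}_n)$ (since such $\phi$ lie in $\gamma^{-1}(\widetilde{H}) \subseteq \bN \subseteq \bP$, being in $\ker \gamma \subseteq \pi^{-1}(H_0)$), together with the relative ICC property, to conclude that the Fourier coefficients of any element of $\bP$ are supported on a subgroup $K$ with $\gamma^{-1}(\widetilde{H}) < K < Aut(\mathbb{F}_n)$, whence $\bP = L(K)$.

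Once the intermediate subalgebras are identified with $L(K)$ for intermediate subgroups $K$, I would invoke the maximality part of Lemma \ref{lem:maxnonT}: since $\gamma^{-1}(\widetilde{H}) = \pi^{-1}(H_0)$ is a maximal non-(T) subgroup of $Aut(\mathbb{F}_n)$, any group $K$ strictly containing it has Property (T), so $L(K)$ has Property (T) and hence is not non-(T); and for $K = Aut(\mathbb{F}_n)$ we recover the whole algebra. Combined with the fact that $\bN$ itself is non-(T), this yields exactly that $\bN$ is a maximal non-(T) von Neumann subalgebra. The main obstacle I anticipate is the intermediate-subalgebra step: one must ensure the hypotheses of a result of the type of Theorem \ref{thm:Amrutamgeneralised} genuinely apply in this crossed-product-by-$Out(\mathbb{F}_n)$ setting rather than a commutative one, and in particular verify carefully that the relative ICC property of $Inn(\mathbb{F}_n)$ in $Aut(\mathbb{F}_n)$ forces vanishing of off-diagonal Fourier coefficients for every $\psi$ whose conjugacy class under $Inn(\mathbb{F}_n)$ is infinite; the subtlety is that one needs $\gamma^{-1}(\widetilde{H})$, not merely $Inn(\mathbb{F}_n)$, to detect the support, so the argument must combine the relative ICC vanishing with the subgroup structure to pin down the support group $K$.
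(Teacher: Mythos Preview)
Your overall architecture matches the paper's: use Lemma \ref{lem: Inn<Aut is relative ICC} to get relative ICC of $Inn(\mathbb{F}_n)$ in $Aut(\mathbb{F}_n)$, deduce that every intermediate von Neumann algebra between $L(\gamma^{-1}(\widetilde{H}))$ and $L(Aut(\mathbb{F}_n))$ is a group algebra $L(K)$, and then finish with Lemma \ref{lem:maxnonT}. The parenthetical facts you record about $L(Aut(\mathbb{F}_n))$ being a Property (T) II$_1$ factor and $L(\gamma^{-1}(\widetilde{H}))$ being non-(T) are also exactly what is needed.

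The gap is precisely where you flag it: your intermediate-subalgebra step is not carried out. You correctly observe that Theorem \ref{thm:Amrutamgeneralised} does not apply because the base is $L(\mathbb{F}_n)$ rather than an abelian $L^\infty(X)$, but the replacement you propose --- a direct Fourier argument showing that coefficients of elements of $\bP$ are supported on a subgroup --- is only a hope, not an argument. In particular, the identity $u_\phi E(x)u_\phi^{-1}=E(x)$ you invoke is false in general (conjugation by $u_\phi$ moves $u_\psi$ to $u_{\phi\psi\phi^{-1}}$, not to itself), so the relative ICC property does not by itself force vanishing of off-diagonal coefficients in the way your sketch suggests. Something more is needed to pass from ``$Inn(\mathbb{F}_n)\subset \bP$ and relative ICC'' to ``$\bP=L(K)$''.

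The paper closes this gap by invoking an external result: since $Inn(\mathbb{F}_n)$ is normal in $Aut(\mathbb{F}_n)$ and relative ICC gives $L(Inn(\mathbb{F}_n))'\cap L(Aut(\mathbb{F}_n))=\mathbb{C}$ (so every intermediate algebra is a subfactor), \cite[Corollary 3.8(2)]{chifan_das} applies directly to conclude that every intermediate subfactor is of the form $L(K)$. An equivalent route, using tools already in the paper, is to observe that normality gives $L(Aut(\mathbb{F}_n))\cong L(\mathbb{F}_n)\rtimes Out(\mathbb{F}_n)$, that the relative ICC condition forces this action on the factor $L(\mathbb{F}_n)$ to be (properly) outer, and then apply Choda's Galois correspondence \cite[Corollary 4]{choda} exactly as in the proof of Theorem \ref{thm:Galois}. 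Either way, the point is that you need normality of $Inn(\mathbb{F}_n)$ together with a genuine structural theorem on intermediate subalgebras of outer crossed products, not just a bare Fourier computation.
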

\begin{proof}
	By  Lemma \ref{lem: Inn<Aut is relative ICC}  we know that $Inn(\mathbb{F}_n)$ is relative ICC in $\Gamma=Aut(\mathbb{F}_n)$, so that $L(Inn(\mathbb{F}_n))'\cap L(Aut(\mathbb{F}_n))=\mathbb{C}1$ and every intermediate von Neumann subalgebra between $L(Inn(\mathbb{F}_n))$ and $L(Aut(\mathbb{F}_n))$ is a subfactor. As $Inn(\mathbb{F}_n)$ is also normal in $Aut(\mathbb{F}_n)$, \cite[Corollary 3.8(2)]{chifan_das} implies that every intermediate subfactor between $L(Inn(\mathbb{F}_n))$ and $L(Aut(\mathbb{F}_n))$ is of the form $L(K)$ for some intermediate subgroup $Inn(\mathbb{F}_n)\leq K\leq Aut(\mathbb{F}_n)$. Lemma \ref{lem:maxnonT} ends the proof. 
\end{proof}


\section{Open problems}

We finish the article by listing certain open problems regarding the maximal Haagerup subgroups and subalgebras, accompanied by brief comments on what we know about them so far.

\begin{prob}
Find an example of a maximal Haagerup subgroup $H<G$ such that $L(H)$ is not a maximal Haagerup subalgebra inside $L(G)$.	
\end{prob}

For amenability the relevant examples were produced for example  in \cite{remi_carderi2}. We believe that a suitable candidate is given by the pair of groups considered in Proposition \ref{prop:wreathabelian}, although it is not clear whether one can find a Bernoulli factor of the corresponding algebraic action.

In the first version of this paper we asked whether one can find a  non-Haagerup group $G$ such that $\bz^n$ $(n\geq 1)$ is a maximal Haagerup subgroup of $G$ or show that no such example exists. Note that our results include examples of amenable and maximal Haagerup subgroups inside non-Haagerup groups. On the other hand,  observe that if $\bz$ can be realised as a maximal Haagerup subgroup inside a non-Haagerup group $G$, then $G$ does not have property $P_{\mbox{na\"i}}$ as introduced in \cite{bekch} and it admits an infinite cyclic or trivial Haagerup (hence also amenable) radical. As was pointed to us by Dan Ursu, the examples where $\bz$ is a maximal Haagerup group in a Property (T) group are provided by `torsion-free Tarski monsters', as constructed in \cite{Olshanski}. Indeed, Corollary 1 of that paper shows that any non-cyclic torsion free hyperbolic group (of which there are Property (T) examples) admits a non-abelian torsion free quotient whose all non-trivial subgroups are cyclic.

\begin{prob} \label{prob:semidirect}
Prove that $L(SL_2(\bz))$ is a maximal Haagerup subalgebra of $L(\bz^2 \rtimes SL_2(\bz))$ and more generally $L(\widehat{A}G\rtimes G)$ is a maximal Haagerup subalgebra inside  $L((\widehat{A}G\oplus \mathbb{Z}^2)\rtimes G)$, where $G=SL_2(\mathbb{Z})$ and $A$ is any finite abelian group.	
\end{prob}

To attack the second problem mentioned above, following the proof of Theorem \ref{thm: general version for 2nd example}, one may need to describe all intermediate factors $G\curvearrowright X$ such that there exist $G$-equivariant p.m.p.\ maps $\alpha$, $\beta:\mathbb{T}^2\times A^G\overset{\alpha}{\to}X\overset{\beta}{\to}A^G$ with $\beta\circ \alpha=proj_{A^G}$.

\begin{prob}
Determine all maximal Haagerup subgroups inside $SL_3(\bz)$. 
\end{prob}

This is naturally related to Proposition \ref{prop:maxHAPSL3Z}, Proposition \ref{prop: max non-(T) in sl3z} and Proposition \ref{prop:maxnonTvNa}.

\begin{prob}
Find an explicit example of a maximal non-(T) subalgebra in 
$L(SL_{n\geq 3}(\bz))$.
\end{prob}

Here of course one could  also ask specifically about the subgroups from Propositions \ref{prop:maxnonT} and \ref{prop: max non-(T) in sl3z}.

\begin{prob}
	Find explicit and natural examples of maximal Haagerup von Neumann subalgebras of type III.
\end{prob}

Here the situation seems to be completely open, in a sense that no natural obstructions to the Haagerup property seem to be known beyond the context of finite von Neumann algebras.

\subsection*{Acknowledgements} We acknowledge several useful discussions on the subject of this paper with R\'emi Boutonnet, Chen Meiri, Yuhei Suzuki,  Dan Ursu, Chenxu Wen, and especially with Stefaan Vaes. We also thank Ionut Chifan, Sayan Das, Ami Viselter and Mateusz Wasilewski for their comments. 
The authors were partially supported by the National Science Center (NCN) grant no.~2014/14/E/ST1/00525.

\subsection*{Note added in proof:} The first part of Problem \ref{prob:semidirect} has now been solved in \cite{Jiang}. 
\begin{bibdiv}
\begin{biblist}

\bib{Amr}{article}{
author={Amrutam, T.},
title={On intermediate C$^*$-subalgebras of C$^*$-simple group actions, with an appendix by T.~Amrutam and Y.~Jiang},
journal={Int.\,Math.\,Res.\,Not., to appear},
status={arXiv: 1811.11381},
}

\bib{bf}{article}{
	author={Bannon, J. P.},
	author={Fang, J.},
	title={Some remarks on Haagerup's approximation property},
	journal={J. Operator Theory},
	volume={65},
	date={2011},
	number={2},
	pages={403--417},}

\bib{bekch}{article}{
   author={Bekka, B.},
   author={Cowling, M.},
   author={de la Harpe, P.},
   title={Some groups whose reduced $C^*$-algebra is simple},
   journal={Inst. Hautes \'{E}tudes Sci. Publ. Math.},
   number={80},
   issue={1},
   date={1994},
   pages={117--134},}

\bib{bdv}{book}{
   author={Bekka, B.},
   author={de la Harpe, P.},
   author={Valette, A.},
   title={Kazhdan's property (T)},
   series={New Mathematical Monographs},
   volume={11},
   publisher={Cambridge University Press, Cambridge},
   date={2008},
   pages={xiv+472},}

\bib{bb}{article} {
	AUTHOR = {Boutonnet, R.},
	author = {Brothier, A.},
	TITLE = {Crossed-products by locally compact groups: intermediate
		subfactors},
	JOURNAL = {J. Operator Theory},
	FJOURNAL = {Journal of Operator Theory},
	VOLUME = {79},
	YEAR = {2018},
	NUMBER = {1},
	PAGES = {101--137},
	ISSN = {0379-4024},
	MRCLASS = {46L10 (46L37)},
	MRNUMBER = {3764145},
}

\bib{remi_carderi2}{article}{
   author={Boutonnet, R.},
   author={Carderi, A.},
   title={Maximal amenable von Neumann subalgebras arising from maximal
   amenable subgroups},
   journal={Geom. Funct. Anal.},
   volume={25},
   date={2015},
   number={6},
   pages={1688--1705},
   }
\bib{remi_carderi1}{article}{
   author={Boutonnet, R.},
   author={Carderi, A.},
   title={Maximal amenable subalgebras of von Neumann algebras associated
   with hyperbolic groups},
   journal={Math. Ann.},
   volume={367},
   date={2017},
   number={3-4},
   pages={1199--1216},}

\bibitem{Brannan} M.\,Brannan, Approximation properties for free orthogonal and free unitary quantum groups, \emph{J.\,Reine Angew.\,Math.} \textbf{672} (2012), 223--251.

\bib{bkko}{article} {
	AUTHOR = {Breuillard, E.},
	author = {Kalantar, M.},
	author = {Kennedy,
		M.},
	author = { Ozawa, N.},
	TITLE = {{$C^*$}-simplicity and the unique trace property for discrete
		groups},
	JOURNAL = {Publ. Math. Inst. Hautes \'{E}tudes Sci.},
	FJOURNAL = {Publications Math\'{e}matiques. Institut de Hautes \'{E}tudes
		Scientifiques},
	VOLUME = {126},
	YEAR = {2017},
	PAGES = {35--71},
	ISSN = {0073-8301},
	MRCLASS = {46L10 (20C07 37A55 46L89)},
	MRNUMBER = {3735864},
	MRREVIEWER = {Anton Deitmar},
}

\bib{bur}{article}{
	author={Burger, M.},
	title={Kazhdan constants for ${\rm SL}(3,{\bf Z})$},
	journal={J. Reine Angew. Math.},
	volume={413},
	date={1991},
	pages={36--67},}

\bib{burns}{article}{
	author={Burns, R. G.},
	title={On finitely generated subgroups of free products},
	journal={J. Austral. Math. Soc.},
	volume={12},
	date={1971},
	pages={358--364},}
   
\bib{cs_2}{article}{
	author={Cameron, J.},
	author={Smith, R.},
	title={Intermediate subalgebras and bimodules for general crossed
		products of von Neumann algebras},
	journal={Internat. J. Math.},
	volume={27},
	date={2016},
	number={11},
	pages={1650091-1--1650091-28},}

 \bib{CR06}{article}{
 	author={Caprace, P.},
 	author={R\'{e}my, B.},
 	title={Simplicit\'{e} abstraite des groupes de Kac-Moody non affines},
 	language={French, with English and French summaries},
 	journal={C. R. Math. Acad. Sci. Paris},
 	volume={342},
 	date={2006},
 	number={8},
 	pages={539--544},}

\bib{cost}{article}{
   author={Caspers, M.},
   author={Okayasu, R.},
   author={Skalski, A.},
   author={Tomatsu, R.},
   title={Generalisations of the Haagerup approximation property to
   arbitrary von Neumann algebras},
   language={English, with English and French summaries},
   journal={C. R. Math. Acad. Sci. Paris},
   volume={352},
   date={2014},
   number={6},
   pages={507--510},}

\bib{cs}{article}{
	author={Caspers, M.},
	author={Skalski, A.},
	title={The Haagerup property for arbitrary von Neumann algebras},
	journal={Int. Math. Res. Not.},
	date={2015},
	number={19},
	pages={9857--9887},}

\bib{cs2}{article}{
   	AUTHOR = {Caspers, M.},
   	author={Skalski, A.},
   	TITLE = {The {H}aagerup approximation property for von {N}eumann
   		algebras via quantum {M}arkov semigroups and {D}irichlet
   		forms},
   	JOURNAL = {Comm. Math. Phys.},
   	FJOURNAL = {Communications in Mathematical Physics},
   	VOLUME = {336},
   	YEAR = {2015},
   	NUMBER = {3},
   	PAGES = {1637--1664},
   	ISSN = {0010-3616},
   	MRCLASS = {46L89 (81T75)},
   	MRNUMBER = {3324152},
   	MRREVIEWER = {Efton Park},
   }
  
  \bib{ccjjv}{book}{
  	author={Cherix, P.},
  	author={Cowling, M.},
  	author={Jolissaint, P.},
  	author={Julg, P.},
  	author={Valette, A.},
  	title={Groups with the Haagerup property},
  	series={Modern Birkh\"auser Classics},
  	date={2001},
  	pages={viii+126},}
  
\bib{chifan_das}{article}{
author={Chifan, I.},
author={Das, S.},
title={Rigidity results for von Neumann algebras arising from mixing extensions of profinite actions of groups on probability spaces},
status={preprint, available at arXiv: 1903.07143},
}

\bib{chifan_das_khan}{article}{
	author={Chifan, I.},
	author={Das, S.},
		author={Khan, K.},
	title={Some Applications of Group Theoretic Rips Constructions to the Classification of von Neumann Algebras},
	status={preprint, available at arXiv: 1911.11729	},
}

  \bib{ci}{article}{
  	author={Chifan, I.},
  	author={Ioana, A.},
  	title={On relative property (T) and Haagerup's property},
  	journal={Trans. Amer. Math. Soc.},
  	volume={363},
  	date={2011},
  	number={12},
  	pages={6407--6420},}
  
  \bib{choda}{article}{
  	author={Choda, H.},
  	title={A Galois correspondence in a von Neumann algebra},
  	journal={T\^{o}hoku Math. J. (2)},
  	volume={30},
  	date={1978},
  	number={4},
  	pages={491--504},}

  \bibitem{Cho}
  M. Choda,
  \emph{Group factors of the Haagerup type},
  Proc. Japan Acad. Ser. A Math. Sci. {\bf 59} (1983), 174--177.
  
   \bib{Connes}{article}{ 
   	AUTHOR = {Connes, A.},
   	TITLE = {Classification of injective factors. {C}ases {$II_{1},$}
   		{$II_{\infty },$} {$III_{\lambda },$} {$\lambda \not=1$}},
   	JOURNAL = {Ann. of Math. (2)},
   	FJOURNAL = {Annals of Mathematics. Second Series},
   	VOLUME = {104},
   	YEAR = {1976},
   	NUMBER = {1},
   	PAGES = {73--115},
   	}

  \bibitem{CoJ}
  A.\,Connes and V.\,Jones,
  \emph{Property (T) for von Neumann algebras},
  Bull.\,London Math.\,Soc. {\bf 17} (1985), 57--62.

  \bib{dgo}{article}{
  	author={Dahmani, F.},
  	author={Guirardel, V.},
  	author={Osin, D.},
  	title={Hyperbolically embedded subgroups and rotating families in groups
  		acting on hyperbolic spaces},
  	journal={Mem. Amer. Math. Soc.},
  	volume={245},
  	date={2017},
  	number={1156},
  	pages={v+152},}
  
 \bibitem{DFSW}
 M.\,Daws, P.\,Fima, A.\,Skalski and S.\,White,
 The Haagerup property for locally compact quantum groups,
 \emph{ J. Reine Angew. Math. (Crelle)}, \textbf{711} (2016), 189--229.

   \bibitem{Day} M.\,Day, Amenable semigroups, \emph{Illinois J.\,Math.} \textbf{1} (1957), 509--544. 
  
  \bib{de2}{article}{
  	author={de Cornulier, Y.},
  	title={Finitely presented wreath products and double coset
  		decompositions},
  	journal={Geom. Dedicata},
  	volume={122},
  	date={2006},
  	pages={89--108},}
  
  \bib{de}{article}{
  	author={de Cornulier, Y.},
  	title={Finitely presentable, non-Hopfian groups with Kazhdan's property
  		(T) and infinite outer automorphism group},
  	journal={Proc. Amer. Math. Soc.},
  	volume={135},
  	date={2007},
  	number={4},
  	pages={951--959},}

  \bib{csv}{article}{
  	author={de Cornulier, Y.},
  	author={Stalder, Y.},
  	author={Valette, A.},
  	title={Proper actions of wreath products and generalizations},
  	journal={Trans. Amer. Math. Soc.},
  	volume={364},
  	date={2012},
  	number={6},
  	pages={3159--3184},}

\bib{dv}{article}{
title={Inner amenability, property Gamma, McDuff II$_1$ factors and stable equivalence relations},
author={Deprez, T.}, 
author={Vaes, S.},
 journal={Ergodic Theory Dynam. Systems},
   volume={38},
   date={2018},
   number={7},
   pages={2618--2624},}

\bib{dix}{article}{
	AUTHOR = {Dixmier, J.},
	TITLE = {Sous-anneaux ab\'{e}liens maximaux dans les facteurs de type fini},
	JOURNAL = {Ann. of Math. (2)},
	FJOURNAL = {Annals of Mathematics. Second Series},
	VOLUME = {59},
	YEAR = {1954},
	PAGES = {279--286},
	ISSN = {0003-486X},
	MRCLASS = {46.3X},
	MRNUMBER = {0059486},
	MRREVIEWER = {F. I. Mautner},
}

\bib{femo}{article}{
	AUTHOR = {Feldman, J.},
	author={ Moore, C. C.},
	TITLE = {Ergodic equivalence relations, cohomology, and von {N}eumann
		algebras. {I}},
	JOURNAL = {Trans. Amer. Math. Soc.},
	FJOURNAL = {Transactions of the American Mathematical Society},
	VOLUME = {234},
	YEAR = {1977},
	NUMBER = {2},
	PAGES = {289--324},
}

\bib{ge}{article}{
   author={Ge, L.},
   title={On ``Problems on von Neumann algebras by R. Kadison, 1967''},
   note={With a previously unpublished manuscript by Kadison;
   International Workshop on Operator Algebra and Operator Theory (Linfen,
   2001)},
   journal={Acta Math. Sin. (Engl. Ser.)},
   volume={19},
   date={2003},
   number={3},
   pages={619--624},}

\bib{gk}{article}{
   author={Ge, L.},
   author={Kadison, R.},
   title={On tensor products for von Neumann algebras},
   journal={Invent. Math.},
   volume={123},
   date={1996},
   number={3},
   pages={453--466},}

\bib{glasner}{book}{
	author={Glasner, E.},
	title={Ergodic theory via joinings},
	series={Mathematical Surveys and Monographs},
	volume={101},
	publisher={American Mathematical Society, Providence, RI},
	date={2003},
	pages={xii+384},}

\bib{Gr87}{article}{
	author={Gromov, M.},
	title={Hyperbolic groups},
	conference={
		title={Essays in group theory},
	},
	book={
		series={Math. Sci. Res. Inst. Publ.},
		volume={8},
		publisher={Springer, New York},
	},
	date={1987},
	pages={75--263},}

\bibitem{Haa}
U.\,Haagerup,
\emph{An example of non-nuclear $C^*$-algebra which has the metric approximation property}, Invent. Math. {\bf 50} (1979), 279--293.

\bib{Haga}{article}{
AUTHOR = {Haga, Y.},
TITLE = {On subalgebras of a cross product von {N}eumann algebra},
JOURNAL = {T\^{o}hoku Math. J. (2)},
FJOURNAL = {The Tohoku Mathematical Journal. Second Series},
VOLUME = {25},
YEAR = {1973},
PAGES = {291--305},
}

\bib{hall}{article}{
   author={Hall, M., Jr.},
   title={Coset representations in free groups},
   journal={Trans. Amer. Math. Soc.},
   volume={67},
   date={1949},
   pages={421--432},}

\bib{houghton}{article}{
	author={Houghton, C. H.},
	title={The first cohomology of a group with permutation module
		coefficients},
	journal={Arch. Math. (Basel)},
	volume={31},
	date={1978/79},
	number={3},
	pages={254--258},}
\bib{i}{article}{
   author={Ioana, A.},
   title={Relative property (T) for the subequivalence relations induced by
   the action of ${\rm SL}_2(\Bbb Z)$ on $\Bbb T^2$},
   journal={Adv. Math.},
   volume={224},
   date={2010},
   number={4},
   pages={1589--1617},}

\bib{i_duke}{article}{
AUTHOR = {Ioana, A.},
TITLE = {Cocycle superrigidity for profinite actions of property ({T})
	groups},
JOURNAL = {Duke Math. J.},
FJOURNAL = {Duke Mathematical Journal},
VOLUME = {157},
YEAR = {2011},
NUMBER = {2},
PAGES = {337--367},
}

\bib{ioICM}{article}{
 author={Ioana, A.},
title={Rigidity of von Neumann algebras}, 
   journal={Proceedings of the International Congress of Mathematicians (ICM 2018)},
   date={2019},
   pages={1639--1672},}

\bib{ipp}{article}{
   author={Ioana, A.},
   author={Peterson, J.},
   author={Popa, S.},
   title={Amalgamated free products of weakly rigid factors and calculation
   of their symmetry groups},
   journal={Acta Math.},
   volume={200},
   date={2008},
   number={1},
   pages={85--153},}

\bib{iv}{article}{
   author={Ioana, A.},
   author={Vaes, S.},
   title={Rigid actions need not be strongly ergodic},
   journal={Proc. Amer. Math. Soc.},
   volume={140},
   date={2012},
   number={9},
   pages={3283--3288},}

\bib{Jiang}{article}{
	author={Jiang, Y.},
	title={Maximal Haagerup subalgebras in $L(\mathbb{Z}_2\rtimes SL_2(\mathbb{Z}))$},
	journal={J. Operator Theory, to appear},
	status={arXiv: 2003.00687},
}

\bibitem{Jol}
P. Jolissaint,
\emph{Haagerup approximation property for finite von Neumann algebras},
J. Operator Theory {\bf 48} (2002), 549--571.

\bib{kkn}{article}{
	AUTHOR = {Kaluba, M.},
		author = {Kielak, D.},
	author= {Nowak, P.W.}, 
	TITLE = {On property (T) for ${\rm Aut}(\Bbb \mathbb{F}_n)$ and $SL_n(\bz)$},
status={preprint, available at arXiv: 1812.03456},
	
}

\bib{kno}{article}{
 	AUTHOR = {Kaluba, M.},
 	author= {Nowak, P.W.}, 
 	author = {Ozawa, N.},
 	TITLE = {{${\rm Aut}(\Bbb F_5)$} has property ({T})},
 	JOURNAL = {Math. Ann.},
 	FJOURNAL = {Mathematische Annalen},
 	VOLUME = {375},
 	YEAR = {2019},
 	NUMBER = {3-4},
 	PAGES = {1169--1191},
 	ISSN = {0025-5831},
 	
 }

\bib{JungeXu}{article}{
 	AUTHOR = {Junge, M.},
 	 author = {Xu, Q.},
 	TITLE = {Noncommutative {B}urkholder/{R}osenthal inequalities},
 	JOURNAL = {Ann. Probab.},
 	FJOURNAL = {The Annals of Probability},
 	VOLUME = {31},
 	YEAR = {2003},
 	NUMBER = {2},
 	PAGES = {948--995},
 	ISSN = {0091-1798},
 	MRCLASS = {46L53 (81S25)},
 	MRNUMBER = {1964955},
 	MRREVIEWER = {Narcisse Randrianantoanina},
 }
 
 \bib{knapp}{book}{
 	author={Knapp, Anthony W.},
 	title={Lie groups beyond an introduction},
 	series={Progress in Mathematics},
 	volume={140},
 	publisher={Birkh\"{a}user Boston, Inc., Boston, MA},
 	date={1996},
 	pages={xvi+604},}

\bibitem{Luck} W. L\"uck,  Survey on geometric group theory, \emph{M\"unster J.\,Math.} \textbf{1} (2008), 73--108. 
 
  \bib{mar}{book}{
   author={Margulis, G. A.},
   title={Discrete subgroups of semisimple Lie groups},
   series={Ergebnisse der Mathematik und ihrer Grenzgebiete (3) [Results in
   Mathematics and Related Areas (3)]},
   volume={17},
   publisher={Springer-Verlag, Berlin},
   date={1991},
   pages={x+388},}

\bib{meiri}{article}{
	author={Meiri, C.},
	title={Generating pairs for finite index subgroups of ${\rm SL}(n,\Bbb
		Z)$},
	journal={J. Algebra},
	volume={470},
	date={2017},
	pages={420--424},}

\bib{Morris}{book} {
	AUTHOR = {Morris, D.W.},
	TITLE = {Introduction to arithmetic groups},
	PUBLISHER = {Deductive Press},
	YEAR = {2015},
	PAGES = {xii+475},
}

\bib{ot}{article}{
   author={Okayasu, R.},
   author={Tomatsu, R.},
   title={Haagerup approximation property for arbitrary von Neumann
   algebras},
   journal={Publ. Res. Inst. Math. Sci.},
   volume={51},
   date={2015},
   number={3},
   pages={567--603},}

\bib{Olshanski}{article}{
	AUTHOR = {Ol\cprime shanski\u{\i}, A. Yu.},
	TITLE = {On residualing homomorphisms and {$G$}-subgroups of hyperbolic
		groups},
	JOURNAL = {Internat. J. Algebra Comput.},
	FJOURNAL = {International Journal of Algebra and Computation},
	VOLUME = {3},
	YEAR = {1993},
	NUMBER = {4},
	PAGES = {365--409},
}

\bib{packer}{article}{
   author={Packer, J. A.},
   title={On the embedding of subalgebras corresponding to quotient actions
   in group-measure factors},
   journal={Pacific J. Math.},
   volume={119},
   date={1985},
   number={2},
   pages={407--443},}

\bib{park}{article}{
   author={Park, K. K.},
   title={${\rm GL}(2,{\bf Z})$ action on a two torus},
   journal={Proc. Amer. Math. Soc.},
   volume={114},
   date={1992},
   number={4},
   pages={955--963},}

\bib{popa_advances}{article}{
	AUTHOR = {Popa, S.},
	TITLE = {Maximal injective subalgebras in factors associated with free
		groups},
	JOURNAL = {Adv. Math.},
	FJOURNAL = {Advances in Mathematics},
	VOLUME = {50},
	YEAR = {1983},
	NUMBER = {1},
	PAGES = {27--48},
	ISSN = {0001-8708},
	MRCLASS = {46L10 (46L35)},
	MRNUMBER = {720738},
	MRREVIEWER = {Vaughan Jones},
	}

\bib{popa_correspondence}{article}{
	author={Popa, S.},
	title={Correspondences},
	status={INCREST Preprint, 56/1986},
}

\bib{popa_book}{book}{
	AUTHOR = {Popa, S.},
TITLE = {Classification of subfactors and their endomorphisms},
SERIES = {CBMS Regional Conference Series in Mathematics},
VOLUME = {86},
PUBLISHER = {Published for the Conference Board of the Mathematical
	Sciences, Washington, DC; by the American Mathematical
	Society, Providence, RI},
YEAR = {1995},
PAGES = {x+110},
}

\bib{popa}{article}{
	author={Popa, S.},
	title={On a class of type ${\rm II}_1$ factors with Betti numbers
		invariants},
	journal={Ann. of Math. (2)},
	volume={163},
	date={2006},
	number={3},
	pages={809--899},}

\bib{suzuki}{article}{
author={Suzuki, Y.},
title={Complete descriptions of intermediate operator algebras by intermediate extensions of dynamical systems},
journal={Comm. Math.Phys.},
 volume={375},
date={2020},
number={2},
pages={1273--1297},}

\bib{Takesaki2}{book}{
	AUTHOR = {Takesaki, M.},
	TITLE = {Theory of operator algebras. {II}},
	SERIES = {Encyclopaedia of Mathematical Sciences},
	VOLUME = {125},
	NOTE = {Operator Algebras and Non-commutative Geometry, 6},
	PUBLISHER = {Springer-Verlag, Berlin},
	YEAR = {2003},
	PAGES = {xxii+518},
	ISBN = {3-540-42914-X},
	MRCLASS = {46L10 (47L35 47L55)},
	MRNUMBER = {1943006},
	MRREVIEWER = {Robert S. Doran},
	DOI = {10.1007/978-3-662-10451-4},
	URL = {https://doi.org/10.1007/978-3-662-10451-4},
}

\bib{Th10}{article}{
	author={Thom, A.},
	title={Examples of hyperlinear groups without factorization property},
	journal={Groups Geom. Dyn.},
	volume={4},
	date={2010},
	number={1},
	pages={195--208},}

\bib{venk}{article}{
   author={Venkataramana, T. N.},
   title={Zariski dense subgroups of arithmetic groups},
   journal={J. Algebra},
   volume={108},
   date={1987},
   number={2},
   pages={325--339},}

\bib{witte}{article}{
   author={Witte, D.},
   title={Measurable quotients of unipotent translations on homogeneous
   spaces},
   journal={Trans. Amer. Math. Soc.},
   volume={345},
   date={1994},
   number={2},
   pages={577--594},}

\bib{zimmer}{book}{
	author={Zimmer, R. J.},
	title={Ergodic theory and semisimple groups},
	series={Monographs in Mathematics},
	volume={81},
	publisher={Birkh\"{a}user Verlag, Basel},
	date={1984},
	pages={x+209},}

\end{biblist}
\end{bibdiv}
\end{document}